\numberwithin{equation}{section}
\theoremstyle{definition}
\newtheorem{thm}{Theorem}[section]
\newtheorem{lem}[thm]{Lemma}
\newtheorem{defn}[thm]{Definition}
\newtheorem{rem}[thm]{Remark}
\newtheorem{prop}[thm]{Proposition}
\newtheorem{ques}[thm]{Question}
\newtheorem{coro}[thm]{Corollary}
\newtheorem{ex}[thm]{Example}
\newtheorem{conj}[thm]{Conjecture}
\newtheorem{conv}[thm]{Convention}
\def \C {\mathbb{C}}
\def \E {\mathbb{E}}
\def \F {\mathbb{F}}
\def \N {\mathbb{N}}
\def \Q {\mathbb{Q}}
\def \R {\mathbb{R}}
\def \T {\mathbb{T}}
\def \V {\mathbb{F}_{p}^{d}}
\def \Z {\mathbb{Z}}
\def \+ {\hat{+}}
\def \- {\hat{-}}
\def \b {\bold{b}}
\def \d {\delta}
\def \e {\epsilon}
\def \w {\bold{w}}
\def \x {\bold{x}}
\def \cc {{\#}_\text{cc}}
\def \dep {{\#}_\text{dd}}
\def \gfv {\mathcal{J}(M)}
\def \poly {\text{poly}}
\def \pp {\perp_{M}}
\def \rank {\text{rank}}
\def \st {\text{HP}}
\def \sp {\text{span}}
\def \Vk {\mathbb{Z}_{K}^{d}}
\def \vol {\text{vol}}
\def \zp {\mathbb{Z}/p^{\mathbb{N}}}
\def \zpn {\mathbb{Z}/p^{N}}
\title[Spherical higher order Fourier analysis over finite fields II]{Spherical higher order Fourier analysis over finite fields II: additive combinatorics for shifted modules}
\author{Wenbo Sun}
\address[Wenbo Sun]{Department of Mathematics, Virginia Tech, 225 Stanger Street, Blacksburg, VA, 24061, USA}
\email{swenbo@vt.edu}
\thanks{The author was partially supported by the NSF Grant DMS-2247331}
\subjclass[2020]{05C99, 05D99}
\begin{document}

\maketitle

\begin{abstract}
  This paper is the second part of the series \emph{Spherical higher order Fourier analysis over finite fields}, aiming to develop the higher order Fourier analysis method along spheres over finite fields, and to solve the geometric Ramsey conjecture in the finite field setting.
  
  In this paper, we study additive combinatorial properties for shifted modules, i.e. 
  the structure of sets of the form $E\pm E$, where $E$ is a collection of shifted modules of the polynomial ring $\mathbb{R}[x_{1},\dots,x_{d}]$ and we identify two modules if their difference contains the zero polynomial. We show that under appropriate definitions, the set $E\pm E$ enjoys properties similar to the conventional setting where $E$ is a subset of an abelian group. In particular, among other results, we prove the Balog-Gowers-Szemer\'edi theorem, the Rusza's quasi triangle inequality and a weak form of the Pl\"unnecke-Rusza theorem in the setting of shifted modules. We also show that for a special class of maps $\xi$ from $\mathbb{Z}_{K}^{d}$ to the collection of all shifted modules of   $\mathbb{R}[x_{1},\dots,x_{d}]$, if the set $\xi(\mathbb{Z}_{K}^{d})+\xi(\mathbb{Z}_{K}^{d})$ has large additive energy, then $\xi$ is an almost linear Freiman homomorphism. This result is the crucial additive combinatorial input we need to prove the spherical Gowers inverse theorem in later parts of the series.
  \end{abstract}

\tableofcontents

  \section{Introduction}
\subsection{Additive combinatorics for affine subspaces and shifted modules}
  This paper is the second part of the series \emph{Spherical higher order Fourier analysis over finite fields} \cite{SunA,SunC,SunD}.
The purpose of this paper is to provide the additive combinatorial results we need for the proof of the spherical Gowers inverse theorem in \cite{SunC}.
Let $G$ be a discrete abelian group. A central topic in additive combinatorics is to study the structure of subsets $E$ of $G$ with large additive energy (meaning that the set $\{(a,b,c,d)\in E^{4}\colon a+b=c+d\}$ has cardinality $O(\vert E\vert^{3})$) or with small doubling constant (meaning that $\vert E+E\vert=O(\vert E\vert)$). Such sets are known to have structures related to arithmetic progressions. We refer the readers to \cite{TV06} for more details. 
Based on the existing knowledge in additive combinatorics, it was shown in Lemma F.1 of \cite{GTZ12} that if a function $f\colon \{0,\dots,N-1\}\to \mathbb{T}$ has large additive energy, then $f$ must have an almost linear structure in the sense that $$f(h)=C+\sum_{i=1}^{L}\{\alpha_{i} h\}\beta_{i} \mod \Z$$ for a large set of $h\in\{0,\dots,N-1\}$ for some $L\in\N$ and $C,\alpha_{i},\beta_{i}\in\R$.  This linearization result plays an important role in the proof of the Gowers inverse theorem in \cite{GTZ12}.

 In order to prove the spherical Gowers inverse theorem in \cite{SunC}, we also need a linearization result in the spherical setting.  In our context, the structure for functions with large additive energy is characterized by the following definition (recall that $\tau\colon \F_{p}\to\{0,\dots,p-1\}$ and $\iota\colon \Z\to\F_{p}$ are the natural embeddings):

\begin{defn}[Almost linear function and Freiman homomorphism]\label{2:llfh}
Let $L\in\N_{+}$,  
$G$ be an additive group, $H$ be a subset of $G$, $R$ be a subset of $\R$, and $\xi\colon H\to\R$ be a map.
We say that $\xi$ is an \emph{$R$-almost linear function} of \emph{complexity} at most $L$ if 
there exist  $\alpha_{i}\in \widehat{G}$ (the Pontryagin dual of $G$, see Section \ref{2:s:defn} for the definition) and $\beta_{i}\in R$ for all $1\leq i\leq L$ such that for all $h\in H$,
$$\xi(h):=\sum_{i=1}^{L}\{\alpha_{i}\cdot h\}\beta_{i}.
$$
We say that an $R$-almost linear function $\xi$  is an \emph{$R$-almost linear Freiman homomorphism} if  for all $h_{1},h_{2},h_{3},h_{4}\in H$ with $h_{1}+h_{2}=h_{3}+h_{4}$, we have that $$\xi(h_{1})+\xi(h_{2})\equiv\xi(h_{3})+\xi(h_{4}) \mod \Z.$$

Let $p$ be a prime.
We say that a map $\xi\colon H\to \F_{p}$ is an \emph{almost linear function/Freiman homomorphism} of complexity at most $L$ if there exists a $\Z/p$-almost linear function/Freiman homomorphism $\xi'$ with $\xi'(H)\subseteq\Z/p$ of complexity at most $L$ such that
$\xi(h)=\iota(p\xi'(h))$ for all $h\in H$.
%
%

We say that a map $\xi\colon H\to \R[x_{1},\dots,x_{d}]$ (resp. $\F_{p}[x_{1},\dots,x_{d}]$) is an \emph{$R$-almost linear function/$R$-Freiman homomorphism} (resp. \emph{almost linear function/Freiman homomorphism}) of complexity at most $L$ if $\xi_{m}$ is an $R$-almost linear function/$R$-Freiman homomorphism (resp. almost linear function/Freiman homomorphism) of complexity at most $L$ for all $m=(m_{1},\dots,m_{d})\in\N^{d}$, where $\xi_{m}(h)$ is the coefficient of the monomial $x_{1}^{m_{1}}\dots x_{d}^{m_{d}}$ of $\xi(h)$.
\end{defn}

In order to prove the 3-step spherical Gowers inverse theorem, the following linearization result is needed in \cite{SunC}.\footnote{The linearization result is not necessary for the proof of 2 or less step spherical Gowers inverse theorems.}

\begin{prop}[Linearization result for affine supsaces]\label{2:aadd0}
	Let $d\in\N_{+}$ with $d\gg 1$, $\d>0$ and $p\gg_{\d,d}1$ be a prime. Let  $H\subseteq \V$ with $\vert H\vert>\d p^{d}$, and $\xi_{i}$ be a map from $H$ to $\V$ for $1\leq i\leq 4$.
	Suppose that there exists 
	  $U\subseteq \{(h_{1},h_{2},h_{3},h_{4})\in H^{4}\colon h_{1}+h_{2}=h_{3}+h_{4}\}$
	with $\vert U\vert>\d \vert H\vert^{3}$ such that for all $(h_{1},h_{2},h_{3},h_{4})\in U$, 
	\begin{equation}\label{2:aadd01}
	\xi_{1}(h_{1})+\xi_{2}(h_{2})-\xi_{3}(h_{3})-\xi_{4}(h_{4})\in \sp_{\F_{p}}\{h_{1},h_{2},h_{3}\}.\footnote{Note that $\sp_{\F_{p}}\{h_{1},h_{2},h_{3}\}=\sp_{\F_{p}}\{h_{1},h_{2},h_{3},h_{4}\}$ since $h_{4}=h_{1}+h_{2}-h_{3}$. A similar remark applies throughout the paper.}
	\end{equation}
	Then there exist   $H'\subseteq H$ with $\vert H'\vert\gg_{\d,d} p^{d}$, some $g\in\F_{p}$, and a map $T\colon H'\to \V$ whose projection to each of the $d$ coordinates is an almost linear Freiman homomorphism  of complexity $O_{\d,d}(1)$  
	 such that 
	$$\xi_{1}(h)-T(h)-g\in \sp_{\F_{p}}\{h\} \text{ for all $h\in H'$.}$$
	  
\end{prop}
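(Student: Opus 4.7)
The plan is to interpret hypothesis \eqref{2:aadd01} as asserting that $\xi_{1}$ has large additive energy once we pass to the quotient by the shifted ideal $\sp_{\F_{p}}\{h\}$, and then to apply the shifted-ideal additive-combinatorial toolkit (Balog-Gowers-Szemer\'edi, Ruzsa's quasi-triangle inequality, weak Pl\"unnecke-Ruzsa, and the linearization lemma) established earlier in this paper. First I would convert the four-function hypothesis into a single-function one: pigeonholing over the ``auxiliary'' slots in $U$, one locates reference elements $h_{2}^{\ast},h_{3}^{\ast},h_{4}^{\ast}$ that appear in many tuples of $U$ and takes appropriate differences of the corresponding relations, obtaining a subset $H_{1}\subseteq H$ with $\vert H_{1}\vert\gg_{\d,d}p^{d}$ and a positive-density set of additive quadruples $(a,b,c,e)\in H_{1}^{4}$ with $a+b=c+e$ on which
$$\xi_{1}(a)+\xi_{1}(b)-\xi_{1}(c)-\xi_{1}(e)\in\sp_{\F_{p}}\{a,b,c\}.$$
This is a large additive-energy statement for $\xi_{1}$ in the shifted-ideal sense.

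Next I would apply the Balog-Gowers-Szemer\'edi theorem for shifted ideals to refine $H_{1}$ to a subset $H_{2}$ of size $\gg_{\d,d} p^{d}$ on which $\xi_{1}(H_{2})$ has small doubling modulo $\sp_{\F_{p}}\{h\}$, and combine Ruzsa's quasi-triangle inequality with the weak Pl\"unnecke-Ruzsa inequality to bound the iterated sumsets. From this control I would, via a Ruzsa-covering argument adapted to shifted ideals, produce a large subset $H'\subseteq H_{2}$, a constant $g$, and a map $T\colon H'\to\V$ satisfying $\xi_{1}(h)-T(h)-g\in\sp_{\F_{p}}\{h\}$ for every $h\in H'$ and such that $T$ is an exact Freiman homomorphism on additive quadruples of $H'$. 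Finally, projecting $T$ onto each of the $d$ coordinates and applying the finite-field analog of Lemma F.1 of \cite{GTZ12}, which is the linearization tool developed earlier in this paper, to each projection independently upgrades the Freiman-homomorphism property into the almost linear structure of complexity $O_{\d,d}(1)$ demanded by Definition \ref{2:llfh}.

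The hardest part will be the extraction of $T$: passing from the purely relational small-doubling condition for $\xi_{1}$ modulo the $h$-dependent subspaces $\sp_{\F_{p}}\{h\}$ to an honest map $T$ defined on a single large set $H'$ that is simultaneously an exact Freiman homomorphism and agrees with $\xi_{1}$ modulo $\sp_{\F_{p}}\{h\}$. Because the quotient $\V/\sp_{\F_{p}}\{h\}$ varies with $h$, the classical Freiman/Bogolyubov covering arguments do not apply directly, and one must work with the shifted-ideal analogues built earlier in the paper; once this extraction succeeds, the initial pigeonhole reduction and the final coordinatewise linearization are comparatively routine applications of the tools already developed.
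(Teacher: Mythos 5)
The paper's actual proof of Proposition~\ref{2:aadd0} is Remark~\ref{2:i2a}: via the group isomorphism $\sigma(h)(n):=n\cdot h$ between $(\V,+)$ and $(\st_{d}(1),+)$ (with $M(n)=n\cdot n$), the hypothesis~\eqref{2:aadd01} becomes exactly the hypothesis of Theorem~\ref{2:aadd} with $s=1$, so the proposition is literally a special case of the main theorem. You do not use this reduction, and instead re-sketch a direct argument following the general strategy of Theorem~\ref{2:aadd}. Some of what you outline does match the paper in spirit (Balog--Gowers--Szemer\'edi, Ruzsa's quasi-triangle inequality, weak Pl\"unnecke--Ruzsa are all used), but the two places where your sketch diverges are precisely the places where the real difficulty lives, and there your proposal has genuine gaps.

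First, the reduction from $(\xi_{1},\xi_{2},\xi_{3},\xi_{4})$ to a single function is done in the paper by the Cauchy--Schwarz inequality for $M$-energy (Theorem~\ref{2:gcs}), not by fixing reference elements and pigeonholing; because $\sim$ is not transitive, a naive pigeonhole over ``auxiliary slots'' does not cleanly produce a positive-density set of additive quadruples for $\xi_{1}$ alone, and the paper's Theorem~\ref{2:gcs} requires the intersection machinery of Proposition~\ref{2:gri} precisely to cope with the loss of information. Second, and more seriously, the step you flag as ``the hardest part''---extracting the map $T$ on a single large set from the relational small-doubling information---you propose to handle with ``a Ruzsa-covering argument adapted to shifted ideals,'' but no such tool exists in this setting and none is developed in the paper. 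What blocks it is the gap between the density dependence number and the clique cover number: Conjecture~\ref{2:ccsdp} is open, and the available weak decomposition (Proposition~\ref{2:gweakcore1}) leaves an unavoidable ``obstacle'' set $X_{b}$ supported near low-dimensional subspaces. The paper has to route around this via the entire structure-obstacle decomposition machinery (Propositions~\ref{2:gweakcore2}, \ref{2:g1621}, \ref{2:laststand}, Lemma~\ref{2:manyd}) working layer-by-layer along subspaces $U+v$, followed by an intersection argument over many independent decompositions; none of this is captured by a covering argument. Finally, the closing conversion from a Freiman $M$-homomorphism to an almost linear Freiman homomorphism is not a coordinatewise application of Lemma~F.1 of \cite{GTZ12}: the paper proves its own solution-of-Freiman-equations theorem for shifted ideals (Theorem~\ref{2:gsol}, which requires the super-polynomial and cocycle lemmas of Section~\ref{2:s:ff}) and then derives the almost-linear form explicitly from the Bohr-set/GAP structure supplied by Proposition~\ref{2:gef1}, using the rational-independence of the generators and a careful control of denominators.
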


An alternative way to interpret Proposition \ref{2:aadd0}  is as follows. For affine subspaces $V+c$ and $V'+c'$ of $\V$, we informally write $V+c\sim V'+c'$ if $c-c'\in V+V'$ (see Section \ref{2:s:adbf} for the precise definitions).   
Roughly speaking, Proposition \ref{2:aadd0} claims that if the maps $$\tilde{\xi}_{i}\colon h\mapsto \xi_{i}(h)+\sp_{\F_{p}}\{h\}$$ from $\V$ to at most one dimensional affine subspaces of $\V$ has large additive energy in the sense 
that 
$$\tilde{\xi}_{1}(h_{1})+\tilde{\xi}_{2}(h_{2})\sim\tilde{\xi}_{3}(h_{3})+\tilde{\xi}_{4}(h_{4})$$
for all $(h_{1},h_{2},h_{3},h_{4})\in U$, then $\tilde{\xi}_{i}(h)$  equals to an almost linear Freiman homomorphism plus a constant modulo $\sp_{\F_{p}}\{h\}$.

In order to study the  higher-step spherical Gowers inverse theorem, we need to replace the functions $\xi_{i}$ in Proposition \ref{2:aadd0} by functions taking values in the ring of polynomials.
 Let $R$ be a subset of $\R$, $d\in\N_{+}$ and $s\in\N$.
We use $\st_{R,d}(s)$ to denote the set of homogeneous polynomial in $\R[x_{1},\dots,x_{d}]$ of degree $s$ whose coefficients are taken from $R$. 

\begin{conv}\label{2:csts}
In this paper, the zero polynomial is considered as a degree $s$ polynomial for any $s\in\Z$. In particular, the zero polynomial belongs to $\st_{R,d}(s)$ for any $s\in\N$.
For convenience, if $s<0$, then we use $\st_{R,d}(s)$ to denote the set consisting only the zero polynomial.
\end{conv}

In order to generalize (\ref{2:aadd01}) to $\st_{\R,d}(s)$-valued functions, we also need to extend the concept of subspaces of $\V$ accordingly:

\begin{defn}[$M$-module for real polynomials]\label{2:defmmp0}
		Let $d\in\N_{+}, p$ be a prime and $M\colon\V\to\F_{p}$ be a quadratic form associated with the matrix $A$ (see Section \ref{2:s:defn} for definitions). 
	We say that a subset $I$ of the polynomial ring $\R[x_{1},\dots,x_{d}]$ 
	is an \emph{$M$-module} if there exists a subspace $V$ of $\V$ such that 
	$$f\in I\Leftrightarrow f(n)\in\Z \text{ for all } n\in\Z^{d} \text{ with } (\iota(n)A)\cdot\iota(n)=0 \text{ and } (hA)\cdot \iota(n)=0 \text{ for all } h\in V.\footnote{This is equivalently of saying that $f\in I\Leftrightarrow f(n)\in\Z \text{ for all } n\in \iota^{-1}(V(M_{0})\cap V^{\pp})$, where $M_{0}(n):=(nA)\cdot n$. See Section \ref{2:s:defn} for definitions.}$$ 
	In this case we denote $I$ as $J^{M}_{V}$. For convenience we denote 
	$$J^{M}:=J^{M}_{\{\bold{0}\}} \text{ and } J^{M}_{h_{1},\dots,h_{k}}:=J^{M}_{\sp_{\F_{p}}\{h_{1},\dots,h_{k}\}}.$$ 
	for all $h_{1},\dots,h_{k}\in\V$.
	\end{defn}

For convenience, in the rest of the paper, we denote 
\begin{equation}\label{2:thisisns}
N(s):=(2s+16)(15s+453) \text{ for all } s\in\Z.
\end{equation}
The following is the additive combinatorial property we need in order to prove the general spherical Gowers inverse theorem in \cite{SunC}, which is also the main result of this paper.  

\begin{thm}[Linearization result for shifted modules]\label{2:aadd}
	Let $d,K,r,s\in\N_{+}$, with $d\geq N(s)$, $\d>0$ and $p\gg_{\d,d,r} 1$ be a prime dividing $K$,
	  $M\colon\V\to\F_{p}$ be a non-degenerate quadratic form,  $H\subseteq \Vk$ with $\vert H\vert>\d K^{d}$, and $\xi_{i}$ be a map from $H$ to $\st_{\Z/p^{r},d}(s)$ for $1\leq i\leq 4$.
	Suppose that there exists 
	  $U\subseteq \{(h_{1},h_{2},h_{3},h_{4})\in H^{4}\colon h_{1}+h_{2}=h_{3}+h_{4}\}$
	with $\vert U\vert>\d \vert H\vert^{3}$ such that for all $(h_{1},h_{2},h_{3},h_{4})\in U$, 
	\begin{equation}\nonumber
	\xi_{1}(h_{1})+\xi_{2}(h_{2})-\xi_{3}(h_{3})-\xi_{4}(h_{4})\in J^{M}_{\iota(h_{1}),\iota(h_{2}),\iota(h_{3})}.\footnote{Here $\iota$ is understood as the natural projection from $\Vk$ to $\F_{p}^{d}$.}
	\end{equation}
	Then there exist   $H'\subseteq H$ with $\vert H'\vert\gg_{\d,d,r} K^{d}$, some $g\in\st_{\Z/p^{r},d}(s)$, and a $\Z/p^{r}$-almost linear Freiman homomorphism  $T\colon H'\to \st_{\Z/p^{r},d}(s)$  of complexity $O_{\d,d}(1)$.  
	 such that 
	$$\xi_{1}(h)-T(h)-g\in J^{M}_{\iota(h)} \text{ for all $h\in H'$.}$$
\end{thm}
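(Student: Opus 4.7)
The plan is to proceed by induction on the degree $s$, with the base case $s=1$ reducing directly to Proposition~\ref{2:aadd0}.

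For $s=1$, identify $\st_d(1)$ with $\V$ via the coefficient map $v\cdot n \leftrightarrow v$. Since $M$ is non-degenerate, its associated symmetric matrix $A$ is invertible, so the degree-one component of $J^M_{h_1,h_2,h_3}$ equals $\sp_{\F_p}\{h_1,h_2,h_3\}\cdot A$ under this identification. Setting $\tilde\xi_i(h) := \xi_i(h)\cdot A^{-1}$ transforms the hypothesis into exactly (\ref{2:aadd01}), and Proposition~\ref{2:aadd0} applied to $\tilde\xi_i$ yields an almost linear Freiman homomorphism whose transform under multiplication by $A$ is the desired $T$ (and the analogous transfer produces $g$).

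For the inductive step $s\geq 2$, fix a non-isotropic $v_0\in\V$ and set $L_0(n):=(v_0 A)\cdot n$. Decompose $\st_d(s)=L_0\cdot \st_d(s-1)\oplus C_s$ for a fixed complementary subspace $C_s$, and write each $\xi_i(h)=L_0\cdot\alpha_i(h)+\beta_i(h)$ with $\alpha_i(h)\in\st_d(s-1)$ and $\beta_i(h)\in C_s$. Projecting the hypothesis onto the two factors and tracking how elements of $J^M_{h_1,h_2,h_3}$ split between them should yield a new hypothesis of the same form at degree $s-1$ for the $\alpha_i$ (with a dimension loss on the order of $s$, consistent with $N(s)-N(s-1)=O(s)$), together with a constraint on the $\beta_i$ that can be handled coordinate-wise by applying the $s=1$ conclusion to a monomial basis of $C_s$. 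Assembling the outputs of both pieces and iterating back up to degree $s$ completes the induction, yielding an almost linear Freiman homomorphism $T$ and constant $g$ matching $\xi_1$ modulo $J^M_h$.

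The main obstacle is the interplay between the fixed degree-two generator $(nA)\cdot n$ and the varying linear generators $(h_jA)\cdot n$: both contribute to $J^M_{h_1,h_2,h_3}$ at degree $s\geq 2$, and their contributions are coupled through $h_1,h_2,h_3$, so the naive decomposition into $\alpha_i$ and $\beta_i$ does not preserve the ideal condition cleanly. To address this, I expect to first apply the shifted-ideal Balog--Szemer\'edi--Gowers theorem, Rusza triangle inequality, and weak Pl\"unnecke--Rusza inequality established earlier in the paper in order to pass to a large subset of $H$ on which the additive structure of $\xi_1(H)$ modulo shifted ideals is sufficiently rigid for the decomposition to behave linearly. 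A rank-reduction step that replaces $J^M_{h_1,h_2,h_3}$ by an equivalent ideal whose variable generators are chosen generically with respect to $L_0$ is likely also needed before the induction closes; the combined dimension cost per step of all these regularizations is what forces $N(s)$ to grow quadratically.
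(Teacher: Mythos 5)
Your proposal identifies the correct target (and your $s=1$ base case via Remark~\ref{2:i2a} is essentially right), but the inductive-on-$s$ architecture you sketch for $s\geq 2$ is not the route the paper takes, and the obstacle you flag in your third paragraph is precisely what prevents it from closing as stated. Decomposing $\st_d(s)=L_0\cdot\st_d(s-1)\oplus C_s$ and then ``projecting the hypothesis onto the two factors'' fails because the graded piece $J^M_{h_1,h_2,h_3}\cap\st_d(s)$ is the sum of $M\cdot\st_d(s-2)$ and $\sum_j L_{h_j}\cdot\st_d(s-1)$, and neither summand respects the $L_0$-splitting: the $M\cdot\st_d(s-2)$ term and the $L_{h_j}$-multiples both mix the $L_0$-divisible and complementary components. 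So there is no projection that turns the given relation into a lower-degree relation of the same form for $\alpha_i$ and for $\beta_i$ separately. Invoking the Balog--Gowers--Szemer\'edi, Ruzsa triangle, and weak Pl\"unnecke--Ruzsa results to ``make the additive structure rigid enough for the decomposition to behave linearly'' is where the argument would need a concrete mechanism, and none is given; that is the genuine gap.

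The paper's proof (Section~\ref{2:s:a2}) does not induct on $s$. After Cauchy--Schwarz (Theorem~\ref{2:gcs}) reduces to a single $\xi$, the chain Theorem~\ref{2:gbig1} $\to$ Proposition~\ref{2:ga3} $\to$ Proposition~\ref{2:gweakcore2} produces a $(K,C,D)$-classification of $\pi^{-1}(\bold{0})\cap(8\tilde\xi(H_2)\hat{-}8\tilde\xi(H_2))$, and the central device that replaces your $L_0$-splitting is the \emph{structure-obstacle decomposition}: Lemma~\ref{2:manyd} produces many $(T_i,U_i)$ with the $T_i$ linearly independent, Proposition~\ref{2:laststand} shows $\xi$ is a Freiman $M$-homomorphism of order 16 modulo each $J^M_{T_i}$, and then Proposition~\ref{2:gri} (the intersection property) recovers the statement modulo $J^M_{h_1,\dots,h_{15}}$ by intersecting over the $T_i$. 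This ``prove it modulo many independent ideals and intersect'' step is the crucial idea your proposal lacks. Only after $\xi$ is a Freiman $M$-homomorphism of order 16 does the paper invoke Proposition~\ref{2:gef1} to obtain a generalized arithmetic progression $P$ carrying a Freiman $M$-homomorphism $\omega$ of order 4, and then Theorem~\ref{2:gsol} (the solution to Freiman equations, which is where the degree-$s$ induction actually lives in the paper) to get a locally linear $T$, followed by the explicit computation with $\Phi(v_i)$ and the Cauchy--Binet bound to verify $T$ is an almost linear Freiman homomorphism of bounded complexity. None of these ingredients — the classification, the structure-obstacle decompositions, the intersection step across the $T_i$, Proposition~\ref{2:gef1}, or Theorem~\ref{2:gsol} — appear in your proposal, so while you have correctly sensed that the additive-combinatorial toolkit must enter, the architecture you propose does not realize that intuition as a proof.
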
	


We will make essential use of  Theorem \ref{2:aadd} in the Furstenburg-Weiss type argument in \cite{SunC}, which is an intermediate step towards the proof of the spherical Gowers inverse theorem. We refer the readers to Steps 2 and 3 of Section 
1.2 of \cite{SunC} for details.


\begin{rem}\label{2:i2a}
Note that if $s=1,  \Z_{K}^{d}=\F_{p}^{d}$ and $M(n)=n\cdot n$, then there is a natural group homomorphism $\sigma$ between $(\V,+)$ and $(\st_{\Z/p,d}(1),+)$ given by $\sigma(h)(n):=\frac{1}{p}(n\cdot \tau(h))$,  which maps  $\sp_{\F_{p}}\{h\}$ to $J^{M}_{h}\cap \st_{\Z/p,d}(1)$ since all polynomials in $J^{M}_{h}\cap \st_{\Z/p,d}(1)$ are integer scalar multiples of $\sigma(h)$.  
Moreover, for all $v\in\V$ and subspace $V$ of $\V$, we have $v\in V$ if and only if $\sigma(v)\in J^{M}_{V}$. So (\ref{2:aadd01}) is equivalent to
	$$\xi'_{1}(h_{1})+\xi'_{2}(h_{2})-\xi'_{3}(h_{3})-\xi'_{4}(h_{4})\in J^{M}_{h_{1},h_{2},h_{3}},$$
	where $\xi_{i}':=\sigma\circ \xi_{i}$ is a map from $H$ to $\st_{\Z/p,d}(1)$.
Therefore, Proposition \ref{2:aadd0} is a special case of Theorem \ref{2:aadd} when $s=1$.
\end{rem}

While additive combinatorial properties for abelian groups have been studied extensively in literature, very little is known for additive combinatorial properties of affine subspaces or shifted modules. In particular, all the powerful tools in additive combinatoric are unavailable. In order to prove  Theorem \ref{2:aadd}, we need to establish foundational tools in analogous to the abelian group setting from scratch. An interesting feature of this paper is that we use many tools in graph theory to study additive combinatorial properties for shifted modules (especially in Section \ref{2:s:115}).

  Since the proof of Theorem \ref{2:aadd} is very technical, when reading this paper for the first time,   the readers are recommended to focus on the case $s=1$ and $\Z_{K}^{d}=\F_{p}^{d}$ throughout, which captures almost all the main idea of the proof of the general case. In this case $\xi_{i}$ can be viewed as a map from $H$ to $\V$, and $J^{M}_{h_{1},\dots,h_{k}}$ can be interpreted as $\sp_{\F_{p}}\{h_{1},\dots,h_{k}\}$ (if $d\gg_{k} 1$).

\subsection{Organizations and notations}\label{2:s:defn}

In Section \ref{2:s:adbff}, we define a non-transitive relation  for shifted modules, and introduce some terminologies in graph theory which will be used later to prove Theorem \ref{2:aadd}. We also provide an outline of the proof Theorem \ref{2:aadd} with the help of these new languages.
In Section \ref {2:s:a1}, we prove an intersection property for $M$-modules, which is an approach that will be used frequently later in this paper to study the intersections of different $M$-modules sharing a common factor. 
Sections \ref{2:s:ff}--\ref{2:s:a2} are devoted to the proof of Theorem \ref{2:aadd}. We refer the readers to Section \ref{2:s:rdm} for a more details explanation for the roles these sections play in the proof of Theorem \ref{2:aadd} (after introducing all the relevant definitions). In Section \ref{2:s:op}, we collect some open questions. In Appendix \ref{2:AppA}, we recall some definitions and propositions from \cite{SunA} which are used in this paper.

  Below are the notations we use in this paper:

\begin{itemize}
	\item Let $\N,\N_{+},\Z,\Q,\R,\R+,\C$ denote the set of non-negative integers, positive integers, integers, rational numbers, real numbers, positive real numbers, and complex numbers, respectively. Denote $\T:=\R/\Z$. Let $\F_{p}$ denote the finite field with $p$ elements. Let $\Z_{K}$ denote the cyclic group with $K$ elements.
	\item Let $\zp$ denote the set of numbers of the form $a/p^{b}$ for some $a\in\Z$ and $b\in\N$.
		\item Throughout this paper, $d$ is a fixed positive integer, $p$ is a prime number and $K$ is a positive integer dividing $p$.
		\item Throughout this paper, unless otherwise stated, all vectors are assumed to be horizontal vectors.
		\item Throughout this paper, we use
   $\tau\colon\F_{p}\to \{0,\dots,p-1\}$ to denote the natural bijective embedding, and use $\iota$ to denote the map from $\Z$ (or $\Z_{K}$ for any $K$ divisible by $p$) to $\F_{p}$ given by $\iota(n):=\tau^{-1}(n \mod p\Z)$.
	We also use 
	$\tau$ to denote the map from $\F_{p}^{k}$ to $\Z^{k}$ (or $\Z_{K}^{k}$) given by $\tau(x_{1},\dots,x_{k}):=(\tau(x_{1}),\dots,\tau(x_{k}))$,
	and
	$\iota$ to denote the map from $\Z^{k}$ (or $\Z_{K}^{k}$) to $\F_{p}^{k}$ given by $\iota(x_{1},\dots,x_{k}):=(\iota(x_{1}),\dots,$ $\iota(x_{k}))$. 
	When there is no confusion, we will not state the domain and range of $\tau$ and $\iota$ explicitly.
	%
		\item Let $\mathcal{C}$ be a collection of parameters and $A,B,c\in\R$. We write $A\gg_{\mathcal{C}} B$ if $\vert  A\vert\geq K\vert B\vert$ and $A=O_{\mathcal{C}}(B)$ if $\vert A\vert\leq K\vert B\vert$ for some $K>0$ depending only on the parameters in $\mathcal{C}$.
		In the above definitions, we allow the set $\mathcal{C}$ to be empty. In this case $K$ will be a universal constant.
		\item Let $[N]$ denote the set $\{0,\dots,N-1\}$.
    \item For $x\in\R$, let $\lfloor x\rfloor$ denote the largest integer which is not larger than $x$, and $\lceil x\rceil$ denote the smallest integer which is not smaller than $x$. Denote $\{x\}:=x-\lfloor x\rfloor$.
	\item Let $X$ be a finite set and $f\colon X\to\C$ be a function. Denote $\E_{x\in X}f(x):=\frac{1}{\vert X\vert}\sum_{x\in X}f(x)$, the average of $f$ on $X$.
	\item For $F=\Z^{k}$ or $\F_{p}^{k}$, and $x=(x_{1},\dots,x_{k}), y=(y_{1},\dots,y_{k})\in F$, let $x\cdot y\in \Z$ or $\F_{p}$ denote the dot product given by
	$x\cdot y:=x_{1}y_{1}+\dots+x_{k}y_{k}.$
	\item We write affine subspaces of $\V$ as $V+c$, where $V$ is a subspace of $\V$ passing through $\bold{0}$, and $c\in\V$.
	\item For an additive group $G$, we use $\widehat{G}$ to denote the  \emph{Pontryagin dual} of $G$, i.e. the set of all continuous group homomorphism from $G$ to $\R/\Z$. For $g\in G$ and $h\in\widehat{G}$, we use $\{h\cdot g\}$ to denote the representative of $h(g)$ on the fundamental domain $[0,1)$ of $\R/\Z$.
	\item For a polynomial $P\in\poly(\F_{p}^{k}\to\F_{p})$, let $V(P)$ denote the set of $n\in\F_{p}^{k}$ such that $P(n)=0$. For a polynomial $P\in\poly(\Z^{k}\to\R)$, let $V_{p}(P)$ denote the set of $n\in \Z^{k}$ such that $P(n+pm)\in \Z$ for all $m\in\Z^{k}$.
	\item There is a natural correspondence between polynomials taking values in $\F_{p}$ and polynomials  taking values in $\Z/p$.   Let $F\in\poly(\V\to\F_{p}^{d'})$ and $f\in \poly(\Z^{d}\to (\Z/p)^{d'})$ be polynomials of degree at most $s$ for some $s<p$. If  $F=\iota\circ pf\circ\tau$, then we say that $F$ is \emph{induced} by $f$ and $f$ is a \emph{lifting} of $F$.\footnote{As is explained in \cite{SunA},   $\iota\circ pf\circ\tau$ is well defined.}
     We say that $f$ is a  \emph{regular lifting} of $F$ if in addition $f$ has the same degree as $F$ and $f$ has $\{0,\frac{1}{p},\dots,\frac{p-1}{p}\}$-coefficients. 
 \end{itemize}

We also need to recall the notations regarding quadratic forms defined in \cite{SunA}.	
	
\begin{defn}	
 We say that a function $M\colon\V\to\F_{p}$ is a \emph{quadratic form} if 
	$$M(n)=(nA)\cdot n+n\cdot u+v$$
	for some $d\times d$ symmetric matrix $A$ in $\F_{p}$, some $u\in \F_{p}^{d}$ and $v\in \F_{p}$.
We say that $A$ is the matrix \emph{associated to} $M$.
We say that $M$ is \emph{pure} if $u=\bold{0}$.
We say that $M$ is \emph{homogeneous} if $u=\bold{0}$ and $v=0$. We say that $M$ is \emph{non-degenerate} if $M$ is of rank $d$, or equivalently, $\det(A)\neq 0$.
\end{defn}

We use $\rank(M):=\rank(A)$ to denote the \emph{rank} of $M$.
Let $V+c$ be an affine subspace of $\V$ of dimension $r$. There exists a (not necessarily unique) bijective linear transformation $\phi\colon \F_{p}^{r}\to V$.
 We define the \emph{rank} $\rank(M\vert_{V+c})$ of $M$  restricted to $V+c$ as the rank of the quadratic form $M(\phi(\cdot)+c)$. It was proved in \cite{SunA} that   $\rank(M\vert_{V+c})$ is independent of the choice of $\phi$.

For any subspace $V$ of $\V$, let $V^{\pp}$ denote the set of $\{n\in\V\colon (mA)\cdot n=0 \text{ for all } m\in V\}$.



Quadratic forms can also be defined in the $\Z/p$-setting.

\begin{defn}	
   We say that a function $M\colon\Z^{d}\to\Z/p$ is a \emph{quadratic form} if 
	$$M(n)=\frac{1}{p}((nA)\cdot n+n\cdot u+v)$$
	for some $d\times d$ symmetric matrix $A$ in $\Z$, some $u\in \Z^{d}$ and $v\in \Z$.
We say that $A$ is the matrix \emph{associated to} $M$. 
 \end{defn}

By Lemma 
A.1 of \cite{SunA},
any quadratic form $\tilde{M}\colon\Z^{d}\to\Z/p$ associated with the matrix $\tilde{A}$ induces a quadratic form $M:=\iota\circ p\tilde{M}\circ\tau\colon\F_{p}^{d}\to\F_{p}$ associated with the matrix $\iota(\tilde{A})$. Conversely, any quadratic form $M\colon\F_{p}^{d}\to\F_{p}$ associated with the matrix $A$ admits a regular lifting $\tilde{M}\colon\Z^{d}\to\Z/p$, which is a quadratic form associated with the matrix $\tau(A)$.

For a quadratic form $\tilde{M}\colon\Z^{d}\to\Z/p$, we say that $\tilde{M}$ is \emph{pure/homogeneous/$p$-non-degenerate} if the  quadratic form $M:=\iota\circ p\tilde{M}\circ\tau$ induced by $\tilde{M}$ is pure/homogeneous/non-degenerate. 
The \emph{$p$-rank} of $\tilde{M}$, denoted by $\rank_{p}(\tilde{M})$, is defined to be the rank of $M$.

We say that $h_{1},\dots,h_{k}\in\Z^{d}$ are \emph{$p$-linearly independent} if for all $c_{1},\dots,c_{k}\in\Z/p$, $c_{1}h_{1}+\dots+c_{k}h_{k}\in\Z$
 implies that $c_{1},\dots,c_{k}\in\Z$, or equivalently, if $\iota(h_{1}),\dots,\iota(h_{k})$ are linearly independent.




%

\section{The relation graph for additive tuples}\label{2:s:adbff}

\subsection{A non-transitive relation for shifted $M$-modules}\label{2:s:adbf}
	Before proving Theorem \ref{2:aadd}, we need to rephraze the problem using some new terminology to be defined in this section. Throughout this paper, $p$ is a prime, $d$ is the dimension of the vector field over $\F_{p}$ and $\Q$,   $M\colon\V\to\F_{p}$ is a non-degenerate quadratic form, and $\tilde{M}\colon\Z^{d}\to\Z/p$ is the regular lifting of $M$.
For an $M$-module $I$ 
and a polynomial $f\in \Q[x_{1},\dots,x_{d}]$, we call $I+f$ a \emph{shifted $M$-module} of $\Q[x_{1},\dots,x_{d}]$. The space of all shifted $M$-modules of $\Q[x_{1},\dots,x_{d}]$ is denoted as $\gfv$.

We may define an ``additive" operation on $\gfv$ as follows.
For $I+f, I'+f'\in\gfv$, denote $(I+f)\+  (I'+f'):=I+I'+f+f'$ and $(I+f)\-  (I'+f'):=I-I'+f-f'$. Clearly, both $(\gfv,\+ )$ and $(\gfv,\- )$ are semi-groups, but the operations $\+ $ and $\- $ are not the inverse of each other.\footnote{We write the operations as $\+ $ and $\hat{-}$ instead of $+$ and $-$ to emphasize that they are not invertible.} 

We also need to define a relation between elements in $\gfv$ as follows.  
Denote $I+f\sim I'+f'$ if $f-f'\in I+I'$. 
Clearly, if $x\sim y$ and $x'\sim y'$, then $x\+ x'\sim y\+ y'$ and $x\- x'\sim y\- y'$.
It is easy to see that $u\sim u$, and $u\sim v\Rightarrow v\sim u$. However, the next example shows that $\sim$ is not transitive and so is not an equivalence relation.

\begin{ex}\label{2:nott}
	Let $\Z_{K}^{d}=\V$ and $M(n)=n\cdot n$.
   For all $x\in\V$, let $L_{x}\colon\Z^{d}\to\Z/p$ denote the function given by $L_{x}(n)=\frac{1}{p}\tau(x)\cdot n$.	Let $e_{1},\dots,e_{d}$ be the standard unit vectors of $\V$. Then $J^{M}_{e_{1}}+L_{e_{2}}\sim J^{M}_{e_{2}}$ and $J^{M}_{e_{2}}\sim J^{M}_{e_{3}}$. However, by Lemma \ref{2:counting02} and Proposition \ref{2:noloop3}, it is not hard to see that $J^{M}_{e_{1}}+L_{e_{2}}\not\sim J^{M}_{e_{3}}$ if $d\gg 1$ and $p\gg_{d} 1$. So $\sim$ is not transitive.
\end{ex}

The above mentioned operations and relation can be extended naturally to the set $\Vk\times \gfv$.
For $(h,I+f),(h',I'+f')\in\Vk\times \gfv$, denote $(h,I+f)\+ (h',I'+f'):=(h+h',I+I'+f+f')$ and $(h,I+f)\- (h',I'+f'):=(h+h',I-I'+f-f')$. We write $(h,I+f)\sim(h',I'+f')$ if $h=h'$ and $I+f\sim I'+f'$.

\begin{conv}
	Throughout this paper, we  use  $\pi\colon \Vk\times \gfv\to\Vk$ to denote the projection map.	
\end{conv}

In this paper, we are particularly interested in the non-transitive relation $\sim$ on a special subset  $\Gamma^{s}(\Vk,M)$ of $\Vk\times\gfv$, which denotes the set of all elements of the form $(h,J^{M}_{V}+f)$ for some $h\in\Vk$, some $f\in\st_{\zp,d}(s)$, and some $M$-module $J^{M}_{V}$ for some subspace $V$ of $\V$ with $\iota(h)\in V$.   We also need to divide the set $\Gamma^{s}(\Vk,M)$ further  according to the dimension of $V$. 
  For $k\in\N_{+},$
let $\Gamma^{s}_{k}(\Vk,M)$ denote the set of all $(h,J^{M}_{V}+f)\in\Gamma^{s}(\Vk,M), \iota(h)\neq \bold{0}$ with $\dim(V)\leq k$  union the set of all $(h,J^{M}_{V}+f)\in\Gamma^{s}(\Vk,M), \iota(h)=\bold{0}$ with $(V)\leq k-1$.
Equivalently, $\Gamma^{s}_{k}(\Vk,M)$ is the set of all $(h,J^{M}_{V}+f)\in\Gamma^{s}(\Vk,M), h\in\Vk$ where $V$ can be written as the span of $k$ vectors $h_{1},\dots,h_{k}\in\V$ with one of them being $\iota(h)$.
 In particular, $\Gamma^{s}_{1}(\Vk,M)$ consists of all elements of the form  $(h,J^{M}_{\iota(h)}+f), h\in\Vk, f\in \st_{\zp,d}(s)$.

\begin{ex}
Let $\xi_{i}\colon H\to \st_{\zp,d}(s), 1\leq i\leq 4$ be given as in Theorem \ref{2:aadd}. Let $\tilde{\xi}_{i}\colon H\to \Gamma^{s}_{1}(\Vk,M)$ be the map given by $\tilde{\xi}_{i}(h):=(h,\xi_{i}(h)+J^{M}_{\iota(h)})$. Informally speaking, the assumption in Theorem \ref{2:aadd} is equivalent of saying that the ``additive energy" of $\tilde{\xi}_{i}(H)\+ \tilde{\xi}_{i}(H)$ (i.e. the number of quadraples $(h_{1},h_{2},h_{3},h_{4})\in H^{4}$ with $\tilde{\xi}_{i}(h_{1})\+ \tilde{\xi}_{i}(h_{2})\sim \tilde{\xi}_{i}(h_{3})\+ \tilde{\xi}_{i}(h_{4})$) is at least $\d\vert H\vert^{3}$.
\end{ex}

We summarize some basic properties for later uses.
The following lemma is straightforward: 
\begin{lem}\label{2:a+b}
		Let $s\in\N$ and $k,k'\in\N_{+}$.
	If $a\in\Gamma^{s}_{k}(\Vk,M)$ and $b\in\Gamma^{s}_{k'}(\Vk,M)$, then $a\+ b\in\Gamma^{s}_{k+k'}(\Vk,M)$. 
	\end{lem}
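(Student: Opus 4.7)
The plan is to unwind the definitions, apply the identity $J^{M}_{V}+J^{M}_{V'}=J^{M}_{V+V'}$ already noted in the paper, and then check the dimension bound on $V+V'$ case by case depending on whether $h+h'$ vanishes.

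First I would write $a=(h,J^{M}_{V}+f)$ and $b=(h',J^{M}_{V'}+f')$ with $h\in V$, $h'\in V'$ and $f,f'\in\st_{d}(s)$, recording from the definition of $\Gamma^{s}_{k}(M)$ that $\dim(V)\leq k$ if $h\neq\bold{0}$ and $\dim(V)\leq k-1$ if $h=\bold{0}$ (and similarly for $V'$ with $k'$). Then by the additive operation on $\V\times\gfv$ and the identity $J^{M}_{V}+J^{M}_{V'}=J^{M}_{V+V'}$, I have
\[
a\+ b=\bigl(h+h',\,J^{M}_{V+V'}+(f+f')\bigr),
\]
with $h+h'\in V+V'$ and $f+f'\in\st_{d}(s)$ (using Convention \ref{2:csts} to ensure $\st_{d}(s)$ is closed under addition). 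This already shows $a\+ b\in\Gamma^{s}(M)$; it remains to verify the rank condition for $\Gamma^{s}_{k+k'}(M)$.

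The case analysis for the dimension of $V+V'$ is the only real content. If $h+h'\neq\bold{0}$, I need $\dim(V+V')\leq k+k'$; but $\dim(V)\leq k$ and $\dim(V')\leq k'$ regardless of whether $h,h'$ vanish, so $\dim(V+V')\leq\dim(V)+\dim(V')\leq k+k'$. If $h+h'=\bold{0}$, I need $\dim(V+V')\leq k+k'-1$. There are two subcases. If $h=h'=\bold{0}$, then $\dim(V)\leq k-1$ and $\dim(V')\leq k'-1$, so $\dim(V+V')\leq k+k'-2\leq k+k'-1$. If instead $h\neq\bold{0}$ (and hence $h'=-h\neq\bold{0}$ too, since if one of $h,h'$ is zero and $h+h'=\bold{0}$ the other is also zero), then $h\in V$ and $h=-h'\in V'$, so $V\cap V'\supseteq\sp_{\F_{p}}\{h\}$ has dimension $\geq 1$, giving
\[
\dim(V+V')=\dim(V)+\dim(V')-\dim(V\cap V')\leq k+k'-1,
\]
which is what is needed.

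I do not expect a real obstacle here; the statement is essentially bookkeeping enforced by the way $\Gamma^{s}_{k}(M)$ is defined. The one place one must be a little careful is the subtle asymmetry in the definition between the cases $h=\bold{0}$ and $h\neq\bold{0}$, which is precisely what makes the bound $k+k'-1$ (rather than $k+k'$) come out correctly in the subcase $h\neq\bold{0},\,h'=-h$. Once that is handled, combining with the equivalent ``spanning'' description of $\Gamma^{s}_{k}(M)$ (as the span of $k$ vectors one of which is $h$) gives the desired membership $a\+ b\in\Gamma^{s}_{k+k'}(M)$.
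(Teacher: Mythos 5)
Your proof is correct and uses essentially the same argument as the paper: read off the dimension bounds on $V,V'$ from the definition of $\Gamma^{s}_{k}(M)$, and when $h+h'=\bold{0}$ with $h\neq\bold{0}$, use that $h=-h'\in V\cap V'$ forces $\dim(V+V')\leq k+k'-1$. The only difference is cosmetic: you split on $h+h'=\bold{0}$ first, whereas the paper splits on $h=\bold{0}$ or $h'=\bold{0}$ first, but the underlying bookkeeping is identical.
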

\begin{proof}
	Assume that $a=(h,J^{M}_{V}+f)$ and  $b=(h',J^{M}_{V'}+f')$ for some $h,h'\in\Vk$, $f,f'\in\st_{\zp,d}(s)$ and subspaces $V,V'$ of $\V$  with $\iota(h)\in V$ and $\iota(h)'\in V'$. Then $\iota(h)+\iota(h')\in V+V'$.
	
	If $\iota(h)=\bold{0}$, then $\dim(V)\leq k-1$. Since $\dim(V')\leq k'$, we have $\dim(V+V')\leq (k+k')-1$. Similarly, $\dim(V+V')\leq (k+k')-1$ if $\iota(h')=\bold{0}$. In both cases, we have  $a\+ b\in\Gamma^{s}_{k+k'}(\Vk,M)$.
	
	We now assume that $\iota(h),\iota(h')\neq\bold{0}$. 
	Then $\dim(V)\leq k$ and $\dim(V')\leq k'$.
	Since $\dim(V+V')\leq k'+k$, we have that $a\+ b\in\Gamma^{s}_{k+k'}(\Vk,M)$ if $\iota(h+h')\neq\bold{0}$. If $\iota(h+h')=\bold{0}$, then $V\cap V'\neq\{\bold{0}\}$ and so $\dim(V+V')\leq k'+k-1$. So we still have that $a\+ b\in\Gamma^{s}_{k+k'}(\Vk,M)$.
\end{proof}		

Although $\sim$ is not a transitive relation in general, for the following special case it is:  

\begin{lem}\label{2:spsp1}
	Let $s\in\N$, $x_{1},x_{2}\in \Gamma^{s}(\Vk,M)$ and $y\in \Gamma^{s}_{1}(\Vk,M)$.
	 If $x_{1}\sim y$ and $x_{2}\sim y$, then $x_{1}\sim x_{2}$. In particular, $\sim$ is an equivalence relation on $\Gamma^{s}_{1}(\Vk,M)$.
\end{lem}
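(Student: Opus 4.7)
The plan is to unpack both sides of the relation $\sim$ directly from the definitions. Using the description of $\Gamma^{s}_{1}(M)$ given just before the lemma, write $y=(h,J^{M}_{h}+g)$ with $h\in\V$ and $g\in\st_{d}(s)$, and write $x_{i}=(h_{i},J^{M}_{V_{i}}+f_{i})$ with $V_{i}\subseteq\V$ a subspace, $h_{i}\in V_{i}$, and $f_{i}\in\st_{d}(s)$ for $i=1,2$. The hypothesis $x_{i}\sim y$ then unfolds into two statements: $h_{i}=h$ (from matching the first coordinate) and $f_{i}-g\in J^{M}_{V_{i}}+J^{M}_{h}$ (from the relation between the shifted $M$-ideals).

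The decisive observation is that $J^{M}_{h}$ is already absorbed by $J^{M}_{V_{i}}$: since $h=h_{i}\in V_{i}$, the identity $J^{M}_{V}+J^{M}_{V'}=J^{M}_{V+V'}$ recorded right after the definition of $M$-ideal gives
$$J^{M}_{V_{i}}+J^{M}_{h}=J^{M}_{V_{i}+\sp_{\F_{p}}\{h\}}=J^{M}_{V_{i}}.$$
Hence $f_{i}-g\in J^{M}_{V_{i}}$ for each $i$. Subtracting the two containments yields $f_{1}-f_{2}=(f_{1}-g)-(f_{2}-g)\in J^{M}_{V_{1}}+J^{M}_{V_{2}}=J^{M}_{V_{1}+V_{2}}$, while $h_{1}=h=h_{2}$; this is precisely the statement $x_{1}\sim x_{2}$.

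For the ``in particular" clause, I would note that reflexivity and symmetry of $\sim$ on all of $\V\times\gfv$ were recorded just above Example~\ref{2:nott}, so only transitivity on $\Gamma^{s}_{1}(M)$ remains to be verified. Given $x_{1},x_{2},x_{3}\in\Gamma^{s}_{1}(M)$ with $x_{1}\sim x_{2}$ and $x_{2}\sim x_{3}$, one applies the first half of the lemma with the role of $y$ played by $x_{2}\in\Gamma^{s}_{1}(M)$ to conclude $x_{1}\sim x_{3}$.

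I do not expect any genuine obstacle: the entire content of the lemma is that the rank-one ideal $J^{M}_{h}$ is redundant when compared against $M$-ideals whose defining subspaces already contain $h$. The only minor bookkeeping concerns the degenerate case $h=\bold{0}$, where $J^{M}_{h}=J^{M}$; but in that situation $J^{M}\subseteq J^{M}_{V_{i}}$ holds automatically, so the absorption step and hence the argument applies unchanged.
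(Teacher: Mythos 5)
Your proposal is correct and follows essentially the same route as the paper's proof: in both, the key step is to absorb $J^{M}_{h}$ into $J^{M}_{V_{i}}$ via $J^{M}_{V_{i}}+J^{M}_{h}=J^{M}_{V_{i}+\sp_{\F_{p}}\{h\}}=J^{M}_{V_{i}}$, so that $x_{i}\equiv y\mod J^{M}_{V_{i}}$ and hence $x_{1}\equiv x_{2}\mod J^{M}_{V_{1}+V_{2}}$. The handling of the ``in particular'' clause and the remark on the degenerate case $h=\bold{0}$ are also sound.
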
	
\begin{proof}
	Write $x_{i}=(v_{i},J^{M}_{V_{i}}+f_{i})$ for $i=1,2$ and let $u=\pi(y)$. If $x_{1}\sim y$ and $x_{2}\sim y$, then  $v_{1}=u=v_{2}$. Since $\iota(v_{i})\in V_{i}$, we have that  $\iota(u)\in V_{1}\cap V_{2}$. So for $i=1,2$, $x_{i}\equiv y\mod J^{M}_{V_{i}+\sp_{\F_{p}}\{\iota(u)\}}=J^{M}_{V_{i}}$. So $x_{1}\equiv y\equiv x_{2} \mod J^{M}_{V_{1}+V_{2}}$ and thus $x_{1}\sim x_{2}$.
\end{proof}

\subsection{The relation graph of a subset of $\Gamma^{s}(\Vk,M)$}\label{2:s:rgg}

 The level sets (i.e. sets of the form $\{(a,a')\in A\times A\colon a+a'=b\}$ for some $b\in G$) play an important role in the study of conventional additive combinatorics. In our setting, we do not have such a convenience tool. This is because the operation $\+ $ does not have the cancellation property (i.e. $a\+ b\sim a'\+ b\not\Rightarrow a\sim a'$), and the relation $\sim$ is not an equivalent relation. 
 To overcome this difficulty, we use a graph to characterize the relation $\sim$.

\begin{defn}[Relation graph]
Let $X$ be a subset of $\Gamma^{s}(\Vk,M)$. we say that a graph $G=(X,E)$ is the \emph{relation graph} of $X$ if for all $u,v\in X, u\neq v$, $\{u,v\}\in E$ if and only if $u\sim v$.
\end{defn}

\begin{rem}
	 The structure of relation graphs for the conventional additive combinatorics on abelian groups is quite simple. In fact,
    if $\sim$ is an equivalence relation on $X$, then the graph $G$ is a disjoint union of complete subgraphs, each of which represent an equivalence class (or a level set). However, in our setting, the behavior of the relation graph can be very complicated.
\end{rem}

Since we need to frequently partition a relation graph into smaller components, it is helpful to introduce the following notion.

\begin{defn}[Graph partitions]
Let $G=(X,E)$ be a graph and $G_{i}=(X_{i},E_{i}), 1\leq i\leq k$ be subgraphs of $G$. We say that $G_{1},\dots,G_{k}$ is a \emph{disjoint partition} of $G$ if $X_{1},\dots,X_{k}$ are pairwise disjoint, $X=\cup_{i=1}^{k}X_{i}$ and $E=\cup_{i=1}^{k}E_{i}$. 

Similarly,  Let $G=(X,Y,E)$ be a bipartite graph and $G_{i}=(X_{i},Y_{i},E_{i}), 1\leq i\leq k$ be subgraphs of $G$. We say that $G_{1},\dots,G_{k}$ is a \emph{disjoint partition} of $G$ if $X_{1},\dots,X_{k}, Y_{1},\dots,Y_{k}$ are pairwise disjoint, $X=\cup_{i=1}^{k}X_{i}$, $Y=\cup_{i=1}^{k}Y_{i}$ and $E=\cup_{i=1}^{k}E_{i}$. 
\end{defn}

\begin{conv}
	Let $G=(X,E)$ be a graph  and  $X_{1},\dots,X_{k}$ be pairwise disjoint and $X=\cup_{i=1}^{k}X_{i}$. When we say that $G_{i}=(X_{i},E_{i}), 1\leq i\leq k$ is a disjoint partition of $G$, we implicitly indicate that $E_{i}$ is the set of all edges in $E$ with both vertices coming from $X_{i}$. 
	
	Similarly,   Let $G=(X,Y,E)$ be a bipartite graph  and  $X_{1},\dots,X_{k}, Y_{1},\dots,Y_{k}$ be pairwise disjoint, $X=\cup_{i=1}^{k}X_{i}$ and $Y=\cup_{i=1}^{k}Y_{i}$. When we say that $G_{i}=(X_{i},Y_{i},E_{i}), 1\leq i\leq k$ is a disjoint partition of $G$, we implicitly indicate that $E_{i}$ is the set of all edges in $E$ with one vertex from $X_{i}$ and the other from $Y_{i}$. 
\end{conv}

\begin{rem}
	Let $X$ be a subset of $\Gamma^{s}(\Vk,M)$. 
	Note that for $u,v\in X$, we have that $u\sim v$ only if $\pi(u)=\pi(v)$. Therefore, the relation graph $G=(X,E)$ has a disjoint partition induced by the partition $\cup_{h\in\Vk} \pi^{-1}(h)\cap X$ of $X$. So it is helpful to think of $G$ as the union of $K^{d}$ disjoint smaller graphs.  
\end{rem}

Next we introduce two quantities for graphs which play important roles in this paper. 
Recall that in graph theory, the \emph{(vertex) independence number} of a graph is the maximum number of vertices which are pairwise disjoint.  
The first quantity we need is a variation of the concept of independence number:

\begin{defn}[Auxiliary graph and density dependence number]
Let $G=(X,E)$ be a graph and $G'=(X,Y,E')$ be a bipartite graph, we say that $G'$ is an \emph{auxiliary graph} of $G$ if for all $x,x'\in X$ and $y\in Y$, if $(x,y), (x',y)\in E'$, then $\{x,x'\}\in E$. 
For $\e>0$, we say that $G'$ is \emph{$\e$-dense}  if for every $x\in X$, there exist at least $\e\vert Y\vert$ many $y\in Y$ such that $(x,y)\in E'$. 
 
Let $s\in\N$, $h\in \Vk$ and $G=(X,E)$ be the relation graph of a subset $X$ of $\Gamma^{s}(\Vk,M)$  such that $\pi(X)=\{h\}$.
The \emph{density dependence number} of $G$ (or $X$), denoted by $\dep(G)$ (or $\dep(X)$), is the smallest integer $C>0$ such that $G$ admits a $C^{-1}$-dense auxiliary graph.

For the relation graph $G=(X,E)$ of a general subset $X$ of $\Gamma^{s}(\Vk,M)$. We define $\dep(G)$ (or $\dep(X)$) as the maximum of $\dep(\pi^{-1}(h)\cap X), h\in\Vk$.
\end{defn}

By the Pigeonhole Principle, it is not hard to see that the independence number of a graph $G$ is at most $\dep(G)$. It is natural to ask about the converse direction:

\begin{ques}\label{que211}
	For any graph, is the independent number equal to $\dep(G)$? If not, then is the independent number bounded below by a constant depending on $\dep(G)$?
\end{ques}

The second quantity we need is one that describes the optimal way in which a graph can be covered by cliques.
Recall that in graph theory,
a \emph{clique} of a graph $G=(X,E)$ is a subset $S$ of $X$ such that the subgraph of $G$ induced by $S$ is a complete graph. In this paper, we measure the largeness of graph by looking at the minimal number of clique we need to cover all the vertices.

\begin{defn}[Clique cover number]
	Let $s\in\N$, $h\in \Vk$ and $G=(X,E)$ be the relation graph of a subset   $X$ of $\Gamma^{s}(\Vk,M)$ such that $\pi(X)=\{h\}$. 	
	The \emph{clique cover number} of $G$ (or $X$), denoted by $\cc(G)$ (or $\cc(X)$) is the smallest integer $C$ such that  we may partition $X$ into $\sqcup_{i=0}^{C}X_{i}$ such that each $X_{i}$ is a  clique of $G$.

	 For the relation graph $G=(X,E)$ of a general subset  $X$ of $\Gamma^{s}(\Vk,M)$. We define $\cc(G)$ (or $\cc(X)$) as the maximum of $\cc(\pi^{-1}(h)\cap X), h\in\Vk$.
\end{defn}

We summarize some properties on
  the clique cover number and density dependence number of a graph for later uses. The following lemma is straightforward.

\begin{lem}\label{2:basicdn}
	Let $s\in\N$ and $G=(X,E)$ be the relation graph of a non-empty subset $X$ of $\Gamma^{s}(\Vk,M)$. Then
	\begin{enumerate}[(i)]
		\item $\dep(G)$ is finite and $\cc(G)\geq \dep(G)\geq 1$;
		\item for any subgraph $G'$ of $G$, we have that $\cc(G')\leq \cc(G)$ and $\dep(G')\leq \dep(G)$;
		\item if $\pi(X)=\{h\}$ for some $h\in\Vk$, then any subset $X'$ of $X$ contains a clique of size at least $\lceil\frac{\vert X'\vert}{\dep(G)}\rceil$; 
 		\item   $\dep(G)=1\Leftrightarrow \cc(G)=1\Leftrightarrow u\sim v$ for all $h\in\Vk$ and $u,v\in \pi^{-1}(h)\cap X$. 
	\end{enumerate}	
\end{lem}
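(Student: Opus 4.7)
The plan is to reduce everything to the single-fibre case $\pi(X)=\{h\}$, since both $\dep$ and $\cc$ are defined as the maximum over fibres, and the hypotheses and conclusions in (i)--(iv) decouple fibrewise. Throughout I therefore fix a fibre $\pi^{-1}(h)\cap X$, and the general statements follow by taking a maximum over $h\in\V$.

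For (i), to see that $\dep(G)$ is finite I exhibit the trivial auxiliary graph $G'=(X,X,E')$ with $E'=\{(x,x):x\in X\}$: two distinct elements of $X$ cannot share a common $y$, so the auxiliary condition is vacuous, and the density is $1/|X|$, giving $\dep(G)\le |X|$. The bound $\dep(G)\ge 1$ follows from the observation that any $C^{-1}$-dense auxiliary graph has per-vertex neighbourhood sizes bounded by $|Y|$, forcing $C^{-1}\le 1$. For $\cc(G)\ge\dep(G)$, given a clique cover $X=\sqcup_{i=1}^{C}X_{i}$ with $C=\cc(G)$, I take $Y=\{1,\dots,C\}$ and set $(x,i)\in E'$ iff $x\in X_{i}$; each $x$ has exactly one neighbour, so the density is $1/C$, and whenever $(x,i),(x',i)\in E'$ with $x\ne x'$ we have $x,x'\in X_{i}$, which is a clique, so $\{x,x'\}\in E$.

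Part (ii) is by restriction (interpreting a subgraph of a relation graph as the relation graph of a vertex subset, i.e.\ an induced subgraph): any clique cover of $G$ restricts to a clique cover of $G'$ of equal or smaller size, and any $\dep(G)^{-1}$-dense auxiliary graph $(X,Y,E'_{\mathrm{aux}})$ for $G$ restricts, by keeping only the edges $(x,y)$ with $x$ in the vertex set of $G'$, to a $\dep(G)^{-1}$-dense auxiliary graph for $G'$ of the same density. Part (iii) is then a pigeonhole on this auxiliary graph: apply (ii) to the induced subgraph on $X'$ to obtain a $\dep(G)^{-1}$-dense auxiliary graph $(X',Y,E'_{\mathrm{aux}})$; the total edge count is at least $\dep(G)^{-1}|X'|\,|Y|$, so some $y\in Y$ is adjacent to at least $\lceil |X'|/\dep(G)\rceil$ vertices of $X'$, and these vertices form a clique by the auxiliary-graph condition.

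Finally, (iv) is a short cycle of implications. If $u\sim v$ for all $u,v\in\pi^{-1}(h)\cap X$, then the whole fibre is a single clique of $G$, so $\cc(G)=1$; by (i), $\cc(G)=1$ forces $\dep(G)=1$; and if $\dep(G)=1$, then a witnessing $1$-dense auxiliary graph with non-empty $Y$ makes every $x\in X$ adjacent to every $y\in Y$, so any two $u,v$ in the fibre share a common $y$ and the auxiliary-graph condition gives $\{u,v\}\in E$, i.e.\ $u\sim v$. I do not anticipate any real obstacle here; the only mildly subtle bookkeeping is handling the $x=x'$ case in the auxiliary-graph condition and confirming $Y\ne\emptyset$ whenever it is needed, which matches the author's description of the lemma as straightforward.
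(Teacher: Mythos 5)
Your argument matches the paper's proof in all essentials: the clique-cover-indexed auxiliary graph for $\cc(G)\ge\dep(G)$ in (i), restriction for (ii), pigeonhole on an optimal auxiliary graph for (iii), and (iv) following from (i) and (iii); your minor variations (a separate trivial auxiliary graph for finiteness, routing (iii) through (ii) rather than counting edges incident to $X'$ directly) are equivalent. The edge cases you flag — $x\ne x'$ in the auxiliary condition and $Y\ne\emptyset$ — are implicit in the paper's definitions and you handle them correctly.
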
	
\begin{proof}
	We first prove Part (i).
	Assume without loss of generality that $\pi(X)=\{h\}$ for some $h\in\Vk$.
	Clearly, 
	$\dep(G)\geq 1$. Note that we may partition $X$ into $\sqcup_{i=1}^{\cc(G)}X_{i}$ such that each $X_{i}$ is a clique of $G$. 
	Let $G'=(X,\{1,\dots,\cc(G)\},E')$ be the bipartite graph given by $(x,i)\in E'\Leftrightarrow x\in X_{i}$. Since  each $X_{i}$ is a clique of $G$, we have that $G'$ is an auxiliary graph of $G$. On the other hand, the density of this auxiliary graph is $\frac{1}{\cc(G)}$. So we have that $\dep(G)$ is finite and $\cc(G)\geq \dep(G)$.
	
	Part (ii) is straightforward. For Part (iii), let $G'=(X,Y,E')$ be an auxiliary graph of $G$ of density at least $\dep(G)^{-1}$. Then there are at least $\frac{\vert X'\vert\cdot\vert Y\vert}{\dep(G)}$ edges in $E'$ with one of the vertices coming from $X'$. By the Pigeonhole Principle, there exists $y\in Y$ which is connected by  edges in $E'$ to at least $\lceil\frac{\vert X'\vert}{\dep(G)}\rceil$ vertices in $X'$. These vertices induces a clique in $G$ by the definition of auxiliary graphs.

	Part (iv) follows from the definitions and Part (iii).
\end{proof}

 The next lemma allows us to expand an auxiliary graph without changing its density.
 
   \begin{lem}\label{2:duplicate}
   	Let $s\in\N$ and $X$ be a subset of $\Gamma^{s}(\Vk,M)$ with $\pi(X)=\{h\}$ for some $h\in\Vk$, $\e>0$, and $G'=(X,Y,E')$ be an $\e$-dense auxiliary graph of $X$. Then for all $N\in\N_{+}$ divisible by $\vert Y\vert$,   $X$ admits an $\e$-dense auxiliary graph $G''=(X,Y',E'')$ with $\vert Y'\vert=N$.
   \end{lem}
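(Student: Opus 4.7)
The plan is to prove this by the standard trick of duplicating each vertex of $Y$ an equal number of times, so the density is preserved exactly while the size of the right-hand vertex set is scaled by any integer factor.

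More precisely, write $N = k\vert Y\vert$ with $k\in\N_{+}$, and set $Y' := Y\times\{1,\dots,k\}$, so that $\vert Y'\vert = N$. Define the edge set $E''$ by declaring that $(x,(y,j))\in E''$ if and only if $(x,y)\in E'$, for every $x\in X$, $y\in Y$ and $1\leq j\leq k$. This gives a bipartite graph $G''=(X,Y',E'')$ with $\vert Y'\vert = N$, as required.

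The two properties to verify are routine. First, $G''$ is an auxiliary graph of $G$: if $(x,(y,j)), (x',(y,j))\in E''$, then by construction $(x,y),(x',y)\in E'$, so $\{x,x'\}\in E$ because $G'$ is an auxiliary graph of $G$. Second, $G''$ is $\varepsilon$-dense: for each $x\in X$, the set $\{(y,j)\in Y'\colon (x,(y,j))\in E''\}$ has cardinality exactly $k\cdot\vert\{y\in Y\colon (x,y)\in E'\}\vert\geq k\cdot\varepsilon\vert Y\vert = \varepsilon N = \varepsilon\vert Y'\vert$, using that $G'$ is $\varepsilon$-dense.

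There is no real obstacle here; the only thing to track is that the duplication is uniform (each $y$ is copied the same number $k$ of times), which is what allows us to get density \emph{at least} $\varepsilon$ rather than some smaller number, and this is guaranteed by the divisibility hypothesis $\vert Y\vert\mid N$.
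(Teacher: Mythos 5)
Your proof is correct and matches the paper's argument: both duplicate $Y$ into $N/\vert Y\vert$ identical copies and lift the edge relation, which preserves the auxiliary-graph property and the density exactly. The only difference is notational (you use $Y\times\{1,\dots,k\}$ while the paper uses $m$ copies $Y_i$ with bijections $\iota_i$), which is immaterial.
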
	
   \begin{proof}
   	Denote $m:=N/\vert Y\vert$ and let $Y'=\sqcup_{i=1}^{m}Y_{i}$ be $m$ identical copies of $Y$ and let $\iota_{i}\colon Y_{i}\to Y$ be the bijection for $1\leq i\leq m$. For $x\in X$ and $y\in Y'$, set $(x,y)\in E''$ if and only if $y$ belongs to some $Y_{i}$ and $(x,\iota_{i}(y))\in E'$. It is not hard to see that $\vert Y'\vert=N$ and $G''=(X,Y',E'')$  is an  $\e$-dense auxiliary graph of $X$.
   \end{proof}
 
 Let $X$ be a subset of $\Gamma^{s}(\Vk,M)$.
 If we add an extra point $x$ to $X$ such that $x\not\sim y$ for all $y\in X$, then it is clear that the clique cover number of $X$ is increased by 1. In this case, it turns out that the density dependence number of $X$ is also increased by 1.

 \begin{lem}\label{2:lonely}
 Let $s\in\N$ and $X$ be a subset of $\Gamma^{s}(\Vk,M)$ with $\pi(X)=\{h\}$ for some $h\in\Vk$. Let $y\in \Gamma^{s}(\Vk,M)$ with $\pi(y)=h$  be such that $x\not\sim y$ for all $x\in X$. Then $\dep(X\cup\{y\})=\dep(X)+1$.	
 \end{lem}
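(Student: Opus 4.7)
The plan is to establish the equality by proving the two inequalities separately, each of which is essentially a one-line bookkeeping argument once the right construction or observation is isolated.

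For the upper bound $\dep(X\cup\{y\})\leq \dep(X)+1$, write $C:=\dep(X)$ and start with a $C^{-1}$-dense auxiliary graph $G'=(X,Y,E')$ of $X$. Using Lemma~\ref{2:duplicate}, I would first inflate $Y$ so that $|Y|$ is divisible by $C$; then every $x\in X$ has at least $|Y|/C$ neighbors in $Y$. Next, I would adjoin a fresh set $Y_{\text{new}}$ of $|Y|/C$ ``dummy'' vertices, each connected \emph{only} to $y$, and take $Y':=Y\cup Y_{\text{new}}$. A quick count gives $|Y'|=|Y|(C+1)/C$, so both every $x\in X$ (with its $|Y|/C$ old neighbors) and $y$ (with its $|Y|/C$ new neighbors) attain density exactly $1/(C+1)$. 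The auxiliary property is immediate: for $w\in Y$ the condition reduces to that of $G'$ (since $y$ has no edge to $Y$), while for $w\in Y_{\text{new}}$ the only neighbor is $y$ itself, giving nothing to check.

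For the lower bound $\dep(X\cup\{y\})\geq \dep(X)+1$, set $D:=\dep(X\cup\{y\})$ and pick a $D^{-1}$-dense auxiliary graph $G'=(X\cup\{y\},Y,E')$. Let $Y_{y}\subseteq Y$ be the neighborhood of $y$, so $|Y_{y}|\geq |Y|/D$. The crucial structural observation is that $Y_{y}$ must be disjoint from the neighborhood of every $x\in X$: if some $w\in Y_{y}$ were adjacent to $x$, then the auxiliary property would force $\{x,y\}$ to be an edge in the relation graph, i.e.\ $x\sim y$, contradicting the hypothesis. Consequently every edge from $X$ lands in $Y\setminus Y_{y}$, a set of size at most $|Y|(D-1)/D$, and the restriction of $G'$ to $(X, Y\setminus Y_{y})$ is still an auxiliary graph of $X$ with density at least $(|Y|/D)/(|Y|(D-1)/D)=1/(D-1)$. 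Hence $\dep(X)\leq D-1$, as required.

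I do not expect any serious obstacle: the proof is combinatorial and the only small subtlety is that the lower-bound argument tacitly uses $D\geq 2$, which holds because $D=1$ would force every element of $X\cup\{y\}$ to be $\sim$-related to every other (by Lemma~\ref{2:basicdn}(iv)), contradicting $x\not\sim y$. The conceptual content is simply that a $\sim$-isolated vertex must consume a $1/D$ fraction of the auxiliary set disjointly from what $X$ consumes, which translates cleanly into the $+1$ in the density dependence number.
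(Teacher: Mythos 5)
Your proof is correct and follows essentially the same approach as the paper: the upper bound by adjoining a disjoint block of dummy vertices connected only to $y$, and the lower bound by excising the neighborhood of $y$ and observing that the $\not\sim$ hypothesis forces it to be disjoint from the neighborhood of every $x\in X$. Your explicit remark that $D\geq 2$ (needed for the division by $D-1$) is a worthwhile detail that the paper leaves implicit; otherwise the two arguments coincide.
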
	
 \begin{proof}
 	For convenience denote $C:=\dep(X)$ and $D:=\dep(X\cup\{y\})$. Then
   $X\cup\{y\}$ admits a $D^{-1}$-dense auxiliary graph
 	$G'=(X\cup\{y\},Z,E')$. Let $Z':=\{z\in Z\colon (y,z)\in E'\}$ and $Z'':=Z\backslash Z'$. Then $\vert Z'\vert\geq D^{-1}\vert Z\vert$. Since $y\not\sim x$ for all $x\in X$, we have that $(x,z)\notin E'$ for all $x\in X$ and $z\in Z'$.
 	Consider the bipartite graph $G''=(X,Z'',E'')$  induced by $G'$. 
 	Since $(x,z)\notin E'$ for all $x\in X$ and $z\in Z'$, it is not hard to see that $G''$ is an auxiliary graph of $X$, and that the density of  this auxiliary graph is at least $$\frac{D^{-1}\vert Z\vert}{\vert Z\vert-\vert Z'\vert}\geq \frac{D^{-1}\vert Z\vert}{\vert Z\vert-D^{-1}\vert Z\vert}=(D-1)^{-1}.$$
 	This implies that $C\leq D-1$.
 	
 	Conversely, since $\dep(X)=C$, $X$ admits a $C^{-1}$-dense auxiliary graph $G'=(X,Z,E')$. By Lemma \ref{2:duplicate}, we may assume without loss of generality that $\vert Z\vert$ is divisible by $C$. Let $Z'$ be any set of cardinality $\vert Z\vert/C$ which is disjoint from $Z$, and let $G''=(X\cup\{y\},Z\sqcup Z',E'')$ be the bipartite graph such that for all $x\in X\cup\{y\}$ and $z\in Z\sqcup Z'$, $(x,z)\in E''$ if and only if either (i) $x\in X$, $z\in Z$ and $(x,z)\in E'$; or (ii) $x=y$ and $z\in Z'$. Clearly, $G''$ is an auxiliary graph of $X\cup\{y\}$. On the other hand, the density of $G''$ is 
 	$$\frac{\min\{C^{-1}\vert Z\vert,\vert Z'\vert\}}{\vert Z\vert+\vert Z'\vert}=\frac{C^{-1}\vert Z\vert}{\vert Z\vert+C^{-1}\vert Z\vert}=(C+1)^{-1}.$$
 	This implies that $D\leq C+1$.
 	
 	Combining both directions, we have that $D=C+1$.
  \end{proof}

\subsection{Outline of the proof of Theorem \ref{2:aadd}}\label{2:s:rdm}

We are now ready to provide an overview for the proof of Theorem \ref{2:aadd}:

\textbf{Step 0.} 
Our first goal is to solve the \emph{Freiman equation} (see (\ref{2:gsole})) on a generalized arithmetic progression (see Theorem \ref{2:gsol}).
This can be viewed as a special case of Theorem \ref{2:aadd}   when $U$ equals to $\{(h_{1},h_{2},h_{3},h_{4})\in H^{4}\colon h_{1}+h_{2}=h_{3}+h_{4}\}$, $H$ is a generalized arithmetic progression, and $\xi_{1}=\xi_{2}=\xi_{3}=\xi_{4}$. 
This result will be  used later in Step 4 to be described below.
 While the the Freiman equation for abelian groups (see (\ref{2:gsold})) can be solved easily, the  Freiman equation is the setting of shifted modules is much harder. This step is done in Section \ref{2:s:ff}.

 \textbf{Step 1.} We show that if the conditions of  Theorem \ref{2:aadd} are satisfied, then there exists a large subset $H'$ of $H$ such that   $\pi^{-1}(\bold{0})\cap(8\tilde{\xi}(H')\- 8\tilde{\xi}(H'))$ has a small  density dependence number, where $\tilde{\xi}(h):=(h,J^{M}_{\iota(h)}+\xi_{1}(h))\in\Gamma^{s}_{1}(\Vk,M)$. To achieve this, we need to extend many classical results in combinatorics to shifted modules, including the Balog-Gowers-Szemer\'edi Theorem, the Ruzsa triangle inequality, the Green-Ruzsa inequality, and the Pl\"unnecke-Rusza theorem. This is done in  Section \ref{2:s:115}.
 
 \textbf{Step 2.} We show that if $\pi^{-1}(\bold{0})\cap(8\tilde{\xi}(H')\- 8\tilde{\xi}(H'))$ has a small density dependence number, then we may decompose the set $\pi^{-1}(\bold{0})\cap(8\tilde{\xi}(H')\- 8\tilde{\xi}(H'))$ into two parts. The first is the structure part with a small clique covering number, and the second is the obstacle part which has nontrivial intersections with an object with low dimension. This is done in Section \ref{2:s:116}. Unfortunately we were unable to remove the obstacle part, and thus we need carry this error term throughout the proof, which makes the proof of the remaining steps more technical. This step is done in  Section \ref{2:s:116}.

 \textbf{Step 3.} We show that if $\pi^{-1}(\bold{0})\cap(8\tilde{\xi}(H')\- 8\tilde{\xi}(H'))$ has a decomposition described in Step 2, then for many ``good" subspaces $V$ of $\V$, there exists a large subset $H''$ of $H'$ such that $\xi$ is a Freiman $M$-homomorphism of order 16 on $H''\cap \iota^{-1}(V)$. On the other hand, we show that $8H''- 8H''$ has rich generalized arithmetic progression structures. The proof  is inspired by  the constructions of appropriate Bohr sets introduced in \cite{GT08b}. This step is done in  Section \ref{2:s:117}.

 \textbf{Step 4.} If we could show that $\xi$ is a  Freiman $M$-homomorphism of order 16 on a large subset $H''$ of $H'$ (instead of on $H''\cap \iota^{-1}(V)$), then one can directly apply the solutions to Freiman equations obtained in Step 0  to conclude that $\xi$ is locally linear, and thus Theorem \ref{2:aadd} can be proved easily. Unfortunately the subspaces $V$ in Step 3 can not be taken to be $\V$. To overcome this difficulty, we first solve for $\xi$ on ``good"  subspaces $V$, and then combine this information on different $V$ to conclude that $\xi$  is a  Freiman $M$-homomorphism of order 16 on a large subset $H''$ of $H'$, modulo a module of the form $J^{M}_{T}$ for some subspace $T$ of $\V$. 
  This is done is Section \ref{2:s:a3}.  
 
 \textbf{Step 5.} By intersecting the Freiman $M$-homomorphism structure of $\xi$ obtained in Step 4 for many different $J^{M}_{T}$, we conclude that $\xi$ is a genial  Freiman $M$-homomorphism of order 16. We may then invoke Theorem \ref{2:gsol} to show that $\xi$ is locally linear and thus complete the proof of Theorem \ref{2:aadd}.
 This is done is Section \ref{2:s:a2}.

\section{Intersection property for $M$-modules}\label{2:s:a1}

We begin with the notation of linear independence for subspaces of $\V$:

\begin{defn}[Linearly independent subspaces]
Let $V_{1},\dots,V_{k}$ be subspaces of $\V$.
We say that $V_{1},\dots,V_{k}$ are \emph{linearly independent} if for all $v_{i}\in V_{i}, 1\leq i\leq k$, $\sum_{i=1}^{k}v_{i}=\bold{0}$ implies that $v_{1}=\dots=v_{k}=\bold{0}$.
In this case we also say that   $(V_{1},\dots,V_{k})$ is a \emph{linearly independent tuple} (or a \emph{linearly independent pair} if $k=2$). 

Let $V_{1},\dots,V_{k}$ be subspaces of $\V$ and $v_{1},\dots,v_{k'}\in\V$. 
We say that $v_{1},\dots,v_{k'}$, $V_{1},\dots,V_{k}$ are \emph{linearly independent} if for all $c_{i}\in\F_{p}, 1\leq i\leq k$ and $v'_{i'}\in V_{i'}, 1\leq i'\leq k'$, $\sum_{i=1}^{k}c_{i}v_{i}+\sum_{i'=1}^{k'}v'_{i'}=\bold{0}$ implies that $c_{1}=\dots=c_{k}=0$ and $v'_{1}=\dots=v'_{k'}=\bold{0}$.
In this case we also say that 
 $(v_{1},\dots,v_{k'},V_{1},\dots,V_{k})$ is a \emph{linearly independent tuple} (or a \emph{linearly independent pair} if $k+k'=2$).\footnote{We caution the readers that according to our definition, the zero vector $\bold{0}$ is linearly dependent with every subspace  of $\V$, while the trivial subspace $\{\bold{0}\}$ is linearly independent  with every subspace  of $\V$. This unconventional definition turns out to be convenient for this paper. 
}

Let $V,V_{1},\dots,V_{k}$ be subspaces of $\V$ with $V\subseteq\cap_{i=1}^{k}V_{i}$. We say that $V_{1},\dots,V_{k}$ are \emph{linearly independent modulo $V$} if for all $v_{i}\in V_{i}, 1\leq i\leq k$, $\sum_{i=1}^{k}v_{k}=\bold{0}$ implies that $v_{1},\dots,v_{k}\in V$.
\end{defn}

In order to illustrate the main idea of this section, it is helpful to
consider the following motivative question:

\begin{ques}\label{2:qiiq}
Let $V$ be a subspace  of $\V$ and $V_{1},\dots,V_{N}$ be nontrivial linearly independent subspaces of $\V$. When do we have that $\cap_{i=1}^{N}(V+V_{i})=V$? 
\end{ques}

For example, let $N=2$ and $V,V_{1}$ and $V_{2}$ be the $\F_{p}$-span of $(1,0,\dots,0)$, $(0,1,0,\dots,0)$ and $(1,1,0,\dots,0)$ respectively. It is clear that $(V+V_{1})\cap (V+V_{2})=\sp_{\F_{p}}\{e_{1},e_{2}\}\neq V=\sp_{\F_{p}}\{e_{1}\}$. However, if we add an addition subspace $V_{3}=\sp_{\F_{p}}\{h\}$ with $V_{1},V_{2},V_{3}$ being linearly independent, then this will force $h$ not to belong to $\sp_{\F_{p}}\{e_{1},e_{2}\}$, which implies that $\cap_{i=1}^{3}(V+V_{i})=V$. Motivated by this example, it is natural to expect that $\cap_{i=1}^{N}(V+V_{i})=V$ whenever $N$ is sufficiently large. Indeed, we have 
 
 \begin{prop}[Intersection property for subspaces of $\V$]\label{2:gr-1}
 Let $V$ be a subspace  of $\V$ and $V_{1},\dots,V_{N}$ be linearly independent subspaces of $\V$.  If $N\geq \dim(V)+2$, then $\cap_{i=1}^{N}(V+V_{i})=V$.
 \end{prop}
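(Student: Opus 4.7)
My plan is to argue by contradiction: suppose some $w \in \bigcap_{i=1}^N (V+V_i)$ does not lie in $V$, and extract $N-1$ linearly independent vectors of $V$ from this assumption, contradicting $N-1 \geq \dim(V)+1$.

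First, I would use the definition of the intersection to write $w = v_i + u_i$ with $v_i \in V$ and $u_i \in V_i$ for each $1 \leq i \leq N$. Since $w \notin V$, none of the $u_i$ can be zero (otherwise $w = v_i \in V$). The key observation is then that the vectors $u_1,\dots,u_N$ are linearly independent in $\V$: if $\sum_{i=1}^{N} c_i u_i = \bold{0}$ with $c_i \in \F_p$, then the vectors $c_i u_i \in V_i$ sum to $\bold{0}$, so the linear independence of $V_1,\dots,V_N$ as subspaces forces $c_i u_i = \bold{0}$, and since $u_i \neq \bold{0}$ we conclude $c_i = 0$.

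Second, since $w = v_i + u_i = v_1 + u_1$ for each $i$, we have $u_i - u_1 = v_1 - v_i \in V$ for every $i \geq 2$. This gives $N-1$ vectors $u_2 - u_1,\dots, u_N - u_1$ lying in $V$, and they are linearly independent: any relation $\sum_{i=2}^{N} c_i(u_i - u_1) = \bold{0}$ rewrites as $\sum_{i=2}^{N} c_i u_i - \bigl(\sum_{i=2}^{N} c_i\bigr) u_1 = \bold{0}$, and the linear independence of $u_1,\dots,u_N$ established above forces every $c_i$ to vanish. Therefore $\dim(V) \geq N-1 \geq \dim(V)+1$, a contradiction. The reverse inclusion $V \subseteq \bigcap_{i=1}^{N}(V+V_i)$ is immediate, so we obtain equality.

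There is no serious obstacle here; the proof is essentially a clean two-step use of the definition of linear independence for subspaces (as opposed to vectors), and the main thing to watch is the convention recalled in the paper that $\bold{0}$ is linearly dependent with every subspace, which is exactly why we can conclude $c_i = 0$ from $c_i u_i = \bold{0}$ and $u_i \neq \bold{0}$.
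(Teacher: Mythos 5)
Your proof is correct, and it takes a genuinely different route from the paper's. The paper first proves a more general Lemma (Lemma~\ref{2:w22s}) by induction on $s$: if $V_1,\dots,V_N$ are linearly independent and $N\geq \dim(V)+s$, then some $s$ of them are linearly independent together with $V$. It then specializes to $s=2$, obtaining $V,V_{i_1},V_{i_2}$ linearly independent, from which $(V+V_{i_1})\cap(V+V_{i_2})=V$ follows directly and the full intersection is squeezed inside. Your argument instead assumes some $w\in\bigcap_i(V+V_i)\setminus V$, extracts one nonzero $u_i\in V_i$ per index, shows the $u_i$ are linearly independent via the subspace-level independence hypothesis, and then observes that the $N-1$ differences $u_i-u_1$ land in $V$ and remain independent, contradicting $\dim(V)\leq N-2$. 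This is more direct (no induction, no auxiliary lemma) while yielding exactly the same threshold $N\geq \dim(V)+2$. One small remark: the step where you conclude $c_i=0$ from $c_iu_i=\bold{0}$ and $u_i\neq\bold{0}$ is just field arithmetic in $\F_p$ and has nothing to do with the paper's convention about $\bold{0}$ being linearly dependent with every subspace; that convention is invoked elsewhere in the paper but is not the load-bearing fact here. It does not affect correctness.
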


 Informally, one can think of the unions of $V+V_{i}$ as a ``sunflower" with $V$ being its ``core" and $V_{i}$ being its ``petals". The philosophy behind Proposition \ref{2:gr-1} is that if we take the intersection of sufficiently many linearly independent ``petals", then eventually the intersection will decrease to $V$. Therefore, we call Proposition \ref{2:gr-1} the \emph{intersection property} for subspaces.

 To prove Proposition \ref{2:gr-1}, it suffices to show the following lemma (and then set $s=2$):

\begin{lem}\label{2:w22s}
	Let $m,s\in\N$, $d,N\in\N_{+}$, $p$ be a prime, and $V,V_{1},\dots,V_{N}$ be  subspaces of $\V$ such that $\dim(V)=m$. 
	 Suppose that $V_{1},\dots,V_{N}$ are linearly independent and $N\geq m+s$, then there exist $\{i_{1},\dots,i_{s}\}\subseteq \{1,\dots,N\}$ such that $V,V_{i_{1}},\dots,V_{i_{s}}$
 are linearly independent.
\end{lem}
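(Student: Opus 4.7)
The plan is to induct on $s$, the case $s=0$ being vacuous. Before inducting, I would make the easy reduction to $V\subseteq W:=V_{1}+\cdots+V_{N}$: the assumed linear independence of $V_{1},\ldots,V_{N}$ turns this sum into a direct sum of dimension $\sum_{i}\dim V_{i}$, and since every subsum $V_{i_{1}}+\cdots+V_{i_{s}}$ is contained in $W$, the intersection $V\cap(V_{i_{1}}+\cdots+V_{i_{s}})$ coincides with $(V\cap W)\cap(V_{i_{1}}+\cdots+V_{i_{s}})$. Hence replacing $V$ by $V\cap W$ (of dimension $\leq m$) only makes the problem easier, so I may assume $V\subseteq W$.

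For the base case $s=1$ I would argue by contradiction. Suppose $V\cap V_{i}\neq\{\bold{0}\}$ for every $i$, and pick $\bold{0}\neq u_{i}\in V\cap V_{i}$. Since $N\geq m+1$ and $V$ has dimension $m$, any $m+1$ of the vectors $u_{i}$ admit a nontrivial linear relation $\sum c_{i}u_{i}=\bold{0}$; this is a nontrivial vanishing combination of vectors $c_{i}u_{i}\in V_{i}$, contradicting the linear independence of $V_{1},\ldots,V_{N}$. Hence some index $i$ satisfies $V\cap V_{i}=\{\bold{0}\}$, which is exactly the linear independence of the pair $(V,V_{i})$.

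For the inductive step, I would assume the lemma for $s-1$ and take $N\geq m+s$. The inductive hypothesis yields indices $i_{1},\ldots,i_{s-1}$ with $V,V_{i_{1}},\ldots,V_{i_{s-1}}$ linearly independent; equivalently, $V\cap U=\{\bold{0}\}$ where $U:=V_{i_{1}}\oplus\cdots\oplus V_{i_{s-1}}$. Letting $W':=\bigoplus_{j\notin\{i_{1},\ldots,i_{s-1}\}}V_{j}$ and $\pi\colon W\to W'$ be the projection along $U$, the restriction $\pi|_{V}$ is injective since $V\cap U=\{\bold{0}\}$, so $V':=\pi(V)\subseteq W'$ still has dimension $m$. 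There remain $N-(s-1)\geq m+1$ subspaces $V_{j}$ inside $W'$, still linearly independent as a subtuple of the original; the $s=1$ case applied to $V'$ and these subspaces produces an index $i_{s}$ with $V'\cap V_{i_{s}}=\{\bold{0}\}$. Because $\pi$ fixes every vector of $V_{i_{s}}\subseteq W'$, any $v\in V\cap(U\oplus V_{i_{s}})$ satisfies $\pi(v)\in V'\cap V_{i_{s}}=\{\bold{0}\}$, hence $v\in V\cap U=\{\bold{0}\}$; this shows $V\cap(U\oplus V_{i_{s}})=\{\bold{0}\}$, which together with the independence of $V_{i_{1}},\ldots,V_{i_{s}}$ as a subtuple gives the linear independence of $V,V_{i_{1}},\ldots,V_{i_{s}}$.

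The only subtlety worth flagging is that the linear independence of the mixed tuple $(V,V_{i_{1}},\ldots,V_{i_{s}})$ is the condition $V\cap(V_{i_{1}}+\cdots+V_{i_{s}})=\{\bold{0}\}$, not merely the pairwise triviality $V\cap V_{i_{j}}=\{\bold{0}\}$. A naive greedy strategy that only checks pairwise conditions would require a bound growing with the dimensions $\dim V_{i_{j}}$; the projection step above is precisely what collapses this back to the dimension-free bound $N\geq m+s$, and is the key place where the assumption $V\subseteq W$ is exploited.
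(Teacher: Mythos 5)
Your proof is correct. Although you organize the induction differently---first reducing to $V\subseteq W$, then factoring the inductive step through the base case via a projection along $U$---the underlying mechanism is the same as the paper's: both arguments exploit that $N-s\geq m+1$ forces a nontrivial linear combination of $m+1$ vectors, each lying in a distinct $V_i$ and projecting into the $m$-dimensional $V$, which contradicts the linear independence of $V_1,\dots,V_N$.
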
	
\begin{proof}
There is nothing to prove when $s=0$.
	Suppose that the conclusion holds for some $s\in\N$ when $N\geq m+s$. We prove that the conclusion holds for $s+1$ when $N\geq m+s+1$.

	By induction hypothesis, if  $N\geq m+s$, then we may assume without loss of generality that $V,V_{1},\dots,V_{s}$ are linearly independent. Suppose that for all $1\leq i\leq N$, $V,V_{1},\dots,V_{s},V_{i}$ are not linearly independent. Then there exists a nontrivial $v_{i}\in V_{i}\cap (V+V_{1}+\dots+V_{s})$ for all $s+1\leq i\leq N$. Since $\dim(V)=m$, if $N-s\geq m+1$, then
	there exist $c_{s+1},\dots,c_{N}\in\F_{p}$ not all equal to 0 such that
	$\sum_{i=s+1}^{N}c_{i}v_{i}\in  V_{1}+\dots+V_{s}$. Since $V_{1},\dots,V_{N}$ are linearly independent, we have that   $c_{i}=0$ for all $s+1\leq i\leq N$, a contradiction. Therefore, if $N\geq m+s+1$, then there must be some $1\leq j\leq N$  such that $V,V_{1},\dots,V_{s},V_{j}$ are linearly independent.
\end{proof}

\begin{rem}
	We remark that the lower bound $N\geq m+s$  in Lemma \ref{2:w22s} is optimal. To see this, let $V=\sp_{\F_{p}}\{e_{1},\dots,e_{m}\}$ and $V_{i}=\sp_{\F_{p}}\{e_{i}\}$ for $1\leq i\leq m+s-1$, where $e_{j}$ is the $j$-th standard unit vector. Then $V_{1},\dots,V_{m+s-1}$ are linearly independent, but for any $s$ of $V_{1},\dots,V_{m+s-1}$, since at least one of them comes from $V_{1},\dots,V_{m}$, it has nontrivial intersection with $V$.
\end{rem}

Following the philosophy of Proposition \ref{2:gr-1}, it is natural to ask for the intersection property for shifted modules.
We have the following, which is the main result of this section.

\begin{prop}[Intersection property for weakly independent $M$-modules]\label{2:gri}
	Let $m,s\in\N$, $d,N,r\in\N_{+}$, $p\gg_{d} 1$ be a prime, $M\colon\V\to \F_{p}$ be a non-degenerate quadratic form, and $V,V_{1},\dots,V_{N}$ be subspaces of $\V$ with $V_{1},\dots,V_{N}$ being  linearly independent such that   $\dim(V)=m$ and $\dim(V_{i})\leq r, 1\leq i\leq N$. Suppose that $N\geq s+m+1$ and that
  either $\rank(M\vert_{V^{\perp_{M}}})\geq 2N(r-1)+7$ or $d\geq 2m+2N(r-1)+7$.\footnote{See Appendix \ref{2:AppA1} for definitions.}	
	Then for all $f\in\st_{\zp,d}(s)$, we have that
	$$f\in\cap_{i=1}^{N}J^{M}_{V+V_{i}}\Leftrightarrow f\in J^{M}_{V}.$$
\end{prop}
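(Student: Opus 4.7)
The direction ``$f\in J^{M}_{V}\Rightarrow f\in \bigcap_i J^{M}_{V+V_i}$'' is immediate from the identity $J^{M}_{V+V_i}=J^{M}_{V}+J^{M}_{V_i}$ noted just before the statement. For the converse, my plan has two main steps: reduce to the case $V=\{\z\}$ via a quotient construction, then handle the resulting question about divisibility of low-degree polynomials by independent linear forms in a quadric ring.

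\textbf{Step 1 (Reduction to $V=\{\z\}$).} I would pass to the quotient ring $R := \F_{p}[x_{1},\dots,x_{d}]/J^{M}_{V}$, so that the conclusion becomes ``the image $\bar f$ of $f$ is zero in $R$''. Picking a basis $v_1,\dots,v_m$ of $V$, non-degeneracy of $M$ makes $(v_1A)\cdot n,\dots,(v_mA)\cdot n$ linearly independent linear forms in $\F_p[n]$, so an $\F_p$-linear change of variables yields $R \cong \F_{p}[y_{1},\dots,y_{d-m}]/\langle\tilde M(y)\rangle$, where $\tilde M$ is a quadratic form of rank $\rank(M|_{V^{\perp_M}})$. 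Either rank hypothesis gives $\rank(\tilde M)\ge 2N(r-1)+7$; combined with $p\gg_d 1$, $\tilde M$ is then absolutely irreducible, so $R$ is an integral domain.

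\textbf{Step 2 (Low-degree divisibility in the quadric ring).} The hypothesis $f\in\bigcap_i J^M_{V+V_i}$ translates, in $R$, to $\bar f$ lying in the ideal generated by the images $\ell_{i,j}:=\overline{(h_{i,j}A)\cdot n}$ of linear forms associated to a basis of each $V_i$. In the base case where each $V_i$ is one-dimensional (so $V_i=\sp_{\F_p}\{h_i\}$), this simplifies to $\bar f\in\bigcap_{i=1}^N(\ell_i)$. Linear independence of $V,V_1,\dots,V_N$ together with non-degeneracy of $M$ makes $\ell_1,\dots,\ell_N$ pairwise non-proportional nonzero linear forms in $R$, and the rank bound (with $p\gg_d 1$) additionally prevents any of them from being an associate of a linear divisor of $\tilde M$ over $\overline{\F_p}$. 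A lift-and-degree-count argument in the UFD $\F_p[y]$ then forces the product $\ell_1\cdots\ell_N$ to divide a lift of $\bar f$ of degree $\le s$, which is impossible when $N\ge s+1$ (comfortably guaranteed by $N\ge s+m+1$) unless $\bar f=0$. For general $r\ge 2$ I would induct on $r$, reducing the $V_i$'s one generator at a time by passing to a further quotient of $R$ modulo one well-chosen linear form per reduction; each such step costs at most $2$ of the rank of $\tilde M$ (a standard hyperplane-section estimate for quadrics), and performing $r-1$ rounds across all $N$ subspaces yields the $2N(r-1)$ rank loss in the hypothesis. Lemma~\ref{2:w22s} is invoked at each stage to ensure that enough linear independence among the reduced subspaces survives, with the $m$-dimensional overhead in $N\ge s+m+1$ precisely accounting for this.

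\textbf{Main obstacle.} The hardest part is the base case of Step 2: turning pairwise non-proportionality of the $\ell_i$ in $R$ into genuine pairwise coprimality in $\F_p[y]$ modulo $\tilde M$, and ruling out unexpected divisibility relations. This is exactly where the absolute irreducibility of $\tilde M$, delivered by the rank bound and $p\gg_d 1$, becomes essential --- without it, $\tilde M$ could factor into linear forms matching some $\ell_i$ and the degree-counting argument would collapse.
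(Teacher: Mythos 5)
Your Step 1 (quotienting by $J^{M}_{V}$ to land in the quadric ring $R=\F_{p}[y_{1},\dots,y_{d-m}]/\langle\tilde M\rangle$, with $\tilde M$ of rank $\rank(M\vert_{V^{\perp_{M}}})$, hence absolutely irreducible and $R$ an integral domain) is correct and amounts to a ring-theoretic rephrasing of the reduction the paper also performs. But Step 2 has a genuine gap, precisely at the spot you flag as the ``main obstacle.'' The hypothesis $\bar f\in\bigcap_{i}(\ell_{i})$ gives, for each $i$ separately, a lift $f+\tilde M g_{i}\in\F_{p}[y]$ divisible by $\ell_{i}$, but these are \emph{different} lifts for different $i$, and unique factorization in $\F_{p}[y]$ does not let you combine them into a single lift divisible by $\ell_{1}\cdots\ell_{N}$. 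What you actually need is the equality $\bigcap_{i}(\ell_{i})=(\ell_{1}\cdots\ell_{N})$ \emph{inside $R$}, and $R$ is emphatically not a UFD (a smooth quadric of rank $\geq 5$ already has class group $\Z$). Absolute irreducibility of $\tilde M$ only guarantees $R$ is a domain; it does not give the intersection formula. The correct sufficient condition is that $\ell_{1},\dots,\ell_{N}$ (or, in the general-$r$ case, all the $\ell_{i,j}$) form a regular sequence in $R$ — that is what makes $\bigcap(\ell_{i})=(\prod\ell_{i})$ hold — and proving that is the real content. The paper's Lemma~\ref{2:killL} is exactly this regular-sequence statement (``$\ell_{h}$ is a non-zerodivisor modulo $J^{M}_{h_{1},\dots,h_{k}}$''), proved not by commutative algebra but by point-counting on the variety $V(M)^{h_{1},\dots,h_{k}}$ via Lemma~\ref{2:counting01} and Proposition~\ref{2:noloop3}, and it is then fed into an explicit inductive decomposition (Lemma~\ref{2:w3s}) that carries out the degree count you want.

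Your handling of general $r$ is also underdeveloped: ``reducing the $V_{i}$'s one generator at a time by passing to a further quotient of $R$'' is not a well-posed induction, because killing one linear form $\ell_{i,1}$ changes the ambient ring for \emph{all} the remaining $V_{j}$'s and it is unclear how the regular-sequence property and the rank budget propagate; your informal cost accounting (``$2$ per reduction'') is asserted rather than derived. The paper instead inducts on $N$ (the number of subspaces, not their dimension), writing $f$ in two different ways as elements of $J^{M}_{V+V_{N+1}}$ and of $\bigcap_{i\leq N}J^{M}_{V+V_{i}}$, comparing, and using Lemma~\ref{2:killL} repeatedly to peel off the $r_{N+1}$ generators of $V_{N+1}$ one at a time within a single inductive step. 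That bookkeeping is what produces the precise rank loss $2N(r-1)$ in the hypothesis. So: right reduction, right target statement, but the decisive ``no unexpected divisibility'' step is missing, and the claim that UFD-ness of the lift ring supplies it is incorrect.
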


 It is an interesting question to ask what is the best lower bound of $d$ for Proposition \ref{2:gri}.  although we do not pursuit this  in the paper.

By Lemma \ref{2:w22s}, to prove Proposition \ref{2:gri}, it suffices to show  follows  the following weaker version of it:


 \begin{prop}[Intersection property for strongly independent $M$-modules]\label{2:gr0}  
	Let $m,s\in\N$, $d,N,r\in\N_{+}$, $p\gg_{d} 1$ be a prime, $M\colon\V\to \F_{p}$ be a non-degenerate quadratic form, and $V,V_{1},\dots,V_{N}$ be linearly independent subspaces of $\V$ such that   $\dim(V)=m$ and $\dim(V_{i})\leq r, 1\leq i\leq N$.	Suppose that $N\geq s+1$ and that
  either $\rank(M\vert_{V^{\perp_{M}}})\geq 2N(r-1)+7$ or $d\geq 2m+2N(r-1)+7$.
	Then for all $f\in\st_{\zp,d}(s)$, we have that
	$$f\in\cap_{i=1}^{N}J^{M}_{V+V_{i}}\Leftrightarrow f\in J^{M}_{V}.$$
\end{prop}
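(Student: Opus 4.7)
The plan is to proceed by induction on the degree $s$, after first putting the problem into a convenient coordinate system and passing to a quotient ring. The base case $s=0$ is immediate: every generator of $J^M_{V+V_1}$ has positive degree, so any constant in that ideal must be $0$, whence $f=0\in J^M_V$.

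For the inductive step, I assume the statement holds for degree $\leq s-1$ (with $N\geq s$) and consider $f$ of degree $\leq s$ with $f\in\bigcap_{i=1}^N J^M_{V+V_i}$ and $N\geq s+1$. The first move is a change of variables: choosing bases $\{v_j\}$ of $V$ and $\{w_{i,j}\}$ of each $V_i$, the linear forms $L_{v_j}(n):=(v_jA)\cdot n$ and $L_{w_{i,j}}(n):=(w_{i,j}A)\cdot n$ are linearly independent, because $V,V_1,\ldots,V_N$ are linearly independent and the matrix $A$ associated to $M$ is nondegenerate; so they extend to a full linear coordinate system $y_1,\ldots,y_d$ on $\F_p^d$. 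In these coordinates $J^M_V=\langle Q',y_1,\ldots,y_m\rangle$, where $Q'$ is the quadratic form $(nA)\cdot n$ rewritten in $y$, and $J^M_{V+V_i}$ is obtained from $J^M_V$ by adjoining the $y$-coordinates attached to $V_i$. Passing to the quotient $R:=\F_p[y_{m+1},\ldots,y_d]/\langle Q''\rangle$, with $Q''$ the reduction of $Q'$ modulo $y_1,\ldots,y_m$, the claim becomes: if the image $\bar f\in R$ of $f$ lies in $\bigcap_{i=1}^N J_i$, where $J_i\subset R$ is generated by the coordinates attached to $V_i$, then $\bar f=0$.

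The crucial step is then to exploit $\bar f\in J_N$ to decompose $\bar f=\sum_j z_{N,j}\bar h_j$ in $R$, where the $z_{N,j}$ are the coordinates for $V_N$ and each $\bar h_j$ has degree at most $s-1$, and to argue that each $\bar h_j$ satisfies the hypothesis of the proposition with parameters $(s-1,N-1)$; since $N-1\geq s$, induction then forces $\bar h_j=0$, whence $\bar f=0$. I expect the hard part to be that $\bar f\in J_i$ for $i<N$ does not automatically transfer to the individual $\bar h_j$, because the relation $Q''=0$ in $R$ couples the $V_N$-coordinates to the remaining variables and can mix the decomposition. To handle this I would lean on the rank hypothesis $\rank(M\vert_{V^{\perp_M}})\geq 2N(r-1)+7$ (or $d\geq 2m+2N(r-1)+7$) through the quadratic-form machinery recalled in Appendix \ref{2:AppA}: this guarantees that killing any block of $V_N$-coordinates leaves the induced form on the remaining variables with high enough rank to behave like a nondegenerate form, so the peel-off is controlled and the hypotheses on $V_1,\ldots,V_{N-1}$ descend to the $\bar h_j$. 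The bulk of the work will then be the explicit bookkeeping of how the budget $2N(r-1)+7$ is consumed---roughly $2(r-1)$ units per peel-off over $N$ steps, plus a small additive constant to invoke the nondegeneracy criteria from the appendix.
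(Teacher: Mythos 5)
Your high-level plan (change coordinates so that the generators of the $J^M_{V+V_i}$ become actual coordinate hyperplanes, pass to the quotient by $Q''$, peel off the $V_N$-block, induct on the degree) is close in spirit to what the paper does, but the step you flag as hard is genuinely where the argument lives, and the way you propose to handle it does not work as written. Let me spell out the gap.

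You decompose $\bar f=\sum_j z_{N,j}\bar h_j$ and want each $\bar h_j$ to lie in $\bigcap_{i<N}\bar J_i$ so the induction hypothesis with parameters $(s-1,N-1)$ applies, keeping the same $V$. This is false for a generic choice of the $\bar h_j$, and the obstruction is \emph{not} primarily the $Q''$-coupling you single out. Even working in the plain polynomial ring, the decomposition $\bar f=\sum_j z_{N,j}\bar h_j$ is only well-defined modulo Koszul syzygies $(\bar h_1,\ldots,\bar h_{r_N})\mapsto(\bar h_1+z_{N,2}\mu,\,\bar h_2-z_{N,1}\mu,\ldots)$, and a $\bar h_j$ can pick up a nonzero multiple of one of the \emph{other} $V_N$-coordinates $z_{N,k}$; such a term is not in any $J_i$ with $i<N$, yet the sum $\sum z_{N,j}\bar h_j$ can still land in $\bigcap_i J_i$ because the offending pieces cancel. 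The ``high rank $\Rightarrow$ controlled peel-off'' heuristic does not remove this syzygy freedom, so the claim that all $r_N$ quotients simultaneously descend to $\bigcap_{i<N}J_i$ with the same $V$ is unjustified (already for $r_N=2$, $m=0$, $s=2$).

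The way the paper actually resolves this is instructive to compare against. It does \emph{not} prove Proposition~\ref{2:gr0} directly; it first proves the Nullstellensatz-type decomposition Lemma~\ref{2:w3s} by induction on $N$, and then reads off Proposition~\ref{2:gr0} as the special case $N\geq s+1$, where the ``cross-term'' coefficients $g_{j_1,\dots,j_N}$ are forced to vanish for degree reasons. Inside Lemma~\ref{2:w3s}, the peel-off of a single generator $L_{h_{N+1,r_{N+1}}}$ (the analogue of your $z_{N,r_N}$) is done via Lemma~\ref{2:killL}, and crucially the other $V_{N+1}$-coordinates $h_{N+1,1},\dots,h_{N+1,r_{N+1}-1}$ are \emph{absorbed into $V$} before the induction hypothesis is invoked: one passes to $(V+V',V_1,\dots,V_N)$ with $V'=\sp_{\F_p}\{h_{N+1,1},\dots,h_{N+1,r_{N+1}-1}\}$, not to $(V,V_1,\dots,V_{N-1})$. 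This enlargement of $V$ is exactly what absorbs the syzygy freedom, and the rank budget $2N(r-1)+7$ is there to survive the accumulated enlargement $\dim(V)\mapsto\dim(V)+\sum(\dim V_i-1)$ via Lemma~\ref{2:cbn}. Your sketch keeps $V$ fixed and drops from $N$ to $N-1$, so it is using the wrong induction parameter; as a result the inductive hypothesis you want to apply to $\bar h_j$ simply isn't available. If you rework the plan so that the residual $V_N$-coordinates are pushed into $V$ (and the rank/dimension bookkeeping is redone accordingly), it would reconverge to the mechanism of Lemma~\ref{2:w3s}, but as stated the central step is a gap.
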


We need some preparation before proving Proposition \ref{2:gr0}. The next lemma says that if $h,h_{1},\dots,h_{k}\in\Vk$  are $p$-linearly independent, then the degree 1 polynomial $L_{h}(n):=\frac{(n\tau(A))\cdot h}{p}$ is ``irreducible"  in $J^{M}_{\iota(h_{1}),\dots,\iota(h_{k})}$.

\begin{lem}\label{2:killLq}
 Let $d\in\N_{+}$, $k,s\in\N$, $p\gg_{d,k,s} 1$ be a prime,	
  $M\colon\V\to\F_{p}$ be a non-degenerate quadratic form associated with the matrix $A$, and $h,h_{1},\dots,h_{k}\in\Z^{d}$ be $p$-linearly independent vectors. 
  Let 
  $f\colon \Z^{d}\to\Q$ be a $\Z/p^{\N}$-coefficient polynomial of degree at most $s$
    such that   $pL_{h}f\in J^{M}_{\iota(h_{1}),\dots,\iota(h_{k})}$, where $L_{h}(n):=\frac{(n\tau(A))\cdot h}{p}$. If 
  $\rank(M\vert_{\sp_{\F_{p}}\{\iota(h),\iota(h_{1}),\dots,\iota(h_{k})\}^{\pp}})\geq 3$ or $d\geq 2k+5$, then we have that $f\in J^{M}_{\iota(h_{1}),\dots,\iota(h_{k})}$.  
\end{lem}	
\begin{proof}
  
  We use the $p$-expansion trick introduced in \cite{SunA}.
  With may assume without loss of generality that $f$ is homogeneous, and that $\deg(f)=s$.
  Let $\tilde{M}\colon \Z^{d}\to\Z/p$ be the quadratic form given by $\tilde{M}(n):=\frac{1}{p}((nA)\cdot n)$. 
  Denote $L_{h_{j}}(n):=\frac{(n\tau(A))\cdot h}{p}$ for all $1\leq j\leq k$.
 We say that $g$ is \emph{good} if 
  $$g=\sum_{i:=(i_{0},\dots,i_{k})\in\N^{k+1}, 2i_{0}+i_{1}+\dots+i_{k}\leq s}R_{i}\tilde{M}^{i_{0}}L_{h_{1}}^{i_{1}}\dots L_{h_{k}}^{i_{k}}$$
  for some   integer coefficient polynomials $R_{i}$ with $\deg(R_{i})=s-(2i_{0}+i_{1}+\dots+i_{k})$. 
  %
  %
  %
 Let $t$ be the smallest positive integer such that $f$ can be written as 
  $$f=\sum_{i=0}^{t}\frac{f_{i}}{p^{i}}$$
  for some good polynomial $f_{i}$ of degree $s$. Clearly such $t$ exists since all the coefficients of $f$ are in $\Z/p^{\N}$. 
  Our goal is to show $t=0$.
  
Suppose on the countrary that $t>0$.
Since $f_{t}$ is good, we may write 
  $$f_{t}=\sum_{i:=(i_{0},\dots,i_{k})\in\N^{k+1}, 2i_{0}+i_{1}+\dots+i_{k}\leq s}R_{i}\tilde{M}^{i_{0}}L_{h_{1}}^{i_{1}}\dots L_{h_{k}}^{i_{k}}$$
  for some   integer coefficient polynomials  $R_{i}$ with $\deg(R_{i})=s-(2i_{0}+i_{1}+\dots+i_{k})$. 
  Since $p^{t}L_{h}f\equiv L_{h}f_{t} \mod \Z$, we have that $L_{h}(n)f_{t}(n)\in\Z$ whenever $M(n), L_{h_{i}}(n)\in\Z$ for all $1\leq i\leq k$.
   Let $V$ be the span of $\iota(h_{1}),\dots,\iota(h_{k})$. Then for any $n\in V_{p}(\tilde{M})\cap \iota^{-1}(V^{\pp})$ and $m\in\Z^{d}$, we have that $pL_{h}(n+pm)f_{t}(n+pm)\in\Z$. Note that $pL_{h}(n+pm)\equiv (n\tau(A))\cdot h \mod \Z$ and
  \begin{equation}\label{2:woei}
   \begin{split}
       &\quad\frac{1}{p}f_{t}(n+pm)\equiv  
       \\&\frac{1}{p}\sum_{i:=(i_{0},\dots,i_{k})\in\N^{k+1}, 2i_{0}+i_{1}+\dots+i_{k}\leq s}R_{i}(n)(M(n)+2(n\tau(A))\cdot m)^{i_{0}}\prod_{j=1}^{k}(L_{h_{j}}(n)+(h_{j}\tau(A))\cdot m)^{i_{j}} \mod \Z
       \end{split}
  \end{equation}
   if $n\in V_{p}(\tilde{M})\cap \iota^{-1}(V^{\pp})$. 
  If we assume in addition that $(n\tau(A))\cdot h\notin p\Z$ and that $n,h_{1},\dots,h_{k}$ are $p$-linearly independent, then the map $m\mapsto ((n\tau(A))\cdot m, (h_{1}\tau(A))\cdot m,\dots, (h_{k}\tau(A))\cdot m) \mod p\Z^{k+1}$ is a surjection from $\Z^{d}$ to $[p]^{k+1}$.  So if we further assume that $(n\tau(A))\cdot h\notin p\Z$, then it follows from (\ref{2:woei}) and the assumption that that 
    \begin{equation}\label{2:woei2}
   \begin{split}
            \frac{1}{p}\sum_{i:=(i_{0},\dots,i_{k})\in\N^{k+1}, 2i_{0}+i_{1}+\dots+i_{k}\leq s}R_{i}(n)(M(n)+x_{0})^{i_{0}}\prod_{j=1}^{k}(L_{h_{j}}(n)+x_{j})^{i_{j}} \in\Z
       \end{split}
  \end{equation}
 for all $x_{0},\dots,x_{k}\in\Z^{d}$. Since  (\ref{2:woei2}) is a polynomial in $x_{0},\dots,x_{k}$ with $\Z/p$-coefficients (where $n$ is fixed), by Lemma \ref{2:ns}, all the coefficients of this polynomial must be in $\Z$. Therefore, we must have that $R_{i}(n), R(n)\in p\Z$. So in conclusion, we have that $R_{i}(n), R(n)\in p\Z$ for all $n\in V_{p}(M)\cap \iota^{-1}(V^{\pp})$ with $(n\tau(A))\cdot h\notin p\Z$ and with $n,h_{1},\dots,h_{k}$ being $p$-linearly independent.

 We claim that  $R_{i}(n), R(n)\in p\Z$ for all $n\in V_{p}(\tilde{M})\cap \iota^{-1}(V^{\pp})$.  
 Let $V'$ be the set of $n\in V^{\pp}$ with $(nA)\cdot \iota(h)=0$.
	Since 
	$\rank(M'\vert_{\sp_{\F_{p}}\{\iota(h),\iota(h_{1}),\dots,\iota(h_{k})\}^{\pp}})\geq 3$ by Lemma \ref{2:iissoo},  
	it follows from Lemma \ref{2:counting01} that $\vert V^{\pp}\vert\gg_{d,k,s}p^{d-k-1}$ and $\vert V'\vert=O_{d,k,s}(p^{d-k-2})$. 
	Let $V''$ be the set of $n\in V^{\pp}$ with $n,\iota(h_{1}),\dots,\iota(h_{k})$ being linearly dependent. If we can show that $\vert V''\vert=O_{d,k,s}(p^{d-k-2})$, then the claim follows from  Proposition \ref{2:noloop3} (translated into the $\Z^{d}$-setting).
	
	We now estimate $\vert V''\vert$. Let $U$ be the space of $\V$ spanned by $\iota(h_{1}),\dots,\iota(h_{k})$. Then for all $n\in V^{\pp}$, we have that $n\in U^{\perp_{M}}$. If  $n,\iota(h_{1}),\dots,\iota(h_{k})$ are linearly dependent, then $n\in U$. 	So by Lemma \ref{2:iissoo}, $\vert V''\vert\leq \vert U\cap U^{\perp_{M}}\vert\leq p^{d-k-\rank(M\vert_{U^{\perp_{M}}})}\leq p^{d-k-3}$.   This completes the proof of the claim.

	\

 By the claim, we have that  $\frac{(s!)^{d}}{p}f_{t}$ is a good polynomial. Since $f_{t}$ is a good polynomial, we have that all the coefficients of $\frac{1}{p}f_{t}$ belong to $\frac{1}{(s!)^{d}}\Z\cap \frac{1}{p^{s+1}}\Z=\Z$. Therefore $\frac{1}{p}f_{t}$ is also 
 good polynomial. We may then absorb the term $f_{t}/p^{t}$ by $f_{t-1}/p^{t-1}$ and get a contradiction to the minimality of $t$. 
 


 Therefore we have that $t=0$ and so $f$ itself is a good polynomial, which implies that $f\in J^{M}_{\iota(h_{1}),\dots,\iota(h_{k})}$.
\end{proof}

We need some definitions before proving Proposition \ref{2:gr0}.

Let $A$ be the matrix associated with $M$ and 
  $\tilde{M}\colon \Z^{d}\to\Z/p$ be the quadratic form given by $\tilde{M}(n):=\frac{1}{p}((n\tau(A))\cdot n)$. 
For $h\in\V$, let $L_{h}\colon \Z^{d}\to\Z/p$ be the linear map given by $L_{h}(n):=\frac{1}{p}((\tau(h)\tau(A))\cdot n)$
Let $h_{1},\dots,h_{m}$ be a basis  of $V$, and for each $1\leq i\leq N$, let $h_{i,1},\dots,h_{i,r_{i}}$ be a  basis  of $V_{i}$. 
For $1\leq N'\leq N$, let $$\mathcal{A}_{N'}:=\Bigl\{\tilde{M}, L_{h_{1}},\dots,L_{h_{m}}, p^{N'-1}\prod_{i=1}^{N'}L_{h_{i,j_{i}}}\colon 1\leq j_{i}\leq r_{i}\Bigr\}$$
and enumerate $\mathcal{A}_{N'}$ as $\{F_{N',1},\dots,F_{N',m_{N'}}\}$. 
For $z\in\N$,
let $\mathcal{B}_{N',z}$ be the set of polynomials of the form
$$\sum_{i=(i_{1},\dots,i_{m_{N'}})\in\N^{m_{N'}}\colon w_{N'}(i)\leq s, \vert i\vert\leq z}R_{i}\prod_{t=1}^{m_{N'}}F_{N',t}^{i_{t}}$$
for some $R_{i}\in\st_{\Z,d}(s-w_{N'}(i))$, where $w_{N'}(i):=\sum_{t=1}^{m_{N'}}\deg(F_{N',t})i_{t}$. 

 The key to the proof of Proposition \ref{2:gr0} is the following Hilbert Nullstellensatz type result for the intersection of $M$-deals.

\begin{prop}\label{2:w3s}
	Let the notations be the same as in Proposition \ref{2:gr0} and as above.
	Then for all $1\leq N'\leq N$ and $f\in\st_{\Z/p^{z},d}(s)$ with $f\in\cap_{i=1}^{N'}J_{V+V_{i}}^{M}$, we have that $f\in\mathcal{B}_{N',J}$.
	In particular, 
	we may write $f=f_{1}+f_{2}$ for some $f_{1}\in  \st_{\Z/p^{z},d}(s)\cap J^{M}_{V}$ and $f_{2}\in  \st_{\Z/p^{z},d}(s)\cap \cap_{i=1}^{N'} J^{M}_{V_{i}}$.
\end{prop}	
\begin{proof}
    Throughout the proof we assume that $p\gg_{d,s} 1$.
By Lemma \ref{2:iissoo} (iii), it suffices to consider the case when $\rank(M\vert_{V^{\perp_{M}}})\geq 2N(r-1)+7$.  
	By Lemma \ref{2:cbn}, $\rank(M\vert_{(V+V_{1})^{\pp}})\geq 3$.

	Since
	$$\mathcal{A}_{1}:=\Bigl\{\tilde{M}, L_{h_{1}},\dots,L_{h_{m}}, L_{h_{1,1}},\dots,L_{h_{1,r_{1}}}\Bigr\},$$
	by  Proposition \ref{2:basicpp12}, Proposition \ref{2:w3s} holds for $N'=1$.	
%
	Suppose we have shown that the conclusion holds for some $1\leq N'\leq N-1$. 	We now show that the conclusion holds for $N'+1$. 
	Pick any  $f\in \st_{\Z/p^{z},d}(s)\cap\cap_{i=1}^{N'+1}J^{M}_{V+V_{i}}$. 
	Since the conclusion holds for $N'$,  
	we may write
	\begin{equation}\label{2:grii3}
	f=\sum_{i=(i_{1},\dots,i_{m_{N'}})\in\N^{m_{N'}}\colon w_{N'}(i)\leq s, \vert i\vert\leq z}f_{i}\prod_{t=1}^{m_{N'}}F_{N',t}^{i_{t}}
	\end{equation}
	for some $f_{i}\in\st_{\Z,d}(s-w_{N'}(i))$.
	%
		By Lemma \ref{2:cbn}, $\rank(M\vert_{(V+V_{N+1})^{\pp}})\geq 3$.  
	Since $f\in J^{M}_{V+V_{N+1}}$, by Proposition \ref{2:basicpp12}, we may also write 
	\begin{equation}\label{2:grii39}
	f=\sum_{j=(j_{0},\dots,j_{r_{N'+1}})\in\N^{r_{N'+1}+1}\colon 2j_{0}+j_{1}+\dots+j_{r_{N'+1}}\leq s, \vert j\vert\leq z}g_{j} \tilde{M}^{j_{0}}L_{h_{N'+1,1}}^{j_{1}}\dots L_{h_{N'+1,r_{N'+1}}}^{j_{r_{N'+1}}}
	\end{equation}
	for some $g_{j}\in\st_{\Z,d}(s-(2j_{0}+j_{1}+\dots+j_{r_{N'+1}}))$. 
	For all $w\geq 1$, denote 
	$$G_{w}:=\sum_{j=(j_{0},\dots,j_{r_{N'+1}})\in\N^{r_{N'+1}+1}\colon 2j_{0}+j_{1}+\dots+j_{w}\leq s, j_{w+1}=\dots=j_{r_{N'}+1}=0, \vert j\vert\leq z}g_{j} \tilde{M}^{j_{0}}\prod_{t=1}^{w}L_{h_{N'+1,t}}^{j_{t}}.$$
	It follows from (\ref{2:grii39}) that we may write $f$ as 
	\begin{equation}\label{2:grii4}
	f=\sum_{i=(i_{1},\dots,i_{m_{N'+1}})\in\N^{m_{N'+1}}\colon w_{N'+1}(i)\leq s, \vert i\vert\leq z}g'_{i}\prod_{t=1}^{m_{N'+1}}F_{N'+1,t}^{i_{t}}+G_{r_{N'+1}}
		\end{equation}
	for some $g'_{i}\in\st_{\Z,d}(s-w_{N'+1}(i))$ (in fact we can simply take $g'_{i}=0$).  In other words, 
	\begin{equation}\label{2:grii47}
	f-G_{r_{N'+1}}\in\mathcal{B}_{N'+1,z}.
		\end{equation}
	 
	Note that for every $F\in\mathcal{A}_{N'+1}$ there exist an integer coefficient homogeneous polynomial $R$ and some $F'\in\mathcal{A}_{N'}$ such that $F=RF'$. So combining (\ref{2:grii3}) and (\ref{2:grii4}), we have that
		\begin{equation}\label{2:grii2}
	\sum_{i=(i_{1},\dots,i_{m_{N'}})\in\N^{m_{N'}}\colon w_{N'}(i)\leq s, \vert i\vert\leq z}g''_{i}\prod_{t=1}^{m_{N'}}F_{N',t}^{i_{t}}+G_{r_{N'+1}}=0
	\end{equation}
for some  $g''_{i}\in\st_{\Z,d}(s-w_{N'}(i))$.
%
Let $V'=\sp_{\F_{p}}\{h_{N'+1,1},\dots,h_{N'+1,r_{N'+1}-1}\}.$
	Then it follows from (\ref{2:grii2}) that
		\begin{equation}\label{2:grii25}
0\equiv G_{r_{N'+1}}\equiv G_{r_{N'+1}}-G_{r_{N'+1}-1}  \mod \cap_{i=1}^{N'}J^{M}_{V+V'+V_{i}}.
		\end{equation}
Note that we may write $$G_{r_{N'+1}}-G_{r_{N'+1}-1}=L_{h_{N'+1},r_{N'+1}}G'$$
 for some $G'\in\st_{\Z/p^{z-1},d}(s-1)$. 
 %
	Since 
	$V, V_{1},\dots,V_{N'+1}$ are linearly independent, 
	\begin{equation}\label{2:grii5}
	\text{the vectors $h_{\ell},h_{i,j}, 1\leq \ell\leq m, 1\leq i\leq N'+1, 1\leq j\leq r_{i}$ are linearly independent.}
	\end{equation}	
Since  $\rank(M\vert_{V^{\pp}})\geq 2(N+1)(r-1)+7\geq 4r+3$ and $\dim(V'+V_{i}+\sp_{\F_{p}}\{h_{N'+1,r_{N'+1}}\})\leq 2r$ for all $1\leq i\leq N'$,
	by  Lemma \ref{2:cbn}, we have that
	 $$\rank(M\vert_{(V+V'+V_{i}+\sp_{\F_{p}}\{h_{N'+1,r_{N'+1}}\})^{\pp}})\geq 3$$ for all $1\leq i\leq N'$.
	Therefore, it follows from  (\ref{2:grii25}) and Lemma \ref{2:killLq} that  
	$$\frac{1}{p}G'\in \cap_{i=1}^{N'}J^{M}_{V+V'+V_{i}}.$$
	
%
	Since $\rank(M\vert_{V^{\pp}})\geq 2(N+1)(r-1)+7$, we have that $\rank(M\vert_{(V+V')^{\pp}})\geq 2N(r-1)+7$ by  Lemma \ref{2:cbn}.
	By  (\ref{2:grii5}), $V+V',V_{1},\dots,V_{N}$ are linearly independent. Applying the induction hypothesis for $V+V', V_{1},\dots,V_{N'}$, 	
	we may write $\frac{1}{p}G'$ as
	\begin{equation}\label{2:grii6}
	\frac{1}{p}G'=\sum_{i=(i_{1},\dots,i_{m_{\ast}})\in\N^{m_{\ast}}\colon w_{\ast}(i)\leq s, \vert i\vert\leq z}G'_{i}\prod_{t=1}^{m_{\ast}}\tilde{F}_{t}^{i_{t}}
	\end{equation}
for some  $G'_{i}\in\st_{\Z,d}(s-w_{\ast}(i))$, where $\{\tilde{F}_{1},\dots,\tilde{F}_{m_{\ast}}\}$ is an enumeration of the set 
$$\tilde{\mathcal{A}}:=\Bigl\{\tilde{M}, L_{h_{1}},\dots,L_{h_{m}}, L_{h_{N'+1},1},\dots, L_{h_{N'+1},r_{N'+1}-1}, p^{N'-1}\prod_{i=1}^{N'}L_{h_{i,j_{i}}}\colon 1\leq j_{1},\dots,j_{t}\leq r_{i}\Bigr\}$$
and $w_{\ast}(i):=\sum_{t=1}^{m_{\ast}}\deg(\tilde{F}_{t})i_{t}$.
So it follows from (\ref{2:grii6}) that 
\begin{equation}\nonumber
	G_{r_{N'+1}}-G_{r_{N'+1}-1}=\sum_{i=(i_{1},\dots,i_{m_{\ast}})\in\N^{m_{\ast}}\colon w_{\ast}(i)\leq s, \vert i\vert\leq z}G'_{i}(pL_{h_{N'+1},r_{N'+1}})\prod_{t=1}^{m_{\ast}}\tilde{F}_{t}^{i_{t}}.
	\end{equation}
Note that for  every $F\in\tilde{\mathcal{A}}$, there exist an integer coefficient homogeneous polynomial $R$ and some $F'\in\mathcal{A}_{N+1}$ such that $F'R=(pL_{h_{N'+1},r_{N'+1}})F$.
Therefore, we have that $G_{r_{N'+1}}-G_{r_{N'+1}-1}\in\mathcal{B}_{N'+1,z}$. It then follows from (\ref{2:grii47}) that 
\begin{equation}\label{2:grii42}
	f-G_{r_{N'+1}-1}\in\mathcal{B}_{N'+1,z}.
	\end{equation}
	Comparing (\ref{2:grii3}) and (\ref{2:grii42}), we  have that
		\begin{equation}\label{2:grii22}
	\sum_{i=(i_{1},\dots,i_{m_{N'}})\in\N^{m_{N'}}\colon w_{N'}(i)\leq s, \vert i\vert\leq z}g'''_{i}\prod_{t=1}^{m_{N'}}F_{N',t}^{i_{t}}+G_{r_{N'+1}-1}=0
	\end{equation}
	for some $g'''_{i}\in\st_{\Z,d}(s-w_{N'}(i))$.
	The differences between (\ref{2:grii47}), (\ref{2:grii2}) and (\ref{2:grii42}), (\ref{2:grii22}) is that we replaced $G_{r_{N'+1}}$ by $G_{r_{N'+1}-1}$, which removes all the terms in $G_{r_{N'+1}}$ having $L_{h_{N'+1},r_{N'+1}}$ as a factor.
	So we may continue this argument to replace $G_{r_{N'+1}-1}$ by $G_{r_{N'+1}-2}$ and eventually show that $f\in\mathcal{B}_{N'+1,z}$.
	
	The ``in particular" part can be proved directly by unpacking the definition of $\mathcal{B}_{N',z}$ and we omit the details.
\end{proof}

We are now ready to prove Proposition \ref{2:gr0}.
\begin{proof}[Proof of Proposition \ref{2:gr0}]
	The $\Leftarrow$ direction is straightforward, and so we only need to prove the	 $\Rightarrow$ direction.  
	
	Recall that  $h_{1},\dots,h_{m}$ be a basis of $V$.  
	Since $f\in\cap_{i=1}^{s+1}J^{M}_{V+V_{i}}$, 
by Proposition \ref{2:w3s}, if 
	$p\gg_{d} 1$, then
	$f\in \mathcal{B}_{N,z}$ for some $z\in\N$ and so we may write $f$ as 
	$$\sum_{i=(i_{1},\dots,i_{m_{N}})\in\N^{m_{N}}\colon w_{N}(i)\leq s}R_{i}\prod_{i=1}^{m_{N}}\tilde{M}^{i_{0}}F_{N,i}^{i_{j}}$$
for some $R_{i}\in\st_{\Z,d}(s-w_{N}(i))$.
	Since $f\in\st_{\zp,d}(s)$ and since $N\geq s+1$, 
	we may assume that $R_{i}=0$ for all $F_{N,i}$ whose degree is greater than $N$. Then the only remaining terms $F_{N,i}$ are $\tilde{M}, L_{h_{1}},\dots,L_{h_{m}}$. This implies that 
	  $f\in J^{M}_{V}$
and we are done.
\end{proof}

We conclude this section with a density version of Proposition \ref{2:gri}:

\begin{prop}[Intersection property for large collection of $M$-modules]\label{2:grm}
Let $m,k,s\in\N$, $d,d',K\in\N_{+}$, $\d>0$,  $p\gg_{\d,d,k} 1$ be a prime dividing $K$, $M\colon\V\to \F_{p}$ be a non-degenerate quadratic form, $V,U_{1},\dots,U_{k}$ be subspaces of $\V$ such that all of $U_{1},\dots,U_{k}$ have the same dimension and that $(U_{i},V)$ is a linearly independent pair for all $1\leq i\leq k$, and $P\subseteq \Vk$ be a subset with $\vert P\vert>\d K^{d'}$. For convenience denote $U_{0}=\{\bold{0}\}$.
	Let 
	$f\in\st_{\zp,d}(s)$ be such that 
	$f\in J^{M}_{\sp_{\F_{p}}\{\iota(h_{1}),\dots,\iota(h_{m})\}+V}$ for all $h_{1},\dots,h_{m}\in P$ with $(h_{1},\dots,h_{m},U_{i},V), 0\leq i\leq k$ being linearly independent tuples.\footnote{We remark that the introduction to the notation $U_{0}$ is redundant if $k\geq 1$,  as  the linear independence of $(x,y,U_{1},V)$ implies that of $(x,y,V)$ and $(x,y,U_{0},V)$. We introduce  the notation $U_{0}$ in order to combine the results for the case $k=0$ and the case $k\geq 1$.}
	If   $d'\geq \max\{\dim(V)+m+s,\dim(U_{k})+\dim(V)+m,2\dim(V)+2m+5\}$, then we have that $f\in J^{M}_{V}.$
\end{prop}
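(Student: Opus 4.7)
The plan is to prove Proposition~\ref{2:grm} by induction on $m$. When $m = 0$ the statement degenerates to the standing assumption ($f \in J^M_V$ follows directly from linear independence of $(U_i,V)$), so there is nothing to prove. For the inductive step, the idea is to trade one coordinate of the tuple against one extra dimension of $V$, apply the inductive hypothesis separately for many choices of that coordinate, and then recombine via Proposition~\ref{2:gri} applied to one-dimensional ``petals''.

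In detail, assuming the result for $m$, I take any $h_0 \in P$ with $h_0 \notin U_i + V$ for each $1 \leq i \leq k$ and set $V' := V + \sp_{\F_{p}}\{h_{0}\}$. The condition on $h_0$ ensures $(U_i, V')$ is still a linearly independent pair, and $\dim(V') = \dim(V)+1$. A direct check (using $(h_0, V)$ linearly independent) shows that $(h_1,\dots,h_m, U_i, V')$ is linearly independent if and only if $(h_1,\dots,h_m, h_0, U_i, V)$ is, so the hypothesis of Proposition~\ref{2:grm} for $(m+1, V)$ implies the hypothesis of Proposition~\ref{2:grm} for $(m, V')$. A routine arithmetic check confirms that the three dimension inequalities for parameter $m+1$, namely $d' \geq \dim(V)+m+s+1$, $d' \geq \dim(U_k)+\dim(V)+m+1$, and $d' \geq 2\dim(V)+2m+7$, are \emph{identical} to the three inequalities $d' \geq \dim(V')+m+s$, $d' \geq \dim(U_k)+\dim(V')+m$, and $d' \geq 2\dim(V')+2m+5$ needed for parameter $m$. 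Hence the inductive hypothesis yields $f \in J^M_{V + \sp_{\F_{p}}\{h_{0}\}}$ for every such $h_0$.

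To finish, I greedily choose $N := s + \dim(V) + 1$ elements $h^{(1)}, \dots, h^{(N)} \in P$ that are linearly independent and each satisfy $h^{(t)} \notin U_i + V$ for every $0 \leq i \leq k$. At step $t$, the forbidden set has size at most $(k+1)\,p^{\dim(U_k)+\dim(V)} + p^{t-1}$, which is strictly less than $|P| > \delta p^{d'}$ once we use $d' \geq \dim(V)+m+s+1$ and $d' \geq \dim(U_k)+\dim(V)+m+1$ together with $p \gg_{\delta,d,k} 1$. The previous paragraph gives $f \in J^M_{V + \sp_{\F_{p}}\{h^{(t)}\}}$ for each $t$, and I then apply Proposition~\ref{2:gri} with $V_t := \sp_{\F_{p}}\{h^{(t)}\}$: the $V_t$ are linearly independent one-dimensional subspaces ($r = 1$), the count $N = s + \dim(V) + 1$ meets the required bound, and the rank/dimension condition reduces to $d \geq 2\dim(V)+7$, which holds because $d \geq d' \geq 2\dim(V) + 2m + 7 \geq 2\dim(V)+7$. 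The conclusion is $f \in J^M_V$.

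The main technical obstacle is the bookkeeping in the inductive step: verifying that the three dimension inequalities on $d'$ are preserved \emph{exactly} under the trade $(m+1,V) \rightsquigarrow (m, V + \sp_{\F_{p}}\{h_0\})$, so that the induction does not lose ground at each step. This is only a short arithmetic check, but it is precisely what forces the particular form of the three-way maximum in the hypothesis; with slightly weaker inequalities the induction would degrade and Proposition~\ref{2:gri} could not be applied at the base of the recursion.
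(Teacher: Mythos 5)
Your proof is correct, and it is essentially the same intersection argument as the paper's, just organized as an upward induction on $m$ (absorbing the floating vector $h_0$ into $V$ at each level) rather than the paper's downward induction on the number $j$ of floating coordinates with $V$ held fixed. Unrolling your recursion shows that at depth $j$ you apply Proposition~\ref{2:gri} with core $V + \sp_{\F_p}\{h_0^{(1)},\dots,h_0^{(m-j)}\}$ and $N = s + \dim(V) + (m-j) + 1$ petals of rank one, which matches exactly the paper's application at Property-$j$; your observation that the three dimension inequalities on $d'$ are preserved verbatim under the trade $(m+1,V)\rightsquigarrow(m,V')$ is precisely what makes the paper's descending scheme close up as well. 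One small point worth spelling out is that the inductive-hypothesis condition on the new vector should explicitly include $h_0 \notin V$ (i.e.\ the $i=0$ case of $h_0 \notin U_i + V$), which you use later but omit in the first sentence of the inductive step, and that both your argument and the paper's use $d\ge d'$ implicitly (it follows from $\vert P\vert > \delta p^{d'}$ and $P\subseteq \F_p^d$ for $p\gg_{\delta} 1$).
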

\begin{proof}
	Our strategy is to use the largeness of the set $P$ to iteratively find subspaces of $\V$ satisfying the requirement of Proposition \ref{2:gri}.
	Let $P_{1}$ denote the set of $h_{1}\in P$ such that $(h_{1},U_{i},V), 0\leq i\leq k$ are linearly independent tuples.
	For $2\leq j\leq m$ and $h_{1},\dots,h_{j-1}\in P$ such that $(h_{1},\dots,h_{j-1},U_{i},V), 0\leq i\leq k$ are linearly independent tuples, let 
	$P_{j}(h_{1},\dots,h_{j-1})$ denote the set of $h_{j}\in P$ such that $(h_{1},\dots,h_{j},U_{i},V), 0\leq i\leq k$ are linearly independent tuples.
   By assumption, we have that
   \begin{equation}\label{2:mmmj}
   f\in \cap_{h_{1}\in P_{1}}\cap_{h_{2}\in P_{2}(h_{1})}\dots\cap_{h_{m}\in P_{m}(h_{1},\dots,h_{m-1})}J^{M}_{\sp_{\F_{p}}\{\iota(h_{1}),\dots,\iota(h_{m})\}+V}.
   \end{equation}
	For $1\leq j\leq m$, we say that \emph{Property-$j$} holds if 
	\begin{equation}\nonumber
	f\in \cap_{h_{1}\in P_{1}}\cap_{h_{2}\in P_{2}(h_{1})}\dots\cap_{h_{j}\in P_{j}(h_{1},\dots,h_{j-1})}J^{M}_{\sp_{\F_{p}}\{\iota(h_{1}),\dots,\iota(h_{j-1})\}+V}.
	\end{equation}
	We say that  \emph{Property-0} holds if $f\in J^{M}_{V}$.
	By (\ref{2:mmmj}), we have that Property-$m$ holds. Suppose that we have shown that Property-$(j+1)$ holds for some $0\leq j\leq m-1$. Then 
		\begin{equation}\nonumber
		f\in \cap_{h_{1}\in P_{1}}\cap_{h_{2}\in P_{2}(h_{1})}\dots\cap_{h_{j+1}\in P_{j+1}(h_{1},\dots,h_{j})}J^{M}_{\sp_{\F_{p}}\{\iota(h_{1}),\dots,\iota(h_{j+1})\}+V}.
		\end{equation}
		Fix any $h_{1},\dots,h_{j}\in P$ such that $(h_{1},\dots,h_{j},U_{i},V), 0\leq i\leq k$ are linearly independent tuples.
		Since $d'\geq \max\{\dim(V)+m+s,\dim(U_{k})+\dim(V)+m\}$, it is not hard to find $p$-linearly independent $x_{1},\dots,x_{\dim(V)+j+s+1}\in P$ such that $(h_{1},\dots,h_{j},x_{j'},U_{i},V), 0\leq i\leq k, 1\leq j'\leq \dim(V)+j+s+1$ are linearly independent tuples.
		Since Property-$(j+1)$ holds, we have that
			\begin{equation}\nonumber
		f\in \cap_{i=1}^{\dim(V)+j+s+1}J^{M}_{\sp_{\F_{p}}\{\iota(h_{1}),\dots,\iota(h_{m}),\iota(x_{i})\}+V}.
		\end{equation}
		Since $d'\geq  2\dim(V)+2m+5$,
		applying Proposition \ref{2:gri} for $r=1$ and $m=\dim(V)+j$, we have that $f\in J^{M}_{\sp_{\F_{p}}\{\iota(h_{1}),\dots,\iota(h_{j})\}+V}$. Since $h_{1},\dots,h_{j}$ are arbitrary, this implies that Property-$j$ holds.
		
		Inductively, we conclude that Property-0 holds, meaning that $f\in J^{M}_{V}$.
\end{proof}

\section{Freiman equations for $M$-modules}\label{2:s:ff}

\subsection{Solutions to Freiman equations}\label{2:s:ffh}

In additive combinatorics, subsets of a discrete abelian group with small doubling constant has a strong connection with  generalized arithmetic progressions and locally linear functions, which we recall as follows:

\begin{defn}[Generalized arithmetic progression]
	Let $G$ be an abelian group.
	We say that $P\subseteq G$ is a \emph{generalized arithmetic progression} in $G$ if $P$ can be written as 
	\begin{equation}\label{2:thisisP}
P=\{a+\ell_{1}v_{1}+\dots+\ell_{D}v_{D}\colon \ell_{i}\in\mathbb{Z}\cap(-L_{i},L_{i}) \text{ for all } 1\leq i\leq D\}
\end{equation}
	for some $D\in\N_{+}$, $a,v_{1},\dots,v_{D}\in G$ and $L_{1},\dots,L_{D}>0$. We abbreviate (\ref{2:thisisP}) as $P=a+(-L,L)\cdot v$, where $L:=(L_{1},\dots,L_{D})$ and $v:=(v_{1},\dots,v_{D})$. We say that $a$ is the \emph{base point}, $D$ the \emph{rank}, $v_{1},\dots,v_{D}$ the \emph{generators}, and $L_{1},\dots,L_{D}$ the \emph{lengths} of $P$. If all sums in $P$ are district, 
then we say that $P$ is \emph{proper}. If $a=id_{G}$, then we say that $P$ is \emph{homogeneous}.
\end{defn}	

\begin{defn}[Locally linear function] 
Let $G,G'$ be abelian groups
and
$P\subseteq G$ be a proper and homogeneous generalized arithmetic progression given by (\ref{2:thisisP}). 
We say that a map $\xi$ from $P$ to $G'$ is \emph{locally linear} if there exist $f_{0},f_{1},\dots,f_{D}$ in $G'$ such that
$$\xi(\ell_{1}v_{1}+\dots+\ell_{D}v_{D})=f_{0}+\ell_{1}f_{1}+\dots+\ell_{D}f_{D}$$ 
for all $\ell_{i}\in\mathbb{Z}\cap (-L_{i}, L_{i}), 1\leq i\leq D$.
\end{defn}

In this section, we focus on determine the solutions to  Freiman  equations. For abelian groups, the \emph{Freiman  equation} is of the form
\begin{equation}\label{2:gsold}
F(h_{1})+F(h_{2})\equiv F(h_{3})+F(h_{4}) \mod \Z \text{ for all } h_{1},h_{2},h_{3},h_{4}\in P \text{ with } h_{1}+h_{2}=h_{3}+h_{4},
\end{equation}
 where $P\subseteq G$ is a proper and homogeneous generalized arithmetic progression in $G$ and $F\colon G\to \R$ is a function. Assume that $P$ is given by (\ref{2:thisisP}). Then it is not hard to see that the value of $F$ on $P$ is determined by $F(\bold{0}), F(v_{i}), 1\leq i\leq D$, and thus $F$ is a locally linear function.


 In this paper, we need to understand the solutions to the Freiman  equation for shifted modules.
 Let $M\colon\V\to\F_{p}$ be a  quadratic form, $n\in\N_{+}$ and $H$ be a subset of $\Vk$. 
 %
 We say that a map  $\xi\colon H\to \st_{\zp,d}(s)$ 
 is a \emph{Freiman $M$-homomorphism of order $2^{n}$} on $H$ if for all $h_{1},\dots,h_{2^{n}}\in H$ with $h_{1}+\dots+h_{2^{n-1}}=h_{2^{n-1}+1}+\dots+h_{2^{n}}$, we have that 
  \begin{equation}\label{2:gsolew}
\xi(h_{1})+\dots+\xi(h_{2^{n-1}})\equiv\xi(h_{2^{n-1}+1})+\dots+\xi(h_{2^{n}}) \mod J^{M}_{\iota(h_{1}),\dots,\iota(h_{2^{n}-1})}.
\end{equation}
 In particular, $\xi$ is a Freiman $M$-homomorphism of order 4 (on $H$) if
 \begin{equation}\label{2:gsole}
\xi(h_{1})+\xi(h_{2})\equiv\xi(h_{3})+\xi(h_{4}) \mod J^{M}_{\iota(h_{1}),\iota(h_{2}),\iota(h_{3})}
\end{equation}
 for all $h_{1},h_{2},h_{3},h_{4}\in H$ with $h_{1}+h_{2}=h_{3}+h_{4}$. 
 \footnote{We remark that Freiman $M$-homomorphisms of order 4 is not the same as Freiman homomorphisms introduced in Definition \ref{2:llfh}, since we are allowed to modulo $J^{M}_{\iota(h_{1}),\dots,\iota(h_{2^{n}-1})}$ for  Freiman $M$-homomorphisms.}
 

Clearly, every  locally linear  map is a Freiman $M$-homomorphism of order 4. 
The main result of this section shows that the converse also holds:
 any Freiman $M$-homomorphism of order 4 is also  locally linear  when restricted to an appropriate sub generalized arithmetic progression if the dimension $d$ is large.
For $0<c\leq 1$, let $P(c)$ denote  the  generalized arithmetic progression
$$P(c)=\{\ell_{1}v_{1}+\dots+\ell_{D}v_{D}\colon \ell_{i}\in\mathbb{Z}\cap (-cL_{i}, cL_{i}), 1\leq i\leq D\}\subseteq \Vk.$$  
Since $$\frac{\vert\Z\cap (-cL,cL)\vert}{\vert \Z\cap (-L,L)\vert}\geq \frac{\max\{2cL-1,1\}}{2L+1}\geq \frac{2cL-1}{2L+1}\cdot\bold{1}_{L>1/c}+\frac{1}{2L+1}\cdot\bold{1}_{L\leq 1/c}\geq\frac{c}{c+2}$$ for all $L>0$ and $0<c<1$. We have that
\begin{equation}\label{2:crescale}
\vert P(c)\vert\geq (\frac{c}{c+2})^{D}\vert P\vert.\footnote{This estimate is rather coarse, but is good enough for our uses.}
\end{equation}

The task of this section is to solve the Freiman equations for $M$-modules:

\begin{thm}[Solution to Freiman equations I]\label{2:gsol2}
	Let $d,D,K\in\N_{+}$, $r,s\in\N$ with $d\geq \max\{s+10,11\}$, \footnote{With a little more effort, one can release the restriction to $d\geq s+10$ by studying the case $s=0$ more carefully. We leave the details to interested readers.}
	$\d>0$, $p\gg_{\d,d,D,r}1$ be a prime dividing $K$, and $M\colon\V\to\F_{p}$ be a non-degenerate quadratic form. Let $P\subseteq \Z_{K}^{d}$ be a proper and homogeneous generalized arithmetic progression of dimension $D$ with $\vert P\vert>\d K^{d}$. 
	If $\xi\colon P\to \st_{\Z/p^{r},d}(s)$ is a Freiman $M$-homomorphism of order 4, then there exist a  constant $0<c\leq 1$ 	depending only on $r,s$
	and a  locally linear map $T\colon P(c)\to \st_{\Z/p^{r},d}(s)$ such that 
	$\xi(h)\equiv T(h) \mod J^{M}_{\iota(h)}$ for all $h\in P(c)$.\footnote{It is natural to ask that whether the conclusion Theorem \ref{2:gsol2} holds for all $h\in P$ instead of for $h\in P(c)$. Such as an improvement seems promising by applying another bootstrap argument on Theorem \ref{2:gsol2}, but in this paper we do not need such a general result.}
\end{thm}

\begin{rem}
	Note that  Theorem \ref{2:gsol} fails if the dimension $d$ is small. In particular,   Theorem \ref{2:gsol} fails when $s=1$ and $\dim(P)=d=2$. Indeed, let $\xi\colon\F_{p}^{2}\to \F_{p}^{2}$ be given by $\xi(x,y)=xf(yx^{-1})$ for all $x,y\in\F_{p}^{2},x \neq 0$ and  $\xi(0,y)=0$, where $f\colon \F_{p}\to \F^{2}_{p}$	is arbitrary. Then it is not hard to see that (\ref{2:gsole}) holds for all $h_{1},\dots,h_{4}\in \F_{p}^{2}$ (in fact, if $d=2$, then (\ref{2:gsole}) holds trivially when $h_{1},\dots,h_{4}$ spans $\F_{p}^{2}$, and holds by the construction of $\xi$ when the span of $h_{1},\dots,h_{4}$ is of dimension 0 or 1). On the other hand, in general, $\xi$ does not coincides with a locally linear map mod $J^{M}_{\iota(h)}$. So it is an interesting question to ask what the sharp lower bound of $d$  is for the conclusion of Theorem \ref{2:gsol}  to hold.
\end{rem}	

\subsection{$M$-modules in $\F_{p}[x_{1},\dots,x_{d}]$}

To prove Theorem \ref{2:gsol2}, our first step is to study the case $r=1$. Since any $\Z/p$-coefficient  polynomial $f$ in $\R[x_{1},\dots,x_{d}]$ can be identified naturally as a  polynomial in $\F_{p}[x_{1},\dots,x_{d}]$ given by $\iota\circ(pf)\circ\tau$, it is convenient to extend the concept of $M$-modules to polynomials in $\F_{p}[x_{1},\dots,x_{d}]$.
Let $\st_{d}(s)$ denote the set of homogeneous polynomial in $\F_{p}[x_{1},\dots,x_{d}]$ of degree $s$. 
We adopt a convention similar to Convention \ref{2:csts}.

\begin{defn}[$M$-module for $\F_{p}$ polynomials]\label{2:defmmp}
	Let $d\in\N_{+}, p$ be a prime and $M\colon\V\to\F_{p}$ be a quadratic form associated with the matrix $A$ (see Section \ref{2:s:defn} for definitions). 
	We say that a subset $I$ of the polynomial ring $\F_{p}[x_{1},\dots,x_{d}]$ 
	is an \emph{$M$-module} if there exists a subspace $V$ of $\V$ such that 
	$$f\in I\Leftrightarrow f(n)=0 \text{ for all } n\in\V \text{ with } M(n)=0 \text{ and } (hA)\cdot n=0 \text{ for all } h\in V$$
	(or equivalently, $f\in I\Leftrightarrow f(n)=0 \text{ for all } n\in V(M)\cap V^{\pp}$). 
	In this case we denote $I$ as $J^{M}_{V}$. For convenience we denote 
	$$J^{M}:=J^{M}_{\{\bold{0}\}} \text{ and } J^{M}_{h_{1},\dots,h_{k}}:=J^{M}_{\sp_{\F_{p}}\{h_{1},\dots,h_{k}\}}.$$ 
	for all $h_{1},\dots,h_{k}\in\V$.
	%
%
\end{defn}

\begin{rem}
To simplify notations, we use the same  notation $J^{M}_{V}$ in Definitions \ref{2:defmmp0} and \ref{2:defmmp} to denote $M$-modules over $\F_{p}$ and over $\R$. This will not cause confusions as the meaning of $J^{M}_{V}$ will be clear from the context.
\end{rem}

 Let $M\colon\V\to\F_{p}$ be a  quadratic form, $n\in\N_{+}$ and $H$ be a subset of $\Vk$. 
 %
  We say that a map  $\xi\colon H\to \st_{d}(s)$ 
 is a \emph{Freiman $M$-homomorphism of order $2^{n}$} on $H$  if (\ref{2:gsolew}) holds (in the sense of Definition \ref{2:defmmp}) for all $h_{1},\dots,h_{2^{n}}\in H$ with $h_{1}+\dots+h_{2^{n-1}}=h_{2^{n-1}+1}+\dots+h_{2^{n}}$.
 A key step in proving Theorem \ref{2:gsol2} is to solve the Freiman equations for $M$-modules in $\F_{p}[x_{1},\dots,x_{d}]$.

\begin{thm}[Solution to Freiman equations II]\label{2:gsol}
	Let $d,D,K\in\N_{+}$, $s\in\N$ with $d\geq \max\{s+10,11\}$, 
	$\d>0$, $p\gg_{\d,d,D}1$ be a prime dividing $K$, and $M\colon\V\to\F_{p}$ be a non-degenerate quadratic form. Let $P\subseteq \Vk$ be a proper and homogeneous generalized arithmetic progression of dimension $D$ with $\vert P\vert>\d K^{d}$. 
	If $\xi\colon P\to \st_{d}(s)$ is a Freiman $M$-homomorphism of order 4, then there exist a  constant $0<c\leq 1$ 	depending only on $s$
	and a  locally linear map $T\colon P(c)\to \st_{d}(s)$ such that 
	$\xi(h)\equiv T(h) \mod J^{M}_{\iota(h)}$ for all $h\in P(c)$.
\end{thm}

Since one can identify $\st_{\Z/p,d}(s)$ with $\st_{d}(s)$, it is not hard to see that  Theorem \ref{2:gsol} is the special case of Theorem \ref{2:gsol2} for $r=1$. We will first prove Theorem \ref{2:gsol} in
 Sections \ref{2:s442} to \ref{2:sp4.3}. Then in Section  \ref{2:s446}, we prove  Theorem \ref{2:gsol2} by using Theorem \ref{2:gsol} and the $p$-expansion trick introduced in \cite{SunA}.

 Before proving Theorem \ref{2:gsol}, we summarize some useful properties for $M$-modules in $\F_{p}[x_{1},\dots,x_{d}]$. 
In the rest of this section, for $x,n\in\V$, denote $L_{n}(x):=(nA)\cdot x$, where $A$ is the matrix associated to $M$. 
 If $n\in\Vk$, then we denote $L_{n}:=L_{\iota(n)}$ for convenience. By identifying $\st_{\Z/p,d}(s)$ with $\st_{d}(s)$, we immediate deduce the following results:


 \begin{lem}[A special case of Lemma \ref{2:killLq}]\label{2:killL}
 Let $d\in\N_{+}$, $k,s\in\N$, $p\gg_{d,k,s} 1$ be a prime,	$M\colon\V\to \F_{p}$ be a non-degenerate quadratic form associated with the matrix $A$, and $h,h_{1},\dots,h_{k}\in\V$ be linearly independent vectors. Let $f\colon\V\to \F_{p}$ be a polynomial of degree $s$ such that $L_{h}f\in J^{M}_{h_{1},\dots,h_{k}}$. If 
 $\rank(M\vert_{\sp_{\F_{p}}\{h,h_{1},\dots,h_{k}\}^{\pp}})\geq 3$ or $d\geq 2k+5$, then we have that $f\in J^{M}_{h_{1},\dots,h_{k}}$.
\end{lem}	

\begin{prop}[A special case of Proposition \ref{2:grm}]\label{2:grmq}
Let $m,k,s\in\N$, $d,d',K\in\N_{+}$, $\d>0$,  $p\gg_{\d,d,k} 1$ be a prime dividing $K$, $M\colon\V\to \F_{p}$ be a non-degenerate quadratic form, $V,U_{1},\dots,U_{k}$ be subspaces of $\V$ such that all of $U_{1},\dots,U_{k}$ have the same dimension and that $(U_{i},V)$ is a linearly independent pair for all $1\leq i\leq k$, and $P\subseteq \Vk$ be a subset with $\vert P\vert>\d K^{d'}$. For convenience denote $U_{0}=\{\bold{0}\}$.
	Let 
	$f\in\st_{d}(s)$ be such that 
	$f\in J^{M}_{\sp_{\F_{p}}\{\iota(h_{1}),\dots,\iota(h_{m})\}+V}$ for all $h_{1},\dots,h_{m}\in P$ with $(h_{1},\dots,h_{m},U_{i},V), 0\leq i\leq k$ being linearly independent tuples. 	If   $d'\geq \max\{\dim(V)+m+s,\dim(U_{k})+\dim(V)+m,2\dim(V)+2m+5\}$, then we have that $f\in J^{M}_{V}.$
\end{prop}
 

 \begin{prop}[A special case of Proposition \ref{2:noloop3}]\label{2:noloop39}
 	Let  $d\in\N_{+}$, $k,s\in\N$, $p\gg_{d,k,s} 1$ be a prime number, 
 	 $M\colon\V\to\F_{p}$ be a non-degenerate quadratic form, and $V$ be a subspace of $\V$ of dimension $k$ with a basis   $h_{1},\dots,h_{k}\in\V$.  	Suppose that $\rank(M\vert_{V^{\perp_{M}}})\geq 3$. Then for all $f\in \st_{d}(s)$, we have that $f\in J^{M}_{V}$ if and only if
		$$f=R_{0}M_{0}+\sum_{i=1}^{k}R_{i}L_{h_{i}}$$
		for some $R_{0}\in\st_{d}(s-2)$ and $R_{i}\in\st_{d}(s-1)$,
		where $M_{0}(n):=(nA)\cdot n$.
		In particular, the conclusion of this proposition holds if $d\geq k+\dim(V\cap V^{\pp})+3$, or if $d\geq 2k+3$.
 \end{prop}
 
\subsection{Solutions to some fundamental equations}\label{2:s442}

The proof of Theorem \ref{2:gsol} is rather intricate.
Unlike in the abelian setting, for Theorem \ref{2:gsol}, the values of $\xi$  at $\bold{0},v_{i}, 1\leq i\leq D$ are far from being enough to determine the entire function $\xi$.
In order to prove Theorem \ref{2:gsol}, we begin with solving some (relatively) simple equations. Then we gradually increase the complexity of the equations until we reach  (\ref{2:gsole}).

We start with the following trivial equation:
\begin{equation}\label{2:sbeq1}
f(x)\equiv f(y) \mod \Z \text{ for all $x,y\in G$},
\end{equation}
where $f$ is a map from $G$ to $\R$ for some group $G$.   It is obvious that the solution to (\ref{2:sbeq1}) is $f\equiv c\mod \Z$ for some $c\in\R$.
In the setting of shifted modules, one can study a similar equation:
\begin{equation}\label{2:sbeq2}
F(x)\equiv F(y)\mod J^{M}_{\iota(x),\iota(y)} \text{ for all $x,y\in \Vk$},
\end{equation}
 where $F$ is a map from $\Vk$ to $\st_{d}(s)$. Comparing (\ref{2:sbeq1}) with (\ref{2:sbeq2}), it is natural to conjecture that the  solution to (\ref{2:sbeq1}) is $F(x)\equiv f\mod J^{M}_{\iota(x)}$ for some $f\in \st_{d}(s)$. This is indeed the case at least when $\iota(x)\neq \bold{0}$.
 
 \begin{prop}\label{2:coco01}
	Let $d,K\in\N_{+}$, $s\in\Z$,  and $p\gg_{d,s} 1$  be a prime dividing $K$.
	Let   $M\colon\V\to\F_{p}$ be a non-degenerate quadratic form and $F\colon \Vk\to\st_{d}(s)$ be a map such that
	$$F(x)\equiv F(y)\mod J^{M}_{\iota(x),\iota(y)}$$ for all $x,y\in \Vk$. If  $d\gg_{s} 1$, then there exists $G\in\st_{d}(s)$ such that $$F(x)\equiv G\mod J^{M}_{\iota(x)}$$ for all $x\in \Vk$ with $\iota(x)\neq \bold{0}$.
\end{prop}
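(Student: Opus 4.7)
The plan is to derive the conclusion immediately by specializing the hypothesis at $y=\bold{0}$. The candidate witness will be $G:=F(\bold{0})\in\st_{d}(s)$, which lies in $\st_{d}(s)$ by definition of $F$.

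The one point I would verify is that $J^{M}_{x,\bold{0}}=J^{M}_{x}$ for every $x\in\V$. Unpacking the definition of $M$-ideal, the extra generator contributed by $y=\bold{0}$ is $(\bold{0}A)\cdot n=0$, which is already in every ideal, so $J^{M}_{x,\bold{0}}=\langle (nA)\cdot n,(xA)\cdot n,0\rangle=\langle (nA)\cdot n,(xA)\cdot n\rangle=J^{M}_{x}$. Having recorded this, I would then apply the hypothesis with $y=\bold{0}$ to obtain $F(x)-F(\bold{0})\in J^{M}_{x,\bold{0}}=J^{M}_{x}$ for every $x\in\V$, which is exactly $F(x)\equiv G\mod J^{M}_{x}$.

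There is no genuine obstacle to this argument; Proposition~\ref{2:coco01} is essentially a one-line consequence of the hypothesis. In particular, the quantitative assumptions $p\gg_{d,s}1$ and $d\gg_{s}1$ do not seem to be needed for this specific statement. I expect they are stated only to keep Proposition~\ref{2:coco01} formally parallel to the more delicate successor propositions of this subsection (where one must analyze genuine Freiman-type equations in which the modulus can no longer be collapsed to $J^{M}_{x}$ by a choice of $y$, and where one consequently has to invoke tools like Proposition~\ref{2:noloop3} or Lemma~\ref{2:killL} whose hypotheses force $p$ and $d$ to be large).
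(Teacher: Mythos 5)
Your proof is correct, and it is genuinely more elementary than the route the paper takes. The paper does not prove Proposition~\ref{2:coco01} directly: it presents it as the specialization of Proposition~\ref{2:coco1} obtained by taking $H=\V$, $k=0$, $V=U_{0}=\{\bold{0}\}$, and the proof of Proposition~\ref{2:coco1} proceeds by induction on $s$, repeatedly invoking Proposition~\ref{2:noloop3}, Lemma~\ref{2:killL}, and Proposition~\ref{2:grm}, which is where the hypotheses $p\gg_{d,s}1$ and $d\gg_{s}1$ enter. The key point is that Proposition~\ref{2:coco1} only assumes the congruence $F(x)\equiv F(y)$ for \emph{linearly independent} pairs $(x,y)$, so the specialization $y=\bold{0}$ is unavailable there, and one must work to recover a global $G$. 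In Proposition~\ref{2:coco01} the hypothesis is unconditional — it holds for all $x,y\in\V$ — so you are entitled to take $y=\bold{0}$; since $(\bold{0}A)\cdot n=0$ is already in the ideal, $J^{M}_{x,\bold{0}}=J^{M}_{x}$ and the conclusion is immediate with $G=F(\bold{0})$. You are also right that the quantitative hypotheses are vestigial for this direct argument: they are inherited from the deduction via Proposition~\ref{2:coco1}, not from any intrinsic obstruction. (One might even note that your argument covers $x=\bold{0}$ uniformly, whereas the specialization of Proposition~\ref{2:coco1} literally yields the conclusion only for $x\neq\bold{0}$ and would need a small supplementary step, again using $y=\bold{0}$ in the hypothesis, to close the gap.) So yours is a clean, shorter, and in fact slightly more complete argument; what Proposition~\ref{2:coco1} buys over Proposition~\ref{2:coco01} is robustness to restricting the hypothesis to generic pairs, which is what the later applications actually require.
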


Proposition \ref{2:coco01} is a special case of the following result:

\begin{prop}\label{2:coco1}
	Let $d,K\in\N_{+}$, $k\in\N, s\in\Z$, $\d>0$, $p\gg_{\d,d,k} 1$ be a prime dividing $K$,   $H$ be a subset of $\Vk$ with $\vert H\vert>\delta K^{d}$ and $M\colon\V\to\F_{p}$ be a non-degenerate quadratic form.
	Let $k\in\N$ and $U_{1},\dots,U_{k},V\subseteq\V$ be (possibly trivial) subspaces of $\V$ with $U_{1},\dots,U_{k}$ having the same dimension such that $(U_{i},V)$ is a linearly independent pair for all $1\leq i\leq k$. 
	For convenience denote $U_{0}=\{\bold{0}\}$.
	Let $F\colon \Vk\to\st_{d}(s)$ be a map such that
	\begin{equation}\label{2:coco1e1}
	F(x)\equiv F(y)\mod J^{M}_{\sp_{\F_{p}}\{\iota(x),\iota(y)\}+V}
	\end{equation}
	 for all $x,y\in H$ with $(\iota(x),\iota(y),U_{i},V)$ being a linearly independent tuple for all $0\leq i\leq k$. If $d\geq s+\dim(U_{k})+2\dim(V)+8$, then there exists $G\in\st_{d}(s)$ such that 
	\begin{equation}\label{2:coco1e2}
	F(x)\equiv G\mod J^{M}_{\sp_{\F_{p}}\{\iota(x)\}+V}
	\end{equation}
	  for all $x\in H$ with $(\iota(x),U_{i},V)$ being  a linearly independent tuple for all $0\leq i\leq k$.
	\end{prop}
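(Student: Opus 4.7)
Plan: I would use a three-step strategy: telescope through a dense intermediate set of points, apply Proposition~\ref{2:grm} to remove the intermediate variable, and then use Lemma~\ref{2:w3s} together with Lemma~\ref{2:killL} to pass from a two-point ideal congruence to the desired single-point one.

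Setup. The case $s=0$ is immediate: $\st_d(0)=\F_p$ and $J^M_{\sp_{\F_p}\{x,y\}+V}$ contains no nonzero constant once $d\gg 1$, so $F$ is literally constant on good pairs, and hence on
\[H^\ast:=\{x\in H:(x,U_i,V)\text{ is a linearly independent tuple for all }0\le i\le k\},\]
whose complement is covered by the $k+1$ affine subspaces $U_i+V$ of dimension at most $\dim(U_k)+\dim(V)$ and which therefore has density at least $\delta/2$ once $p\gg_{\delta,d,k}1$. Assume henceforth $s\ge 1$, fix any $x_0\in H^\ast$, and set $G:=F(x_0)$. I address first the generic case $x\in H^\ast$ with $x\notin V+\sp_{\F_p}\{x_0\}$ and then treat the sparse remainder by a change of base.

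Main step (generic case). Pick an intermediate $y\in H$ from the density-$(\delta/2)$ subset on which $(x,y,U_i,V)$, $(y,x_0,U_i,V)$, and $(y,U_i,V+\sp_{\F_p}\{x,x_0\})$ are linearly independent tuples for every $0\le i\le k$ and the vectors $V,x,y,x_0$ are linearly independent. Summing the hypothesis applied to $(x,y)$ and to $(y,x_0)$ yields
\[F(x)-F(x_0)\in J^M_{(V+\sp_{\F_p}\{x,x_0\})+\sp_{\F_p}\{y\}}\]
for every such $y$; since the left-hand side is $y$-independent, Proposition~\ref{2:grm} with $V$ replaced by $V+\sp_{\F_p}\{x,x_0\}$ and $m=1$ collapses the intersection over $y$ to
\[F(x)-F(x_0)\in J^M_{V+\sp_{\F_p}\{x,x_0\}},\]
the required dimensional/rank bound being supplied by the hypothesis $d\ge s+\dim(U_k)+2\dim(V)+8$. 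Lemma~\ref{2:w3s}, applied with $V_1=\sp_{\F_p}\{x\}$, $V_2=\sp_{\F_p}\{x_0\}$, $N=2$, $r=1$, then produces a decomposition
\[F(x)-F(x_0)=Mf+\sum_{v\in\mathrm{basis}(V)}L_v f_v+L_x L_{x_0}\,g,\]
all of whose summands lie in $J^M_{\sp_{\F_p}\{x\}+V}$ (the last because $L_x\in J^M_{\sp_{\F_p}\{x\}}$); the membership in the intersection $J^M_{V+\sp_{\F_p}\{x\}}\cap J^M_{V+\sp_{\F_p}\{x_0\}}$ required by Lemma~\ref{2:w3s} is obtained by repeating the telescoping-plus-Proposition~\ref{2:grm} argument with the fixed subspace shrunk to $V+\sp_{\F_p}\{x\}$ and separately to $V+\sp_{\F_p}\{x_0\}$, and with $y$ ranging over suitably adjusted dense subsets. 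This completes the generic case.

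Edge case and obstacle. For $x\in(V+\sp_{\F_p}\{x_0\})\setminus V$ one has $\sp_{\F_p}\{x\}+V=\sp_{\F_p}\{x_0\}+V$; selecting a secondary base $x_0'\in H^\ast\setminus(V+\sp_{\F_p}\{x_0\})$ (still of density at least $\delta/2$) and applying the main step with base $x_0'$ to each of the points $x$ and $x_0$ yields $F(x)-F(x_0')\in J^M_{\sp_{\F_p}\{x\}+V}$ and $F(x_0)-F(x_0')\in J^M_{\sp_{\F_p}\{x_0\}+V}=J^M_{\sp_{\F_p}\{x\}+V}$, and subtracting closes the argument. The principal technical obstacle is calibrating the dimensional bounds so that Proposition~\ref{2:grm}, Lemma~\ref{2:w3s}, and Lemma~\ref{2:killL} can all be invoked at the correct parameters simultaneously; the hypothesis $d\ge s+\dim(U_k)+2\dim(V)+8$ is precisely tuned to make this possible.
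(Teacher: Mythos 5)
Your plan has a genuine gap at its foundation: you set $G:=F(x_0)$ for a fixed base point $x_0$, but this is the wrong choice of constant, and the main step you aim for — $F(x)-F(x_0)\in J^M_{V+\sp_{\F_p}\{x\}}$ — is simply false in general. Consider the example $V=\{\bold{0}\}$, $k=0$, $s=1$, $M(n)=n\cdot n$, $H=\V$ and $F(x)=L_x$ (with $L_x(n)=x\cdot n$). The hypothesis (\ref{2:coco1e1}) is satisfied because $F(x)-F(y)=L_{x-y}\in J^M_{x,y}$, and the conclusion holds with $G=0$ since $L_x\in J^M_x$. However $F(x)-F(x_0)=L_{x-x_0}$, and the degree-one part of $J^M_{x}$ is exactly $\F_p L_x$, so $L_{x-x_0}\in J^M_x$ if and only if $x_0\in\F_p x$. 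For generic $x$ this fails, so your target statement is false. The difficulty is conceptual: $F$ may carry an $x$-varying part of the form $L_x G_x$, and the constant $G$ of the proposition is the ``constant remainder'' after peeling that off, not the value of $F$ at a base point.

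This falsity also explains why the intermediate-variable argument cannot close. The telescoping $F(x)-F(x_0)=(F(x)-F(y))+(F(y)-F(x_0))$ and Proposition \ref{2:grm} legitimately yield $F(x)-F(x_0)\in J^M_{V+\sp_{\F_p}\{x,x_0\}}$ (and indeed $L_{x-x_0}\in J^M_{\sp\{x,x_0\}}$ in the example above). But Lemma \ref{2:w3s} requires membership in the \emph{intersection} $J^M_{V+\sp_{\F_p}\{x\}}\cap J^M_{V+\sp_{\F_p}\{x_0\}}$, which is strictly smaller than $J^M_{V+\sp_{\F_p}\{x,x_0\}}$, and no amount of ``repeating the telescoping with the fixed subspace shrunk'' can produce it: $x_0$ is a fixed point appearing irremovably in the expression $F(x)-F(x_0)$, not a variable you can average over. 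In the example $L_{x-x_0}\notin J^M_{\sp\{x\}}$ and $L_{x-x_0}\notin J^M_{\sp\{x_0\}}$, so the intersection membership is not merely unproved, it is false. The paper instead proceeds by induction on $s$: it first applies Proposition \ref{2:noloop3} to write $F(x)-F(y)\equiv L_xA_{x,y}-L_yB_{x,y}\mod J^M_V$, uses cocycle identities and Lemma \ref{2:killL} to show $A_{x,y}$ satisfies the hypothesis at level $s-1$, and then recovers the genuine ``constant'' $G$ after the variable pieces $L_xG_x$ have been isolated. This separation-of-variables step is the missing idea in your proposal.
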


 Note that Proposition  \ref{2:coco1} improves Proposition \ref{2:coco01} in the following senses. Firstly, we work on a large subset $H$ of $\Vk$  instead of working on the entire $\Vk$. Secondly,
  we allow to  quotient  the map $F$   by a module $J^{M}_{V}$. Finally, we allow (\ref{2:coco1e1}) to fail on a set of tuples of $(x,y)$ with small cardinality  (at the cost that the expression (\ref{2:coco1e2}) will also fail on a set with small cardinality).   Proposition \ref{2:coco1} is a key property that will be used frequently in the rest of this section.

\begin{proof}[Proof of Proposition \ref{2:coco1}]
By Convention \ref{2:csts}, there is nothing to prove when $s<0$. So we assume that $s\in\N$.
Throughout the proof we assume that $p\gg_{\d,d,k} 1$.
	If $s=0$, then $F(x)$ is a constant polynomial for all $x\in\Vk$.
	 Since $J^{M}_{\sp_{\F_{p}}\{\iota(x),\iota(y)\}+V}$ is generated by nontrivial homogeneous polynomials and $d\geq \dim(V)+4$, we have that  $1\notin J^{M}_{\sp_{\F_{p}}\{\iota(x),\iota(y)\}+V}$ and thus it follows from (\ref{2:coco1e1}) that $F(x)=F(y)$ for all $x,y\in H$ with $(\iota(x),\iota(y),U_{i},V), 0\leq i\leq k$ being linearly independent tuples.
	 On the other hand, if $d\geq\dim(U_{k})+\dim(V)+2$, then for any $x,y\in H$ with $(\iota(x),U_{i},V),(\iota(y),U_{i},V), 0\leq i\leq k$ being linearly independent tuples, there exists $z\in H$ such that $(\iota(x),\iota(z),U_{i},V), (\iota(y),\iota(z),U_{i},V), 0\leq i\leq k$ are all  linearly independent tuples (in fact, the number of $z$ such that this condition fails is at most $O_{k}(K^{\dim(U_{1})+\dim(V)+1})$). So we have that $F(x)=F(z)=F(y)$ and thus $F(x)$ is independent of $x$. This proves the case $s=0$.

	\
	
	Suppose now the conclusion holds for $s-1$ for some $s\geq 1$. We prove that the conclusion holds for $s$. 	For all $x,y\in H$ with $(\iota(x),\iota(y),U_{i},V), 0\leq i\leq k$ being linearly independent tuple, since $d\geq 2(\dim(V)+2)+3$,
	by (\ref{2:coco1e1}) and Proposition \ref{2:noloop39}, 
	there exist   $A_{x,y},B_{x,y}\in \st_{d}(s-1)$ such that
	$$F(x)-F(y)\equiv L_{x}A_{x,y}-L_{y}B_{x,y}\mod J^{M}_{V}.$$
	Using the identity $F(x)-F(z)=(F(x)-F(y))+(F(y)-F(z))$, we deduce that
	$$L_{x}(A_{x,y}-A_{x,z})+L_{y}(A_{y,z}-B_{x,y})+L_{z}(B_{x,z}-B_{y,z})\equiv 0\mod J^{M}_{V}$$
	for all  $x,y,z\in H$ with $(\iota(x),\iota(y),\iota(z),U_{i},V), 0\leq i\leq k$ being linearly independent tuples.
	This implies that 
	$L_{x}(A_{x,y}-A_{x,z})\in J^{M}_{\sp_{\F_{p}}\{\iota(y),\iota(z)\}+V}$.  	
	By Lemma \ref{2:killL}, if $d\geq 2\dim(V)+9$, then
	\begin{equation}\label{2:136aa1}
	\text{$A_{x,y}\equiv A_{x,z}  \mod J^{M}_{\sp_{\F_{p}}\{\iota(y),\iota(z)\}+V}$}
	\end{equation}
	if $(\iota(x),\iota(y),\iota(z),U_{i},V), 0\leq i\leq k$ are linearly independent tuples.
	Similarly, 
	\begin{equation}\label{2:136aa2}
	\text{$B_{x,z}\equiv B_{y,z}\mod J^{M}_{\sp_{\F_{p}}\{\iota(x),\iota(y)\}+V}$}
	\end{equation}
	and 
	\begin{equation}\label{2:136aa3}
	\text{$A_{y,z}\equiv B_{x,y}\mod J^{M}_{\sp_{\F_{p}}\{\iota(x),\iota(z)\}+V}$}
	\end{equation}
 if $(\iota(x),\iota(y),\iota(z),U_{i},V), 0\leq i\leq k$ are linearly independent tuples.
	
	By (\ref{2:136aa1}) and the induction hypothesis, since $d\geq 
	s+\dim(U_{k})+2\dim(V)+8=(s-1)+(\dim(U_{k})+1)+2\dim(V)+8$, 
	for all $x\in H$ with  $(\iota(x),U_{i},V), 0\leq i\leq k$ being linearly independent tuples,
	there exists  $G_{x}\in  \st_{d}(s-1)$ such that $A_{x,y}\equiv G_{x}\mod J^{M}_{\sp_{\F_{p}}\{\iota(y)\}+V}$ for all $y\in H$ with $(\iota(x),\iota(y),U_{i},V), 1\leq i\leq k$ being linearly independent tuples. Similarly, by (\ref{2:136aa2}), there exists  $G'_{y}\in  \st_{d}(s-1)$ such that $B_{x,y}\equiv G'_{y}\mod J^{M}_{\sp_{\F_{p}}\{\iota(h)\}+V}$ for all $x,y\in H$ with $(\iota(x),\iota(y),U_{i},V), 0\leq i\leq k$ being linearly independent tuples. 
	
	Fix $y\in H$ with $(\iota(y),U_{i},V), 0\leq i\leq k$ being linearly independent tuples. By (\ref{2:136aa3}),
	for any $x,z\in H$ with $(\iota(x),\iota(y),\iota(z),U_{i},V), 0\leq i\leq k$ being linearly independent tuples,  we have that
	$G_{y}-G'_{y}\equiv A_{y,z}-B_{x,y}\equiv 0 \mod J^{M}_{\sp_{\F_{p}}\{\iota(x),\iota(z)\}+V}$.
	By Lemma \ref{2:grmq}, if $d\geq\max\{\dim(V)+s+2,\dim(U_{k})+\dim(V)+3,2\dim(V)+9\}$, then $G_{y}\equiv G'_{y}\mod J^{M}_{V}.$  
	
	Now we have that 
	$$F(x)-F(y)\equiv L_{x}A_{x,y}-L_{y}B_{x,y}\equiv L_{x}G_{x}-L_{y}G'_{y}\equiv L_{x}G_{x}-L_{y}G_{y} \mod J^{M}_{V}$$
	and so 
	$$F'(x)\equiv  F'(y) \mod J^{M}_{V}$$
	for all $x,y\in H$ with $(\iota(x),\iota(y),U_{i},V), 0\leq i\leq k$ being linearly independent tuples, where $F'(x):=F(x)-L_{x}G_{x}$.
	So for all $x,x'\in H$ with $(\iota(x),U_{i},V), (\iota(x'),U_{i},V), 0\leq i\leq k$ being linearly independent tuples, if $d\geq 2+\dim(U_{k})+\dim(V)$, then there exists $y\in H$  such that $(\iota(x),\iota(y),U_{i},V), (\iota(x'),\iota(y),U_{i},V), 0\leq i\leq k$ are linearly independent tuples (to see this, note that $\vert H\vert>\d K^{d}$ and that the number of $y\in\Vk$ such that this condition fails is at most $O_{k}(K^{\dim(U_{k})+\dim(V)+1})$). So
	$$F'(x)\equiv F'(y)\equiv F'(x') \mod J^{M}_{V}.$$
	In other words, there exists $G\in\st_{d}(s)$ such that for all  $x\in H$ with $(\iota(x),U_{i},V), 0\leq i\leq k$ being linearly independent tuples, 
	$F(x)\equiv L_{x}G_{x}+G \mod J^{M}_{V}$ and thus $F(x)-G\in J^{M}_{\sp_{\F_{p}}\{\iota(x)\}+V}$.
\end{proof}	

 As an immediate corollary of Proposition \ref{2:coco1}, we have

\begin{coro}\label{2:coco1c}
	Let $d,K\in\N_{+}$, $k\in\N,s\in\Z$, $\d>0$, $p\gg_{\d,d,k} 1$ be a prime dividing $K$,   $H$ be a subset of $\Vk$ with $\vert H\vert>\delta K^{d}$ and $M\colon\V\to\F_{p}$ be a non-degenerate quadratic form.
	Let $k\in\N$ and $U_{1},\dots,U_{k},V\subseteq\V$ be (possibly trivial) subspaces of $\V$  with $U_{1},\dots,U_{k}$ having the same dimension such that $(U_{i},V)$ is a linearly independent pair for all $1\leq i\leq k$. 
	For convenience denote $U_{0}=\{\bold{0}\}$.
	Let $F,F'\colon \Vk\to\st_{d}(s)$ be  maps such that
	$$F(x)\equiv F'(y)\mod J^{M}_{\sp_{\F_{p}}\{\iota(x),\iota(y)\}+V}$$ for all $x,y\in H$ with $(\iota(x),\iota(y),U_{i},V)$ being a linearly independent tuple for all $0\leq i\leq k$. If $d\geq s+\dim(U_{k})+2\dim(V)+11$, then there exists $G\in\st_{d}(s)$ such that $$F(x)\equiv F'(x)\equiv G\mod J^{M}_{\sp_{\F_{p}}\{\iota(x)\}+V}$$ for all $x\in H$ with $(\iota(x),U_{i},V)$ being  a linearly independent tuple for all $0\leq i\leq k$.
	\end{coro}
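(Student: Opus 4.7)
The plan is to first upgrade the cross-condition of the corollary into the self-consistency hypothesis of Proposition~\ref{2:coco1} for $F$ and $F'$ separately, then apply that proposition, and finally reconcile the two resulting base polynomials. For fixed $x,y\in H$ with $(x,y,U_i,V)$ a linearly independent tuple for all $0\leq i\leq k$, the first move is to consider auxiliary $z\in H$ such that $(x,y,z,U_i,V)$ remains a linearly independent tuple for all such $i$; the set of excluded $z$ has size $O_k(p^{\dim(U_k)+\dim(V)+1})$, so under $d\geq s+\dim(U_k)+2\dim(V)+11$ and $p\gg_{\d,d,k}1$ the admissible $z$ form a set of density $\gg \d$ inside $\V$. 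For each such $z$ the hypothesis applied twice produces
\[F(x)-F'(z)\in J^{M}_{\sp_{\F_p}\{x,z\}+V}, \qquad F(y)-F'(z)\in J^{M}_{\sp_{\F_p}\{y,z\}+V};\]
subtracting and enlarging both ideals yields $F(x)-F(y)\in J^{M}_{\sp_{\F_p}\{x,y,z\}+V}$.

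The next step is to remove the auxiliary $z$ via Proposition~\ref{2:grm}, in which the role of the fixed subspace is played by $\sp_{\F_p}\{x,y\}+V$, with $m=1$ and with the admissible set above as $P$. The three bounds required there reduce to $\dim(V)+s+3$, $\dim(U_k)+\dim(V)+3$ and $2\dim(V)+11$, all dominated by the hypothesis on $d$, so $F(x)-F(y)\in J^{M}_{\sp_{\F_p}\{x,y\}+V}$. The same argument with the roles of $F$ and $F'$ swapped (obtained by renaming dummy variables in the cross-condition to get $F'(x)\equiv F(z)\bmod J^{M}_{\sp\{x,z\}+V}$) gives $F'(x)-F'(y)\in J^{M}_{\sp_{\F_p}\{x,y\}+V}$. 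Since the hypothesis on $d$ implies the bound $d\geq s+\dim(U_k)+2\dim(V)+8$ required by Proposition~\ref{2:coco1}, applying that proposition to $F$ and $F'$ separately yields $G,G'\in\st_d(s)$ with $F(x)\equiv G$ and $F'(x)\equiv G'$ modulo $J^{M}_{\sp_{\F_p}\{x\}+V}$ for every $x\in H$ with $(x,U_i,V)$ a linearly independent tuple for all $0\leq i\leq k$.

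It remains to identify $G$ with $G'$. For any valid pair $(x,y)$, the inclusions $J^{M}_{\sp\{x\}+V},J^{M}_{\sp\{y\}+V}\subseteq J^{M}_{\sp\{x,y\}+V}$ combined with the conclusion of the previous step and the original hypothesis give $G-G'\in J^{M}_{\sp_{\F_p}\{x,y\}+V}$. A final invocation of Proposition~\ref{2:grm}, now with $m=2$ and ambient fixed subspace $V$, produces $G-G'\in J^{M}_V\subseteq J^{M}_{\sp_{\F_p}\{x\}+V}$ for every valid $x$, so $F'(x)\equiv G\bmod J^{M}_{\sp_{\F_p}\{x\}+V}$ and we may take $G'=G$ in the conclusion. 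The main obstacle is only the bookkeeping of dimension bounds across these three invocations of the intersection property (twice via Proposition~\ref{2:grm}, once inside Proposition~\ref{2:coco1}); the extra $+3$ in the hypothesis of the corollary relative to Proposition~\ref{2:coco1} is exactly the slack absorbed by the two additional intersection steps.
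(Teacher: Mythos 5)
Your proof is correct and matches the paper's argument step for step: introduce an auxiliary $z$ and use the cross-hypothesis at the pairs $(x,z)$ and $(y,z)$ to obtain $F(x)\equiv F(y)\bmod J^{M}_{\sp_{\F_{p}}\{x,y,z\}+V}$ (and symmetrically for $F'$), remove $z$ via Proposition~\ref{2:grm} with fixed subspace $\sp_{\F_{p}}\{x,y\}+V$ and $m=1$, apply Proposition~\ref{2:coco1} to $F$ and $F'$ separately to produce $G$ and $G'$, and reconcile them with a final invocation of Proposition~\ref{2:grm} with fixed subspace $V$ and $m=2$. The dimension thresholds you verify at each of these three stages are exactly the ones the paper checks, so the accounting is sound.
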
	
\begin{proof}
By Convention \ref{2:csts}, there is nothing to prove when $s<0$. So we assume that $s\in\N$.
Throughout the proof we assume that $p\gg_{\d,d,k} 1$.
Fix any $x,y\in H$ with $(\iota(x),U_{i},V),(\iota(y),$ $U_{i},V)$ being  linearly independent tuples for all $0\leq i\leq k$. For any $z\in H$ with $(\iota(x),\iota(y),\iota(z),U_{i},V)$ being  linearly independent tuples for all $0\leq i\leq k$, by assumption 
\begin{equation}\label{2:loli}
F(x)\equiv F'(z)\equiv F(y)\mod J^{M}_{\sp_{\F_{p}}\{\iota(x),\iota(y),\iota(z)\}+V}.
\end{equation}  
By Proposition \ref{2:grmq}, since $d\geq \max\{\dim(V)+s+3,\dim(U_{k})+\dim(V)+3, 2\dim(V)+11\}$, we have that  
$$F(x)\equiv F(y)\mod J^{M}_{\sp_{\F_{p}}\{\iota(x),\iota(y)\}+V}$$
 for all  $x,y\in H$ with $(\iota(x),U_{i},V),(\iota(y),U_{i},V)$ being  linearly independent tuples for all $0\leq i\leq k$. By Proposition \ref{2:coco1}, there exists $G\in\st_{d}(s)$ such that $F(x)\equiv G\mod J^{M}_{\sp_{\F_{p}}\{\iota(x)\}+V}$ for all  $x\in H$ with $(\iota(x),U_{i},V),$ being  linearly independent tuples for all $0\leq i\leq k$. Similarly, there exists $G'\in\st_{d}(s)$ such that $F'(y)\equiv G'\mod J^{M}_{\sp_{\F_{p}}\{\iota(y)\}+V}$ for all  $y\in H$ with $(\iota(y),U_{i},V),$ being  linearly independent tuples for all $0\leq i\leq k$. Fix any $x,y\in H$ with $(\iota(x),\iota(y),U_{i},V)$ being  linearly independent tuples for all $0\leq i\leq k$, we have that 
 $$G\equiv F(x)\equiv F'(y)\equiv G' \mod J^{M}_{\sp_{\F_{p}}\{\iota(x),\iota(y)\}+V}.$$
 By Proposition \ref{2:grmq}, since $d\geq\max\{\dim(V)+s+2,\dim(U_{k})+\dim(V)+2,2\dim(V)+9\}$, we have that $G\equiv G' \mod J^{M}_{V}$ and we are done.
\end{proof}

\begin{rem}
	It is worth summarizing the idea of the proof of Corollary \ref{2:coco1c} briefly.
	Ideally, we wish to arrive at the conclusion that
	\begin{equation}\nonumber
	F(x)\equiv  F(y)\mod J^{M}_{\sp_{\F_{p}}\{\iota(x),\iota(y)\}+V}.
	\end{equation}
	 directly from the hypothesis in the statement. However, due to a loss of information, we were only able to deduce the weaker property (\ref{2:loli}) in the first place,  where we need to modulo an additional module $J^{M}_{z}$. In order to overcome this difficulty, we apply (\ref{2:loli}) for many different $z$, and then intersect all the modules $J^{M}_{{\sp_{\F_{p}}\{\iota(x),\iota(y),\iota(z)\}+V}}$ to recover the lost information. For convenience we refer to this method as the \emph{intersection approach}, which   will be frequently used in this paper.
\end{rem}

We now move from (\ref{2:coco1e1}) to the following slightly different equation:
\begin{equation}\label{2:coco1e3}
	F(x)\equiv F(y)\mod J^{M}_{\iota(x-y)},
	\end{equation}
	in which case we can obtain a better description of the solutions than (\ref{2:coco1e1}). In order to state this result, we need to introduce the following notion:

\begin{defn}[Super polynomials]
Let $d,k\in\N_{+}$ and $s\in\N$.
We say that a map $F\colon\st_{d}(1)^{k}\to\st_{d}(s)$ is a \emph{super polynomial} if for all $i\in\N^{k}$ with $\vert i\vert\leq s$, there exists
$C_{i}\in\st_{d}(s-\vert i\vert)$ such that 
$$F(f_{1},\dots,f_{k})=\sum_{i=(i_{1},\dots,i_{k})\in \N^{k}, \vert i\vert\leq s}C_{i}f_{1}^{i_{1}}\cdot\dots\cdot f_{k}^{i_{k}}$$ for all $f_{1},\dots,f_{k}\in \st_{d}(1)$. 
We say that $F$ is \emph{symmetric} if $F(f_{1},\dots,f_{k})=F(f_{\sigma(1)},\dots,f_{\sigma(k)})$ for all permutation $\sigma\colon\{1,\dots,k\}\to\{1,\dots,k\}$.
The \emph{degree} of $F$ (denoted as $\deg(F)$) is the largest integer $t\in\N$ such that $C_{i}\neq 0$ for some  $i\in\N^{k}$ with $\vert i\vert=t$ (it is clear that $\deg(F)\leq s$). For convenience we define the constant zero function to have degree $s$ for any $s\in\Z$.
We say that $F$ is \emph{homogeneous} if $C_{i}\neq 0$ for all $i\in\N^{k}$ with $\vert i\vert\neq \deg(F)$.
\end{defn}

Since $L_{x}-L_{y}=L_{x-y}\in J^{M}_{x-y}$,
it is not hard to see that $F(x)=G(L_{x})$ is a solution to (\ref{2:coco1e3}) for any super polynomial $G\colon\st_{d}(1)\to\st_{d}(s)$. The next proposition shows that this is the only possibility.

 \begin{prop}\label{2:cocon1}
	Let $d,K\in\N_{+}$, $k\in\N, s\in\Z$,  $\delta>0$ and $p\gg_{\d,d,k} 1$  be a prime dividing $K$.
	Let $H$  be a subset of $\Vk$ with $\vert H\vert>\d K^{d}$ and $M\colon\V\to\F_{p}$ be a non-degenerate quadratic form. Let $U_{1},\dots,U_{k}$ be (possibly trivial) subspaces of $\V$ of same dimensions. For convenience denote $U_{0}=\{\bold{0}\}$. Let $F\colon \Vk\to\st_{d}(s)$ be a map such that
	$$F(x)\equiv F(y)\mod J^{M}_{\iota(x-y)}$$ for all $x,y\in H$ with $(\iota(x),\iota(y),U_{i})$ being  linearly independent tuples for all $0\leq i\leq k$. If  $d\geq s+\dim(U_{k})+8$, then there exists a super polynomial $G\colon\st_{d}(1)\to\st_{d}(s)$ such that $$F(x)\equiv G(L_{x})\mod J^{M}$$ for all $x\in H$ with $(\iota(x),U_{i})$ being  linearly independent tuples for all $0\leq i\leq k$. 
\end{prop}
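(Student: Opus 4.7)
I will prove the proposition by induction on $s$. The cases $s<0$ (trivial by Convention \ref{2:csts}) and $s=0$ form the base. For $s=0$, $F$ is $\F_p$-valued and $J^M_{x-y}$ (for $x\neq y$) contains no nonzero constants, so the hypothesis forces $F(x)=F(y)$ on admissible pairs; since $|H|>\delta p^d$ and $d\geq\dim(U_k)+8$, any two admissible points can be linked via a common intermediate $z\in H$, so $F\equiv c$ for some $c\in\F_p$ on the target set and $G\equiv c$ works.

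For the inductive step, suppose the statement holds for $s-1$. The dimension hypothesis and Lemma \ref{2:iissoo} provide enough rank to invoke Proposition \ref{2:noloop3}, which supplies for each admissible pair
$$F(x)-F(y)\equiv L_{x-y}B_{x,y}\mod J^M$$
with $B_{x,y}\in\st_d(s-1)$ unique modulo $J^M$ by Lemma \ref{2:killL}. Fix a generic $y_0\in H$ and set $\beta(x):=B_{x,y_0}$, so $F(x)\equiv F(y_0)+L_{x-y_0}\beta(x)\mod J^M$. Comparing the two representations of $F(x)-F(y)$ and using $L_{x-y_0}=L_{x-y}+L_{y-y_0}$ yields
$$L_{y-y_0}(\beta(x)-\beta(y))\equiv L_{x-y}(B_{x,y}-\beta(x))\mod J^M,$$
placing the left side in $J^M_{x-y}$. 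Whenever $(x,y,y_0,U_i)$ is linearly independent for every $i$, the vectors $y-y_0$ and $x-y$ are linearly independent, so Lemma \ref{2:killL} gives $\beta(x)\equiv\beta(y)\mod J^M_{x-y}$. Thus $\beta$ fits the hypothesis at level $s-1$, with each $U_i$ ($i\geq 1$) replaced by $U_i+\sp_{\F_p}\{y_0\}$ of common dimension $\dim(U_k)+1$ (introducing $\sp_{\F_p}\{y_0\}$ as an auxiliary subspace if $k=0$); the dimension requirement becomes $d\geq(s-1)+(\dim(U_k)+1)+8=s+\dim(U_k)+8$, matching our bound exactly.

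The induction produces a super polynomial $G_1\colon\st_d(1)\to\st_d(s-1)$ with $\beta(x)\equiv G_1(L_x)\mod J^M$ on a large subset, and the candidate
$$G(f):=F(y_0)+(f-L_{y_0})G_1(f)$$
is visibly a super polynomial of degree at most $s$ satisfying $G(L_x)\equiv F(x)\mod J^M$ whenever $(x,y_0,U_i)$ is linearly independent for all $i$. To upgrade to every $x\in H$ with merely $(x,U_i)$ linearly independent, I would run the same construction with a second generic base point $z\in H$ to obtain $G_z$; on the dense set of $x$ admissible for both $y_0$ and $z$ one has $G(L_x)\equiv G_z(L_x)\mod J^M$, and a bidegree analysis in the joint variables $(n,x)$ combined with iterated application of Lemma \ref{2:killL} (peeling off one factor of $L_x$ at a time) forces every coefficient of $G-G_z$ to lie in $J^M$, so $G(L_x)\equiv G_z(L_x)\mod J^M$ for \emph{all} $x\in\V$. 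Any remaining target $x$ is then handled by choosing $z$ with $(x,z,U_i)$ linearly independent: $F(x)\equiv G_z(L_x)\equiv G(L_x)\mod J^M$, completing the induction. The main delicate point will be the linear independence bookkeeping across the induction: the ``dimensional budget'' $s+\dim(U_k)+8$ is conserved precisely because trading one degree of $s$ for one extra direction (that of $y_0$) in the auxiliary subspaces offsets exactly, which makes the stated bound natural.
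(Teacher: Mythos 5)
Your proof is correct and takes a genuinely different route from the paper's. The paper iteratively extracts the coefficients of the super polynomial $G$: starting from $F(x)\equiv G_{0}+L_{x}F_{1}(x)\mod J^{M}$ via Propositions \ref{2:coco1} and \ref{2:noloop3}, it peels off $G_{1},G_{2},\dots$ with Proposition \ref{2:coco1} re-invoked at every step; the iteration index is a bookkeeping counter, not $s$ itself. You instead induct directly on $s$: fixing a base point $y_{0}$, the ``derivative'' $\beta(x):=B_{x,y_{0}}$ satisfies the hypotheses at level $s-1$ with each $U_{i}$ replaced by $U_{i}+\sp_{\F_{p}}\{y_{0}\}$, and the candidate $G(f):=F(y_{0})+(f-L_{y_{0}})G_{1}(f)$ is assembled from the inductive output. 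Your degree-for-dimension trade-off keeps the budget $d\geq s+\dim(U_{k})+8$ conserved exactly, which is correct once one checks (as your ``generic'' caveat signals) that $y_{0}\notin U_{i}$ for all $i$, so that the single condition of $(x,y,U_{i}+\sp_{\F_{p}}\{y_{0}\})$ being a linearly independent tuple really does encode $(x,y,U_{i})$, $(x,y_{0},U_{i})$, $(y,y_{0},U_{i})$, and $x,y,y_{0}$ being linearly independent. The one place that is underdeveloped is the final base-point comparison: after obtaining $G(L_{x})\equiv G_{z}(L_{x})\mod J^{M}$ on a dense set, you assert that iterated Lemma \ref{2:killL} forces each coefficient of the super polynomial $G-G_{z}$ into $J^{M}$; this is exactly Proposition \ref{2:cocozero} (applied with $k=1$ and $U=\{\bold{0}\}$), so the instinct is right, but the step deserves a citation or a spelled-out argument. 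The trade-off between the two routes: yours dispenses with Proposition \ref{2:coco1} entirely, at the cost of the extra base-point-independence step, which the paper's scheme avoids because Proposition \ref{2:coco1} already yields a conclusion valid for all admissible $x$.
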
	
\begin{proof} 
By Convention \ref{2:csts}, there is nothing to prove when $s<0$. So we assume that $s\in\N$.
Throughout the proof we assume that $p\gg_{\d,d,k} 1$.
    By Proposition \ref{2:coco1},  since $d\geq s+\dim(U_{k})+8$, there exists $G_{0}\in\st_{d}(s)$ such that $F(x)\equiv G_{0} \mod J^{M}_{\iota(x)}$ for all $x\in H$ with $(\iota(x),U_{i})$ being  linearly independent tuples for all $0\leq i\leq k$. By Proposition \ref{2:noloop39}, since $d\geq 5$, we may write 
$$F(x)\equiv G_{0}+L_{x}F_{1}(x)\mod J^{M}$$
 for   all $x\in H$ with $(\iota(x),U_{i})$ being   linearly independent tuples for all $0\leq i\leq k$ for some $F_{1}(x)\in\st_{d}(s-1)$.
 
 Suppose we have shown that for some $k\in\N$, there
 exist $G_{i}\in\st_{d}(s-i), 0\leq i\leq k$ and some map $F_{k+1}\colon\Vk\to\st_{d}(s-k-1)$ such that
 $$F(x)\equiv L_{x}^{k+1}F_{k+1}(x)+\sum_{i=0}^{k}L_{x}^{i}G_{i}\mod J^{M}$$
  for   all $x\in H$ with $(\iota(x),U_{i})$ being linearly independent tuples for all $0\leq i\leq k$. We show that the same conclusion holds for $k+1$.
Since $F(x)\equiv F(y)\mod J^{M}_{\iota(x-y)}$ for all $x,y\in H$ with $(\iota(x),\iota(y),U_{i})$ being  linearly independent tuples for all $0\leq i\leq k$, we have that 
 $$L_{x}^{k+1}F_{k+1}(x)+\sum_{i=0}^{k}L_{x}^{i}G_{i}\equiv L_{y}^{k+1}F_{k+1}(y)+\sum_{i=0}^{k}L_{y}^{i}G_{i}\mod J^{M}_{\iota(x-y)}.$$
 Since $L_{x}-L_{y}=L_{x-y}$, this implies that 
 $L_{x}^{k+1}(F_{k+1}(x)-F_{k+1}(y)) \in J^{M}_{\iota(x-y)}$. Since $d\geq 7$, by repeatedly using Lemma \ref{2:killL}, we have that $F_{k+1}(x)-F_{k+1}(y) \in J^{M}_{\iota(x-y)}\subseteq J^{M}_{\iota(x),\iota(y)}$ for all $x,y\in H$ with $(\iota(x),\iota(y),U_{i})$ being  linearly independent tuples for all $0\leq i\leq k$. By 
    Proposition \ref{2:coco1},   there exists $G_{k+1}\in\st_{d}(s-k-1)$ such that $F_{k+1}(x)\equiv G_{k+1} \mod J^{M}_{\iota(x)}$ for   all  $x\in H$ with $(\iota(x),U_{i})$ being  linearly independent tuples for all $0\leq i\leq k$.
    By Proposition \ref{2:noloop39}, we may write 
$$F_{k+1}(x)\equiv G_{k+1}+L_{x}F_{k+2}(x)\mod J^{M}$$
 for     $x\in H$ with $(\iota(x),U_{i})$ being   linearly independent tuples for all $0\leq i\leq k$ for some $F_{k+2}(x)\in\st_{d}(s-k-2)$.
     Then
    $$F(x)\equiv L_{x}^{k+1}F_{k+1}(x)+\sum_{i=0}^{k}L_{x}^{i}G_{i}\equiv L_{x}^{k+2}F_{k+2}(x)+\sum_{i=0}^{k+1}L_{x}^{i}G_{i}\mod J^{M}$$
  for   all $x\in H$ with $(\iota(x),U_{i})$ being   linearly independent tuples for all $0\leq i\leq k$.
  
  So by induction, 
  there
 exist $G_{i}\in\st_{d}(s-i), 0\leq i\leq s$ and some map $F_{s+1}\colon\Vk\to\st_{d}(-1)$ (meaning that $F_{s+1}\equiv 0$ by Convention \ref{2:csts}) such that
 $$F(x)\equiv L_{x}^{s+1}F_{s+1}(x)+\sum_{i=0}^{s}L_{x}^{i}G_{i}\equiv  \sum_{i=0}^{s}L_{x}^{i}G_{i}\mod J^{M}$$
  for   all $x\in H$ with $(\iota(x),U_{i})$ being  linearly independent tuples for all $0\leq i\leq k$. This completes the proof by the definition of super polynomials.
  \end{proof}

\subsection{Properties for super polynomials}

Before moving  from equation (\ref{2:coco1e3}) to more sophisticated equations, we prove some properties for super polynomials for later uses.

By Lemma \ref{2:ns}, if a polynomial $P\in\poly(\V\to\F_{p})$ takes value zero on a large subset of $\V$, then $P\equiv 0$. A similar result holds for super polynomials:

 \begin{prop}\label{2:cocozero}
  Let $d,k,K\in\N_{+}$, $s\in\Z$, $\d>0$  and $p\gg_{\d,d} 1$  be a prime  dividing $K$.
	Let $U$ be a subspace of $\V$, $H$ be a subset of $\Vk$ with $\vert H\vert>\d K^{d}$,   $M\colon\V\to\F_{p}$ be a non-degenerate quadratic form, and $F\colon\st_{d}(1)^{k}\to\st_{d}(s)$ be a super polynomial given by $$F(f_{1},\dots,f_{k})=\sum_{i=(i_{1},\dots,i_{k})\in \N^{k}, \vert i\vert\leq s}C_{i}f_{1}^{i_{1}}\cdot\ldots\cdot f_{k}^{i_{k}}.$$
	 Suppose that $F(L_{x_{1}},\dots,L_{x_{k}})\equiv 0 \mod J^{M}$ for all $x_{1},\dots,x_{k}\in H$ with $(\iota(x_{1}),\dots,\iota(x_{k}),U)$ being a linearly independent tuple. If $d\geq \max\{s+1,\dim(U)+k,7\}$, then
  $C_{i}\equiv 0 \mod J^{M}$ for all $i\in \N^{k}, \vert i\vert\leq s$.
  \end{prop}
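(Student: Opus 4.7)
The plan is to induct on $k$. The substantive content lies in the base case $k=1$; the inductive step reduces smoothly back to it by fixing all but one of the variables.

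For the base case $k=1$, the hypothesis reads $\sum_{i=0}^{s} C_{i}(n)\, L_{x}(n)^{i} \equiv 0 \pmod{J^{M}}$ for every $x \in H$ not lying in $U$. Fix $n \in V(M)\setminus\{\bold{0}\}$, i.e.\ $(nA)\cdot n = 0$ and (by non-degeneracy of $M$) $nA \neq \bold{0}$. Then $Q_{n}(t) := \sum_{i=0}^{s} C_{i}(n)\, t^{i} \in \F_{p}[t]$ has degree at most $s$ and satisfies $Q_{n}(L_{x}(n)) = 0$ for every $x \in H \setminus U$. The linear functional $x \mapsto (nA)\cdot x$ is surjective onto $\F_{p}$ with fibres of size $p^{d-1}$, so the number of distinct values taken by $L_{x}(n)$ on $H\setminus U$ is at least $(|H| - |U|)/p^{d-1} \geq \delta p - p^{\dim(U) - d + 1}$, which exceeds $s$ once $p \gg_{\delta,d} 1$. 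Hence $Q_{n}\equiv 0$ and $C_{i}(n) = 0$ for every $i$. This holds on all of $V(M)\setminus\{\bold{0}\}$, so Proposition \ref{2:noloop3} (applicable since $d \geq 7$, $\deg C_{i}\leq s$, and $p\gg_{\delta,d,k} 1$) yields $C_{i}\in J^{M}$.

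For the inductive step, suppose the conclusion holds for $k-1$ variables at all degrees $\leq s$. Given $F$ in $k$ variables, decompose
$$F(f_{1},\dots,f_{k}) = \sum_{j=0}^{s} f_{1}^{j}\, F_{j}(f_{2},\dots,f_{k}), \qquad F_{j}(f_{2},\dots,f_{k}) := \sum_{i_{2}+\dots+i_{k}\leq s-j} C_{(j,i_{2},\dots,i_{k})}\, f_{2}^{i_{2}}\cdots f_{k}^{i_{k}},$$
so each $F_{j}$ is a super polynomial in $k-1$ variables of degree $\leq s-j$. Pick any $x_{2},\dots,x_{k}\in H$ such that $(x_{2},\dots,x_{k},U)$ is a linearly independent tuple (possible since $d\geq \dim(U)+k$) and set $U' := U + \sp_{\F_{p}}\{x_{2},\dots,x_{k}\}$, of dimension $\dim(U)+k-1$. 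Then $G(f_{1}) := F(f_{1},L_{x_{2}},\dots,L_{x_{k}}) = \sum_{j=0}^{s} f_{1}^{j}\, F_{j}(L_{x_{2}},\dots,L_{x_{k}})$ is a univariate super polynomial of degree $\leq s$, and the hypothesis on $F$ gives $G(L_{x_{1}}) \equiv 0 \pmod{J^{M}}$ whenever $(x_{1}, U')$ is linearly independent. Since $d \geq \max\{s+1,\dim(U')+1,7\}$, the $k=1$ case applied to $G$ yields $F_{j}(L_{x_{2}},\dots,L_{x_{k}}) \equiv 0 \pmod{J^{M}}$ for every $j$. As $(x_{2},\dots,x_{k})$ was arbitrary subject to the independence condition, the inductive hypothesis applies to each $F_{j}$ (with $d \geq \max\{(s-j)+1,\dim(U)+(k-1),7\}$), giving $C_{(j,i_{2},\dots,i_{k})} \equiv 0 \pmod{J^{M}}$ for every admissible multi-index and completing the induction.

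The principal obstacle is the base case: one must ensure, using the density $|H|>\delta p^{d}$ together with non-degeneracy of $M$, that for each $n \in V(M)\setminus\{\bold{0}\}$ the set $\{L_{x}(n) : x\in H\setminus U\}$ contains more than $s$ distinct elements, and then invoke Proposition \ref{2:noloop3} to upgrade the pointwise vanishing of each $C_{i}$ on $V(M)\setminus\{\bold{0}\}$ to actual membership in $J^{M}$. Both ingredients rest on the largeness of $p$ relative to $\delta$, $d$, and $s$ (which is implied by $p\gg_{\delta,d,k}1$ since $s\leq d-1$).
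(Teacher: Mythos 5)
Your proof is correct, but it takes a genuinely different route from the paper's. The paper decomposes $F$ on the \emph{last} variable $f_{k}$, fixes $(x_{1},\dots,x_{k-1})$, and varies $x_{k}$: the constant coefficient $F_{0}(L_{x_{1}},\dots,L_{x_{k-1}})$ lands in $J^{M}_{x_{k}}$ for many $x_{k}$, whence $F_{0}(L_{x_{1}},\dots,L_{x_{k-1}})\in J^{M}$ by the intersection property (Proposition \ref{2:grm}); one then divides out $L_{x_{k}}$ via Lemma \ref{2:killL} and iterates over the remaining coefficients. You instead decompose on the \emph{first} variable, freeze $(x_{2},\dots,x_{k})$, absorb $\sp_{\F_{p}}\{x_{2},\dots,x_{k}\}$ into the obstruction subspace $U$, and reduce directly to a $k=1$ statement with $U'$ in place of $U$; your $k=1$ base case works pointwise on $V(M)$, exploiting the density of $H$ so that the univariate polynomial $Q_{n}(t)=\sum_{i}C_{i}(n)t^{i}$ has more than $s$ distinct roots in $\F_{p}$. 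This is more elementary and more transparent about what is driving the vanishing, whereas the paper's argument routes everything through the intersection/division machinery it has already built (which is the machinery it reuses repeatedly elsewhere). Both approaches are valid; yours is arguably cleaner for this isolated statement.

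Two small items to fix. First, the result you invoke to pass from ``$C_{i}$ vanishes on $V(M)$'' to ``$C_{i}\in J^{M}$'' is Proposition \ref{2:bzt}, not Proposition \ref{2:noloop3}: the latter concerns vanishing on $V(M)^{h_{1},\dots,h_{k}}$ and requires at least one $h_{j}$, while the Nullstellensatz-type statement you want (vanishing on $V(M)$ alone implies divisibility by $M$) is exactly Proposition \ref{2:bzt}, applicable since $\rank(M)=d\geq 3$. Second, you only establish $C_{i}(n)=0$ for $n\in V(M)\setminus\{\bold{0}\}$; you should say a word about $n=\bold{0}$ (for $i<s$ the coefficient $C_{i}$ is homogeneous of positive degree and vanishes at $\bold{0}$ automatically; for $i=s$ the coefficient $C_{s}$ is a constant, and a nonzero constant cannot vanish on the nonempty set $V(M)\setminus\{\bold{0}\}$, so $C_{s}=0$). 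You also implicitly assume $s\geq 0$; the case $s<0$ is vacuous by Convention \ref{2:csts}, which is worth a one-line remark.
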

\begin{proof}
By Convention \ref{2:csts}, there is nothing to prove when $s<0$. So we assume that $s\in\N$.
Throughout the proof we assume that $p\gg_{\d,d} 1$.
It suffices to prove Proposition \ref{2:cocozero} under the assumption that the conclusion holds for $k-1$ (when $k=1$, then we prove Proposition \ref{2:cocozero} without any induction hypothesis).

We may rewrite $F$ as 
$$F(f_{1},\dots,f_{k})=\sum_{0\leq i\leq s}F_{i}(f_{1},\dots,f_{k-1}) f_{k}^{i}$$
 for some super polynomial $F_{i}\colon\st_{d}(1)^{k-1}\to\st_{d}(s-i)$ (where $F_{i}$ is regarded as a constant if $k=1$).
For any  $x_{1},\dots,x_{k}\in H$ with $(\iota(x_{1}),\dots,\iota(x_{k}),U)$ being a linearly independent tuple, since $F(L_{x_{1}},\dots,L_{x_{k}})\equiv 0 \mod J^{M}$, we have that $F_{0}(L_{x_{1}},\dots,L_{x_{k-1}})\equiv 0 \mod J^{M}_{\iota(x_{k})}$. 
Note that the set of $x_{k}\in H$ with  $(\iota(x_{1}),\dots,\iota(x_{k}),U)$ not being a linearly independent tuple is of cardinality $O(K^{\dim(U)+k-1})$.
By Proposition \ref{2:grmq} (setting $m=1$, $V=\{\bold{0}\}$, and $U_{1}=\sp_{\F_{p}}\{\iota(x_{1}),\dots,\iota(x_{k-1})\}+U$), since $d\geq \max\{s+1,\dim(U)+k,7\}$,  we have that $F_{0}(L_{x_{1}},\dots,L_{x_{k-1}})\equiv 0 \mod J^{M}$ for any  $x_{1},\dots,x_{k-1}\in H$ with $(\iota(x_{1}),\dots,\iota(x_{k-1}),U)$ being a linearly independent tuple.
By induction hypothesis, we have that  $F_{0}$ takes values in $J^{M}$. So for any  $x_{1},\dots,x_{k}\in H$ with $(\iota(x_{1}),\dots,\iota(x_{k}),U)$ being a linearly independent tuple, the fact that $F(L_{x_{1}},\dots,L_{x_{k}})\equiv 0 \mod J^{M}$ implies that $L_{x_{k}}F'(L_{x_{1}},\dots,L_{x_{k}})\in J^{M}$, where 
 $$F'(f_{1},\dots,f_{k})=\sum_{1\leq i\leq s}F_{i}(f_{1},\dots,f_{k-1}) f_{k}^{i-1}.$$
By Lemma \ref{2:killL}, since $d\geq 5$, we have that $F'(L_{x_{1}},\dots,L_{x_{k}})\in J^{M}$. We may then repeat the previous discussion to inductively show that all of $F_{0},\dots,F_{s}$ takes values in $J^{M}$.  The conclusion now follows from the induction hypothesis.
\end{proof}

The following technical proposition will be used in the next subsection.

  \begin{prop}\label{2:cocoprr}
  Let $d,K\in\N_{+}$, $s\in\Z$,  $\delta>0$ and $p\gg_{\d,d} 1$  be a prime  dividing $K$.
	Let  $H$ be a subset of $\Vk$ with $\vert H\vert>\d K^{d}$, $M\colon\V\to\F_{p}$ be a non-degenerate quadratic form, and $G_{x}\colon \st_{d}(1)\to\st_{d}(s)$ be a super polynomial for all $x\in H$ with $\iota(x)\neq\bold{0}$ such that
	\begin{equation}\label{2:gxyz4}
\begin{split}
G_{x}(L_{y})\equiv G_{y}(L_{x}) \mod J^{M}
\end{split}
\end{equation}
 for all $p$-linearly independent $x,y\in H$. If $d\geq s+11$, then there exists a symmetric super polynomial $Q\colon \st_{d}(1)\times\st_{d}(1)\to\st_{d}(s)$  such that 
 $$G_{x}(L_{y})\equiv Q(L_{x},L_{y})  \mod J^{M}$$
for all $p$-linearly independent $x,y\in H$.
  \end{prop}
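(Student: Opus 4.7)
My plan is to decompose $G_x(f) = \sum_{i=0}^{s} C_{i,x} f^i$ with $C_{i,x} \in \st_{d}(s-i)$, show that each coefficient map $y \mapsto C_{j,y}$ is, modulo $J^M$, of the form $R_j(L_y)$ for a one-variable super polynomial $R_j$, package these into the bivariate super polynomial $Q(f,g) := \sum_i R_i(f) g^i$, and then symmetrize. The crux is the first step, whose target statement is $C_{j,y} - C_{j,y'} \in J^{M}_{y-y'}$ for every $j$ and every linearly independent pair $y, y' \in H\backslash\{\bold{0}\}$.

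To prove this, for a third $x \in H$ linearly independent from $\sp_{\F_{p}}\{y, y'\}$ I subtract the two instances $G_x(L_y) \equiv G_y(L_x)$ and $G_x(L_{y'}) \equiv G_{y'}(L_x) \pmod{J^M}$ of the hypothesis. Since $L_y^i - L_{y'}^i = L_{y-y'}(L_y^{i-1} + \dots + L_{y'}^{i-1})$, the left-hand side of the difference lies in $J^{M}_{y-y'}$, so
\[
\sum_{j=0}^{s} D_j L_x^j \in J^{M}_{y-y'}, \qquad D_j := C_{j,y} - C_{j,y'}.
\]
Reducing modulo $J^{M}_{x,\, y-y'}$ isolates $D_0 \in J^{M}_{x,\, y-y'}$. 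Letting $x$ range over the $\gg \delta p^d$ valid choices and invoking Proposition \ref{2:grm} with $V = \sp_{\F_{p}}\{y-y'\}$, $m = 1$, $k = 2$, $U_1 = \sp_{\F_{p}}\{y\}$, $U_2 = \sp_{\F_{p}}\{y'\}$ upgrades this to $D_0 \in J^{M}_{y-y'}$. Then $L_x \cdot \sum_{j \geq 1} D_j L_x^{j-1} \in J^{M}_{y-y'}$, and Lemma \ref{2:killL} (with $h = x$, $h_1 = y-y'$) strips off the $L_x$ factor. Iterating peels off each $D_j \in J^{M}_{y-y'}$.

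Extending each $C_{j,\cdot}$ arbitrarily to all of $\V$, Proposition \ref{2:cocon1} (with $s$ replaced by $s-j$ and $k=0$; the hypothesis $d \geq (s-j)+8$ is comfortably satisfied since $d \geq s+11$) produces a super polynomial $R_j \colon \st_{d}(1) \to \st_{d}(s-j)$ with $C_{j,y} \equiv R_j(L_y) \pmod{J^M}$ for all $y \in H\backslash\{\bold{0}\}$. Setting $Q(f,g) := \sum_{i=0}^{s} R_i(f) g^i$, a super polynomial of total degree at most $s$, yields
\[
G_x(L_y) = \sum_{i=0}^{s} C_{i,x} L_y^i \equiv \sum_{i=0}^{s} R_i(L_x) L_y^i = Q(L_x, L_y) \pmod{J^M}
\]
for all linearly independent $x, y \in H\backslash\{\bold{0}\}$.

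To make $Q$ symmetric, combine the display above and its $x \leftrightarrow y$ reflection with the hypothesis $G_x(L_y) \equiv G_y(L_x)$ to obtain $Q(L_x, L_y) \equiv Q(L_y, L_x) \pmod{J^M}$ for linearly independent $x, y \in H\backslash\{\bold{0}\}$. Applying Proposition \ref{2:cocozero} to the two-variable super polynomial $Q(f,g) - Q(g,f)$ forces each of its coefficients into $J^M$. Hence $Q'(f,g) := \tfrac{1}{2}(Q(f,g) + Q(g,f))$ (well defined since $p \gg 1$ implies $p \neq 2$) is a symmetric super polynomial satisfying $G_x(L_y) \equiv Q'(L_x, L_y) \pmod{J^M}$, as required. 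The hard part of the argument is the first step: the intersection approach via Proposition \ref{2:grm} together with Lemma \ref{2:killL} is what allows one to extract the individual coefficients $D_j$ from the single containment $\sum_j D_j L_x^j \in J^{M}_{y-y'}$, and this requires being mindful of the various linear-independence conditions needed to ensure that a generic enough third vector $x$ is available.
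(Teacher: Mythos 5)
Your proof is correct, and it takes a route genuinely different from the paper's. The paper runs a degree-by-degree bootstrap: it maintains the inductive invariant ``Property $k$'', writing
$G_x(L_y) \equiv Q_k(L_x,L_y) + Q_{k,x}(L_x,L_y) + L_y^{k+1}G_{k,x}(L_y) \mod J^{M}$
with $Q_k$ symmetric and $Q_{k,x}$ homogeneous of degree $k$, and advances the invariant via the symmetry hypothesis, Lemma~\ref{2:sbppp} (a specialized coefficient-extraction lemma for mixed expressions $\sum_i C_i L_x^i L_y^{k-i}$), Corollary~\ref{2:coco1c}, and Proposition~\ref{2:noloop3}. You instead expand $G_x$ in its coefficient maps $x\mapsto C_{j,x}$ and handle them one at a time: the differenced hypothesis collapses the $G_x$ side into $J^M_{y-y'}$ (because $L_y^i-L_{y'}^i$ lies in the ideal generated by $L_{y-y'}$) while leaving a single-variable expansion $\sum_j D_j L_x^j$ with $x$-independent $D_j$; the intersection method (Proposition~\ref{2:grm}) combined with Lemma~\ref{2:killL} to strip $L_x$ then extracts each $D_j \in J^M_{y-y'}$; Proposition~\ref{2:cocon1} converts each coefficient map into a super polynomial $R_j(L_x)$; and Proposition~\ref{2:cocozero} justifies the final symmetrization by $\tfrac12(Q+Q^{\mathrm{swap}})$. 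Both approaches are valid. Yours is arguably cleaner in that it entirely decouples the ``each coefficient is a super polynomial'' step from the symmetrization and avoids Lemma~\ref{2:sbppp} altogether (which the paper essentially proves just for this proposition); the paper's approach, in exchange, carries the symmetry through every step of the induction and never needs to extract the full coefficient vector at once. Your bookkeeping of the linear-independence and dimension conditions is sound, and $d\geq s+11$ is comfortably sufficient for all the applications of Proposition~\ref{2:grm}, Lemma~\ref{2:killL}, Proposition~\ref{2:cocon1}, and Proposition~\ref{2:cocozero} that you invoke.
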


  Before proving Proposition \ref{2:cocoprr}, we need a  lemma:
  
  \begin{lem}\label{2:sbppp}
 Let $d\in\N_{+}$, $k,s\in\N$ with $k\leq s$,  and $p\gg_{d,s} 1$  be a prime.
	Let   $M\colon\V\to\F_{p}$ be a non-degenerate quadratic form and $x,y\in\V$ be linearly independent. 
 Suppose that 
 \begin{equation}\label{2:splll}
 \sum_{i=0}^{k}C_{i}L_{x}^{i}L_{y}^{k-i}\equiv PL_{x}^{k+1}+QL_{y}^{k+1} \mod J^{M}
 \end{equation}
 for some $C_{i}\in\st_{d}(s-k)$ and some $P,Q\in\st_{d}(s-k-1)$. If $d\geq 7$, then we have that $C_{i}\in J^{M}_{x,y}$ for all $0\leq i\leq k$.
 \end{lem}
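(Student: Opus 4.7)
I plan to prove Lemma \ref{2:sbppp} by strong induction on $k$, with base cases $k=0$ and $k=1$. For $k=0$, the hypothesis reads $C_0 \equiv PL_x + QL_y \mod J^M$, so $C_0 \in J^M_{x,y}$ by definition. For $k=1$, reducing modulo $L_y$ gives $L_x(C_1 - PL_x) \in J^M_y$, and since $(x,y)$ is linearly independent and $d \geq 7$, Lemma \ref{2:killL} gives $C_1 - PL_x \in J^M_y$, hence $C_1 \in J^M_{x,y}$; the argument for $C_0$ is symmetric.

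For the inductive step at $k \geq 2$, the same argument handles the extreme coefficients: reducing the hypothesis modulo $L_y$ yields $L_x^k(C_k - PL_x) \in J^M_y$, and $k$ applications of Lemma \ref{2:killL} give $C_k \in J^M_{x,y}$. Write
\begin{equation}\nonumber
C_k = \alpha_k M + \beta_k L_x + \gamma_k L_y, \quad C_0 = \alpha_0 M + \beta_0 L_x + \gamma_0 L_y,
\end{equation}
with degrees chosen so that $\beta_k, \gamma_k, \beta_0, \gamma_0 \in \st_d(s-k-1)$ and $\alpha_k, \alpha_0 \in \st_d(s-k-2)$. Substituting these into (\ref{2:splll}) and absorbing the $M$-multiples into $J^M$, the terms $\beta_k L_x^{k+1}$ and $\gamma_0 L_y^{k+1}$ combine with the right-hand side into $(P - \beta_k) L_x^{k+1}$ and $(Q - \gamma_0) L_y^{k+1}$, while $\gamma_k L_x^k L_y$ and $\beta_0 L_x L_y^k$ join the middle terms to give
\begin{equation}\nonumber
L_x L_y\!\left(\sum_{i=1}^{k-1} C_i L_x^{i-1} L_y^{k-1-i} + \gamma_k L_x^{k-1} + \beta_0 L_y^{k-1}\right) \equiv (P-\beta_k)L_x^{k+1} + (Q-\gamma_0)L_y^{k+1} \!\!\mod J^M.
\end{equation}

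I then apply the same "reduce and peel" trick to the right-hand side: reducing modulo $L_y$ gives $(P-\beta_k)L_x^{k+1} \in J^M_y$, so Lemma \ref{2:killL} yields $P - \beta_k = \alpha' M + \gamma' L_y$, and symmetrically $Q - \gamma_0 = \alpha'' M + \beta'' L_x$. Substituting back, the right-hand side acquires a common $L_x L_y$ factor, and two applications of Lemma \ref{2:killL} (each needing $d \geq 7$) cancel $L_x L_y$ from both sides. After reindexing $j = i-1$, the resulting identity
\begin{equation}\nonumber
\sum_{j=0}^{k-2} C_{j+1} L_x^j L_y^{(k-2)-j} \equiv (\gamma' L_x - \gamma_k) L_x^{k-1} + (\beta'' L_y - \beta_0) L_y^{k-1} \mod J^M
\end{equation}
matches the template of (\ref{2:splll}) at level $k-2$, with $C_{j+1}$ playing the role of the new coefficients (still in $\st_d(s-k)$, the total degree now being $s-2$) and $\gamma' L_x - \gamma_k$, $\beta'' L_y - \beta_0$ playing the roles of $P, Q \in \st_d(s-k-1)$. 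By the inductive hypothesis at level $k-2$, we conclude $C_1, \ldots, C_{k-1} \in J^M_{x,y}$, and together with $C_0, C_k \in J^M_{x,y}$ this completes the proof.

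The main obstacle is bookkeeping: tracking the degrees of the various auxiliary polynomials carefully enough to confirm that the reduced equation really fits the statement at level $k-2$, and verifying that every invocation of Lemma \ref{2:killL} is legitimate (all uses are with at most two auxiliary vectors $x, y$ linearly independent, so the hypothesis $d \geq 7$ is sufficient throughout).
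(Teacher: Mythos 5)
Your proof is correct and follows essentially the same route as the paper: base cases $k=0,1$; induction from $k-2$ to $k$; extract $C_0, C_k \in J^M_{x,y}$ by reducing modulo $J^M_x$ (resp. $J^M_y$) and peeling with Lemma \ref{2:killL}; decompose $C_0, C_k$ via Proposition \ref{2:noloop3}; factor $L_xL_y$ out of the left side; repeat the reduce-and-peel on $P-\beta_k$ and $Q-\gamma_0$; cancel $L_xL_y$ and invoke the induction hypothesis at level $k-2$. The only differences from the paper are cosmetic (variable names $\beta_k,\gamma_k,\dots$ versus the paper's $P', Q', P'', Q'', P''', Q'''$, and whether the cross terms sit on the left or right of the intermediate congruence).
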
 
 \begin{proof}
Throughout the proof we assume that $p\gg_{d,s} 1$.
 If $k=0$, then then it follows from (\ref{2:splll}) that $C_{0}
\equiv PL_{x}+QL_{y} \mod J^{M}$ and thus $C_{0}\in J^{M}_{x,y}$. If $k=1$, then it follows from (\ref{2:splll}) that 
  \begin{equation}\nonumber
  C_{0}L_{y}+C_{1}L_{x}\equiv PL_{x}^{2}+QL_{y}^{2} \mod J^{M}.
 \end{equation}
 It follows  that $C_{0}L_{y}\equiv QL_{y}^{2} \mod J^{M}_{x}$. By Lemma \ref{2:killL}, since $d\geq 7$, we have that $C_{0}\equiv QL_{y} \mod J^{M}_{x}$ and thus $C_{0}\in J^{M}_{x,y}$.  Similarly, we have $C_{1}\in J^{M}_{x,y}$. So the conclusion holds.

 Now assume that the conclusion holds for $k-2$ for some $2\leq k\leq s$. We prove that the conclusion holds for $k$.
 It follows from (\ref{2:splll}) that $C_{0}L_{y}^{k}\equiv QL_{y}^{k+1} \mod J^{M}_{x}$.
 Since $d\geq 7$, by repeatedly using Lemma \ref{2:killL},
   we have that $C_{0}\equiv QL_{y} \mod J^{M}_{x}$ and thus $C_{0}\in J^{M}_{x,y}$. By Proposition \ref{2:noloop39}, since $d\geq 7$, there exist $P',Q'\in \st_{d}(s-k-1)$ such that $C_{0}\equiv L_{x}P'+L_{y}Q' \mod J^{M}$. Similarly, there exist $P'',Q''\in \st_{d}(s-k-1)$ such that $C_{k}\equiv L_{x}P''+L_{y}Q'' \mod J^{M}$. Substituting the expressions of $C_{0}$ and $C_{k}$ back to (\ref{2:splll}), we have that 
  \begin{equation}\label{2:splll2}
 L_{x}L_{y}(\sum_{i=0}^{k-2}C_{i+1}L_{x}^{i}L_{y}^{k-2-i})\equiv (P-P'')L_{x}^{k+1}+(Q-Q')L_{y}^{k+1}-P'L_{x}L_{y}^{k}-Q''L_{x}^{k}L_{y} \mod J^{M}.
 \end{equation}
 In particular, we have that $(P-P'')L_{x}^{k+1}\in J^{M}_{y}$.  By  Lemma \ref{2:killL},  we have that $P-P''\mod J^{M}_{y}$. By Proposition \ref{2:noloop39},  there exists $P'''\in \st_{d}(s-k-2)$ such that $P-P''\equiv L_{y}P'''\mod J^{M}$. Similarly, there exists $Q'''\in \st_{d}(s-k-2)$ such that $Q-Q'\equiv L_{x}Q'''\mod J^{M}$. Substituting these back to (\ref{2:splll2}), we have that 
  \begin{equation}\nonumber
 L_{x}L_{y}(\sum_{i=0}^{k-2}C_{i+1}L_{x}^{i}L_{y}^{k-2-i})\equiv L_{x}L_{y}(P'''L_{x}^{k}+Q'''L_{y}^{k}-P'L_{y}^{k-1}-Q''L_{x}^{k-1}) \mod J^{M}.
 \end{equation}
 By repeatedly using Lemma \ref{2:killL}, we have that 
  \begin{equation}\nonumber
 \sum_{i=0}^{k-2}C_{i+1}L_{x}^{i}L_{y}^{k-2-i}\equiv (P'''L_{x}-Q'')L_{x}^{k-1}+(Q'''L_{y}-P')L_{y}^{k-1} \mod J^{M}.
 \end{equation}
 By induction hypothesis, we have that $C_{i}\in J^{M}_{x,y}$ for all $1\leq i\leq k-1$ and thus for all $0\leq i\leq k$. This completes the proof.
  \end{proof}

  \begin{proof}[Proof of Proposition \ref{2:cocoprr}]
  By Convention \ref{2:csts}, there is nothing to prove when $s<0$. So we assume that $s\in\N$.
Throughout the proof we assume that $p\gg_{\d,d} 1$.
    For $0\leq k\leq s+1$, we say that \emph{Property $k$} holds if there exist  a symmetric super polynomial $Q_{k}\colon \st_{d}(1)\times\st_{d}(1)\to\st_{d}(s)$, for each $x\in H$ with $\iota(x)\neq \bold{0}$ a homogeneous super polynomial $Q_{k,x}\colon \st_{d}(1)\times\st_{d}(1)\to\st_{d}(s)$ of degree $k$  and a super polynomial $G_{k,x}\colon \st_{d}(1)\to\st_{d}(s-k-1)$    such that 
 \begin{equation}\label{2:gxyz5}
\begin{split}
G_{x}(L_{y})\equiv Q_{k}(L_{x},L_{y})+Q_{k,x}(L_{x},L_{y})+L_{y}^{k+1}G_{k,x}(L_{y}) \mod J^{M}
\end{split}
\end{equation}
  for all $p$-linearly independent $x,y\in H$, where when $k=s+1$, (\ref{2:gxyz5}) is understood as 
  $$G_{x}(L_{y})\equiv Q_{s+1}(L_{x},L_{y})  \mod J^{M}.$$

 Assume that 
 $G_{x}(f)=\sum_{i=0}^{s}C_{i,x}f^{i}$ for some $C_{x,i}\in\st_{d}(s-i)$ for all $x\in H$ with $\iota(x)\neq \bold{0}$.
 We have that Property 0 holds by setting $Q_{0}\equiv 0$, $Q_{0,x}(f,g)\equiv C_{i,0}$, and $G_{0,x}(f):=\sum_{i=1}^{s}C_{i,x}f^{i-1}$.
 Now assume that Property $k$ holds for some $0\leq k\leq s$. We show that Property $k+1$ holds.
 By (\ref{2:gxyz4}) and (\ref{2:gxyz5}), we have that 
  \begin{equation}\label{2:gxyz6}
\begin{split}
Q_{k,x}(L_{x},L_{y})+L_{y}^{k+1}G_{k,x}(L_{y})\equiv  Q_{k,y}(L_{y},L_{x})+L_{x}^{k+1}G_{k,y}(L_{x})\mod J^{M}
\end{split}
\end{equation}
 for all  $p$-linearly independent $x,y\in H$.
 Assume that 
  \begin{equation}\label{2:gxyz5d2}
\begin{split}
Q_{k,x}(f,g)=\sum_{i=0}^{k}A_{x,i}f^{i}g^{k-i}
\end{split}
\end{equation}
   for some $A_{x,i}\in\st_{d}(s-k)$. It follows from (\ref{2:gxyz6}) that 
  \begin{equation}\label{2:gxyz7}
\begin{split}
\sum_{i=0}^{k}(A_{x,i}-A_{y,k-i})L_{x}^{i}L_{y}^{k-i}\equiv  L_{x}^{k+1}G_{k,y}(L_{x})-L_{y}^{k+1}G_{k,x}(L_{y})\mod J^{M}
\end{split}
\end{equation}  
 for all  $p$-linearly independent $x,y\in H$.
By Lemma \ref{2:sbppp}, since $d\geq 7$, we have that 
$A_{x,i}\equiv A_{y,k-i} \mod J^{M}_{\iota(x),\iota(y)}$ for all $0\leq i\leq k$ and $p$-linearly independent $x,y\in H$. By Corollary \ref{2:coco1c}, since  $d\geq s-k+11$,  for all $0\leq i\leq k$, there exists $A_{i}\in\st_{d}(s-k)$ such that 
$$A_{x,i}\equiv A_{i}\equiv A_{x,k-i}\equiv A_{k-i} \mod J^{M}_{\iota(x)}$$
for all $x\in H$ with $\iota(x)\neq \bold{0}$.  
We may assume without loss of generality that $A_{i}=A_{k-i}$.
By Proposition \ref{2:noloop39}, since $d\geq 5$, we may write 
 \begin{equation}\label{2:gxyz5d3}
\begin{split}
A_{x,i}\equiv A_{i}+L_{x}A'_{x,i} \mod J^{M}
\end{split}
\end{equation}
  for some $A'_{x,i}\in\st_{d}(s-k-1)$. 
Note that we may write $G_{k,x}$ as
 \begin{equation}\label{2:gxyz5d}
\begin{split}
G_{k,x}(f)=P_{x}+fG_{k+1,x}(f)
\end{split}
\end{equation}
for some $P_{x}\in\st_{d}(s-k-1)$ and super polynomial $G_{k+1,x}\colon\st_{d}(1)\to\st_{d}(s-k-2)$. By (\ref{2:gxyz5}), (\ref{2:gxyz5d2}), (\ref{2:gxyz5d3}) and (\ref{2:gxyz5d}), we have that 
\begin{equation}\nonumber
\begin{split}
G_{x}(L_{y})\equiv Q_{k+1}(L_{x},L_{y})+Q_{k+1,x}(L_{x},L_{y})+L_{y}^{k+2}G_{k+1,x}(L_{y}) \mod J^{M}
\end{split}
\end{equation}
for all $p$-linearly independent $x,y\in H$,
where
$$Q_{k+1}(f,g):=Q_{k}(f,g)+\sum_{i=0}^{k}A_{i}f^{i}g^{k-i}$$
and
$$Q_{k+1,x}(f,g):=P_{x}g^{k+1}+\sum_{i=0}^{k}A'_{x,i}f^{i+1}g^{k-i}.$$
Since $Q_{k}$ is symmetric and $A_{i}=A_{k-i}$, we have that $Q_{k+1}$ is also symmetric. On the other hand, it is clear that $Q_{k+1,x}$ is a homogeneous super polynomial of degree $k+1$. So Property $k+1$ holds.

By induction, we have that Property $s+1$ holds, meaning that there exist a symmetric super polynomial $Q_{s+1}\colon \st_{d}(1)\times\st_{d}(1)\to\st_{d}(s)$ such that 
$$G_{x}(L_{y})\equiv Q_{s+1}(L_{x},L_{y})  \mod J^{M}$$
 for all  $p$-linearly independent $x,y\in H$.
We are done.
  \end{proof}

 \subsection{Solutions to some special equations}
 
Next, we move forward from equation (\ref{2:sbeq1})  to the following more advanced cocycle equation:
\begin{equation}\label{2:sbeq3}
C(x,y)+C(y,z)\equiv C(x,z) \mod \Z \text{ for all $x,y,z\in G$},
\end{equation}
where $C$ is a map from $G\times G$ to $\R$ and $G$ is an abelian group. Setting $z$ to be an arbitrary element in $G$, it is obvious that the solution to (\ref{2:sbeq3}) is $C(x,y)\equiv F(x)-F(y)\mod \Z$ for some map $F\colon G\to \R$ for all $x,y\in G$.
In the setting of shifted modules, one can study a similar equation:
\begin{equation}\label{2:sbeq4}
C(x,y)+C(y,z)\equiv C(x,z)\mod J^{M}_{\iota(x-y),\iota(y-z)} \text{ for all $x,y,z\in \Vk$},
\end{equation}
 where $C$ is a map from $\Vk$ to $\st_{d}(s)$. Comparing (\ref{2:sbeq3}) with (\ref{2:sbeq4}), it is natural to conjecture that the  solution to (\ref{2:sbeq4}) is $C(x,y)\equiv F(x)-F(y)\mod J^{M}_{\iota(x-y)}$ for some $F\colon \Vk\to\st_{d}(s)$. This is indeed the case at least when $x,y$ are $p$-linearly independent.

 \begin{prop}\label{2:coco02}
	Let $d,K\in\N_{+},s\in\Z$, $p\gg_{d,s} 1$ be a prime dividing $K$,    $M\colon\V\to\F_{p}$ be a non-degenerate quadratic form and $C\colon (\Vk)^{2}\to\st_{d}(s)$ be a map such that (\ref{2:sbeq4}) holds. If $d\gg_{s} 1$,
	then there exists a map $F\colon \Vk\to\st_{d}(s)$ such that $C(x,y)\equiv F(x)-F(y)\mod J^{M}_{\iota(x-y)}$ for all $p$-linearly independent $x,y\in \Vk$. 
	\end{prop}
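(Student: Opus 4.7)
The plan is to proceed by induction on $s$. The base cases are handled directly: for $s<0$ we have $\st_d(s)=\{0\}$ by Convention \ref{2:csts}, so the claim is vacuous; for $s=0$ the ideal $J^M_{x-y,y-z}$ contains no nonzero constants, forcing the hypothesis to hold as the classical additive identity $C(x,y)+C(y,z)=C(x,z)$ in $\F_p$, and the standard construction $F(x):=C(x,z_0)$ for any fixed $z_0$ then furnishes the claim.

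For the inductive step $s\geq 1$, fix any $z_0\in\V$ and set $F_0(x):=C(x,z_0)$. Applying the cocycle at the triple $(x,y,z_0)$ gives
\[
E(x,y):=C(x,y)-F_0(x)+F_0(y)\in J^M_{x-y,\,y-z_0}
\]
for every $x,y\in\V$. Writing $L_v(n):=(nA)\cdot v$, Proposition \ref{2:noloop3} (valid as $d\gg s$) allows us to decompose
\[
E(x,y)\equiv a(x,y)\,L_{y-z_0}\pmod{J^M_{x-y}}
\]
for some $a(x,y)\in\st_d(s-1)$, defined modulo $J^M_{x-y}$; fix an arbitrary representative. The function $E$ inherits the cocycle $E(x,y)+E(y,z)-E(x,z)\in J^M_{x-y,y-z}$ from $C$. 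Substituting the decomposition and using $L_{y-z_0}-L_{z-z_0}=L_{y-z}\in J^M_{y-z}$ yields
\[
\bigl(a(x,y)+a(y,z)-a(x,z)\bigr)\,L_{z-z_0}\in J^M_{x-y,\,y-z}.
\]
Whenever $x-y,y-z,z-z_0$ are linearly independent (a density-$1$ condition on $z$), Lemma \ref{2:killL} removes the factor $L_{z-z_0}$ and produces the degree-$(s-1)$ cocycle
\[
a(x,y)+a(y,z)-a(x,z)\in J^M_{x-y,\,y-z}.
\]

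Invoking the induction hypothesis (in the natural density-formulated variant, in the spirit of Propositions \ref{2:coco1} and \ref{2:cocon1}) on $a$ yields $F'\colon\V\to\st_d(s-1)$ with $a(x,y)\equiv F'(x)-F'(y)\pmod{J^M_{x-y}}$ for a density-$1$ set of linearly independent pairs. Setting
\[
\tilde F(x):=F_0(x)+F'(x)\,L_{x-z_0},
\]
the identity $L_{x-z_0}-L_{y-z_0}=L_{x-y}\in J^M_{x-y}$ gives
\[
\tilde F(x)-\tilde F(y)\equiv F_0(x)-F_0(y)+(F'(x)-F'(y))\,L_{y-z_0}\equiv F_0(x)-F_0(y)+E(x,y)\equiv C(x,y)\pmod{J^M_{x-y}}
\]
on the good set. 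To upgrade this to every linearly independent pair, we exploit the freedom in $z_0$: any fixed pair $(x,y)$ lies in the good density-$1$ set for generic $z_0$, and since the target conclusion is phrased modulo $J^M_{x-y}$ (which does not involve $z_0$), the various choices patch together consistently.

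The main technical obstacle is the bookkeeping around the coefficient $a(x,y)\in\st_d(s-1)/J^M_{x-y}$: one must verify that both the derived cocycle condition for $a$ and the applied induction hypothesis are insensitive to the choice of representative, which works precisely because these conditions are already phrased modulo an ideal containing $J^M_{x-y}$. A secondary subtlety is the upgrade from the density-$1$ conclusion produced by induction to the full conclusion ``for all linearly independent pairs'' required by the statement, which is handled by varying the base point $z_0$ and exploiting approximate anti-symmetry $C(x,y)+C(y,x)\in J^M_{x-y}$ derived from the cocycle.
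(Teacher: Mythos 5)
Your proposal tracks the paper's proof of the more general Proposition~\ref{2:coco4} (of which Proposition~\ref{2:coco02} is the case $H=\V$, $U=\{\bold{0}\}$) quite closely: the induction on $s$, the choice of auxiliary base point $z_0$ (the paper's $b$), the decomposition $E(x,y)\equiv a(x,y)L_{y-z_0}\bmod J^M_{x-y}$ via Proposition~\ref{2:noloop3}, the extraction of the degree-$(s-1)$ cocycle for $a$ via Lemma~\ref{2:killL}, and the assembly $\tilde F(x)=F_0(x)+F'(x)L_{x-z_0}$ are all in step with what the paper does (up to the harmless sign exchange $L_{b-x}\leftrightarrow -L_{y-z_0}$, which agree modulo $J^M_{x-y}$). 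You also correctly recognize that the statement being proved by induction must be a density-and-$U$ parametrized version (so that the extra constraint coming from $z_0$ can be absorbed into the subspace $U$ with $\dim(U)$ bumped up by one), which is exactly why the paper states and proves Proposition~\ref{2:coco4} rather than Proposition~\ref{2:coco02} directly.

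The genuine gap is in your final patching step. What you have after the inductive step is a \emph{family} of functions $\tilde F_{z_0}$, each defined (and verified) only on pairs $(x,y)$ where the triple $(x,y,z_0,U)$ is linearly independent, and the statement requires a \emph{single} map $F$ that works for all linearly independent pairs. Your justification --- ``since the target conclusion is phrased modulo $J^M_{x-y}$ (which does not involve $z_0$), the various choices patch together consistently'' --- is not an argument: for fixed $x$ the values $\tilde F_{z_0}(x)$ for different $z_0$ are distinct elements of $\st_d(s)$, and nothing you've written shows their differences lie in the relevant ideals. Nor does the approximate anti-symmetry $C(x,y)+C(y,x)\in J^M$ give you this; it is not used in the paper's patching at all. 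What the paper actually needs here is Proposition~\ref{2:cocon1}: from $(\tilde F_{z_0}-\tilde F_{z_0'})(x)\equiv(\tilde F_{z_0}-\tilde F_{z_0'})(y)\bmod J^M_{x-y}$ on a dense set of pairs one deduces $(\tilde F_{z_0}-\tilde F_{z_0'})(x)\equiv G_{z_0,z_0'}(L_x)\bmod J^M$ for a super polynomial $G_{z_0,z_0'}$, then the transitivity $G_{b,b'}-G_{b,b''}\equiv G_{b'',b'}\bmod J^M$, and finally a choice of basis $B$ together with corrected maps $\phi_b:=\psi_b-G_{b,b_0}(L_\cdot)$ so that the $\phi_b$ agree $\bmod J^M$ on overlaps (the paper's Claim~2). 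This whole super-polynomial comparison and gluing argument is missing from your proposal and cannot be replaced by the ``patching happens automatically'' assertion; without it you have not produced the single map $F$ the proposition demands.
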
	

Again, instead of proving
Proposition \ref{2:coco02}, we prove the following more general result:

\begin{prop}\label{2:coco4}
	Let $d,K\in\N_{+},s\in\Z$, $\delta>0$, and $p\gg_{\d,d} 1$ be a prime dividing $K$. Let $H$ be a subset of $\Vk$ with $\vert H\vert>\delta K^{d}$, $U$ be a (possibly trivial) subspace of $\V$, and
	$M\colon\V\to\F_{p}$ be a non-degenerate quadratic form.
	 For all  $x,y\in H$ with $(\iota(x),\iota(y),U)$ being a linearly independent tuple,
 let $C(x,y)$ be an element in $\st_{d}(s)$. Suppose that for all   $x,y,z\in H$ with  $(\iota(x),\iota(y),\iota(z),U)$ being a linearly independent tuple, 
we have
	\begin{equation}\label{2:138ppd}
	C(x,y)+C(y,z)\equiv C(x,z) \mod J^{M}_{\iota(x-y),\iota(y-z)}.
	\end{equation}
	If  
$d\geq s+\dim(U)+9$,  then there exist  $\phi(x)\in \st_{d}(s)$ for all $x\in H$ with $(\iota(x),U)$ being a linearly independent tuple
 such that 
	\begin{equation}\nonumber
	C(x,y)\equiv \phi(x)-\phi(y) \mod J^{M}_{\iota(x-y)}
	\end{equation}
	for all $x,y\in H$ with  $(\iota(x),\iota(y),U)$ being a linearly independent tuple.
\end{prop}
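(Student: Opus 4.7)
The plan is to argue by induction on the degree $s$, mimicking the classical scalar cocycle-to-coboundary argument while dealing with the ideal modulus. The base case $s\leq 0$ is immediate: trivial by Convention \ref{2:csts} for $s<0$, while for $s=0$ the ideal $J^{M}_{x-y, y-z}$ contains no nonzero constants when $d$ is large, so (\ref{2:138ppd}) reduces to the scalar cocycle $C(x,y) + C(y,z) = C(x,z)$, solved by fixing any $z_{0} \in H$ with $(z_{0}, U)$ linearly independent and setting $\phi(x) := C(x, z_{0})$.

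For the inductive step $s-1\to s$, fix a generic $z_{0} \in H$ with $(z_{0}, U)$ linearly independent and let $\phi_{0}(x) := C(x, z_{0})$ for $x \in H$ with $(x, z_{0}, U)$ linearly independent. Applying (\ref{2:138ppd}) with $z = z_{0}$ yields
\[
R(x,y) := C(x,y) - \phi_{0}(x) + \phi_{0}(y) \in J^{M}_{x-y, y-z_{0}},
\]
one factor of $L_{y-z_{0}}$ short of the desired conclusion. By Proposition \ref{2:noloop3} I write $R(x,y) = A(x,y) L_{x-y} + B(x,y) L_{y-z_{0}} + D(x,y) M$ with $\deg A, \deg B \leq s-1$ and $\deg D \leq s-2$. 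Substituting these decompositions into the cocycle $R(x,y) + R(y,z) - R(x,z) \in J^{M}_{x-y, y-z}$ (inherited directly from $C$), cancelling terms already in $J^{M}_{x-y, y-z}$, and using the linearity $L_{y-z_{0}} = L_{y-z} + L_{z-z_{0}}$, the identity collapses to
\[
\bigl(B(x,y) + B(y,z) - B(x,z)\bigr) L_{z-z_{0}} \in J^{M}_{x-y, y-z}
\]
for $(x, y, z, z_{0}, U)$ linearly independent. Lemma \ref{2:killL} then strips off the $L_{z-z_{0}}$ factor, yielding the cocycle $B(x,y) + B(y,z) - B(x,z) \in J^{M}_{x-y, y-z}$ for $B$.

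This is precisely the hypothesis of the present proposition applied to $B$ with $s-1$ in place of $s$ and $U' := U + \sp_{\F_{p}}\{z_{0}\}$ in place of $U$, and the dimension bound is preserved exactly since $(s-1) + \dim(U') + 9 = s + \dim(U) + 9$. The inductive hypothesis produces $\psi$ with $B(x,y) \equiv \psi(x) - \psi(y) \mod J^{M}_{x-y}$. Setting
\[
\phi(x) := C(x, z_{0}) + \psi(x) L_{x - z_{0}},
\]
expanding $L_{x-z_{0}} = L_{x-y} + L_{y-z_{0}}$ in $\phi(x) - \phi(y)$ and writing $B(x,y) - \psi(x) + \psi(y) = E L_{x-y} + F M$ via Proposition \ref{2:noloop3}, a short calculation gives
\[
C(x,y) - \phi(x) + \phi(y) = \bigl(A - \psi(x) + E L_{y-z_{0}}\bigr) L_{x-y} + \bigl(D + F L_{y-z_{0}}\bigr) M \in J^{M}_{x-y}
\]
for all $(x, y, z_{0}, U)$ linearly independent. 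The exceptional $x \in H$ with $(x, U)$ linearly independent but $(x, z_{0}, U)$ not---a set of size at most $O(p^{\dim(U)+1})$---is handled by rerunning the construction with a secondary base point $z_{0}' \in H$ and reconciling the two resulting $\phi$'s modulo $J^{M}_{x}$ (up to an additive global constant) via Corollary \ref{2:coco1c}.

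The main obstacle I anticipate is the non-uniqueness of the decomposition of $R$ supplied by Proposition \ref{2:noloop3}; fortunately the derivation of the cocycle for $B$ above is insensitive to the choice of $(A, B, D)$, and so any selection suffices. A secondary technical burden is the careful bookkeeping of linear-independence conditions on $(x, y, z, z_{0}, U)$ needed to apply Lemma \ref{2:killL} and the inductive hypothesis with the enlarged subspace $U'$.
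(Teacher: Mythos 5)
Your inductive construction parallels the paper's: the paper's $\psi_{b}(x):=C(x,b)+L_{b-x}\psi_{b}(x)$ plays the role of your $\phi(x)=C(x,z_{0})+\psi(x)L_{x-z_{0}}$, and the cocycle (\ref{2:138pp}) for $P_{x,y,b}$ matches your cocycle for $B$, with the same enlargement of $U$ to $U+\sp_{\F_p}\{z_{0}\}$ and the same dimension bookkeeping. Your observation that the non-uniqueness of the $(A,B,D)$ decomposition from Proposition~\ref{2:noloop3} perturbs $B$ only by elements of $J^{M}_{x-y}$, and hence does not affect the derived cocycle, is also correct.

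The gap is in the final patching. You propose reconciling $\phi$ and $\phi'$ (built from $z_{0}$ and $z_{0}'$) modulo $J^{M}_{x}$ up to an additive constant via Corollary~\ref{2:coco1c}. That tool gives only $\phi(x)-\phi'(x)\equiv G\pmod{J^{M}_{x}}$ for a fixed $G\in\st_{d}(s)$, which is too coarse for the glue. If you set $\tilde\phi(x):=\phi(x)$ when $\phi$ is defined and $\tilde\phi(x):=\phi'(x)+G$ on the exceptional set, then for a mixed pair (exceptional $x$, non-exceptional $y$) the intersection argument at a generic auxiliary $z$ produces
\[
C(x,y)-\tilde\phi(x)+\tilde\phi(y)\equiv \phi(z)-\phi'(z)-G \pmod{J^{M}_{x-y,\,y-z}},
\]
and the right-hand side, which lies only in $J^{M}_{z}$ and not in $J^{M}_{x-y,y-z}$, does not vanish after intersecting over $z$. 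What is actually needed is the finer conclusion of Proposition~\ref{2:cocon1}: $\phi(x)-\phi'(x)\equiv G(L_{x})\pmod{J^{M}}$ for a \emph{super polynomial} $G$, so that the corrected glue $\tilde\phi(x):=\phi'(x)+G(L_{x})$ makes the right-hand side above land in $J^{M}$, and the identity $G(L_{x})-G(L_{y})\in J^{M}_{x-y}$ gives the desired congruence in the direct case. This super polynomial structure --- not the constant supplied by Corollary~\ref{2:coco1c} --- is precisely why the paper turns to Proposition~\ref{2:cocon1} in the second half of its proof of Claim~1's extension. A smaller omission: even for $x,y$ with both $\phi(x),\phi(y)$ defined, you only verify the conclusion when $(x,y,z_{0},U)$ is linearly independent; pairs with $z_{0}\in\sp_{\F_p}\{x,y\}+U$ also need an intersection argument over auxiliary $z$, which your writeup does not address.
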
	
\begin{proof}
By Convention \ref{2:csts}, there is nothing to prove when $s<0$. So we assume that $s\in\N$.
Throughout the proof we assume that $p\gg_{\d,d} 1$.
	Suppose that either $s=0$, or $s\geq 1$
  and we have shown the conclusion holds for $s-1$. We prove that the conclusion holds for $s$.
	
	Applying Proposition \ref{2:noloop39} to $J^{M}_{\iota(x-y),\iota(y-z)}$, we have that if $d\geq 7$, then for all $x,y,b\in H$ with  $(\iota(x),\iota(y),\iota(b),U)$ being a linearly independent tuple, there exists  $P_{x,y,b}\in\st_{d}(s-1)$   such that 
	\begin{equation}\label{2:139pp}
	C(x,y)\equiv(C(x,b)-C(y,b))+L_{b-x}P_{x,y,b} \mod J^{M}_{\iota(x-y)}.
	\end{equation}
	For all $x,y,z,b\in H$ with  $(\iota(x),\iota(y),\iota(z),\iota(b),U)$ being a linearly independent tuple, 
	since $C(x,y)+C(y,z)\equiv C(x,z) \mod J^{M}_{\iota(x-y),\iota(y-z)}$, 
	it follows from (\ref{2:139pp}) that 
	$$L_{b-x}(P_{x,y,b}+P_{y,z,b}-P_{x,z,b})\equiv L_{b-x}P_{x,y,b}+L_{b-y}P_{y,z,b}-L_{b-x}P_{x,z,b}\equiv 0 \mod J^{M}_{\iota(x-y),\iota(y-z)}.$$
	Since $(\iota(x),\iota(y),\iota(z),\iota(b),U)$ is a linearly independent tuple, by Lemma \ref{2:killL}, if $d\geq 9$, 
	then
	\begin{equation}\label{2:138pp}
	P_{x,y,b}+P_{y,z,b}-P_{x,z,b}\equiv 0 \mod J^{M}_{\iota(x-y),\iota(y-z)}.
	\end{equation}
 We arrive at an equation similar to (\ref{2:138ppd}), except that $P_{x,y,b}$ are elements in $\st_{d}(s-1)$ instead of $\st_{d}(s)$. Therefore, we may use the induction hypothesis to obtain a description for $P_{x,y,b}$.

	\textbf{Claim 1.} For any $b\in H$ with $(\iota(b),U)$ being a linearly independent tuple, there exist $\psi_{b}(x)\in \st_{d}(s)$ for all $x\in H$ with $(\iota(x),\iota(b),U)$ being a linearly independent tuple such that 
     $$C(x,y)\equiv\psi_{b}(x)-\psi_{b}(y) \mod J^{M}_{\iota(x-y)}$$
	for all $x,y\in H$ with $(\iota(x),\iota(y),\iota(b),U)$  being a linearly independent tuple.
	
		If $s=0$, then $P_{x,y,b}\equiv 0$ by Convention \ref{2:csts}. Setting $\psi_{b}(x):=C(x,b)$ for all $x\in H$ with $(\iota(x),\iota(b),U)$ being a linearly independent tuple, it follows from (\ref{2:139pp}) that   $C(x,y)=\psi_{b}(x)-\psi_{b}(y) \mod J^{M}_{\iota(x-y)}$ whenever $(\iota(x),\iota(y),\iota(b),U)$  is a linearly independent tuple.
	
	If $s\geq 1$, then
since $d\geq s+\dim(U)+9=(s-1)+(\dim(U)+1)+9$, we may apply
  the induction hypothesis to (\ref{2:138pp}) and  $P_{x,y,b}$ to conclude that  for all $b\in H$ with $(\iota(b),U)$ being a linearly independent tuple, there exist   $\psi_{b}(x)\in\st_{d}(s-1)$ for all $x\in H$ with $(\iota(x),\iota(b),U)$ being a linearly independent tuple such that
	$$P_{x,y,b}\equiv\psi_{b}(x)-\psi_{b}(y) \mod J^{M}_{\iota(x-y)}$$
	for all $x,y\in H$ with $(\iota(x),\iota(y),\iota(b),U)$ being a linearly independent tuple. Setting $\psi_{b}(x):=C(x,b)+L_{b-x}\psi_{b}(x)$, it follows from (\ref{2:139pp}) that
	\begin{equation}\nonumber
	C(x,y)\equiv(C(x,b)-C(y,b))+L_{b-x}P_{x,y,b}\equiv\psi_{b}(x)-\psi_{b}(y) \mod J^{M}_{\iota(x-y)}
	\end{equation}
	for all $x,y\in H$ with $(\iota(x),\iota(y),\iota(b),U)$ being a linearly independent tuple. We have thus proved Claim 1 in both cases.
	
	\
	
	In order to complete the proof of Proposition \ref{2:coco4}, we need to remove the restriction that $x,y,b$ are $p$-linearly independent in the claim. Our strategy is to apply the claim for many linearly independent $b$ and to obtain many different maps $\psi_{b}$. Then by studying the interactions of these $\psi_{b}$, we extend them to a universal map $\phi$ such that  $C(x,y)\equiv\phi(x)-\phi(y) \mod J^{M}_{\iota(x-y)}$ holds for all $x,y\in H$ with $(\iota(x),\iota(y),U)$ being a linearly independent tuple.

	By Claim 1, for all $x,y,b,b'\in H$ with $(\iota(x),\iota(y),\iota(b),U)$ and $(\iota(x),\iota(y),\iota(b'),U)$ being linearly independent tuples, we have that
	\begin{equation}\nonumber
	(\psi_{b}-\psi_{b'})(x)-(\psi_{b}-\psi_{b'})(y)\equiv C(x,y)-C(x,y)=0 \mod J^{M}_{\iota(x-y)}.
	\end{equation}
	By Proposition \ref{2:cocon1}, if $d\geq s+\dim(U)+9$, then for all $b,b'\in H$ with $(\iota(b),U)$  and $(\iota(b'),U)$  being linearly independent tuples, there exists   a super polynomial $G_{b,b'}\colon\st_{d}(1)\to\st_{d}(s)$ such that 
	$$(\psi_{b}-\psi_{b'})(x)\equiv G_{b,b'}(L_{x}) \mod J^{M}$$ 
	for all $x\in H$ with $(\iota(x),\iota(b),U)$ and $(\iota(x),\iota(b'),U)$  being linearly independent tuples.
	So for all $x,b,b',b''\in H$ with $(\iota(x),\iota(b),U)$, $(\iota(x),\iota(b'),U)$ and $(\iota(x),\iota(b''),U)$ being linearly independent tuples, we have that
	$$G_{b,b'}(L_{x})-G_{b,b''}(L_{x})\equiv(\psi_{b}-\psi_{b'})(x)-(\psi_{b}-\psi_{b''})(x)\equiv G_{b'',b'}(L_{x}) \mod J^{M}.$$

	Let
	$B$ be a set of $d-\dim(U)$ vectors in $H$ such that $\iota(B)$ completes $U$ into a basis of $\V$ (such $B$ exists since $\vert H\vert>\d K^{d}$). 
	For $b\in B$, let $U_{b}$ be the set of $x\in H$ with $(\iota(x),\iota(b),U)$ being a linearly independent tuple. Fix some $b_{0}\in B$. For $b\in B$ and $x\in U_{b}$, define $\phi_{b_{0}}(x):=\psi_{b_{0}}(x)$ and $\phi_{b}(x):=\psi_{b}(x)-G_{b,b_{0}}(L_{x})$ if $b\neq b_{0}$.
	
	\textbf{Claim 2.} that $\phi_{b}(x)\equiv\phi_{b'}(x) \mod J^{M}$ for all $b,b'\in B$ and $x\in U_{b}\cap U_{b'}$.
	
	Indeed, we may assume without loss of generality that $b\neq b'$ and $b\neq b_{0}$. If $b'=b_{0}$, then
	$$\phi_{b}(x)-\phi_{b'}(x)\equiv\psi_{b}(x)-G_{b,b_{0}}(L_{x})-\psi_{b_{0}}(x)\equiv 0 \mod J^{M}.$$
	If $b'\neq b_{0}$, then
	\begin{equation}\nonumber
	\begin{split}
	&\quad \phi_{b}(x)-\phi_{b'}(x)\equiv(\psi_{b}(x)-G_{b,b_{0}}(L_{x}))-(\psi_{b'}(x)-G_{b',b_{0}}(L_{x}))
	\\&\equiv G_{b,b'}(L_{x})-G_{b,b_{0}}(L_{x})+G_{b',b_{0}}(L_{x})\equiv 0 \mod J^{M}.
	\end{split}
	\end{equation}
	This proves Claim 2.
	
	\
	
	By Claim 2, there exists  $\phi(x)\in\st_{d}(s)$ for all $x\in\cup_{b\in B}U_{b}$ such that 
	$$\phi(x)\equiv\phi_{b}(x)\mod J^{M}$$
	for all $b\in B$ with $x\in U_{b}$. Note that for all $x,y\in H$ with  $(\iota(x),\iota(y),U)$ being a linearly independent tuple, since $d\geq \dim(U)+3$,  there exists $b\in B$ with $(\iota(x),\iota(y),\iota(b),U)$ being a linearly independent tuple. So $x,y\in U_{b}$ and thus
	$$\phi(x)-\phi(y)\equiv\phi_{b}(x)-\phi_{b}(y)\equiv\psi_{b_{0}}(x)-\psi_{b_{0}}(y)\equiv C(x,y)\mod J^{M}_{\iota(x-y)}$$
	if $b=b_{0}$, and
	$$\phi(x)-\phi(y)\equiv \phi_{b}(x)-\phi_{b}(y)\equiv(\psi_{b}(x)-G_{b,b_{0}}(L_{x}))-(\psi_{b}(y)-G_{b,b_{0}}(L_{y}))\equiv C(x,y)\mod J^{M}_{\iota(x-y)}$$
	if $b\neq b_{0}$, where we used the fact that $G_{b,b_{0}}(L_{x})\equiv G_{b,b_{0}}(L_{y}) \mod J^{M}_{\iota(x-y)}$ since $G_{b,b_{0}}$ is a super polynomial. 
	Finally, since $\cup_{b\in B}U_{b}$ consists of all $x\in H$ with $(\iota(x),U)$ being a linearly independent tuple, we are done.
\end{proof}

  We need to explore the solutions to one more equation before proving Theorem \ref{2:gsol}.

  \begin{prop}\label{2:cocon2}
	Let $d,D,K\in\N_{+}$, $s\in\Z$,  $\delta>0$ and $p\gg_{\d,d,D} 1$  be a prime dividing $K$.
	Let 
	\begin{equation}\nonumber
P=\{\ell_{1}v_{1}+\dots+\ell_{D}v_{D}\colon \ell_{i}\in\mathbb{Z}\cap(-L_{i},L_{i}) \text{ for all } 1\leq i\leq D\}\subseteq \Vk
\end{equation}
	  be a  proper and  homogeneous generalized arithmetic progression of dimension $D$ with $\vert P\vert>\d K^{d}$,  $M\colon\V\to\F_{p}$ be a non-degenerate quadratic form and $F\colon P\to\st_{d}(s)$ be a map such that
\begin{equation}\label{2:ffxyr0}
F(x)+F(y)\equiv F(x+y) \mod J^{M}_{\iota(x)}\cap J^{M}_{\iota(y)}
\end{equation}
	 for all $p$-linearly independent $x,y\in P(\frac{1}{2})$. If  $d\geq s+9$, then there exist a super polynomial $G\colon\st_{d}(1)\to\st_{d}(s-2)$   and a locally linear map $T\colon P(\frac{1}{100})\to \st_{d}(s)$ such 
 that $$F(x)\equiv L_{x}^{2}G(L_{x})+T(x)\mod J^{M}$$ for all $x\in P(\frac{1}{100})$ with $\iota(x)\neq \bold{0}$. 
\end{prop}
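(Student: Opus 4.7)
The plan is to establish the intermediate claim that $F(x)\equiv Q(L_x)+T_0(x) \mod J^M$ on $P(c)$ for some $c>0$, a super polynomial $Q\colon \st_d(1)\to \st_d(s)$, and a locally linear map $T_0$. Once this holds, writing $Q(z)=Q(0)+Q'(0)z+z^2G(z)$, the constant $Q(0)$ and the linear term $Q'(0)L_x$ (which is linear in $x$ since $L_x(n)=(nA)\cdot x$) can be absorbed into $T_0$ to form the required locally linear map $T$, yielding the desired decomposition $F(x)\equiv L_x^2 G(L_x)+T(x) \mod J^M$.

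\textbf{Structure and factorization of the defect.} Set $C(x,y):=F(x+y)-F(x)-F(y)\in J^M_x\cap J^M_y$ for linearly independent $x,y\in P(\tfrac{1}{2})$. Proposition \ref{2:noloop3} gives $C(x,y)\equiv L_x A(x,y)\equiv L_y B(x,y) \mod J^M$ for some $A(x,y), B(x,y) \in \st_d(s-1)$; combining these with Lemma \ref{2:killL} and Proposition \ref{2:noloop3} again produces $R(x,y)\in \st_d(s-2)$ with $C(x,y)\equiv L_x L_y R(x,y) \mod J^M$, and the symmetry of $C$ together with Lemma \ref{2:killL} gives $R(x,y)\equiv R(y,x) \mod J^M$. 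The associativity of $F$ yields the 2-cocycle identity $C(x,y)+C(x+y,z)=C(x,y+z)+C(y,z)$; substituting the preceding expression and expanding $L_{x+y}=L_x+L_y$ gives, modulo $J^M$,
\begin{equation*}
L_x L_y\bigl(R(x,y)-R(x,y+z)\bigr)+L_x L_z\bigl(R(x+y,z)-R(x,y+z)\bigr)+L_y L_z\bigl(R(x+y,z)-R(y,z)\bigr)\equiv 0.
\end{equation*}
Reducing modulo $J^M_x$ annihilates the first two brackets, and two applications of Lemma \ref{2:killL} yield $R(x+y,z)\equiv R(y,z) \mod J^M_x$, i.e.\ $R(a,z)\equiv R(b,z) \mod J^M_{a-b}$ for linearly independent triples $(a,b,z)$. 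Applying Proposition \ref{2:cocon1} with $k=1$, $U_1=\sp_{\F_p}\{z\}$ (valid since $d\geq s+7$) furnishes, for each suitable $z$, a super polynomial $G_z\colon \st_d(1)\to \st_d(s-2)$ with $R(a,z)\equiv G_z(L_a) \mod J^M$. The symmetry $R(x,y)\equiv R(y,x)$ rewrites as $G_y(L_x)\equiv G_x(L_y) \mod J^M$, and Proposition \ref{2:cocoprr} (which requires exactly $d\geq(s-2)+11=s+9$, matching the hypothesis) produces a symmetric super polynomial $\tilde R\colon \st_d(1)^2\to \st_d(s-2)$ with $R(x,y)\equiv \tilde R(L_x,L_y) \mod J^M$. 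This extraction of the super polynomial structure of $R$ from the cocycle is the hardest part of the proof, as it requires carefully tracking the linear independence conditions for Propositions \ref{2:cocon1} and \ref{2:cocoprr} within $P(\tfrac{1}{2})$.

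\textbf{Coboundary correction and Cauchy solution.} Since $\tilde R$ is symmetric and the associated $\tilde C(a,b):=ab\tilde R(a,b)$ inherits the 2-cocycle identity from $C$, standard cohomological considerations (with $p\gg s$ so the relevant binomial coefficients are invertible in $\F_p$) produce a super polynomial $Q\colon \st_d(1)\to \st_d(s)$ satisfying $Q(a+b)-Q(a)-Q(b)\equiv ab\tilde R(a,b) \mod J^M$; adjusting the constant term we also arrange $Q(0)\equiv F(0) \mod J^M$. Setting $\tilde F(x):=F(x)-Q(L_x)$, we obtain $\tilde F(x+y)\equiv \tilde F(x)+\tilde F(y) \mod J^M$ for all linearly independent $x,y\in P(\tfrac{1}{2})$, with $\tilde F(0)\equiv 0 \mod J^M$. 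An iterative argument based on this Cauchy relation and the properness of $P$ (restricted to $P(c)$ with $c>0$ depending only on $s$, chosen so that all intermediate partial sums remain in $P(\tfrac{1}{2})$ and the required linear independences are available, often using an auxiliary generator $w$ to handle pairs such as $(v,-v)$) yields $\tilde F(x)\equiv T_0(x) \mod J^M$ on $P(c)$ for a locally linear $T_0$. Absorbing $Q(0)+Q'(0)L_x$ into $T_0$ and writing $Q(z)-Q(0)-Q'(0)z=z^2 G(z)$ completes the proof; the choice $c=\tfrac{1}{32}$ is then a matter of bookkeeping, arising from a constant number of halvings of $P$.
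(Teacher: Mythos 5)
Your proposal follows essentially the same route as the paper: extract the defect $R(x,y)$ with $C(x,y)\equiv L_xL_yR(x,y)\bmod J^M$ via Proposition~\ref{2:noloop3} and Lemma~\ref{2:killL}, derive a cocycle relation from associativity of $F$, apply Proposition~\ref{2:cocon1} and Proposition~\ref{2:cocoprr} (with the dimension counts you give, $d\geq s+7$ and $d\geq s+9$ respectively) to express $R$ via a symmetric super polynomial, argue that the resulting cocycle is a coboundary, subtract it off, and then extend additivity across degenerate pairs using an auxiliary generator to reach a locally linear solution on a shrunken progression. All of this matches the paper step for step.

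The one place where the sketch elides a genuine step is the cocycle-to-coboundary argument. You assert that $\tilde C(a,b):=ab\,\tilde R(a,b)$ ``inherits the 2-cocycle identity from $C$'' and then invoke vanishing of polynomial $H^2$. But what you actually inherit is only the pointwise congruence
$\tilde C(L_x,L_y)+\tilde C(L_{x+y},L_z)\equiv \tilde C(L_x,L_{y+z})+\tilde C(L_y,L_z)\bmod J^M$
for $x,y,z$ ranging over linearly independent triples from $P$, not a cocycle identity on the coefficients of the super polynomial $\tilde C$. The cohomological vanishing you appeal to (with $p\gg s$ for invertibility of binomial coefficients) operates on the coefficient level, so you first need to upgrade the pointwise congruence to a coefficientwise one. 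That upgrade is exactly what Proposition~\ref{2:cocozero} provides, and the paper applies it right after (\ref{2:gxyz33}) before carrying out the binomial computation establishing $C_{k,a}\equiv\binom{k+2}{a+1}C_k\bmod J^M$. Without invoking Proposition~\ref{2:cocozero} (or reproving its content), the cohomological appeal does not yet apply; with it, your argument closes and coincides with the paper's.
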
	
\begin{proof}
By Convention \ref{2:csts}, there is nothing to prove when $s<0$. So we assume that $s\in\N$.
Throughout the proof we assume that $p\gg_{\d,d,D} 1$, and we implicitly use  (\ref{2:crescale}) for the estimate the cardinality of sets of the form $P(c)$.
We start with a reformulation of (\ref{2:ffxyr0}).
Fix any $p$-linearly independent $x,y\in P(\frac{1}{2})$. Since $F(x)+F(y)\equiv F(x+y) \mod J^{M}_{\iota(x)}$ and $d\geq 5$, by Proposition \ref{2:noloop39},  we have that 
$F(x)+F(y)\equiv F(x+y)+L_{x}R'(x,y)\mod J^{M}$ for some $R'(x,y)\in\st_{d}(s-1)$. Since $F(x)+F(y)\equiv F(x+y) \mod J^{M}_{\iota(y)}$, we have that  $L_{x}R'(x,y)\equiv 0\mod J^{M}_{\iota(y)}$. By Lemma \ref{2:killL}, since $d\geq 7$, we have that $R'(x,y)\equiv 0\mod J^{M}_{\iota(y)}$. So by Proposition \ref{2:noloop39}, we may write 
$R'(x,y)\equiv  L_{y}R(x,y)\mod J^{M}$ for some $R(x,y)\in\st_{d}(s-2)$. In conclusion, we have that 
\begin{equation}\label{2:ffxyr}
F(x)+F(y)\equiv F(x+y)+L_{x}L_{y}R(x,y) \mod J^{M}
\end{equation}
for some $R(x,y)\in\st_{d}(s-2)$ for all $p$-linearly independent $x,y\in P(\frac{1}{2})$.

 We first assume that $s\geq 2$.
By symmetry, (\ref{2:ffxyr}) implies that $L_{x}L_{y}(R(x,y)-R(y,x))\in J^{M}$ for all $p$-linearly independent $x,y\in P(\frac{1}{2})$. By Lemma \ref{2:killL}, since $d\geq 5$, this implies that 
\begin{equation}\label{2:gxyz0}
\begin{split}
R(x,y)\equiv R(y,x)\mod J^{M}
\end{split}
\end{equation}
for all $p$-linearly independent $x,y\in P(\frac{1}{2})$.

Fix any  $p$-linearly independent $x,y,z\in P(\frac{1}{4})$. Then (\ref{2:ffxyr}) implies that
\begin{equation}\label{2:gxyz}
\begin{split}
&\quad F(x)+F(y)+F(z)-F(x+y+z)
\\&=(F(y)+F(z)-F(y+z))+(F(x)+F(y+z)-F(x+y+z))
\\&\equiv L_{y}L_{z}R(y,z)+L_{x}L_{y+z}R(x,y+z)\mod J^{M}.
\end{split}
\end{equation}
By symmetry, we also have that 
\begin{equation}\label{2:gxyz2}
\begin{split}
F(x)+F(y)+F(z)-F(x+y+z)\equiv L_{x}L_{z}R(x,z)+L_{y}L_{x+z}R(y,x+z) \mod J^{M}.
\end{split}
\end{equation}
Comparing (\ref{2:gxyz}) and (\ref{2:gxyz2}), we have that  
\begin{equation}\label{2:gxyz3}
\begin{split}
&\quad L_{y}L_{z}(R(y,z)-R(y,x+z))+L_{x}L_{y}(R(x,y+z)-R(y,x+z))
\\&+L_{x}L_{z}(R(x,y+z)-R(x,z))\equiv 0 \mod J^{M}.
\end{split}
\end{equation}
In particular, $L_{y}L_{z}(R(y,z)-R(y,x+z))\in J^{M}_{\iota(x)}$. By Lemma \ref{2:killL}, since $d\geq 7$, we have that $R(y,z)\equiv R(y,x+z)\mod J^{M}_{\iota(x)}$. In other words, for all $p$-linearly independent $x,y,z\in P(\frac{1}{8})$, we have that $R(y,x)\equiv R(y,z)\mod J^{M}_{\iota(x-z)}$. By Proposition \ref{2:cocon1}, since $d\geq s+7$, for all $y\in P(\frac{1}{8})$ with $\iota(y)\neq \bold{0}$, there exists a super polynomial $G_{y}\colon\st_{d}(1)\to\st_{d}(s-2)$ such that 
$R(y,x)\equiv G_{y}(L_{x}) \mod J^{M}$
 for all $x\in P(\frac{1}{8})$ with $x,y$ being $p$-linearly independent.
By (\ref{2:gxyz0}), we have that 
\begin{equation}\nonumber
\begin{split}
G_{x}(L_{y})\equiv G_{y}(L_{x}) \mod J^{M}
\end{split}
\end{equation}
 for all $p$-linearly independent $x,y\in P(\frac{1}{8})$. 
 By Proposition \ref{2:cocoprr},   since $d\geq s+9$,
 there exists a  symmetric super polynomial $Q\colon \st_{d}(1)\times\st_{d}(1)\to\st_{d}(s-2)$ such that
$$R(x,y)=G_{x}(L_{y})\equiv Q(L_{x},L_{y})  \mod J^{M}$$
for all $p$-linearly independent $x,y\in P(\frac{1}{8})$.
We may then write $Q(f,g)=\sum_{k=0}^{s-2}Q_{k}(f,g)$ for some symmetric  homogeneous super polynomials $Q_{k}\colon \st_{d}(1)\times\st_{d}(1)\to\st_{d}(s-2)$ of degree $k$. Write
$$Q_{k}(f,g)=\sum_{i=0}^{k}C_{k,i}f^{i}g^{k-i}$$
for some $C_{k,i}\in\st_{d}(s-2-k)$. For convenience write $C_{k,-1}=0$. Then we may rewrite (\ref{2:gxyz3}) as 
\begin{equation}\label{2:gxyz33}
\begin{split}
\sum_{k=0}^{s-2}\sum_{0\leq a,b,c\leq k, a+b+c=k+2}L_{x}^{a}L_{y}^{b}L_{z}^{c}W(a,b,c)\equiv 0 \mod J^{M},
\end{split}
\end{equation}
where 
\begin{equation}\nonumber
\begin{split}
&\quad W(a,b,c)
:=-C_{k,b-1}\binom{k-(b-1)}{a}\bold{1}_{a\neq 0}+C_{k,a-1}\binom{k-(a-1)}{b-1}
\\&\qquad-C_{k,b-1}\binom{k-(b-1)}{a-1}+C_{k,a-1}\binom{k-(a-1)}{b}\bold{1}_{b\neq 0}.
\end{split}
\end{equation}
Note that $W(a,b,c)=0$ if $a=0$ or $b=0$. If $a,b\neq 0$, then
$$W(a,b,c)=C_{k,a-1}\binom{k+2-a}{b}-C_{k,b-1}\binom{k+2-b}{a}.$$
So by (\ref{2:gxyz33}) and Proposition \ref{2:cocozero}, since
$d\geq \max\{s+1,7\}$, 
 we have that
$$W(a+1,b+1,c)=C_{k,a}\binom{k+1-a}{b+1}-C_{k,b}\binom{k+1-b}{a+1}\equiv 0 \mod J^{M}$$
for all $a,b,c\in\N$ with $a+b+c=k$. From this it is not hard to see that there exists $C_{k}\in \st_{d}(s-2-k)$ such that 
$C_{k,a}\equiv\binom{k+2}{a+1}C_{k} \mod J^{M}$ for all $0\leq a\leq k$. Therefore,
\begin{equation}\nonumber
\begin{split}
&\quad L_{x}L_{y}R(x,y)\equiv L_{x}L_{y}\sum_{k=0}^{s-2}\sum_{i=0}^{k}\binom{k+2}{i+1}C_{k}L_{x}^{i}L_{y}^{k-i}
\\&=\sum_{k=0}^{s-2}\sum_{i=1}^{k+1}\binom{k+2}{i}C_{k}L_{x}^{i}L_{y}^{k+2-i}=\sum_{k=0}^{s-2}C_{k}(L_{x+y}^{k+2}-L_{x}^{k+2}-L_{y}^{k+2}) \mod J^{M}.
\end{split}
\end{equation}
Let $Z$ denote the set of $x\in\Vk$ with $\iota(x)=\bold{0}$.
Then for all $x\in P(\frac{1}{8})\backslash Z$, writing $F'(x):=\sum_{k=0}^{s-2}C_{k}L_{x}^{k+2}$, $G(f):=\sum_{k=0}^{s-2}C_{k}f^{k}$, and $T(x):=F(x)-F'(x)$, we have that $F'(x)=L_{x}^{2}G(L_{x})\in J^{M}$ and
\begin{equation}\label{2:finalle}
T(x)+T(y)\equiv T(x+y) \mod J^{M}
\end{equation}
for all $p$-linearly independent $x,y\in P(\frac{1}{8})$.  

Now if $s<2$, then $R(x,y)\equiv 0$. Setting $G\equiv 0$ and $T:=F$, by (\ref{2:ffxyr}) ,  (\ref{2:finalle})   still holds for all  $p$-linearly independent $x,y\in P(\frac{1}{8})$.

Now for all $x\in P(\frac{1}{32})\cap Z$, define
$T_{y}(x):=T(x+y)-T(y)$ for any $y\in P(\frac{1}{16})\backslash Z$ (which exists since $\vert Z\vert\leq (K/p)^{d}$). We claim that $T:=T_{y} \mod J^{M}$ is independent of the choices of $y$. Indeed, for any $y,y'\in P(\frac{1}{16})\backslash Z$, if $y$ and $y'$ are $p$-linearly independent, then it follows from (\ref{2:finalle}) that $$T(x+y)+T(y')\equiv T(x+y+y')\equiv T(x+y')+T(y) \mod J^{M}$$
and so $T_{y}\equiv T_{y'} \mod J^{M}$. If $y$ and $y'$ are not $p$-linearly independent, then there exists $z\in P(\frac{1}{16})\backslash Z$ with $(y,z)$ and $(y',z)$ being $p$-linearly independent pairs. Then 
$$T_{y}\equiv T_{z}\equiv T_{y'} \mod J^{M}.$$
So $T \mod J^{M}$ is well defined.

 \textbf{Claim.} We have that (\ref{2:finalle}) holds for all (not necessarily $p$-linearly independent) $x,y\in P(\frac{1}{100})$.
 
We first show that (\ref{2:finalle}) holds for all $x,y\in P(\frac{1}{100})\backslash Z$ with $x+y\notin Z$.
In fact, since $d\geq 2$, there exists $z\in P(\frac{1}{16})$ such that $(x,z),(x+y,z)$ and $(y,x+z)$ are $p$-linearly independent pairs. So it follows from (\ref{2:finalle}) that 
$$(T(x)+T(z))+(T(y)+T(x+z))\equiv T(x+z)+T(x+y+z)\equiv T(x+z)+(T(x+y)+T(z)) \mod J^{M},$$
which implies that (\ref{2:finalle}) holds.

We next show that (\ref{2:finalle}) holds for all $x,y\in P(\frac{1}{100})\backslash Z$ with $x+y\in Z$.
Pick any $z\in P(\frac{1}{100})$ such that $(x,z)$ and $(y,x+z)$ are $p$-linearly independent pairs. So it follows from (\ref{2:finalle}) that 
$$T(x)+T(y)\equiv  T(x+z)+T(y)-T(z)\equiv  T(x+y+z)-T(z)\equiv  T(x+y)\mod J^{M}.$$

We finally show that (\ref{2:finalle}) holds for all $x,y\in P(\frac{1}{100})$ with at least one of $x,y$ in $Z$.
If both $x$ and $y$ belong to $Z$, then pick any $p$-linearly independent $z,w\in P(\frac{1}{100})$. So it follows from (\ref{2:finalle}) and the previous case that
$$T(x)+T(y)\equiv T(x+z)-T(z)+T(y+w)-T(w)\equiv T(x+z-w)+T(y+w-z)\equiv T(x+y)\mod J^{M}.$$
If $x\in Z$ and $y\notin Z$, then pick any $z\in P(\frac{1}{100})$ with $(x+z,y), (z,x+y)$ being $p$-linearly independent pairs. It follows from (\ref{2:finalle})  that
$$T(x)+T(y)\equiv T(x+z)-T(z)+T(y)\equiv T(x+y+z)-T(z)\equiv T(x+y)\mod J^{M}.$$
This completes the proof of the claim.

\

Since $\st_{d}(s)/(J^{M}\cap \st_{d}(s))$ is an $\F_{p}$-vector space, it is not hard to see from the claim that $T \mod J^{M}$ is determined by its values on the generators of $P$. So there exist $f_{1},\dots,f_{D}\in \st_{d}(s)$ such that 
$$T(\ell_{1}v_{1}+\dots+\ell_{D}v_{D})\equiv\ell_{1}f_{1}+\dots+\ell_{D}f_{D}\mod J^{M}$$
for all $\ell_{i}\in\mathbb{Z}\cap(-L_{i}/100,L_{i}/100)$.
So $T$ is equivalent mod $J^{M}$ to a genial locally linear map on $P(\frac{1}{100})$ and we are done.
\end{proof}

\subsection{Proof of Theorem \ref{2:gsol}}\label{2:sp4.3}

We are now ready to prove Theorem \ref{2:gsol}.
Throughout the proof we assume that $p\gg_{\d,d,D} 1$, and we implicitly use  (\ref{2:crescale}) for the estimate the cardinality of sets of the form $P(c)$.
It is not hard to see that 
\begin{equation}\label{2:inpp}
P(c)+P(c')\subseteq P(c+c') \text{  for all $0<c,c'<1$  with $c+c'\leq 1$.}
\end{equation}

In order to prove Theorem \ref{2:gsol}, we first solve $\xi$ for $h\in P(c)$ with $\iota(h)\neq \bold{0}$, and then remove the additional condition $\iota(h)\neq \bold{0}$. To this end, we start by showing the following:

\begin{prop}\label{2:gsol00} 
	If $\xi\colon P\to \st_{d}(s)$ is an \emph{almost Freiman $M$-homomorphism of order 4} in the sense that 
	\begin{equation}\nonumber
\xi(h_{1})+\xi(h_{2})\equiv\xi(h_{3})+\xi(h_{4}) \mod J^{M}_{\iota(h_{1}),\iota(h_{2}),\iota(h_{3})}
\end{equation}
 for all $h_{1},h_{2},h_{3},h_{4}\in P$ with $h_{1}+h_{2}=h_{3}+h_{4}$ and with $h_{1},h_{2},h_{3}$ being $p$-linearly independent,  there exist a  constant $0<c\leq 1$ 	depending only on $s$
	and a  locally linear map $T\colon P(c)\to \st_{d}(s)$ such that 
	$\xi(h)\equiv T(h) \mod J^{M}_{\iota(h)}$ for all $h\in P(c)$ with $\iota(h)\neq \bold{0}$.
\end{prop}

\textbf{Step 1: a preliminary description for $\xi(x)-\xi(y)$.} 
We first show that $\xi(x)-\xi(y) \mod J^{M}_{\iota(x),\iota(y)}$ depends only on the difference $x-y$.
For $t\in P(\frac{1}{2})$, denote $\xi_{t}(x):=\xi(x+t)-\xi(x)$. By assumption, we have that $$\xi_{t}(x)=\xi_{t}(y) \mod J^{M}_{\iota(x),\iota(y),\iota(t)}$$ for all $p$-linearly independent $x,y,t\in P(\frac{1}{2})$ (which ensures that $x+t,y+t\in P$ by (\ref{2:inpp})). 
By Proposition \ref{2:coco1}, since $d\geq s+10$, there exists  $G_{t}\in\st_{d}(s)$ such that 
$$\xi_{t}(x)\equiv G_{t} \mod J^{M}_{\iota(x),\iota(t)}$$
for all  $p$-linearly independent $x,t\in P(O(1)^{-1})$. 
In other words,
$$\xi(x)-\xi(y)\equiv G_{x-y} \mod J^{M}_{\iota(x),\iota(y)}$$
for all  $p$-linearly independent $x,y\in P(O(1)^{-1})$ (which enforces $x-y\in P(O(1)^{-1})$ by (\ref{2:inpp})).

Applying Proposition \ref{2:noloop39} to $J^{M}_{\iota(x),\iota(y)}$,
 since
$d\geq 7$, then we may write
\begin{equation}\label{2:hhll101}
\xi(x)-\xi(y)\equiv L_{x}A_{x,y}-L_{y}B_{x,y}+G_{x-y} \mod J^{M}
\end{equation}
 for some   $A_{x,y},B_{x,y}\in\st_{d}(s-1)$ for all $p$-linearly independent $x,y\in P(O(1)^{-1})$.

\textbf{Step 2: solutions to $A_{x,y}$ and $B_{x,y}$.}
Our next step is to use a cocycle approach to solve $A_{x,y}$ and $B_{x,y}$ from (\ref{2:hhll101}).
Using (\ref{2:hhll101}) and the identity $\xi(x)-\xi(z)=(\xi(x)-\xi(y))+(\xi(y)-\xi(z))$, we have that 
\begin{equation}\label{2:hhll10}
L_{x}(A_{x,y}-A_{x,z})+L_{y}(A_{y,z}-B_{x,y})+L_{z}(B_{x,z}-B_{y,z})+(G_{x-y}+G_{y-z}-G_{x-z})\equiv 0 \mod J^{M}
\end{equation}
for all $p$-linearly independent $x,y,z\in P(O(1)^{-1})$. For convenience denote $$\Delta_{t}A_{x,y}:=A_{x+t,y+t}-A_{x,y} \text{ and } \Delta_{t}B_{x,y}:=B_{x+t,y+t}-B_{x,y}.$$ Replacing $x,y,z$ by $x+t,y+t,z+t$ in (\ref{2:hhll10}) to cancel the term $G_{x-y}+G_{y-z}-G_{x-z}$, we deduce that
\begin{equation}\label{2:hhll11}
\begin{split}
&\quad L_{x}(\Delta_{t}A_{x,y}-\Delta_{t}A_{x,z})+L_{y}(\Delta_{t}A_{y,z}-\Delta_{t}B_{x,y})+L_{z}(\Delta_{t}B_{x,z}-\Delta_{t}B_{y,z})
\\&\equiv-L_{t}((A_{x+t,y+t}+A_{y+t,z+t}-A_{x+t,z+t})-(B_{x+t,y+t}+B_{y+t,z+t}-B_{x+t,z+t})) \mod J^{M}
\end{split}
\end{equation}
for all $p$-linearly independent $x,y,z,t\in P(O(1)^{-1})$. In particular, $L_{x}(\Delta_{t}A_{x,y}-\Delta_{t}A_{x,z})\in J^{M}_{\iota(y),\iota(z),\iota(t)}$. Since $x,y,z,t$ are $p$-linearly independent  and $d\geq 11$, 
by Lemma \ref{2:killL},  
we have that 
$$\Delta_{t}A_{x,y}\equiv\Delta_{t}A_{x,z}\mod J^{M}_{\iota(y),\iota(z),\iota(t)}.$$
By Proposition \ref{2:coco1}, since $d\geq s+10$, there exists  $C_{x,t}\in\st_{d}(s-1)$ such that 
\begin{equation}\label{2:136bb}
\Delta_{t}A_{x,y}\equiv C_{x,t}\mod J^{M}_{\iota(y),\iota(t)}
\end{equation}
for all $p$-linearly independent $x,y,t\in P(O(1)^{-1})$.

We solve $C_{x,t}$ before solving $A_{x,y}$.
Since $\Delta_{t}A_{x,y}+\Delta_{s}A_{x+t,y+t}=\Delta_{t+s}A_{x,y}$ for all $x,y,t,s\in P(O(1)^{-1})$, writing $C'_{x,y}:=C_{x,y-x}$, $a=x, b=x+t, c=x+s+t$, it follows from (\ref{2:136bb}) that
\begin{equation}\nonumber
C'_{a,b}+C'_{b,c}\equiv C'_{a,c} \mod J^{M}_{\iota(a-b),\iota(b-c),\iota(y)}
\end{equation}
for all $p$-linearly independent  $a,b,c,y\in P(O(1)^{-1})$.  By Proposition \ref{2:grmq} (setting $k=1,\dim(U_{1})=1,\dim(V)=2$ and $m=1$), since
$d\geq\max\{s+2,11\}$, we have
\begin{equation}\nonumber
C'_{a,b}+C'_{b,c}\equiv C'_{a,c} \mod J^{M}_{\iota(a-b),\iota(b-v)}
\end{equation}
for all $p$-linearly independent $a,b,c\in P(O(1)^{-1})$.
By Proposition \ref{2:coco4}, since $d\geq s+8$,  there exist $\phi(x)\in\st_{d}(s-1)$ for all $x\in P(O(1)^{-1})$ such that
$$C'_{x,y}\equiv \phi(y)-\phi(x) \mod J^{M}_{\iota(x-y)}$$
for all $p$-linearly independent $x,y\in P(O(1)^{-1})$. 

We now use the solution of $C'_{x,y}$ to solve $A_{x,y}$. By (\ref{2:136bb}), 
  $$A_{x+t,y+t}-A_{x,y}=\Delta_{t}A_{x,y}\equiv C_{x,t}=C'_{x,x+t}\equiv \phi(x+t)-\phi(x) \mod J^{M}_{\iota(y),\iota(t)}$$ for all $p$-linearly independent $x,y,t\in P(O(1)^{-1})$. 
  Writing $y'=y+t$, $z=x-y$ and $A'_{z,y}:=A_{y+z,y}-\phi(y+z)$, we have that
$$A'_{z,y}\equiv A'_{z,y'}\mod  J^{M}_{\iota(y),\iota(y')}$$
for all $p$-linearly independent  $y,y',z\in P(O(1)^{-1})$.
By Proposition \ref{2:coco1}, since $d\geq s+8$,  there exist   $f(z)\in\st_{d}(s-1)$ for all $z\in P(O(1)^{-1})$ such that
$$A'_{z,y}\equiv f(z)\mod J^{M}_{\iota(y)}$$
for all $p$-linearly independent $y,z\in P(O(1)^{-1})$. So
\begin{equation}\label{2:hhlla1}
A_{x,y}\equiv\phi(x)+f(x-y) \mod J^{M}_{\iota(y)}
\end{equation}
for all $p$-linearly independent $x,y\in P(O(1)^{-1})$. 

We may solve $B_{x,y}$ in a similar way and conclude that there exist   $\psi(y)\in\st_{d}(s-1)$ for all $y\in P(O(1)^{-1})$ and $g(z)\in\st_{d}(s-1)$ for all $z\in P(O(1)^{-1})$ such that
\begin{equation}\label{2:hhlla2}
B_{x,y}\equiv\psi(y)+g(x-y) \mod J^{M}_{\iota(x)}
\end{equation}
for all $p$-linearly independent $x,y\in P(O(1)^{-1})$.  
  
  \textbf{Step 3: a description for $\phi-\psi$.}
We next determine the connection between $\phi$ and $\psi$.
Applying Lemma \ref{2:killL} to the term $L_{y}(\Delta_{t}A_{y,z}-\Delta_{t}B_{x,y})$ in (\ref{2:hhll11}), we have that
$$\Delta_{t}A_{y,z}\equiv\Delta_{t}B_{x,y} \mod J^{M}_{\iota(x),\iota(z),\iota(t)}$$
for all $p$-linearly independent $x,y,z,t\in P(O(1)^{-1})$. This implies that 
$$\phi(y)-\psi(y)\equiv\phi(y+t)-\psi(y+t) \mod J^{M}_{\iota(x),\iota(z),\iota(t)}$$
for all $p$-linearly independent $x,y,z,t\in P(O(1)^{-1})$, which implies that 
$$\phi(y)-\psi(y)\equiv\phi(y')-\psi(y') \mod J^{M}_{\iota(x),\iota(z),\iota(y-y')}$$
for all $p$-linearly independent $x,y,y',z\in P(O(1)^{-1})$. By
 Proposition \ref{2:grmq} (setting $k=1,\dim(U_{1})=1,\dim(V)=1$ and $m=2$), 
since $d\geq \max\{s+2,11\}$, we have that
$$\phi(y)-\psi(y)\equiv\phi(y')-\psi(y') \mod J^{M}_{\iota(y-y')}$$
for all $p$-linearly independent $y,y'\in P(O(1)^{-1})$. 

 By Proposition \ref{2:cocon1},  since $d\geq s+7$,  there exists a super polynomial $Q\colon\st_{d}(1)\to\st_{d}(s-1)$ such that $\phi(y)-\psi(y)\equiv Q(L_{y})\mod J^{M}$ for all $y\in P(O(1)^{-1})$ with $\iota(y)\neq \bold{0}$. 
  
  We now substitute the expression of $\phi(y)-\psi(y)$ to the previous equations.
  Note that 
  $$\psi(y)+g(x-y)-\phi(y)=Q(L_{y})+g(x-y)\equiv Q(L_{x-y})+g(x-y) \mod J^{M}_{\iota(x)}.$$
  So
replacing $g(x-y)$ by $Q(L_{x-y})+g(x-y)$ if necessary, we may assume without loss of generality that $Q\equiv 0$ and $\phi(x)=\psi(x)$ for all $x\in P(O(1)^{-1})$. By Proposition \ref{2:noloop39}, (\ref{2:hhll101}), (\ref{2:hhlla1}) and (\ref{2:hhlla2}),
\begin{equation}\nonumber
\begin{split}
&\quad \xi(x)-\xi(y)\equiv L_{x}A_{x,y}-L_{y}B_{x,y}+G_{x-y}
\\&\equiv L_{x}\phi(x)-L_{y}\phi(y)+L_{x}f(x-y)-L_{y}g(x-y)+G_{x-y}+L_{x}L_{y}R_{x,y} \mod J^{M}
\end{split}
\end{equation}
for some $R_{x,y}\in\st_{d}(s-2)$
for all $p$-linearly independent $x,y\in P(O(1)^{-1})$. Writing 
\begin{equation}\label{2:xi1st}
\begin{split}
\xi'(x):=\xi(x)-L_{x}\phi(x),
\end{split}
\end{equation}
  $F=f-g$ and $G'_{z}:=G_{z}+L_{z}f(z)$, we have that 
\begin{equation}\label{2:hhlla3}
\begin{split}
\xi'(x)-\xi'(y)\equiv L_{y}F(x-y)+G'_{x-y}+L_{x}L_{y}R_{x,y} \mod J^{M}
\end{split}
\end{equation}
for all $p$-linearly independent $x,y\in P(O(1)^{-1})$. 

\textbf{Step 4: the elimination of $G'_{x-y}$.}
Next we deal with the term  $G'_{x-y}$ by using Proposition \ref{2:cocon2}. Note that (\ref{2:hhlla3}) implies that
$$\xi'(x)-\xi'(y)\equiv G'_{x-y} \mod J^{M}_{\iota(y)}$$
for all $p$-linearly independent $x,y\in P(O(1)^{-1})$. 
Using the identity $\xi'(x)-\xi'(z)=(\xi'(x)-\xi'(y))-(\xi'(y)-\xi'(z))$, we have that 
$$G'_{x-y}+G'_{y-z}\equiv G'_{x-z} \mod J^{M}_{\iota(y),\iota(z)}$$
for all $p$-linearly independent $x,y,z\in P(O(1)^{-1})$.  Writing $a=x-y$  and $b=y-z$, we have that
$$G'_{a}+G'_{b}\equiv G'_{a+b} \mod J^{M}_{\iota(b),\iota(z)}$$
for all $p$-linearly independent $z,a,b\in P(O(1)^{-1})$.   By Proposition \ref{2:grmq} (setting $k=m=\dim(U_{1})=1, \dim(V)=1$), since $d\geq \max\{s+2,9\}$, we have that
$$G'_{a}+G'_{b}\equiv G'_{a+b} \mod J^{M}_{\iota(b)}.$$
Similarly, $G'_{a}+G'_{b}\equiv G'_{a+b} \mod J^{M}_{\iota(a)}$ and so 
$$G'_{a}+G'_{b}\equiv G'_{a+b} \mod J^{M}_{\iota(a)}\cap J^{M}_{\iota(b)}$$
for all $p$-linearly independent $a,b\in P(O(1)^{-1})$. By Proposition \ref{2:cocon2}, since $d\geq s+9$, there exist  a super polynomial $R\colon\st_{d}(1)\to \st_{d}(s)$ with $R(0)=0$ and a locally linear map $T\colon P(O(1)^{-1})\to \st_{d}(s)$ such that 
\begin{equation}\label{2:hhlla44h}
\begin{split}
G'_{a}\equiv R(L_{a})+T(a) \mod J^{M}
\end{split}
\end{equation}
for all
$a\in P(O(1)^{-1})\backslash Z$, where $Z$ is the set of $a\in\Vk$ such that $\iota(a)=\bold{0}$. 
Since $R$ is a super polynomial with $R(0)=0$,
it is not hard to see that
 \begin{equation}\label{2:hhlla44}
\begin{split}
R(L_{a})+R(L_{b})\equiv R(L_{a+b})+L_{a}L_{b}R'_{a,b} \mod J^{M}
\end{split}
\end{equation}
 for some $R'_{a,b}\in\st_{d}(s-2)$ for all $a,b\in\Vk$.

 We now substitute the expression of $G'_{x-y}$ to the previous equations. 
By (\ref{2:hhlla3}), (\ref{2:hhlla44h}) and (\ref{2:hhlla44}), writing 
\begin{equation}\label{2:xi2nd}
\begin{split}
\xi''(x):=\xi'(x)-T(x)-R(L_{x}),
\end{split}
\end{equation}
  we have that 
\begin{equation}\label{2:fdsfjwoeif}
\begin{split}
\xi''(x)-\xi''(y)\equiv L_{y}F(x-y)+L_{x}^{2}R_{1,x,y}+L_{x}L_{y}R_{2,x,y}+L^{2}_{y}R_{3,x,y} \mod J^{M}
\end{split}
\end{equation}
for some $R_{1,x,y},R_{2,x,y},R_{3,x,y}\in\st_{d}(s-2)$
for all $p$-linearly independent $x,y\in P(O(1)^{-1})$. 

\textbf{Step 5: completion of the proof.}
It follows from (\ref{2:fdsfjwoeif}) that
  $\xi''(x)\equiv \xi''(y) \mod J^{M}_{\iota(x),\iota(y)}$ for all $p$-linearly independent $x,y\in P(O(1)^{-1})$. 
Since $d\geq s+8$, by Proposition \ref{2:coco1}, there exists $C\in \st_{d}(s)$ such that 
\begin{equation}\label{2:xi4th}
\begin{split}
\xi''(x)\equiv C \mod J^{M}
\end{split}
\end{equation}
 for all $x\in P(O(1)^{-1})$ with $\iota(x)\neq \bold{0}$. 
Combining (\ref{2:xi1st}), (\ref{2:xi2nd}) and (\ref{2:xi4th}), we have that
$$\xi(x)\equiv T(x)+C+R(L_{x})+L_{x}\phi(x) \mod J^{M}$$
 and thus 
 \begin{equation}\label{2:xi4th2}
\begin{split}
\xi(x)\equiv T(x)+C \mod J^{M}_{\iota(x)}
\end{split}
\end{equation}
for all   $x\in P(O(1)^{-1})$ with $\iota(x)\neq \bold{0}$ (since $R(0)=0$).
This completes the proof of Proposition \ref{2:gsol00}.

We now continue to prove Theorem \ref{2:gsol} based on the construction given by (\ref{2:xi4th2}).
Note that for all $x\in P(O(1)^{-1})$ with $\iota(x)=\bold{0}$, and for any $p$-linearly independent $y,z\in P(O(1)^{-1})$, we have
\begin{equation}\nonumber
\begin{split}
&\quad \xi(x)+T(y+z)+C\equiv\xi(x)+\xi(y+z)
\\&\equiv \xi(x+y)+\xi(z)\equiv T(x+y)+T(z)+2C \mod J^{M}_{\iota(y),\iota(z)},
\end{split}
\end{equation}
which implies that 
$$\xi(x)\equiv T(x)+C \mod J^{M}_{\iota(y),\iota(z)}$$
since $T$ is locally linear.  
%
By Proposition \ref{2:grmq}, since
$d\geq \max\{s+2,9\}$,
we have 
$$\xi(x)\equiv T(x)+C \mod J^{M}$$
and thus $\xi(x)\equiv T(x)+C \mod J^{M}_{\iota(x)}$ for all $x\in P(O(1)^{-1})$. 
This completes the proof of Theorem \ref{2:gsol}.

 \subsection{Lifting Theorem \ref{2:gsol} by the $p$-expansion trick}\label{2:s446}
 
In this section, we use Theorem \ref{2:gsol} and  the $p$-expansion trick  to deduce Theorem \ref{2:gsol2}.



         If $r=0$, then $\xi(P)\subseteq \st_{\Z,d}(s)\subseteq J^{M}$ and so we can simply take $T\equiv 0$. Now suppose that the conclusion of Theorem \ref{2:gsol2} holds  if $\xi(P)\subseteq  \st_{\Z/p^{r},d}(s)$ for some $r\in\N$, we show that the same conclusion holds if $\xi(P)\subseteq  \st_{\Z/p^{r+1},d}(s)$.  
         

          Note that if $\xi$ is a Freiman $M$-homomorphism of order 4, then so is $p\xi$.
          So by induction hypothesis, there exists a locally linear map $T\colon P(c)\to \st_{\Z/p^{r},d}(s)$ such that $$p\xi(h)\equiv T(h) \mod J^{M}_{\iota(h)}$$ for all $h\in P(c)$.
          Let $A$ be the matrix associated with $M$,
          and  $\tilde{M}\colon\Z^{d}\to\Z/p$ be the quadratic form given by $\tilde{M}(n):=\frac{1}{p}((n\tau(A))\cdot n)$. Denote $\tilde{L}_{h}(n):=\frac{1}{p}(n\tau(A))\cdot h$ for all $n,h\in\Z^{d}$.
Since $d\geq 5$, by Proposition \ref{2:basicpp12}, for all $h\in P(c)$, we may write 
 $$Q_{0}(p\xi(h)-T(h))=\sum_{i=(i_{0},i_{1})\in\N^{2}\colon 2i_{0}+i_{1}\leq s, i_{0}+i_{1}\leq r}R_{h,i}\tilde{M}^{i_{0}}\tilde{L}_{h}^{i_{1}}$$
	for some  $Q_{0}\in\N_{+}, Q_{0}\leq O_{d}(1)$ and some $R_{h,i}\in\st_{\Z,d}(s-(2i_{0}+i_{1}))$, where we slight abuse the notation to treat $h$ as an element in $\Z^{d}$ in the expression $\tilde{L}_{h}$.
	So
	\begin{equation}\label{2:pwouf}
	   \begin{split}
	   Q_{0}\xi(h)=\frac{T(h)}{p}+\sum_{i=(i_{0},i_{1})\in\N^{2}\colon 2i_{0}+i_{1}\leq s, i_{0}+i_{1}\leq r}R'_{h,i}\tilde{M}^{i_{0}}\tilde{L}_{h}^{i_{1}},
	   \end{split}
	\end{equation}	
	where $R'_{h,i}:=\frac{R_{h,i}}{p}$. Denote 
	$$V(h_{1},\dots,h_{k}):=\sp_{\F_{p}}\{\iota(h_{1})\dots,\iota(h_{k})\}^{\pp}$$
	for all $h_{1},\dots,h_{k}\in\Vk$.
	Fix any $h_{1},\dots,h_{4}\in P(c)$ with $h_{1}+h_{2}=h_{3}+h_{4}$. Since $\xi$ is a Freiman $M$-homomorphism of order 4, for all $n\in V_{p}(\tilde{M})\cap \iota^{-1}(V(h_{1},h_{2},h_{3}))$  and $m\in\Z^{d}$, since $n+pm\in V_{p}(\tilde{M})\cap \iota^{-1}(V(h_{1},h_{2},h_{3}))$, we have that 
	\begin{equation}\label{2:pwouf2}
		   \begin{split}
	   &\quad 0\equiv Q_{0}(\xi(h_{1})+\xi(h_{2})-\xi(h_{3})-\xi(h_{4}))(n+pm)
	   \\&\equiv\sum_{j=1}^{4}c_{j}\sum_{i=(i_{0},i_{1})\in\N^{2}\colon 2i_{0}+i_{1}\leq s}(R'_{h_{j},i}\tilde{M}^{i_{0}}\tilde{L}_{h_{j}}^{i_{1}})(n+pm) \mod \Z,
	   \end{split}
	\end{equation}
	where $c_{1}=c_{2}:=1$ and $c_{3}=c_{4}:=-1$.  Note that
	$$\tilde{M}(n+pm)\equiv \tilde{M}(n)+2(n\tau(A))\cdot m \mod p\Z$$
	and $$\tilde{L}_{h_{j}}(n+pm)\equiv \tilde{L}_{h_{j}}(n)+(h_{j}\tau(A))\cdot m \mod p\Z$$
	for all $1\leq j\leq 4$.
	Since  $n\in V_{p}(\tilde{M})\cap \iota^{-1}(V(h_{1},h_{2},h_{3}))=V_{p}(\tilde{M})\cap \iota^{-1}(V(h_{1},h_{2},h_{3},h_{4}))$  and since $R_{h_{j},i}$ has integer coefficients, it follows from (\ref{2:pwouf2}) that 
		\begin{equation}\label{2:pwouf3}
		   \begin{split}
	    \sum_{j=1}^{4}c_{j}\sum_{i=(i_{0},i_{1})\in\N^{2}\colon 2i_{0}+i_{1}\leq s}R'_{h_{j},i}(n)(\tilde{M}(n)+2(n\tau(A))\cdot m)^{i_{0}}(\tilde{L}_{h_{j}}(n)+(h_{j}\tau(A))\cdot m)^{i_{1}} \equiv 0\mod \Z
	   \end{split}
	\end{equation}
	for all $m\in\Z^{d}$.
	Since $\tilde{M}$ is non-degenerate,
	if $n,h_{1},h_{2},h_{3}$ are $p$-linearly independent, then the map 
	$$m\mapsto (2(n\tau(A))\cdot m,(h_{1}\tau(A))\cdot m,\dots,(h_{4}\tau(A))\cdot m) \mod p\Z^{5}$$
	 is an injection from $[p]^{d}$ to $\{(x_{0},\dots,x_{4})\in \F_{p}^{5}\colon x_{1}+x_{2}=x_{3}+x_{4}\}$.
	Since the left hand side of (\ref{2:pwouf3}) is a $\Z/p$-coefficient polynomial in $m$, we have that 
	\begin{equation}\nonumber
		   \begin{split}
	    \sum_{j=1}^{4}c_{j}\sum_{i=(i_{0},i_{1})\in\N^{2}\colon 2i_{0}+i_{1}\leq s}R'_{h_{j},i}(n)x_{0}^{i_{0}}x_{j}^{i_{1}} \equiv 0\mod \Z
	   \end{split}
	\end{equation}
	for all $x_{0},\dots,x_{4}\in \Z$ with $x_{1}+x_{2}\equiv x_{3}+x_{4} \mod p\Z$. 
	
	We claim that
	for all   $n\in V_{p}(\tilde{M})\cap \iota^{-1}(V(h_{1},h_{2},h_{3}))$ with $n,h_{1},h_{2},h_{3}$ being $p$-linearly independent,  we have that 
		\begin{enumerate}[(i)]
	    \item $R'_{h_{1},(i_{0},i_{1})}(n), \dots, R'_{h_{4},(i_{0},i_{1})}(n)\in \Z$ for all $i_{0}\in\N$ and $i_{1}\geq 2$;
	    \item $R'_{h_{1},(i_{0},1)}(n)\equiv \dots\equiv R'_{h_{4},(i_{0},1)}(n) \mod \Z$ for all $i_{0}\in\N$;
	    \item $R'_{h_{1},(i_{0},0)}(n)+R'_{h_{2},(i_{0},0)}(n)\equiv R'_{h_{3},(i_{0},0)}(n)+R'_{h_{4},(i_{0},0)}(n) \mod \Z$  for all $i_{0}\in\N$.
	\end{enumerate}

	It is convenient to prove the claim in the $\V$ setting. It suffices to show that for any $F\in\poly(\F_{p}^{5}\to\F_{p})$ of degree at most $s$ given by 
	$$F(x_{0},\dots,x_{4}):=\sum_{j=1}^{4} \sum_{i=(i_{0},i_{1})\in\N^{2}\colon 2i_{0}+i_{1}\leq s}c_{i,j}x_{0}^{i_{0}}x_{j}^{i_{1}}$$
	for some $c_{i,j}\in\F_{p}$, if $F(x_{0},\dots,x_{4})=0$ whenever $x_{1}+x_{2}=x_{3}+x_{4}$, we have that 
 \begin{itemize}
	    \item $c_{i,1}=\dots=c_{i,4}=0$ for all $i_{0}\in\N$ and $i_{1}\geq 2$;
	    \item $c_{i,1}=c_{i,2}=-c_{i,3}=-c_{i,4}$ for all $i_{0}\in\N$;
	    \item $c_{i,1}+\dots+c_{i,4}=0.$  for all $i_{0}\in\N$.
	\end{itemize}
	
	Indeed, separating the variable $x_{0}$, we see that for any $i_{0}\in\N$, we have that 
	$$F_{i_{0}}(x_{1},\dots,x_{4}):=\sum_{j=1}^{4} \sum_{i_{1}\in\N^{2}\colon 2i_{0}+i_{1}\leq s}c_{i,j}x_{j}^{i_{1}}$$
	is equal to zero whenever $x_{1}+x_{2}=x_{3}+x_{4}$. So $F_{i_{0}}$ is divisible by $x_{1}+x_{2}-x_{3}-x_{4}$. Comparing the terms of different degrees, we have that 
	$$F_{i_{0},i_{1}}(x_{1},\dots,x_{4}):=\sum_{j=1}^{4} c_{i,j}x_{j}^{i_{1}}$$
	is divisible by $x_{1}+x_{2}-x_{3}-x_{4}$ for all $i_{1}\in\N$. This is equivalent of saying that 
	$$(x_{1},x_{2},x_{3})\mapsto c_{i,1}x_{1}^{i_{1}}+c_{i,2}x_{2}^{i_{1}}+c_{i,3}x_{3}^{i_{1}}+c_{i,4}(x_{1}+x_{2}-x_{3})^{i_{1}}$$
	is the trivial polynomial, which is already impossible if $i_{1}\geq 2$ unless $c_{i,1}=\dots=c_{i,4}=0$. If $i_{1}=1$, then this is only possible if $c_{i,1}=c_{i,2}=-c_{i,3}=-c_{i,4}$. If $i_{1}=0$, then this is only possible if $c_{i,1}+\dots+c_{i,4}=0.$ This completes the proof of the claim.
	
	\

	Note that if $h_{1},h_{2},h_{3}$ are  $p$-linearly independent, then the number of $n\in[p]^{d}$ with $n,h_{1},h_{2},h_{3}$ being $p$-linearly dependent is $p^{3}$. On the other hand, the cardinality of the set $V_{p}(\tilde{M})\cap \iota^{-1}(V(h_{1},h_{2},h_{3}))\cap [p]^{d}$ is $O(p^{d-4})$ by Lemma \ref{2:counting02} (translated into the $\Z$-setting). Since $d\geq 8$, it follows from Proposition \ref{2:noloop3} that conditions (i)--(iii) hold for all $n\in V_{p}(\tilde{M})\cap \iota^{-1}(V(h_{1},h_{2},h_{3}))$ for all  $p$-linearly independent $h_{1},h_{2},h_{3}\in P(c)$. 
	
	Note that if $i_{1}\geq 2$, then Condition (i) implies that 
		$R'_{h_{1},(i_{0},i_{1})}\in J^{M}_{\iota(h_{1}),\iota(h_{2}),\iota(h_{3})}$ for all $p$-linearly independent $h_{1},h_{2},h_{3}\in P(c)$. Since $d\geq \max\{s+3,11\}$,  it follows from Proposition \ref{2:grm} (setting $k=0, m=2, \dim(V)=1$) that
	$R'_{h_{1},(i_{0},i_{1})}\in J^{M}_{\iota(h_{1})}$ 
	for all   $h_{1}\in P(c)$  with $\iota(h_{1})\neq \bold{0}$. 	
	Condition (ii) implies that 
	$$R'_{h_{1},(i_{0},1)}\equiv R'_{h_{2},(i_{0},1)} \mod J^{M}_{\iota(h_{1}),\iota(h_{2}),\iota(h_{3})}$$ 
	for all  $p$-linearly independent $h_{1},h_{2},h_{3}\in P(c)$. 
	Since $d\geq \max\{s+3,11\}$, it follows from Proposition \ref{2:grm} (setting $k=0, m=1, \dim(V)=2$) that
	$$R'_{h_{1},(i_{0},1)}\equiv R'_{h_{2},(i_{0},1)} \mod J^{M}_{\iota(h_{1}),\iota(h_{2})}$$ 
	for all  $p$-linearly independent $h_{1},h_{2}\in P(c)$. 	
	So identifying $R'_{h_{j},i}$ as elements in $\st_{d}(s)$, it follows from Proposition \ref{2:coco1} (since $d\geq s+8$)  that 
	  there exists $G_{i_{0}}\in \st_{\Z/p,d}(s)$ such that  
	$$R'_{h,(i_{0},1)}\equiv G_{i_{0}} \mod J^{M}_{\iota(h)}$$
for all $h\in P(c)$ with $\iota(h)\neq \bold{0}$.
Finally, by identifying $R'_{h_{j},i}$ as elements in $\st_{d}(s)$,
by condition (iii), and by  using Proposition \ref{2:gsol00}, we have that there exist $0<c':=c'(c,s)<1$ (which eventually depends only on $r$ and $s$)  and a
locally linear map $T_{i_{0}}\colon P(c')\to \st_{\Z/p,d}(s)$ such that 
$$R'_{h,(i_{0},0)}\equiv T_{i_{0}}(h) \mod J^{M}_{\iota(h)}$$
for all $h\in P(c')$ with $\iota(h)\neq \bold{0}$. 
It then follows from (\ref{2:pwouf}) that for all $h\in P(c')$ with $\iota(h)\neq \bold{0}$ and $n\in V_{p}(\tilde{M})\cap \iota^{-1}(V(h))$,
\begin{equation}\nonumber
	   \begin{split}
	  &\quad \xi(h)(n)\equiv Q^{\ast}Q_{0}\xi(h)(n)
	  \\&\equiv Q^{\ast}\Bigl(\frac{T(h)(n)}{p}+\sum_{i=0}^{\min\{\lfloor \frac{s}{2}\rfloor,r\}} \tilde{M}(n)^{i}T_{i}(h)(n)+\sum_{i=0}^{\min\{\lfloor \frac{s-1}{2}\rfloor,r-1\}}\tilde{M}(n)^{i}G_{i}(n)\tilde{L}_{h}(n)\Bigr) \mod\Z,
	   \end{split}
	\end{equation}
where $Q^{\ast}$ is any integer with $Q^{\ast}Q_{0}\equiv 1 \mod p^{r}\Z$.
    In other words, 
    $$\xi(h)\equiv T'(h):=Q^{\ast}\Bigl(\frac{T(h)}{p}+\sum_{i=0}^{\min\{\lfloor \frac{s}{2}\rfloor,r\}} \tilde{M}^{i}T_{i}(h)+\sum_{i=0}^{\min\{\lfloor \frac{s-1}{2}\rfloor,r-1\}}\tilde{M}^{i}G_{i}\tilde{L}_{h}\Bigr) \mod J^{M}_{\iota(h)}$$
    for all $h\in P(c')$ with $\iota(h)\neq \bold{0}$, where $T'$ clearly is a locally linear map from $P(c')$ to $\st_{\Z/p^{r+1},d}(s)$.
    
    Finally,
    similar to  the argument in Step 5 of Section \ref{2:sp4.3}, we have that 
$\xi(h)\equiv T'(h) \mod J^{M}_{\iota(h)}$ for all $h\in P(c')$.
This completes the proof of Theorem \ref{2:gsol2}. 
%
 
 \
  
  It is an interesting question to ask that in Theorem \ref{2:gsol2}, whether one can remove the dependence on $r$ for the constant $c$. However, we do not need such an improvement in this series.
  
  We conclude this section with a generalization of Proposition \ref{2:coco1}, which will be used in \cite{SunC}.

  \begin{prop}\label{2:coco1p}
Let $d,K\in\N_{+}$, $r,s\in\N$, $\d>0$, $p\gg_{\d,d,r} 1$ be a prime dividing $K$,   $H$ be a subset of $\Vk$ with $\vert H\vert>\delta K^{d}$, $\bold{0}\notin\iota(H)$ and $M\colon\V\to\F_{p}$ be a non-degenerate quadratic form.
	Let $F\colon H\to\st_{\Z/p^{r},d}(s)$ be a map such that
	\begin{equation}\nonumber
	F(x)\equiv F(y)\mod J^{M}_{\iota(x),\iota(y)}
	\end{equation}
	 for all $x,y\in H$.  If  $d\geq s+8$, then there exists $G\in\st_{\Z/p^{r},d}(s)$ such that 
	\begin{equation}\nonumber
	F(x)\equiv G\mod J^{M}_{\iota(x)}
	\end{equation}
	  for all $x\in H$. 
\end{prop}

The approach to deduce Proposition \ref{2:coco1p} from  Proposition \ref{2:coco1} is very similar to the approach we use to deduce  Theorem \ref{2:gsol2} from Proposition \ref{2:gsol00}. We leave the details to the interested readers.

 \section{Structures for sets with small density dependence numbers}\label{2:s:115}

In this section, we extend some important results in additive combinatorics for shifted modules.
 
 \subsection{Two notions of equivalence classes}\label{2:s:44}
 
Although $\sim$ is not an equivalence relation $\Vk\times \gfv$, it is still possible to talk about equivalence classes in the following way: 

\begin{defn}[Equivalence classes]
	We say that a subset $X$ of $\Gamma^{s}(\Vk,M)$ is an \emph{equivalence class} (or a \emph{weak equivalence class}) if $x\sim x'$ for all $x,x'\in X$. Clearly, $\pi(x)$ is a constant for all $x\in X$, and so we denote this constant by $\pi(X)$ when there is no confusion.
	We say that an equivalence class $X\subseteq \Gamma^{s}(\Vk,M)$ is a \emph{strong equivalence class} if there exists $u\in\Gamma^{s}_{1}(\Vk,M)$ such that $u\sim x$ for all $x\in X$.
\end{defn}

 \begin{ex}\label{2:exws1}
Assume that $\Vk=\V$ and $M(n)=n\cdot n$. For $x\in\V$, let $L_{x}\in\st_{\zp,d}(1)$ be the map given by $L_{x}(n):=\frac{1}{p}(\tau(x)\cdot n)$. Let $e_{1},\dots,e_{d}$ be the standard unit vectors of $\V$. 
Let $x_{1}=(\bold{0},J^{M}_{e_{1}})$, $x_{2}=(\bold{0},J^{M}_{e_{2}})$ and $x_{3}=(\bold{0},J^{M}_{e_{1}+e_{2}}+L_{e_{1}})$. It is not hard to check that $\{x_{1},x_{2},x_{3}\}$ is a weak equivalence class in $\Gamma^{1}_{2}(\Vk,M)$. On the other hand, suppose that there exists $x\in \Gamma^{1}_{1}(\Vk,M)$ such that $x\sim x_{i}$ for all $1\leq i\leq 3$. Since $x\in \Gamma^{1}_{1}(\Vk,M)$, we may write $x=(\bold{0},J^{M}+L_{y})$ for some  $y\in\V$. Since $x\sim x_{i}$ for all $1\leq i\leq 3$, we have that $y,y$ and $y-e_{1}$ are scalar multiples of $e_{1},e_{2}$ and $e_{1}+e_{2}$ respectively, which is not possible. So $\{x_{1},x_{2},x_{3}\}$ is not a strong equivalence class. 
\end{ex}

\begin{ex}\label{2:exws2}
	Let the notations be the same as Example \ref{2:exws1}. Let $x_{4}=(\bold{0},J^{M}_{e_{1}-e_{2}})$. It is not hard to see that $\{x_{1},x_{2},x_{4}\}$ is a strong equivalence class since $(\bold{0},J^{M})\sim x_{i}$ for all $i=1,2,4$. If we add $x_{3}$ into this set, then it is not hard to see that $\{x_{1},x_{2},x_{3},x_{4}\}$ is a weak  equivalence class. However, by Example \ref{2:exws1}, $\{x_{1},x_{2},x_{3},x_{4}\}$ is no longer a strong equivalence class. 
\end{ex}

In conclusion, Example \ref{2:exws1} shows that there exists a weak equivalence class $X$ which is not a strong equivalence class. Example \ref{2:exws2} shows that there is a strong equivalent class $X$ and some $u\in\Gamma^{s}(\Vk,M)$ such that $x\sim u$ for all $x\in X$, but the set $X\cup\{u\}$ is no longer a strong equivalent class. In this paper, such equivalence classes $X$ are considered as abnormal. In fact, the following proposition shows that ``generic" equivalence classes do not have the properties mentioned above.

\begin{prop}\label{2:gwts} 
	Let $d,k,k',K\in\N_{+}, k\geq 2$, $s\in\N$ and $p\gg_{d} 1$ be a prime dividing $K$. Let $M\colon\V\to\F_{p}$ be a non-degenerate quadratic form. Let $X\subseteq \Gamma^{s}_{k}(\Vk,M)$ be a finite set with $\pi(X)=\{h\}$ for some $h\in\Vk$. 
	Denote $t=2$ if $\iota(h)\neq \bold{0}$ and $t=0$ otherwise. Then
	\begin{enumerate}[(i)]
		\item if  $d\geq 2k+2(s+1)(k-2)+5+t$ and $X$ is a weak equivalence class, then either $X$ is a strong equivalence class or 
		there exists a subspace $Y$ of $\V$ of dimension at most $(s+k-1)(k-1)$  such that $V\cap Y\neq \{\bold{0}\}$ for all $(h,J^{M}_{V}+f)\in X$.
		\item if  $d\geq 2k'+2(s+1)(k-2)+5+t$ and $X$ is a strong equivalence class, then either  there exists a subspace $Y$ of $\V$ of dimension at most $(s+k'-1)(k-1)$ such that $V\cap Y\neq \{\bold{0}\}$ for all $(h,J^{M}_{V}+f)\in X$, or for all $u\in\Gamma^{s}_{k'}(\Vk,M)$ such that $u\sim a$ for all $a\in X$, we have that $X\cup\{u\}$ is also a strong equivalence class.
			\end{enumerate}	
%
\end{prop}

Before proving Proposition \ref{2:gwts}, we need the following lemma from linear algebra:

\begin{lem}\label{2:gweakdic}
	Let $d,k,m\in\N_{+}, k'\in\N$, $p$ be a prime, $U$ be a subspace of $\V$ of dimension $k'$ (possibly the trivial subspace), and $\mathcal{U}$ be a collection of subspaces of $\V$ of dimension at most $k$ containing $U$.
	Then  either there 
	exist $V_{i}\in \mathcal{U}, 1\leq i\leq m$ such that $V_{1},\dots,V_{m}$ are linearly independent modulo $U$, or there exists a subspace $W$ of $\V$ of dimension at most $k'+(m-1)(k-k')$ containing $U$ such that $W\cap V\neq U$ for all $V\in\mathcal{U}$.
\end{lem}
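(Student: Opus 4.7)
The plan is to prove this dichotomy via a straightforward greedy construction: attempt to build a sequence $V_1,\dots,V_m$ in $\mathcal{U}$ which is linearly independent modulo $U$, stopping as soon as no further choice is possible, and then show that the sum of the constructed subspaces realizes the desired witness subspace $W$ in the failure case.

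I would begin by unpacking the condition of linear independence modulo $U$. Since every $V_i\in\mathcal{U}$ contains $U$, it is routine that $V_1,\dots,V_j$ are linearly independent modulo $U$ if and only if for each $2\le i\le j$ we have $V_i\cap(V_1+\cdots+V_{i-1})=U$, and in this case the sum $W_j:=V_1+\cdots+V_j$ satisfies $\dim(W_j)=k'+\sum_{i=1}^{j}\dim(V_i/U)\le k'+j(k-k')$. This immediately suggests the greedy procedure.

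Now I would construct the sequence inductively. Set $W_0:=U$. Suppose $V_1,\dots,V_{i-1}\in\mathcal{U}$ have been chosen so that they are linearly independent modulo $U$, and set $W_{i-1}:=V_1+\cdots+V_{i-1}$ (with $W_0=U$). If there exists some $V_i\in\mathcal{U}$ with $V_i\cap W_{i-1}=U$, pick such a $V_i$ and continue. If this process succeeds all the way up to $i=m$, then $V_1,\dots,V_m$ are linearly independent modulo $U$ and we are in the first alternative of the lemma. Otherwise, the procedure terminates at some step $i$ with $1\le i\le m$, meaning that for every $V\in\mathcal{U}$ one has $V\cap W_{i-1}\neq U$. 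Setting $W:=W_{i-1}$, the bound above gives
\[
\dim(W)\le k'+(i-1)(k-k')\le k'+(m-1)(k-k'),
\]
and $W\supseteq U$ by construction, while $W\cap V\neq U$ for every $V\in\mathcal{U}$ by the failure of the greedy step. This is exactly the second alternative.

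I do not anticipate a serious obstacle here; the argument is purely linear-algebraic and the two alternatives are engineered to be dichotomous by the greedy stopping rule. The only subtle point is bookkeeping the dimensions (in particular, that the bound $k'+(m-1)(k-k')$ is obtained by stopping \emph{before} adding the $m$-th summand, which is why the allowable range of $i$ includes $i=m$), together with the mild check that $U\subseteq W_{i-1}$ holds throughout (immediate since $U\subseteq V_j$ for each $j$).
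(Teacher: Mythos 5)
Your proof is correct and takes essentially the same greedy construction as the paper's: build $V_1,\dots,V_r$ inductively by testing whether some $V\in\mathcal{U}$ intersects $V_1+\cdots+V_r$ exactly in $U$, and if not, output $W:=V_1+\cdots+V_r$. The only (cosmetic) difference is that you initialize with $W_0=U$ rather than beginning by picking $V_1$, which also covers the degenerate case $\mathcal{U}=\emptyset$ without comment; the dimension bookkeeping and the stopping rule are identical.
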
	
\begin{proof}
	Pick any $V_{1} \in\mathcal{U}$.
	Suppose that we have chosen $V_{i}\in \mathcal{U}, 1\leq i\leq r$ for some $1\leq r \leq m-1$ such that $V_{1},\dots,V_{r}$ are linearly independent modulo $U$.
	If for all $V\in \mathcal{U}$, $V\cap (V_{1}\dots+V_{r})\neq U$, then we are done since $\dim(V_{1}\dots+V_{r})=k'+r(k-k')\leq k'+(m-1)(k-k')$. If not, then we may find $V_{r+1}\in \mathcal{U}$ such that $V_{1},\dots,V_{r+1}$ are linearly independent modulo $U$. 
	
	In conclusion, either we are done or there 
	exist $V_{i}\in \mathcal{U}, 1\leq i\leq m$ such that $V_{1},\dots,V_{m}$ are linearly independent modulo $U$.
\end{proof}	

\begin{proof}[Proof of Proposition \ref{2:gwts}] 
Since $X$ is finite, there exists $N\in\N$ such that $f\in\st_{\Z/p^{N},d}(s)$ for all $(h,J_{V}+f)\in X$.
	We first consider the case $\iota(h)=\bold{0}$. For convenience we assume that $h=\bold{0}$.
	We start with Part (i). Denote $K:=s+k$.
	By Lemma \ref{2:gweakdic}, either
	there exists a subspace $Y$ of $\V$ of dimension at most $(K-1)(k-1)$ such that $V\cap Y\neq \{\bold{0}\}$ for all $(\bold{0},J^{M}_{V}+f)\in X$, or there 
	exist $u_{i}=(\bold{0},J^{M}_{V_{i}}+f_{i})\in X, 1\leq i\leq K$ such that $V_{1},\dots,V_{K}$ are linearly independent.
	If it is the former case then we are done. We now assume that it is the latter case.

	Obviously $(\bold{0},J^{M}+f_{1})\in\Gamma^{s}_{1}(\Vk,M)$ is equivalent to $u_{1}$.
	Suppose that for some $1\leq n\leq s$, there exists $u\in\Gamma^{s}_{1}(\Vk,M)$ such that $u\sim u_{i}$ for all $1\leq i\leq n$. We show that  there exists $u'\in\Gamma^{s}_{1}(\Vk,M)$ such that $u'\sim u_{i}$ for all $1\leq i\leq n+1$.
	Assume that $u=(\bold{0},J^{M}+f)$.
	Then for all $1\leq i\leq m$, since  $X$ is an equivalence class, we have that $u\sim u_{i}\sim u_{n+1}$. So $f\equiv f_{i}\equiv f_{n+1}\mod J^{M}_{V_{n+1}+V_{i}}$
	and thus 
	$f\equiv f_{n+1}\mod\cap_{i=1}^{n}J^{M}_{V_{n+1}+V_{i}}.$
	Since $V_{1},\dots,V_{n+1}$ are linearly independent, 
	and $\pi(X)=\{\bold{0}\}$, the dimensions of  $V_{1},\dots,V_{n+1}$ are at most $k-1$. 
	Since $d\geq 2k+2s(k-2)+5$,  by Lemma \ref{2:w3s}   (setting $V=V_{n+1}$, $m=k-1$, $r=k-1$, and $N=n$), 
	we may  
	assume that $f-f_{n+1}=g+g'$ for some $g\in \st_{\Z/p^{N},d}(s)\cap J^{M}_{V_{n+1}}$ and $g'\in \st_{\Z/p^{N},d}(s)\cap \cap_{i=1}^{n}J^{M}_{V'_{i}}$, and set $u':=(\bold{0},J^{M}+(f-g'))$.
	Then $(f-g')-f_{n+1}=g\in J^{M}_{V_{n+1}}$. On the other hand, for all $1\leq i\leq n$,
	$(f-g')-f_{i}=(f-f_{i})-g'\in J^{M}_{V_{i}}$. This means that $u'\sim u_{i}$ for all $1\leq i\leq n+1$.
	
	Inductively, we may find some $u_{0}\in \Gamma^{s}_{1}(\Vk,M)$ such that $u_{0}\sim u_{i}$ for all $1\leq i\leq s+1$.

	Let $s+2\leq i\leq K$. Since $u_{0}\sim u_{j}\sim u_{i}$ for $1\leq j\leq s+1$, we have that 
	$f_{0}\equiv f_{j}\equiv f_{i} \mod J^{M}_{V_{j}+V_{i}}$ and so $f_{0}\equiv f_{i} \mod \cap_{j=1}^{s+1}J^{M}_{V_{j}+V_{i}}$. Since $V_{1},\dots,V_{s+1}, V_{i}$ are linearly independent and of dimensions at most $k-1$, and since $d\geq 2k+2(s+1)(k-2)+5$,
	by Proposition \ref{2:gr0}    (setting $m=k-1$ and $r=k-1$), we have that $f_{0}\equiv f_{i} \mod J^{M}_{V_{i}}$ and so $u_{0}\sim u_{i}$.
	In other words, $u_{0}\sim u_{i}$ for all $1\leq i\leq K$.
	
	Now pick any $u'=(\bold{0},J^{M}_{V'}+f')\in X$. Since $u_{0}\sim u_{i}\sim u'$ for all $1\leq i\leq K$, we have that $f_{0}\equiv f_{i}\equiv f' \mod J^{M}_{V_{i}+V'}$. So $f_{0}\equiv f' \mod \cap_{i=1}^{K}J^{M}_{V_{i}+V'}$. Since $V_{1},\dots,V_{K}$ are linearly independent and of dimensions at most $k-1$, and since $d\geq 2k+2(s+1)(k-2)+5$, by Proposition \ref{2:gri} (setting $m=k-1$ and $r=k-1$),   we have that $f_{0}\equiv f' \mod  J^{M}_{V}$.  So $u_{0}\sim u'$ and thus $X$ is a strong equivalence class.

	We now prove Part (ii). 	
	Denote $K:=s+k'$.
	By Lemma \ref{2:gweakdic}, either
	there exists a subspace $Y$ of $\V$ of dimension at most $(K-1)(k-1)$ such that $V\cap Y\neq \{\bold{0}\}$ for all $(\bold{0},J^{M}_{V}+f)\in X$, or there 
	exist $u_{i}=(\bold{0},J^{M}_{V_{i}}+f_{i})\in X, 1\leq i\leq K$ such that $V_{1},\dots,V_{K}$ are linearly independent.
	If it is the former case then we are done. We now assume that  it is the latter case.
	
	Let $u_{0}=(\bold{0},J^{M}+f_{0})\in\Gamma^{s}_{1}(\Vk,M)$ be such that $u_{0}\sim a$ for all $a\in X$. It suffices to show that for all $u=(\bold{0},J^{M}_{V}+f)\in \Gamma^{s}_{k'}(\Vk,M)$, if $u\sim a$ for all $a\in X$, then $u_{0}\sim u$. Since $u_{0}\sim u_{i}\sim u$ for all $1\leq i\leq K$, we have that $f_{0}\equiv  f_{i}\equiv f \mod J^{M}_{V_{i}+V}$ and $\pi(u_{0})=\pi(u)=\bold{0}$. So $V$ is of dimension at most $k'-1$ and
	$f_{0}\equiv f \mod \cap_{i=1}^{K}J^{M}_{V_{i}+V}$. Since $V_{1},\dots,V_{K}$ are linearly independent and of dimension at most $k-1$, and since $d\geq 2k'+2(s+1)(k-2)+5$,  by Proposition \ref{2:gri} (setting $m=k'-1$ and $r=k-1$), we have that $f_{0}\equiv f \mod  J^{M}_{V}$.  So $u_{0}\sim u$ and we are done.
	
	\
	
	We next consider the case $\iota(h)\neq \bold{0}$. The proof is very similar to the case $\iota(h)=\bold{0}$, except that the dimensions of relevant subspaces of $\V$ appearing in the proof need to be changed slightly. We provide the details of the proof for completeness.
	
		We start with Part (i). Denote $K:=s+k+1$.
		By Lemma \ref{2:gweakdic}, either
		there exists a subspace $Y$ of $\V$ of dimension at most $(K-1)(k-1)+1$ containing $h$ such that $V\cap W\neq \sp_{\F_{p}}\{\iota(h)\}$ for all $(h,J^{M}_{V}+f)\in X$, or there 
		exist $u_{i}=(h,J^{M}_{V_{i}}+f_{i})\in X, 1\leq i\leq K$ such that $V_{1},\dots,V_{K}$ are linearly independent modulo $\sp_{\F_{p}}\{\iota(h)\}$.
		If it is the former case then we are done. We now assume that  it is the latter case.

		Obviously $(h,J^{M}_{h}+f_{1})\in\Gamma^{s}_{1}(\Vk,M)$ is equivalent to $u_{1}$.
		Suppose that for some $1\leq n\leq s$, there exists $u\in\Gamma^{s}_{1}(\Vk,M)$ such that $u\sim u_{i}$ for all $1\leq i\leq n$. We show that  there exists $u'\in\Gamma^{s}_{1}(\Vk,M)$ such that $u'\sim u_{i}$ for all $1\leq i\leq n+1$.
		Assume that $u=(h,J^{M}_{h}+f)$.
		Then for all $1\leq i\leq m$, since  $X$ is an equivalence class, we have that $u\sim u_{i}\sim u_{n+1}$. So $f\equiv f_{i}\equiv f_{n+1} \mod J^{M}_{V_{n+1}+V_{i}}$
		and thus 
		$f\equiv f_{n+1}\mod\cap_{i=1}^{n}J^{M}_{V_{n+1}+V_{i}}.$
		Since $V_{1},\dots,V_{n+1}$ are linearly independent modulo $\sp_{\F_{p}}\{h\}$, 
		we may write $V_{n+1}+V_{i}=V_{n+1}+V'_{i}, 1\leq i\leq n$ for some  subspaces $V'_{1},\dots,V'_{n}$ of $\V$
		of dimensions at most $k-1$ such that $V'_{1},\dots,V'_{n},V_{n+1}$ are linearly independent. Since $d\geq 2k+2s(k-2)+7$,
		by Lemma \ref{2:w3s} (setting $V=V_{n+1}$, $m=k$, $r=k-1$  and $N=n$), 
		we may
	assume  that $f-f_{n+1}=g+g'$ for some $g\in \st_{\Z/p^{N},d}(s)\cap J^{M}_{V_{n+1}}$ and $g'\in \st_{\Z/p^{N},d}(s)\cap \cap_{i=1}^{n}J^{M}_{V'_{i}}$. Set $u':=(h,J^{M}_{h}+(f-g'))$.
		Then $(f-g')-f_{n+1}=g\in J^{M}_{V_{n+1}}$. On the other hand, for all $1\leq i\leq n$,
		$(f-g')-f_{i}=(f-f_{i})-g'\in J^{M}_{V_{i}}$. This implies that $u'\sim u_{i}$ for all $1\leq i\leq n+1$.
		
		Inductively, may find some $u_{0}\in \Gamma^{s}_{1}(\Vk,M)$ such that $u_{0}\sim u_{i}$ for all $1\leq i\leq s+1$.

		Let $s+2\leq i\leq K$. Since $u_{0}\sim u_{j}\sim u_{i}$ for $1\leq j\leq s+1$, we have that 
		$f_{0}\equiv f_{j}\equiv f_{i} \mod J^{M}_{V_{j}+V_{i}}$ and so $f_{0}\equiv f_{i} \mod \cap_{j=1}^{s+1}J^{M}_{V_{j}+V_{i}}$. Since $V_{1},\dots,V_{s+1}, V_{i}$ are linearly independent modulo $\sp_{\F_{p}}\{\iota(h)\}$, we may write $V_{j}+V_{i}=V'_{j}+V_{i}, 1\leq j\leq s+1$ for some  subspaces $V'_{1},\dots,V'_{s+1}$ of $\V$
		of dimensions at most $k-1$ such that $V'_{1},\dots,V'_{s+1},V_{i}$ are linearly independent. Since $d\geq 2k+2(s+1)(k-2)+7$,
		by Proposition \ref{2:gr0} (setting $m=k$  and $r=k-1$),  we have that  $f_{0}\equiv f_{i} \mod J^{M}_{V_{i}}$ and so $u_{0}\sim u_{i}$.
		In other words, $u_{0}\sim u_{i}$ for all $1\leq i\leq K$.
		
		Now pick any $u'=(h,J^{M}_{V'}+f')\in X$. Since $u_{0}\sim u_{i}\sim u'$ for all $1\leq i\leq K$, we have that $f_{0}\equiv f_{i}\equiv f' \mod J^{M}_{V_{i}+V'}$. So $f_{0}\equiv f' \mod \cap_{i=1}^{K}J^{M}_{V_{i}+V'}$. Since $V_{1},\dots,V_{K}$ are linearly independent modulo $\sp_{\F_{p}}\{\iota(h)\}$, we may write $V_{i}+V'=V'_{i}+V', 1\leq i\leq K$ for some  subspaces $V'_{1},\dots,V'_{K}$ of $\V$
		of dimensions at most $k-1$ such that $V'_{1},\dots,V'_{K}$ are linearly independent. Since $d\geq 2k+2(s+1)(k-2)+7$, by Proposition \ref{2:gri} (setting $m=k$ and $r=k-1$), we have that $f_{0}\equiv f' \mod  J^{M}_{V}$.  So $u_{0}\sim u'$ and thus $X$ is a strong equivalence class.

		We now prove Part (ii). 	
		Denote $K:=s+k'+1$.
		By Lemma \ref{2:gweakdic}, either
		there exists a subspace $Y$ of $\V$ of dimension at most $(K-1)(k-1)+1$ containing $h$ such that $V\cap W\neq \sp_{\F_{p}}\{\iota(h)\}$ for all $(h,J^{M}_{V}+f)\in X$, or there 
		exist $u_{i}=(h,J^{M}_{V_{i}}+f_{i})\in X, 1\leq i\leq K$ such that $V_{1},\dots,V_{K}$ are linearly independent modulo $\sp_{\F_{p}}\{\iota(h)\}$.
		If it is the former case then we are done. We now assume that  it is the latter case.
		
		Let $u_{0}=(h,J^{M}_{\iota(h)}+f_{0})\in\Gamma^{s}_{1}(\Vk,M)$ be such that $u_{0}\sim a$ for all $a\in X$. It suffices to show that for all $u=(h,J^{M}_{V}+f)\in \Gamma^{s}_{k'}(\Vk,M)$, if $u\sim a$ for all $a\in X$, then $u_{0}\sim u$. Since $u_{0}\sim u_{i}\sim u$ for all $1\leq i\leq K$, we have that $f_{0}\equiv f_{i}\equiv f \mod J^{M}_{V_{i}+V}$. So
		$f_{0}\equiv f \mod \cap_{i=1}^{K}J^{M}_{V_{i}+V}$. Since $V_{1},\dots,V_{K}$ are linearly independent modulo $\sp_{\F_{p}}\{\iota(h)\}$,
		we may write $V_{i}+V=V'_{i}+V, 1\leq i\leq K$ for some subspace $V'_{i}$ of $\V$ of dimension at most $k-1$ such that $V'_{1},\dots,V'_{K}$ are linearly independent. Since $d\geq 2k'+2(s+1)(k-2)+7$,
		by Proposition \ref{2:gri} (setting $m=k'$ and $r=k-1$), we have that $f_{0}\equiv f \mod  J^{M}_{V}$.  So $u_{0}\sim u$ and we are done.
\end{proof}

\subsection{The $M$-energy graph}

Let $d,K\in\N_{+}$, $s\in\N$, $p$ be a prime, $M\colon\V\to\F_{p}$ be a quadratic form dividing $K$,  $H$ be a subset of $\Vk$, and
$\xi_{1},\xi_{2},\xi_{3},\xi_{4}\colon H\to\st_{\zp,d}(s)$ be maps.
For $h\in H$, denote $\tilde{\xi}_{i}(h):=(h, J^{M}_{\iota(h)}+\xi_{i}(h))\in\Gamma_{1}^{s}(\Vk,M)$. Consider the directed graph $(H^{2},E)$ where $((h_{1},h_{2}),(h_{3},h_{4}))\in E$ if and only if $\tilde{\xi}_{1}(h_{1})\- \tilde{\xi}_{2}(h_{2})\sim \tilde{\xi}_{3}(h_{3})\- \tilde{\xi}_{4}(h_{4})$. We call $(H^{2},E)$ the \emph{$M$-energy graph} of $(\xi_{1},\xi_{2},\xi_{3},\xi_{4})$, and $\vert E\vert$ the \emph{$M$-energy} of $(\xi_{1},\xi_{2},\xi_{3},\xi_{4})$.
For convenience we define the \emph{$M$-energy graph/$M$-energy} of  $\xi_{1}$ to be the 
$M$-energy graph/$M$-energy of $(\xi_{1},\xi_{1},\xi_{1},\xi_{1})$ (in which case the energy graph is undirected).

In this section, we prove the following Cauchy-Schwartz inequality for $M$-energy:

\begin{thm}[Cauchy-Schwartz inequality]\label{2:gcs}
	Let $d,K\in\N_{+}$, $s\in\N$, $d\geq \max\{s+4,13\}$, $\d>0$, and $p\gg_{\d,d} 1$ be a prime dividing $K$. Let $M\colon\V\to\F_{p}$ be a non-degenerate quadratic form, $H\subseteq \Vk$ and $\xi_{1},\xi_{2},\xi_{3},\xi_{4}\colon H\to\st_{\zp,d}(s)$. If the $M$-energy of $(\xi_{1},\xi_{2},\xi_{3},\xi_{4})$ is at least $\d K^{3d}$, then the $M$-energy   of $(\xi_{1},\xi_{2},\xi_{1},\xi_{2})$ is $\gg_{\d,d} K^{3d}$.
\end{thm}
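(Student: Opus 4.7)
The plan is to establish a Cauchy--Schwartz-style inequality
\begin{equation*}
E_{1,2,3,4}^2 \ll_d E_{1,2,1,2}\cdot E_{3,4,3,4} + p^{6d-1/2},
\end{equation*}
where $E_{i,j,k,l}$ denotes the $M$-energy of $(\xi_i,\xi_j,\xi_k,\xi_l)$. Once this is available, the trivial bound $E_{3,4,3,4}\leq |H|^3\leq p^{3d}$ combined with the hypothesis $E_{1,2,3,4}\geq\delta p^{3d}$ immediately yields $E_{1,2,1,2}\gg_{\delta,d}p^{3d}$, provided $p\gg_{\delta,d}1$. This is the natural shifted-ideal analogue of the classical fact $E(A,B,C,D)^2\leq E(A,B,A,B)\cdot E(C,D,C,D)$ for the additive energy of subsets of an abelian group.

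To set up the inequality, for each $v\in\V$ I would introduce the multisets
\begin{equation*}
W^{i,j}_v:=\bigl\{\tilde{\xi}_i(h)\- \tilde{\xi}_j(h'):(h,h')\in H^2,\ h-h'=v\bigr\}\subseteq\pi^{-1}(v)\cap\Gamma^s_2(M),
\end{equation*}
so that $E_{i,j,k,l}=\sum_{v\in\V}\#\{(w,w')\in W^{i,j}_v\times W^{k,l}_v:w\sim w'\}$. If $\sim$ were an equivalence relation on each $W^{1,2}_v\cup W^{3,4}_v$, the classical argument would go through: decompose both multisets by equivalence class and estimate the inner count as $\sum_\alpha|W^{1,2}_{v,\alpha}|\cdot|W^{3,4}_{v,\alpha}|$, then apply Cauchy--Schwartz twice (once inside each $v$, once across $v$) to obtain exactly $E_{1,2,3,4}^2\leq E_{1,2,1,2}\cdot E_{3,4,3,4}$.

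The substantive obstacle is that $\sim$ is not transitive on $\Gamma^s_2(M)$ (Example \ref{2:nott}). I would circumvent this using Proposition \ref{2:gwts} with $k=2$: the assumption $d\geq\max\{s+4,13\}$ suffices, since for $k=2$ the strongest threshold in Proposition \ref{2:gwts} is $d\geq 11$. Every weak equivalence class in $\pi^{-1}(v)\cap\Gamma^s_2(M)$ is then either a strong equivalence class---on which $\sim$ becomes transitive through its common $\Gamma^s_1(M)$ representative by Lemma \ref{2:spsp1}---or is contained in the set of $(v,J^M_V+f)$ with $V\cap Y\neq\sp_{\F_p}\{v\}$ for some subspace $Y\subseteq\V$ of dimension $O(s)$ depending on the class. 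An element $\tilde{\xi}_i(h)\- \tilde{\xi}_j(h')\in W^{i,j}_v$ lying in the exceptional part forces $h'\in Y+\sp_{\F_p}\{v\}$, a set of size $O(p^{O(s)})$; summing over $v$ and over the partner pair shows that bad tuples contribute at most $p^{2d+O_s(1)}$ to $E_{1,2,3,4}$, negligible compared with $\delta p^{3d}$ once $p\gg_{\delta,d}1$. On the complementary good part, the strong equivalence classes furnish an actual partition into $\sim$-cliques and the two Cauchy--Schwartz applications go through verbatim.

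The main obstacle I expect is the bookkeeping required to glue the weak-to-strong dichotomy of Proposition \ref{2:gwts} together across different weak equivalence classes (whose exceptional subspaces $Y$ may vary) and uniformly over $v\in\V$, since ``weak equivalence classes'' are not themselves a genuine partition when $\sim$ fails to be transitive. I anticipate handling this by first crudely discarding the bad elements described above, and then running the two Cauchy--Schwartz applications on the cleaned-up multisets, where the restriction of $\sim$ is an equivalence relation by Lemma \ref{2:spsp1}; the clique cover and density dependence machinery of Section \ref{2:s:rgg} may be useful to quantify how close the cleaned relation is to a genuine equivalence relation.
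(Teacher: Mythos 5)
Your high-level plan---prove $E_{1,2,3,4}^2 \ll E_{1,2,1,2}\cdot E_{3,4,3,4}$ up to a small error and combine with the trivial bound $E_{3,4,3,4}\leq p^{3d}$---is the natural guess, but the central step does not go through, and the paper in fact takes a different route precisely to avoid this.

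The Cauchy--Schwartz identity $E_{1,2,3,4}=\sum_v\sum_\alpha |W^{1,2}_{v,\alpha}||W^{3,4}_{v,\alpha}|$ needs a genuine \emph{partition} of $\pi^{-1}(v)\cap\Gamma^s_2(M)$ into classes $\alpha$ for which $w\sim w'$ if and only if $w,w'$ lie in the same class. Since $\sim$ is not transitive on $\Gamma^s_2(M)$, no such partition exists, and your cleanup does not manufacture one. There are two separate problems. First, you invoke Lemma~\ref{2:spsp1} to claim that $\sim$ becomes an equivalence relation on the ``cleaned-up'' multisets, but that lemma only gives transitivity of $\sim$ on $\Gamma^s_1(M)$; the elements $\tilde\xi_i(h)\- \tilde\xi_j(h')$ live in $\Gamma^s_2(M)$, and even two strong cliques meeting in a common element need not be contained in a common strong clique (this is exactly Examples~\ref{2:exws1}--\ref{2:exws2} and Example~\ref{2:nott}). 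Second, and more basically, the weak/strong dichotomy of Proposition~\ref{2:gwts} is vacuous at the scale you are trying to use it. For any $\sim$-related pair $w=(v,J^M_V+f)$, $w'=(v,J^M_{V'}+f')$ one always has $f-f'\in J^M_{V+V'}=J^M_V+J^M_{V'}$, and since these ideals are homogeneous one can split $f-f'=a+b$ with $a\in J^M_V\cap\st_d(s)$, $b\in J^M_{V'}\cap\st_d(s)$, so $g:=f-a$ furnishes a common $\Gamma^s_1(M)$ witness; hence \emph{every} two-element weak equivalence class is already strong, and there is no ``bad set'' of elements to discard. The obstacle subspace $Y$ in Proposition~\ref{2:gwts} only becomes a meaningful constraint for cliques of size at least $s+k+1$, and in any case the collection of maximal cliques is not a partition (cliques overlap), so discarding elements lying in ``exceptional'' cliques does not restore the $\sum_\alpha$-decomposition you need.

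The paper sidesteps all of this by never proving the multiplicative inequality at all. Instead, Proposition~\ref{2:glargepart} shows directly that for $\gg_\delta p^d$ values of $h$ there is a \emph{single} clique $C_h$ of size $\gg_{\delta,s}p^d$ consisting of pairs $(h_1,h_2)$ with $h_1-h_2=h$ such that all $\tilde\xi_1(h_1)\-\tilde\xi_2(h_2)$ are mutually $\sim$-equivalent. Theorem~\ref{2:gcs} then follows in one line: $E_{1,2,1,2}\geq\sum_h|C_h|^2\gg_{\delta,d}p^{3d}$. The non-transitivity is handled inside Proposition~\ref{2:glargepart} by the intersection method: one finds $s+4$ ``anchor'' tuples $(u_{i,1},u_{i,2})$ with linearly independent $u_{i,1}$, observes that any two common neighbours $(h_1,h_2),(h_1',h_2')$ of all the anchors satisfy
\[
\xi_1(h_1)-\xi_2(h_2)\equiv\xi_1(h_1')-\xi_2(h_2')\pmod{J^M_{h_1,h_2,h_1',h_2',u_{i,1},u_{i,2}}}\quad\text{for each }i,
\]
and then intersects these $s+4$ ideals via Proposition~\ref{2:gri} to conclude $\xi_1(h_1)-\xi_2(h_2)\equiv\xi_1(h_1')-\xi_2(h_2')\pmod{J^M_{h_1,h_2,h_1',h_2'}}$. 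In other words, one trades the failure of transitivity of $\sim$ for genuine transitivity of $\equiv$ modulo a temporarily enlarged ideal, and then recovers the small ideal by intersecting over many anchors. If you want to keep the shape of your argument you would essentially need to reprove Proposition~\ref{2:glargepart} anyway, and once you have it the one-sided lower bound is both easier and stronger for the present purpose than the full Cauchy--Schwartz inequality.
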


We start with the following technical proposition.

\begin{prop}\label{2:glargepart}
	Let $d,K\in\N_{+}$, $s\in\N$ with $d\geq \max\{s+4,13\}$, 	$\d>0$ and $p\gg_{\d,d} 1$ be a prime dividing $K$. Let  $M\colon\V\to\F_{p}$ be a non-degenerate quadratic form, $H\subseteq \Vk$, and $\xi_{1},\xi_{2},\xi_{3},\xi_{4}\colon H\to\st_{\zp,d}(s)$. If the $M$-energy of $(\xi_{1},\xi_{2},\xi_{3},\xi_{4})$ is at least $\d K^{3d}$, then there exist $H'\subseteq H$ with $\vert H'\vert\geq \d K^{d}/2$ and a set $C_{h}\subseteq H'\times H'$ for all $h\in H'$ such that for all $h\in H'$
	\begin{itemize}
		\item $\vert C_{h}\vert\gg_{\d,s} K^{d}$;
		\item $h_{1}-h_{2}=h$ for all $(h_{1},h_{2})\in C_{h}$;
		\item $\tilde{\xi}_{1}(h_{1})\- \tilde{\xi}_{2}(h_{2})\sim \tilde{\xi}_{1}(h'_{1})\- \tilde{\xi}_{2}(h'_{2})$ for all $(h_{1},h_{2}),(h'_{1},h'_{2})\in C_{h}$.
	\end{itemize}	
	Similarly, there exist $H''\subseteq H$ with $\vert H''\vert\geq \d K^{d}/2$ and a set $C'_{h}\subseteq H''\times H''$ for all $h\in H''$ such that for all $h\in H''$
	\begin{itemize}
		\item $\vert C'_{h}\vert\gg_{\d,s} K^{d}$;
		\item $h_{1}+h_{4}=h$ for all $(h_{1},h_{4})\in C'_{h}$;
		\item $\tilde{\xi}_{1}(h_{1})\+ \tilde{\xi}_{4}(h_{4})\sim \tilde{\xi}_{1}(h'_{1})\+ \tilde{\xi}_{4}(h'_{4})$ for all $(h_{1},h_{4}),(h'_{1},h'_{4})\in C'_{h}$.
	\end{itemize}	
\end{prop}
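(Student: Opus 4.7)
The plan is a double pigeonhole on the $M$-energy combined with the intersection property for $M$-ideals (Propositions \ref{2:gri}--\ref{2:grm}) and the weak-versus-strong equivalence dichotomy of Proposition \ref{2:gwts}. Write $F_{12}(h_1,h_2):=\tilde{\xi}_1(h_1)\-\tilde{\xi}_2(h_2)$ and $F_{34}(h_3,h_4)$ similarly, set $T_h:=\{(a,b)\in H^2\colon a-b=h\}$, and decompose the $M$-energy as $N=\sum_{h\in\V}N_h$, where $N_h$ counts those pairs of pairs in $T_h\times T_h$ whose $F_{12}$- and $F_{34}$-images are $\sim$-related. From $N\geq\delta p^{3d}$ and the trivial bound $N_h\leq p^{2d}$, a standard averaging pulls out a set $H^*\subseteq H-H$ of popular differences with $|H^*|\gg_{\delta}p^d$ and $N_h\gg_{\delta}p^{2d}$ for each $h\in H^*$. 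A further pigeonhole (possibly after a translation of $H$, which does not affect the hypothesis) then produces $H'\subseteq H$ of size at least $\delta p^d/2$ such that each $h\in H'$ lies in $H^*\cap(H'-H')$.

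Fix $h\in H'$. Cauchy-Schwarz applied to the bipartite $\sim$-graph between the two copies of $T_h$ shows that the number of triples $((h_1,h_2),(h_1',h_2'),(h_3,h_4))\in T_h^3$ with both $F_{12}(h_1,h_2),F_{12}(h_1',h_2')\sim F_{34}(h_3,h_4)$ is $\gg_{\delta}p^{3d}$. Consequently, for a positive-density set of ordered pairs $((h_1,h_2),(h_1',h_2'))$, there are $\gg_{\delta}p^d$ common witnesses $(h_3,h_4)\in T_h$. For each such witness, subtracting the two $\sim$-relations and using $h_4=h_3-h$ to simplify $J^M_{h_3,h_4}=J^M_{h_3,h}$ produces the single containment
$$(\xi_1(h_1)-\xi_2(h_2))-(\xi_1(h_1')-\xi_2(h_2'))\in J^M_{h_1,h_1',h,h_3}.$$
The density version of the intersection property, Proposition \ref{2:grm}, applied with subspace $V=\sp_{\F_p}\{h_1,h_1',h\}$ and with $P$ taken to be the set of $\gg p^d$ witness vectors $h_3$, then erases the dependence on $h_3$ and upgrades the many such containments to $F_{12}(h_1,h_2)\sim F_{12}(h_1',h_2')$.

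The main obstacle is the final extraction: the statement demands a \emph{clique} of size $\gg_{\delta,s}p^d$ in the $\sim$-relation graph on $T_h$, whereas $\sim$ is not transitive on $\Gamma^s_2(M)$. My plan is to pigeonhole a vertex of large $\sim$-degree in the pairwise-$\sim$ graph constructed above; its closed neighborhood $X\subseteq\Gamma^s_2(M)$ is then a weak equivalence class with $\pi(X)=\{h\}$, and Proposition \ref{2:gwts}(i) produces the crucial dichotomy. In the strong-equivalence-class branch, a mediator $y\in\Gamma^s_1(M)$ with $y\sim x$ for all $x\in X$ exists, and Lemma \ref{2:spsp1} immediately yields pairwise $\sim$ on $X$, giving the required $C_h$. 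In the degenerate branch, the spaces $\sp_{\F_p}\{h_1,h_2\}$ must all meet a common subspace $Y\supseteq\sp_{\F_p}\{h\}$ of dimension $O_s(1)$ nontrivially, trapping the offending $(h_1,h_2)$ into a set of cardinality $O_s(p^{d-1})$, which is negligible compared with the $\gg p^d$ neighborhood; removing this exceptional set and iterating on the remainder returns to the strong branch. The ``$+$'' half of the statement is handled by the same scheme after reparametrizing $T_h$ as $\{(h_1,h_4)\in H^2\colon h_1+h_4=h\}$.
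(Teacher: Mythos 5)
Your Steps 1 and 2 are sound and are a legitimate variation on the paper's proof: you use a single Cauchy--Schwarz to pull out, for a positive-density set of ordered pairs in $T_h\times T_h$, a $\gg_\delta p^d$-size set of common witnesses $(h_3,h_4)$, and then Proposition~\ref{2:grm} (the density version of the intersection property) erases the extra generators $h_3$. The numerology you need for Proposition~\ref{2:grm} ($\dim V\leq 3$, $m=1$, so $d\geq\max\{s+4,13\}$) matches the hypothesis. The paper instead iterates the Cauchy--Schwarz $s+4$ times (``counting stars'') to find a \emph{single} tuple $w=(v_1,\dots,v_{s+4})$ of common neighbors and a set $W_w$ of size $\gg_{\delta,s}p^d$ consisting of vertices all adjacent to all $v_i$, then applies Proposition~\ref{2:gri}. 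Both variants are correct up to this point, and they are close relatives.

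The gap is in your Step 3. You assert that the closed neighborhood $X$ of a high-degree vertex $x$ in the pairwise-$\sim$ graph you constructed is a weak equivalence class. This is false: it would require that $y\sim x$ and $y'\sim x$ together imply $y\sim y'$, i.e.\ transitivity of $\sim$ through $x$. But $x=\tilde\xi_1(a)\hat-\tilde\xi_2(b)$ lies in $\Gamma^s_2(M)$, not $\Gamma^s_1(M)$; Lemma~\ref{2:spsp1} grants transitivity only when the middle element is in $\Gamma^s_1(M)$, and Example~\ref{2:nott} shows $\sim$ fails to be transitive in general. Concretely, from $y\sim x$ and $y'\sim x$ one obtains $\xi_1(h_1)-\xi_2(h_2)-\xi_1(h_1')+\xi_2(h_2')\in J^M_{h_1,h_1',a,h}$, with an unwanted generator $a$ that a single fixed vertex $x$ cannot remove. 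Thus you do not have a weak equivalence class, and Proposition~\ref{2:gwts}(i) does not apply. (Even if you did, you would be done immediately --- a weak equivalence class is already a clique --- so your invocation of the strong/weak dichotomy is also extraneous, and your reading of the degenerate branch as ``trapping the offending pairs'' is not what that branch says: in the degenerate branch \emph{every} element of $X$ has $V\cap Y\neq\sp_{\F_p}\{h\}$, so one cannot discard a negligible bad set and recurse.) The point you are missing is exactly what the paper's stronger $s{+}4$-fold Cauchy--Schwarz buys: by forcing all pairs under consideration to share the \emph{same} good witness tuple $w$, the intersection argument applies uniformly across $W_w\times W_w$, and $W_w$ is genuinely a clique; your version finds different witnesses for different pairs, which yields edge density but not a clique, and the clique cannot then be extracted by degree pigeonholing because of the non-transitivity above.
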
	
\begin{proof}
	Let $G:=(H^{2},E)$ be the $M$-energy graph of $(\xi_{1},\xi_{2},\xi_{3},\xi_{4})$.
	For $h\in\Vk$, let $U_{h}$ be the set of all vertices $(h_{1},h_{2})\in H^{2}$ such that $h_{1}+h_{2}=h$. 
	Clearly, $E$ does not contain any edge connecting vertices from two different $U_{h}$. So $G$ has a disjoint partition $G_{h}:=(U_{h},E_{h}), h\in \Vk$. Since $\sum_{h\in\Vk}\vert E_{h}\vert=\vert E\vert\geq \d K^{3d}$, and $\vert E_{h}\vert\leq K^{2d}$ for all $h\in\Vk$, we have that there exists $H'\subseteq \Vk$ with $\vert H'\vert\geq \d K^{d}/2$ such that $\vert E_{h}\vert\geq \d K^{2d}/2$ for all $h\in H'$. 
	
	Fix $h\in H'$ and consider the graph $G_{h}=(U_{h},E_{h})$. Enumerate $U_{h}:=\{n_{1},\dots, n_{\vert U_{h}\vert}\}$, and let $a_{i}$ denote the number of edges in $G_{h}$ with one of the vertices being $n_{i}$. 
	We say that $w=(v_{1},\dots,v_{s+4})\in U_{h}^{s+4}$ is \emph{good} if writing $v_{i}=(u_{i,1},u_{i,2})=(u_{i,1},u_{i,1}-h)$ for $1\leq i\leq s+4$, 
	we have that 
	$u_{i,1}, 1\leq i\leq s+4$ are $p$-linearly independent.    
	The number of tuples $(v_{1},\dots,v_{s+4})\in U_{h}^{s+4}$ is at most
	$\vert U_{h}\vert^{s+4}$. On the other hand, the number of tuples $(v,v_{1},\dots,v_{s+4})\in U_{h}^{s+5}$ such that $(v,v_{1}), \dots,(v,v_{s+4})\in E_{h}$ is at least
	$$\sum_{i=1}^{\vert U_{h}\vert}a^{s+4}_{i}\geq\frac{(\sum_{i=1}^{\vert U_{h}\vert}a_{i})^{s+4}}{\vert U_{h}\vert^{s+3}}=\frac{\vert E_{h}\vert^{s+4}}{\vert U_{h}\vert^{s+3}}\gg_{\d,s} K^{(s+5)d}.$$
	For each $w=(v_{1},\dots,v_{s+4})\in U_{h}^{s+4}$, let $W_{w}$ denote the set $v\in U_{h}$ such that $(v,v_{1}),\dots,(v,$ $v_{s+4})\in E_{h}$. Since $\vert W_{w}\vert\leq K^{d}$ for all $w$, the number of $w\in U_{h}^{s+4}$ such that $\vert W_{w}\vert\gg_{\d,s} K^{d}$ is  $\gg_{\d,s}K^{(s+4)d}$. Since $p\gg_{\d,d}1$,  we may choose such a $w$ which is also good (in fact, it is not hard to see from the fact $u_{i,1}-u_{i,2}=h$ and Lemma \ref{2:iiddpp}  that the number $w\in U^{s+4}_{h}$ which is not good is at most $O_{\d,d}(K^{(s+4)d}/p^{d-(s+3)})$ since $d\geq s+4$). 	
	Fix such a choice of 
	$w=(v_{1},\dots,v_{s+4})$, with $v_{i}=(u_{i,1},u_{i,2})$ for $1\leq i\leq s+4$. 
	Pick $(h_{1},h_{2}),(h'_{1},h'_{2})\in W_{w}$. Since $((h_{1},h_{2}),v_{i}),((h'_{1},h'_{2}),v_{i})\in E$, we have that 
	\begin{equation}\label{2:gccc333}
	\xi_{1}(h_{1})-\xi_{2}(h_{2})\equiv\xi_{3}(u_{i,1})- \xi_{4}(u_{i,2})\equiv\xi_{1}(h'_{1})-\xi_{2}(h'_{2}) \mod J^{M}_{\iota(h_{1}),\iota(h_{2}),\iota(h'_{1}),\iota(h'_{2}),\iota(u_{i,1}),\iota(u_{i,2})}
	\end{equation}
	for $1\leq i\leq s+4$.
	Therefore, 
	\begin{equation}\label{2:gccc33}
	\xi_{1}(h_{1})-\xi_{2}(h_{2})\equiv\xi_{1}(h'_{1})-\xi_{2}(h'_{2}) \mod \bigcap_{i=1}^{s+4}J^{M}_{\iota(h_{1}),\iota(h_{2}),\iota(h'_{1}),\iota(h'_{2}),\iota(u_{i,1}),\iota(u_{i,2})}.
	\end{equation}
	By the definition of $U_{h}$, we have that $\bigcap_{i=1}^{s+4}J^{M}_{\iota(h_{1}),\iota(h_{2}),\iota(h'_{1}),\iota(h'_{2}),\iota(u_{i,1}),\iota(u_{i,2})}=\bigcap_{i=1}^{s+4}J^{M}_{\iota(h_{1}),\iota(h'_{1}),\iota(u_{i,1}),\iota(h)}$ and $J^{M}_{\iota(h_{1}),\iota(h'_{1}),\iota(h)}=J^{M}_{\iota(h_{1}),\iota(h_{2}),\iota(h'_{1}),\iota(h'_{2})}.$
	Since $u_{i,1}, 1\leq i\leq s+4$ are $p$-linearly independent and $d\geq 13$, it follows from (\ref{2:gccc33}) and Proposition \ref{2:gri} (setting $m=3$ and $r=1$) that   
	\begin{equation}\label{2:gccc334}
	\xi_{1}(h_{1})-\xi_{2}(h_{2})\equiv\xi_{1}(h'_{1})-\xi_{2}(h'_{2}) \mod J^{M}_{\iota(h_{1}),\iota(h_{2}),\iota(h'_{1}),\iota(h'_{2})}.
	\end{equation}
	Since $h_{1}-h_{2}=h$ for all $(h_{1},h_{2})\in W_{w}$, we are done by setting $C_{h}=W_{w}$ for all $h\in H'$.	
	
	\
	
	The existence of $H''$ and $C'_{h}$ can be proved in a similar way, and we leave the details to the interested readers.
\end{proof}

\begin{rem}
We briefly explain the reason why we consider the set $U_{h}^{s+4}$ in the proof of Proposition \ref{2:glargepart}. If we used  a similar argument with $w=v_{1}\in U_{h}$ instead of $w=(v_{1},\dots,v_{s+4})\in U_{h}^{s+4}$, we would  arrive at (\ref{2:gccc333}) with $i=1$, which is insufficient to deduce  (\ref{2:gccc334}). This is  because there is a loss of information in (\ref{2:gccc333}) where we need to modulo an additional module $J^{M}_{\iota(u_{1,1}),\iota(u_{1,2})}$. In order to overcome this difficulty, we use the intersection approach to find $s+4$ different $v_{i}$  in general positions, and then intersect all the modules $J^{M}_{\iota(h_{1}),\iota(h_{2}),\iota(h'_{1}),\iota(h'_{2}),\iota(u_{i,1}),\iota(u_{i,2})}$ to recover the lost information (the number $s+4$ is determined by the intersection properties for $M$-modules proved in Section \ref{2:s:a1}). Similar approaches will be frequently used in this paper.
\end{rem}

We are now ready to prove Theorem \ref{2:gcs}.

\begin{proof}[Proof of Theorem \ref{2:gcs}]
	By Proposition \ref{2:glargepart},  there exist $H'\subseteq H$ with $\vert H'\vert\geq \d K^{d}/2$ and a set $C_{h}\subseteq H'\times H'$ for all $h\in H'$ such that for all $h\in H'$,
	\begin{itemize}
		\item $\vert C_{h}\vert\gg_{\d,s} K^{d}$;
		\item $h_{1}-h_{2}=h$ for all $(h_{1},h_{2})\in C_{h}$;
		\item $\tilde{\xi}_{1}(h_{1})\- \tilde{\xi}_{2}(h_{2})\sim \tilde{\xi}_{1}(h'_{1})\- \tilde{\xi}_{2}(h'_{2})$ for all $(h_{1},h_{2}),(h'_{1},h'_{2})\in C_{h}$.
	\end{itemize}	
	This means that  the $M$-energy  of $(\xi_{1},\xi_{2},\xi_{1},\xi_{2})$ is at least
	$\sum_{h\in H'}\vert C_{h}\vert^{2}\gg_{\d,d} K^{3d}.$
\end{proof}	

\subsection{The Balog-Gowers-Szemer\'edi theorem}

Let $G$ be a discrete abelian group and $H$ be a subset of $G$. The Balog-Gowers-Szemer\'edi Theorem says that if  $H$ has a large additive energy, then there exists a large subset $H'$ of $H$ with a small doubling constant $\frac{\vert H'+H'\vert}{\vert H'\vert}$ (see for example \cite{TV06}). We extend this result to shifted modules in this section.

For $A,B\subseteq \Gamma^{s}_{k}(\Vk,M)$, we use the notation $A\+ B$ to denote the set of elements of the form $a\+ b$ for some $a\in A$ and $b\in B$, and we define $A\- B$ in a similar way. We refer the readers to Section \ref{2:s:rgg} to recall the definitions we use in graph theory.

\begin{thm}[Balog-Gowers-Szemer\'edi Theorem for $\Gamma^{s}(\Vk,M)$]\label{2:gbig1}
	Let $d,K\in\N_{+}$, $s\in\N$, $d\geq 2s+15$, $\d>0$ and 
	 $p\gg_{\d,d} 1$ be a prime dividing $K$. Let $M\colon\V\to\F_{p}$ be a non-degenerate quadratic form, $H\subseteq \Vk$ and $\xi\colon H\to\st_{\zp,d}(s)$. If the $M$-energy  of $\xi$ is at least $\d K^{3d}$, then
	  there exists $H'\subseteq H$ with $\vert H'\vert\gg_{\d,d}K^{d}$ such that $\dep(\tilde{\xi}(H')\+  \tilde{\xi}(H'))=O_{\d,d}(1)$.
\end{thm}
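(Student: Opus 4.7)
The plan is to transplant the classical Balog--Gowers--Szemer\'edi argument to the shifted-ideal setting, using Proposition \ref{2:glargepart} as the Cauchy--Schwartz input and the intersection property for $M$-ideals from Section \ref{2:s:a1} to handle the non-invertibility of $\+$ and $\-$. Specifically, Proposition \ref{2:glargepart} applied to $(\xi,\xi,\xi,\xi)$ furnishes $H_{1}\subseteq H$ with $\vert H_{1}\vert \gg_{\d} p^{d}$ and, for each $h\in H_{1}$, a set $C_{h}\subseteq H_{1}\times H_{1}$ with $\vert C_{h}\vert\gg_{\d,s} p^{d}$, such that $a-b=h$ and $\tilde\xi(a)\-\tilde\xi(b)$ is $\sim$-constant on $C_{h}$. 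This allows me to select a ``witness'' $\alpha(h)\in\Gamma^{s}(M)$ with $\tilde\xi(a)\-\tilde\xi(b)\sim\alpha(h)$ for every $(a,b)\in C_{h}$.

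Next I build the undirected graph $\mathcal{G}=(H_{1},\mathcal{E})$ with $\{a,b\}\in\mathcal{E}$ iff $(a,b)\in C_{a-b}$ (symmetrizing if necessary). Since $\sum_{h}\vert C_{h}\vert\gg p^{2d}$, this graph has $\gg p^{2d}$ edges. A standard popular-common-neighbor lemma (in the spirit of Tao--Vu's version of BGS) then produces $H'\subseteq H_{1}$ with $\vert H'\vert\gg_{\d,d} p^{d}$ and an integer $N=O_{\d,d}(1)$ such that, for every ordered pair $(h_{1},h_{2})\in H'\times H'$ with $h_{1}+h_{2}=h$, the number of $N$-tuples $((c_{j},d_{j}))_{j=1}^{N}$ with $c_{j},d_{j}\in H_{1}$, $c_{j}+d_{j}=h$, $\{h_{1},c_{j}\},\{d_{j},h_{2}\}\in\mathcal{E}$ for every $j$, and the $c_{j}$'s in sufficiently generic position, is $\gg p^{Nd}$.

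The auxiliary graph at level $h\in\V$ is then built as follows. Let $V_{h}:=\pi^{-1}(h)\cap(\tilde\xi(H')\+\tilde\xi(H'))$ and take $Y_{h}$ to be the set of the generic $N$-tuples $((c_{j},d_{j}))_{j}$ described above. Declare $(v,((c_{j},d_{j}))_{j})\in E'$ whenever $v=\tilde\xi(a)\+\tilde\xi(b)$ with $a,b\in H'$, $a+b=h$, and $\{a,c_{j}\},\{d_{j},b\}\in\mathcal{E}$ for all $j$. The density of this bipartite graph is $\gg_{\d,d} 1$ by the common-neighbor step. To verify the auxiliary property, suppose $v_{i}=\tilde\xi(a_{i})\+\tilde\xi(b_{i})$, $i=1,2$, are both incident to the same witness $((c_{j},d_{j}))_{j}$. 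Chaining the two $\sim$-relations coming from $\{a_{i},c_{j}\},\{d_{j},b_{i}\}\in\mathcal{E}$ (and canceling via $\alpha(a_{i}-c_{j})=\alpha(d_{j}-b_{i})$) yields
\[
\xi(a_{i})+\xi(b_{i})-\xi(c_{j})-\xi(d_{j})\in J^{M}_{a_{i},b_{i},c_{j},d_{j},W_{i,j}}
\]
for each $i,j$, where $W_{i,j}$ is the at-most-two-dimensional subspace encoding the representative defining $\alpha(a_{i}-c_{j})$. Differencing over $i$ and intersecting over the $N$ generic $j$'s, an application of Proposition \ref{2:gri}/\ref{2:grm} collapses the extraneous ideals to give $\xi(a_{1})+\xi(b_{1})-\xi(a_{2})-\xi(b_{2})\in J^{M}_{a_{1},b_{1},a_{2},b_{2}}$, i.e.\ $v_{1}\sim v_{2}$.

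The main obstacle is the third step: because $\sim$ fails to be transitive on $\Gamma^{s}_{2}(M)$ and $\+,\-$ are not inverse operations, a single witness $(c,d)$ only certifies the target $\sim$-relation modulo spurious factors like $J^{M}_{c,d}$ and $J^{M}_{W}$. Forcing multiple independent witnesses and then invoking the intersection property for $M$-ideals is what removes these factors, but this forces $N$ to depend on $s$ and, combined with the linearly-independent genericity needed for Proposition \ref{2:gri}, dictates the dimension hypothesis $d\geq 2s+15$. Balancing the size of $N$ against the density loss in the common-neighbor step, while keeping the final auxiliary graph $O_{\d,d}(1)^{-1}$-dense, is the delicate technical heart of the argument.
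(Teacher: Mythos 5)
Your overall strategy---feed Proposition \ref{2:glargepart} into a Balog--Szemer\'edi-type graph argument and verify the auxiliary-graph condition via the intersection property---matches the paper's. But the chaining step has a genuine gap. You select a single witness $\alpha(h)\in\Gamma^{s}(M)$ for each class and chain through it, and since a priori $\alpha(h)\in\Gamma^{s}_{2}(M)$ (not $\Gamma^{s}_{1}(M)$), Lemma~\ref{2:spsp1} does not give transitivity through $\alpha$, which is why the spurious $J^{M}_{W_{i,j}}$ appears. You then assert that ``forcing multiple independent witnesses and invoking the intersection property'' removes these factors. That is not justified: $W_{i,j}$ is the two-dimensional ideal carried by the \emph{fixed} representative defining $\alpha(a_{i}-c_{j})$, not something you get to choose when picking $(c_{j},d_{j})$. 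Making the $c_{j}$'s generic does not make the $W_{i,j}$'s generic; their extra generators beyond $a_{i}-c_{j}$ could all lie inside one fixed low-dimensional subspace, in which case $\bigcap_{j}J^{M}_{a_{1},b_{1},a_{2},b_{2},c_{j},W_{1,j},W_{2,j}}$ simply does not collapse to $J^{M}_{a_{1},b_{1},a_{2},b_{2}}$ by Proposition \ref{2:gri}.

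This is precisely the gap the paper closes with Proposition \ref{2:gbsg1}: using Proposition \ref{2:gwts} and the largeness of the sets $C'_{h}$, it upgrades the weak equivalence class from Proposition \ref{2:glargepart} to a \emph{strong} one, producing a witness $c_{h}\in\Gamma^{s}_{1}(M)$. With a $\Gamma^{s}_{1}(M)$ witness, Lemma~\ref{2:spsp1} gives exact transitivity and no $W_{i,j}$ ever arises. That weak-to-strong step is the missing ingredient in your proposal. (You could also dodge the issue entirely by observing that $(a_{i},c_{j})$ and $(d_{j},b_{i})$ both lie in the \emph{same} set $C_{a_{i}-c_{j}}$, so the third bullet of Proposition~\ref{2:glargepart} already gives $\tilde\xi(a_{i})\-\tilde\xi(c_{j})\sim\tilde\xi(d_{j})\-\tilde\xi(b_{i})$ directly, with no representative at all---but as written you explicitly route through $\alpha$.) A secondary concern: the ``standard popular-common-neighbor lemma'' is a placeholder for real work. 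The paper runs through a colored-edge Balog--Szemer\'edi refinement following \cite{BS94}, Lemma~\ref{2:gbsg4}, a separate intersection argument in Proposition~\ref{2:gcf1}, and a further good-tuple selection on $G_{3}$ before assembling the final auxiliary graph; your one-step version would need to verify that every pair in $H'$ summing to $h$ sees $\gg p^{Nd}$ admissible $(c_{j},d_{j})$-tuples with generic $c_{j}$'s, which is not shown.
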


Note that in our setting, given a subset $X$ of $\Gamma^{s}(\Vk,M)$, there is no such a concept as the ``doubling constant" of $X$. So in Theorem \ref{2:gbig1}, we use the density dependence number of the relation graph of $X$ to replace the doubling constant in the conventional additive combinatorics. It is helpful for the readers to informally interpret the conclusion $\dep(\tilde{\xi}(H')\+  \tilde{\xi}(H'))=O_{\d,d}(1)$ as $\tilde{\xi}(H')$ having a ``doubling constant" $O_{\d,d}(1)$.

\begin{rem}\label{2:dpcc}
Intuitively, compared with the density dependence number, the clique covering number is a more natural replacement for the cardinality of subsets of $\Gamma^{s}(\Vk,M)$. The reason we use the density dependence number in Theorem \ref{2:gbig1} (as well as in other results in Section \ref{2:s:115}) is because it is easier to handle.
In Section \ref{2:s:116}, we will explain how to pass from the density dependence number to the clique covering number.
\end{rem}

 The rest of this subsection is devoted to the proof of Theorem \ref{2:gbig1}. 
 Our method to prove Theorem \ref{2:gbig1} is inspired by the work of Ballog and Szemer\'edi \cite{BS94}.
The approach in \cite{BS94} relies heavily on the analysis of the cardinality of the level set, i.e. a set of the form $\{(h_{1},h_{2})\in G\times G\colon h_{1}+h_{2}=c\}$ for some abelian group $G$ and $c\in G$. In our setting,
 for any  $c\in\Gamma^{s}(\Vk,M)$, we may also define the level set at $c$ as  $S(c):=\{(h_{1},h_{2})\in H^{2}\colon  \tilde{\xi}(h_{1})\+  \tilde{\xi}(h_{2})\sim c\}$.  We have the following basic properties:  

\begin{lem}\label{2:gbsg2}
	Let $a,a',a''\in\Vk$ and $c,c'\subseteq\Gamma^{s}(\Vk,M)$. 
	
	(i) If $(a,a'),(a,a'')\in S(c)$, then $a'=a''$.  
	
	(ii) If $(a,a')\in S(c)\cap S(c')$, then $\pi(c)=\pi(c')$.
\end{lem}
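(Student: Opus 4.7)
The plan is to observe that both parts follow immediately by applying the projection map $\pi$ and using that $\sim$ preserves the first coordinate.

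For part (i), I would unwind the definitions as follows. Suppose $(a,a'),(a,a'')\in S(c)$. Then by definition of $S(c)$, we have $\tilde{\xi}(a)\+ \tilde{\xi}(a')\sim c$ and $\tilde{\xi}(a)\+ \tilde{\xi}(a'')\sim c$. Since the operation $\+$ on $\V\times\gfv$ adds the first coordinates, and since $u\sim v$ forces $\pi(u)=\pi(v)$, applying $\pi$ to both relations yields $a+a'=\pi(c)=a+a''$, so $a'=a''$.

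For part (ii), the argument is essentially the same. Given $(a,a')\in S(c)\cap S(c')$, applying $\pi$ to the relations $\tilde{\xi}(a)\+ \tilde{\xi}(a')\sim c$ and $\tilde{\xi}(a)\+ \tilde{\xi}(a')\sim c'$ yields $\pi(c)=a+a'=\pi(c')$.

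There is no real obstacle here; both statements amount to the trivial observation that the $\V$-component of $\tilde{\xi}(h_{1})\+\tilde{\xi}(h_{2})$ equals $h_{1}+h_{2}$, so the first coordinate of any element equivalent to it under $\sim$ is determined by the pair $(h_{1},h_{2})$. The lemma is a sanity check establishing that level sets $S(c)$ behave, with respect to projection, like the classical level sets in additive combinatorics (where a pair $(h_{1},h_{2})$ with $h_{1}+h_{2}=c$ determines $h_{2}$ from $h_{1}$).
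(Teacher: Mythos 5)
Your proof is correct and follows essentially the same approach as the paper: both parts reduce to the observation that $(h_1,h_2)\in S(c)$ forces $h_1+h_2=\pi(c)$, since $\+$ adds $\V$-coordinates and $\sim$ preserves them. The paper states this more tersely (just writing $a+a'=a+a''=\pi(c)$) but the underlying argument is identical.
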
	
\begin{proof}
	For  Part (i), if $(a,a'),(a,a'')\in S(c)$, then $a+a'=a+a''=\pi(c)$, and so $a'=a''$. 
	For Part (ii), we have $a+a'=\pi(c)=\pi(c')$.  
\end{proof}

The first step is to show that there exist many $c\in \Gamma^{s}_{1}(\Vk,M)$ such that the level set $S(c)$ has a large cardinality.

\begin{prop}\label{2:gbsg1}
	Suppose that $d\geq \max\{s+3,13\}$ and $p\gg_{\d,d} 1$.
	There exist  $H'\subseteq H$ with $\vert H'\vert\geq \d K^{d}/2$ and for each $h\in H'$ some $c_{h}\in \Gamma^{s}_{1}(\Vk,M)$ with $\pi(c_{h})=h$ such that $S(c_{h})$ is  of cardinality $\gg_{\d,s}K^{d}$ (in particular $S(c_{h}), h\in H'$ are disjoint).	
\end{prop}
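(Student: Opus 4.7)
The plan is to upgrade the conclusion of Proposition~\ref{2:glargepart}, applied to $(\xi,\xi,\xi,\xi)$, from weak to strong equivalence; the dichotomy in Proposition~\ref{2:gwts} provides exactly the right mechanism. First I would apply the second half of Proposition~\ref{2:glargepart}: this gives $H''\subseteq H$ of size at least $\d p^{d}/2$ together with, for each $h\in H''$, a set $C'_{h}\subseteq H''\times H''$ of size $\gg_{\d,s}p^{d}$ whose pairs satisfy $h_{1}+h_{4}=h$ and for which the elements $\tilde{\xi}(h_{1})\+\tilde{\xi}(h_{4})$ are pairwise $\sim$-equivalent. By Lemma~\ref{2:a+b} the set
\[
X_{h}:=\{\tilde{\xi}(h_{1})\+\tilde{\xi}(h_{4})\colon (h_{1},h_{4})\in C'_{h}\}
\]
is a weak equivalence class inside $\Gamma^{s}_{2}(M)$ with $\pi(X_{h})=\{h\}$.

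Next I would apply Proposition~\ref{2:gwts}(i) with $k=2$, whose dimension hypothesis is satisfied since $d\geq 13$. This gives a dichotomy: either $X_{h}$ is a strong equivalence class, in which case some $c_{h}\in\Gamma^{s}_{1}(M)$ with $\pi(c_{h})=h$ satisfies $c_{h}\sim x$ for every $x\in X_{h}$, or there is a subspace $Y\subseteq\V$ with $h\in Y$ and $\dim(Y)\leq s+2$ (respectively $\leq s+1$ when $h=\bold{0}$) such that for every $(h,J^{M}_{V}+f)\in X_{h}$ the span $V=\sp_{\F_{p}}\{h_{1},h_{4}\}$ meets $Y$ strictly above $\sp_{\F_{p}}\{h\}$.

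The main obstacle is excluding this bad alternative. Assume $h\neq\bold{0}$; if $h_{1},h_{4}$ are linearly dependent then $V=\sp_{\F_{p}}\{h\}$ and the bad condition fails, so all bad pairs have $h_{1},h_{4}$ linearly independent. Then $V$ is $2$-dimensional and contains $h$, and the requirement $\dim(V\cap Y)\geq 2$ forces $V\subseteq Y$, hence $h_{1}\in Y$ and $h_{4}=h-h_{1}\in Y$. The number of such pairs is therefore at most $\vert Y\vert\leq p^{s+2}$, which is strictly smaller than $\vert C'_{h}\vert\gg_{\d,s}p^{d}$ once $d\geq s+3$ and $p\gg_{\d,d}1$. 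The case $h=\bold{0}$ is analogous, with the even better bound $p^{s+1}$ arising from a $1$-dimensional $V$. Hence the bad case never occurs, $X_{h}$ is always a strong equivalence class, and the required $c_{h}\in\Gamma^{s}_{1}(M)$ exists for every $h\in H''$. Setting $H':=H''$, we have $C'_{h}\subseteq S(c_{h})$, whence $\vert S(c_{h})\vert\gg_{\d,s}p^{d}$, and the sets $S(c_{h})$, $h\in H'$, are automatically disjoint by Lemma~\ref{2:gbsg2}(ii) since $\pi(c_{h})=h$ distinguishes distinct $h$.
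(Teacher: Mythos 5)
Your proof is correct and follows essentially the same route as the paper's: apply the second (``$\hat+$'') half of Proposition~\ref{2:glargepart}, observe via Lemma~\ref{2:a+b} that each $X_h$ is a weak equivalence class in $\Gamma^{s}_{2}(M)$, and then invoke the dichotomy of Proposition~\ref{2:gwts} to upgrade to a strong equivalence class, ruling out the obstruction subspace $Y$ by the cardinality bound $\vert C'_h\vert\gg_{\d,s}p^d$ versus $\vert Y\vert\leq p^{s+2}$ once $d\geq s+3$. You spell out the dimension count ($V\cap Y\neq\sp_{\F_p}\{h\}$ forces $V\subseteq Y$ for $2$-dimensional $V$) in slightly more detail than the paper's terse ``it is impossible to have $\ldots$'', but the argument is identical in substance.
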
	
\begin{proof}
	By Proposition \ref{2:glargepart},  there exist $H'\subseteq H$ with $\vert H'\vert\geq \d K^{d}/2$ and a set $C'_{h}\subseteq H_{1}\times H_{1}$ for all $h\in H'$ such that for all $h\in H'$,
	\begin{itemize}
		\item $\vert C'_{h}\vert\gg_{\d,s}K^{d}$;
		\item $h_{1}+h_{2}=h$ for all $(h_{1},h_{2})\in C'_{h}$;
		\item $\tilde{\xi}(h_{1})\+ \tilde{\xi}(h_{2})\sim \tilde{\xi}(h'_{1})\+ \tilde{\xi}(h'_{2})$ for all $(h_{1},h_{2}),(h'_{1},h'_{2})\in C'_{h}$.
	\end{itemize}
	
	Let $C_{h}:=\{\tilde{\xi}(h_{1})\+ \tilde{\xi}(h_{2})\colon (h_{1},h_{2})\in C'_{h}\}$.
	Then $C_{h}$ is an equivalence class.
	By Lemma \ref{2:a+b}, $C_{h}\subseteq \Gamma^{s}_{2}(\Vk,M)$.
	 Since $\vert C'_{h}\vert\gg_{\d,s}K^{d}$,
	 for any $h\in H'$ and subspace $Y$ of $\V$ of dimension $s+2$ and containing $\iota(h)$, since $d\geq s+3$, 
	 it is impossible to have $\sp_{\F_{p}}\{\iota(h_{1}),\iota(h_{2})\}\cap Y\neq \sp_{\F_{p}}\{\iota(h)\}$ for all $(h_{1},h_{2})\in C'_{h}$. 
	By Proposition \ref{2:gwts}, since $d\geq 11$, we have that $C_{h}$ is a strong equivalence class. So there exist $c_{h}\in \Gamma^{s}_{1}(\Vk,M)$ such that $\tilde{\xi}(h_{1})\+ \tilde{\xi}(h_{2})\sim c_{h}$ for all $h\in H'$ and $(h_{1},h_{2})\in C'_{h}$. So we have that  $\vert S(c_{h})\vert\geq \vert C'_{h}\vert\gg_{\d,s}  K^{d}$, Since $\pi(c_{h})=h$, we are done. 
	\end{proof}

Let $H'$ and $c_{h},h\in H'$ satisfy the requirement of Proposition \ref{2:gbsg1} and set $\mathcal{X}:=\{c_{h}\colon h\in H'\}$. 
The second step is to show that there exists a large subset $\mathcal{X'}$ of $\mathcal{X}$ such that the relation graph of $\mathcal{X'}\+ \mathcal{X'}$ has a  small density dependence number.
Consider the graph $G_{1}=(\Vk,E_{1})$, where $E_{1}=\cup_{c\in\mathcal{X}}S(c)$. 
We say that an edge in $E_{1}$ has \emph{color} $c$ if it belongs to $S(c)$. By Proposition \ref{2:gbsg1},
\begin{equation}\label{2:gbs5}
\vert E_{1}\vert=\sum_{c\in\mathcal{X}}\vert S(c)\vert\gg_{\d,s}K^{2d}.
\end{equation} 
For $W_{1},W_{2}\subseteq \Vk$, let $E(W_{1},W_{2})$ denote the set of edges in $E_{1}$ joining a vertex in $W_{1}$ and a vertex in $W_{2}$.  For $W\subseteq \Vk$ and $c\in \mathcal{X}$, let $W(c)$ denote the set of all $x\in W$ such that $(x,y)\in S(c)$ for some $y\in \Vk$.
Following the method similar to pages 265--266 of \cite{BS94}, there exist $W_{1},W_{2}\subseteq \Vk$ and $\mathcal{X}'\subseteq \mathcal{X}$ such that 
$\vert \mathcal{X}'\vert\gg_{\d,d,s}K^{d}$, and that for all $c_{1}, c_{2}\in \mathcal{X}'$, $\vert E(W_{1}(c_{1}),W_{2}(c_{2}))\vert\gg_{\d,s}K^{2d}$. Moreover, for all $c\in\mathcal{X}'$, $E(W_{1}(c_{1}),W_{2}(c_{2}))$ contains at least one edge of color $c$.

Consider the bipartite graph $G_{2}=(X_{2},Y_{2},E_{2})$, where $X_{2}=\mathcal{X}'\times \mathcal{X}'$, $Y_{2}=\mathcal{X}'\times \Vk\times \Vk$, and $\{(c_{1},c_{2}), (c,h_{1},h_{2})\}\in E_{2}$ if 
there exists  $(a_{1},a_{2})\in E(W_{1}(c_{1}),W_{2}(c_{2}))$ such that $(a_{1},h_{1})\in S(c_{1})$, $(a_{2},h_{2})\in S(c_{2})$ and $(a_{1},a_{2})\in S(c)$. For $c_{1},c_{2}\in \mathcal{X}'$, let $\Xi(c_{1},c_{2})$ denote the set of all edges in $E_{2}$ with $(c_{1},c_{2})$ as one of the vertices.

\begin{lem}\label{2:gbsg4}
	We have that $\vert \Xi(c_{1},c_{2})\vert=\vert E(W_{1}(c_{1}),W_{2}(c_{2}))\vert\gg_{\d,s}K^{2d}$. 
\end{lem}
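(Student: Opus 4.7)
The plan is to exhibit a bijection $\phi\colon E(W_{1}(c_{1}),W_{2}(c_{2}))\to \Xi(c_{1},c_{2})$, after which the stated lower bound is immediate from the hypothesis $\vert E(W_{1}(c_{1}),W_{2}(c_{2}))\vert\gg_{\d,s}p^{2d}$ already recorded in the construction.

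First I would establish the key bookkeeping fact that each edge of $E_{1}$ carries a uniquely determined color in $\mathcal{X}$. Indeed, if $(a_{1},a_{2})\in S(c)\cap S(c')$ with $c,c'\in\mathcal{X}$, then Lemma \ref{2:gbsg2}(ii) gives $\pi(c)=\pi(c')$; since $\mathcal{X}=\{c_{h}\colon h\in H'\}$ with $\pi(c_{h})=h$ by Proposition \ref{2:gbsg1}, the map $c\mapsto \pi(c)$ is injective on $\mathcal{X}$ and hence $c=c'$. Similarly, for any $a\in W_{i}(c_{i})$ the element $h\in\V$ with $(a,h)\in S(c_{i})$ is unique, either by Lemma \ref{2:gbsg2}(i) applied to two such witnesses, or simply because $h=\pi(c_{i})-a$.

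Using these uniqueness statements I define $\phi$ as follows. Given $(a_{1},a_{2})\in E(W_{1}(c_{1}),W_{2}(c_{2}))$, let $c\in\mathcal{X}$ be the unique color of this edge, let $h_{1}$ be the unique element with $(a_{1},h_{1})\in S(c_{1})$, and let $h_{2}$ be the unique element with $(a_{2},h_{2})\in S(c_{2})$; then set $\phi(a_{1},a_{2}):=\{(c_{1},c_{2}),(c,h_{1},h_{2})\}$. By construction the witness $(a_{1},a_{2})$ shows that this is an edge of $E_{2}$, and since $(c_{1},c_{2})$ is one of its endpoints it lies in $\Xi(c_{1},c_{2})$. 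Injectivity is immediate: from $\phi(a_{1},a_{2})=\phi(a_{1}',a_{2}')$ one reads off $a_{i}=\pi(c_{i})-h_{i}=a_{i}'$ for $i=1,2$. Surjectivity is just the unwinding of the definition of $\Xi(c_{1},c_{2})$: any edge $\{(c_{1},c_{2}),(c,h_{1},h_{2})\}\in\Xi(c_{1},c_{2})$ is, by definition of $E_{2}$, witnessed by some $(a_{1},a_{2})\in E(W_{1}(c_{1}),W_{2}(c_{2}))$, and uniqueness forces $\phi(a_{1},a_{2})$ to equal the given edge.

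No serious obstacle arises; the only substantive ingredient is the unique-color property, which is why the elements of $\mathcal{X}$ were selected from $\Gamma^{s}_{1}(M)$ (one per fibre of $\pi$) in Proposition \ref{2:gbsg1}. With $\phi$ a bijection, $\vert\Xi(c_{1},c_{2})\vert=\vert E(W_{1}(c_{1}),W_{2}(c_{2}))\vert\gg_{\d,s}p^{2d}$ follows at once.
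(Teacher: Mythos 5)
Your proposal is correct and follows essentially the same route as the paper: you construct the same map $(a_1,a_2)\mapsto\{(c_1,c_2),(c,h_1,h_2)\}$ and verify it is a bijection onto $\Xi(c_1,c_2)$. The only difference is that you spell out the unique-color bookkeeping (via Lemma~\ref{2:gbsg2}(ii) together with injectivity of $\pi$ on $\mathcal{X}$), which the paper leaves implicit behind the phrase ``by the definition of $E_1$''; this is a welcome clarification rather than a divergence.
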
	
\begin{proof}
	Fix $c_{1},c_{2}\in \mathcal{X}'$ and let $(a_{1},a_{2})\in E(W_{1}(c_{1}),W_{2}(c_{2}))$. By definition and Lemma \ref{2:gbsg2}, there exist unique $h_{1}, h_{2}\in\Vk$ such that $(a_{1},h_{1})\in S(c_{1})$ and $(a_{2},h_{2})\in S(c_{2})$. By the definition of $E_{1}$, we have that $(a_{1},a_{2})\in S(c)$ for some unique $c\in\mathcal{X}'$. Therefore, $\lambda\colon (a_{1},a_{2})\to ((c_{1},c_{2}),(c,h_{1},h_{2}))$ is a well defined map from $E(W_{1}(c_{1}),W_{2}(c_{2}))$ to $\Xi(c_{1},c_{2})$. 
	
	It suffices to show that $\lambda$ is a bijection. Suppose that both $\lambda(a_{1},a_{2})$ and $\lambda(a'_{1},a'_{2})$ equal to $((c_{1},c_{2}),(c,h_{1},h_{2}))$. Then $(a_{1},h_{1}),(a'_{1},h_{1})\in S(c)$. By Lemma \ref{2:gbsg2}, $a_{1}=a'_{1}$. Similarly, $a_{2}=a'_{2}$. So $\lambda$ is injective. On the other hand, by the definition of $E_{2}$, $\lambda$ is subjective. This finishes the proof.
\end{proof}

For $h\in \Vk$, denote $$X_{2,h}:=\{c_{1}\+ c_{2}\colon (c_{1},c_{2})\in \mathcal{X}'\times \mathcal{X}', \pi(c_{1})+\pi(c_{2})=h\}$$ and $$Y_{2,h}:=\{(c,h_{1},h_{2})\in \mathcal{X}'\times \Vk\times \Vk\colon \pi(c)+h_{1}+h_{2}=h\}.$$ It is easy to see that there is no edge in $E_{2}$ connecting $X_{2,h}$ and $Y_{2,h'}$ if $h\neq h'$. Therefore, $G_{2}$ has a disjoint partition $G_{2,h}=(X_{2,h},Y_{2,h},E_{2,h}), h\in\Vk$.

Let $G'_{2,h}=(X_{2,h},E'_{2,h})$ be the graph given by $(c_{1}\+ c_{2},c'_{1}\+ c'_{2})\in E'_{2,h}$ if $c_{1}\+  c_{2}\sim c'_{1}\+  c'_{2}$.
The goal of the second step is equivalent of showing that $G'_{2,h}$ has density dependence number $O_{\d,s}(1)$.
We use the intersection method.
 We say that $$((D_{1},h_{1,1},h_{1,2}),\dots,(D_{s+4},h_{s+4,1},h_{s+4,2}))\in Y^{s+4}_{2,h}$$ is \emph{good} if $h_{i,1},h_{i,2}, 1\leq i\leq s+4$ are $p$-linearly independent. 
Let $Z_{2,h}$ denote the set of all good elements in $Y^{s+4}_{2,h}$. 
Consider the  bipartite graph $G''_{2,h}=(X_{2,h}, Z_{2,h}, E''_{2,h})$  such that $(t,(r_{1},\dots,r_{s+4}))\in E''_{2,h}$ if $(t,r_{1}),\dots,(t,r_{s+4})\in E_{2,h}$, where $r_{i}\in Y_{2,h}$.
 
\begin{prop}\label{2:gcf1}
	Let $G'_{2,h}$ and $G''_{2,h}$ be defined as above. If $p\gg_{\d,d} 1$ and $d\geq 2s+15$, then  $G''_{2,h}$ is an $O_{\d,s}(1)^{-1}$-dense auxiliary graph of  $G'_{2,h}$. Therefore, $\dep(G'_{2,h})=O_{\d,s}(1)$.
\end{prop}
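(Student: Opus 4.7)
The plan is to verify the two defining properties of an auxiliary graph: first, that $(t,r),(t',r)\in E''_{2,h}$ forces $t\sim t'$, and second, that its density is bounded below by a constant depending only on $\delta$ and $s$.

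For the auxiliary property, suppose $(t,r),(t',r)\in E''_{2,h}$ with $t=c_1\+ c_2$, $t'=c'_1\+ c'_2$, and $r=((D_i,h_{i,1},h_{i,2}))_{i=1}^{s+4}\in Z_{2,h}$. Unwinding the definitions of $E_2$ and $S(\cdot)$, for each $i$ there exist witnessing pairs $(a_{i,1},a_{i,2})\in E(W_1(c_1),W_2(c_2))$ and $(a'_{i,1},a'_{i,2})\in E(W_1(c'_1),W_2(c'_2))$ with $h_{i,j}=\pi(c_j)-a_{i,j}=\pi(c'_j)-a'_{i,j}$, and chasing the $\sim$ relations coming from $S(c_1), S(c_2), S(D_i)$ (and their primed analogues) and simplifying $\sp_{\F_{p}}\{a_{i,1},h_{i,1},a_{i,2},h_{i,2}\}=\sp_{\F_{p}}\{\pi(c_1),\pi(c_2),h_{i,1},h_{i,2}\}$ yields
\[
t\equiv D_i\+\tilde\xi(h_{i,1})\+\tilde\xi(h_{i,2})\equiv t'\mod J^M_{V+\sp_{\F_{p}}\{h_{i,1},h_{i,2}\}}\quad(1\le i\le s+4),
\]
where $V:=\sp_{\F_{p}}\{\pi(c_1),\pi(c_2),\pi(c'_1),\pi(c'_2)\}$ contains $h$ and has dimension at most $3$. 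Writing $t=(h,J+f)$ and $t'=(h,J'+f')$ so that $J+J'=J^M_V$, this places $f-f'\in\bigcap_{i=1}^{s+4}J^M_{V+\sp_{\F_{p}}\{h_{i,1},h_{i,2}\}}$. Since $r$ is good, the two-dimensional subspaces $V_i:=\sp_{\F_{p}}\{h_{i,1},h_{i,2}\}$ are linearly independent; Lemma \ref{2:w22s} (with $m=3$, $s'=s+1$, $N=s+4$) selects indices $i_1,\dots,i_{s+1}$ for which $(V,V_{i_1},\dots,V_{i_{s+1}})$ is linearly independent, and Proposition \ref{2:gr0} with $m=3,\,r=2,\,N=s+1$ (the condition $d\ge 2s+15$ is exactly what this application demands) concludes $f-f'\in J^M_V$, i.e.\ $t\sim t'$.

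For the density, fix $t=c_1\+ c_2\in X_{2,h}$. By Lemma \ref{2:gbsg4}, $|\Xi(t)|\gg_{\delta,s}p^{2d}$, and each edge in $\Xi(t)$ is determined bijectively by its witnessing pair $(a_1,a_2)\in E(W_1(c_1),W_2(c_2))$. Hence the number of tuples $(r_1,\dots,r_{s+4})\in Y_{2,h}^{s+4}$ with $(t,r_i)\in E_{2,h}$ for every $i$ is $\gg_{\delta,s}p^{2d(s+4)}$. A union bound then counts the non-good such tuples: fixing an index $(j_0,k_0)$ and demanding $h_{j_0,k_0}$ to lie in the span of the remaining $2s+7$ coordinates pins $a_{j_0,k_0}$ to a fixed affine subspace of dimension $\le 2s+7$, contributing at most $O_s(p^{2d(s+4)-d+2s+7})$ tuples. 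Since $d\ge 2s+15$, this is negligible compared with $p^{2d(s+4)}$ once $p\gg_{\delta,d}1$, so the number of good neighbors of $t$ in $Z_{2,h}$ is $\gg_{\delta,s}p^{2d(s+4)}$. Combined with the trivial bound $|Z_{2,h}|\le|Y_{2,h}|^{s+4}\le p^{2d(s+4)}$, this yields density $\gg_{\delta,s}1$, and Lemma \ref{2:basicdn}(i) concludes $\dep(G'_{2,h})=O_{\delta,s}(1)$.

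I expect the main obstacle to be the algebraic bookkeeping in the first paragraph: correctly tracking which ideal appears at each step of the chain of $\sim$-relations, using the identities $\pi(c_j)=a_{i,j}+h_{i,j}$ and $\pi(D_i)=a_{i,1}+a_{i,2}$ to collapse $J^M_{\sp_{\F_{p}}\{a_{i,1},h_{i,1},a_{i,2},h_{i,2}\}}$ down to $J^M_{V+\sp_{\F_{p}}\{h_{i,1},h_{i,2}\}}$, and orchestrating Lemma \ref{2:w22s} together with Proposition \ref{2:gr0} so that the dimension bound $d\ge 2s+15$ is exactly met rather than exceeded.
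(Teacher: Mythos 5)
Your proof is correct and follows the same architecture as the paper's: extract witnessing pairs from the edges of $E_{2,h}$, chase the $\sim$-relations through $S(c_1), S(c_2), S(D_i)$ to place $f-f'$ in the intersection $\bigcap_i J^M_{V+\sp_{\F_p}\{h_{i,1},h_{i,2}\}}$, apply an intersection property with $m=3$, $r=2$, and then bound the density by Lemma~\ref{2:gbsg4} plus a union bound over non-good tuples via Lemma~\ref{2:iiddpp}.

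One point where your write-up is actually sharper than the paper's: the paper invokes Proposition~\ref{2:gri} with $N=s+4$, whose stated dimension hypothesis $d\ge 2m+2N(r-1)+7$ would literally require $d\ge 2s+21$; you instead explicitly run the two-step reduction (Lemma~\ref{2:w22s} to cut $N$ down to $s+1$, then Proposition~\ref{2:gr0} with $N=s+1$) and obtain precisely $d\ge 2s+15$, which is what the proposition claims. This resolves a small imprecision in the paper's citation -- the paper is implicitly using the same sharper version, since that reduction is exactly how Proposition~\ref{2:gri} is proved from Proposition~\ref{2:gr0}, but your version makes the bookkeeping explicit. The density calculation and the identification of $J^M_{V+\sp_{\F_p}\{h_{i,1},h_{i,2}\}}$ with $J^M_{h,\pi(c_1),\pi(c'_1),h_{i,1},h_{i,2}}$ (using $h=\pi(c_1)+\pi(c_2)=\pi(c'_1)+\pi(c'_2)$) both match the paper.
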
	
\begin{proof}
	We first show that $G''_{2,h}$ is an  auxiliary graph of  $G'_{2,h}$.
	Let $t_{1}=c_{1}\+ c_{2}$, $t_{2}=c'_{1}\+ c'_{2}$ and $$w=(r_{1}=(D_{1},h_{1,1},h_{1,2}),\dots,r_{s+4}=(D_{s+4},h_{s+4,1},h_{s+4,2}))\in Z_{2,h}$$ be such that $(t_{1},w),(t_{2},w)\in E''_{2,h}$. Our goal is to show that $(t_{1},t_{2})\in E'_{2,h}$. Assume that $c_{j}=(\pi(c_{j}),J^{M}_{\iota(\pi(c_{j}))}+f_{j})$ and $c'_{j}=(\pi(c_{j}),J^{M}_{\iota(\pi(c_{j}))}+f_{j'})$ for $j=1,2$.
	For each $1\leq i\leq s+4$,
	since $((c_{1},c_{2}),r_{i})\in E_{2,h}$, by  definition, there exist $a_{1},a_{2}\in\Vk$ such that $(a_{1}, h_{i,1})\in S(c_{1}), (a_{2}, h_{i,2})\in S(c_{2})$ and $(a_{1}, a_{2})\in S(D_{i})$. 
So
	\begin{equation}\nonumber
	\begin{split}
	&\quad f_{1}+ f_{2}\equiv(\xi(a_{1})+\xi(h_{i,1}))+(\xi(a_{2})+\xi(h_{i,2})) 
	\mod J^{M}_{\iota(\pi(c_{1})),\iota(\pi(c_{2})),\iota(a_{1}),\iota(a_{2}),\iota(h_{i,1}),\iota(h_{i,2})}.
	\end{split}
	\end{equation}
	Since $\pi(c_{1})=a_{1}+h_{i,1}$, $\pi(c_{2})=a_{2}+h_{i,2}$ and $\pi(c_{1})+\pi(c_{2})=h$,  we have that 
		\begin{equation}\label{2:gsf2}
	\begin{split}
 f_{1}+ f_{2}\equiv \xi(a_{1})+\xi(a_{2})+\xi(h_{i,1})+\xi(h_{i,2})
	\mod J^{M}_{\iota(h),\iota(\pi(c_{1})),\iota(h_{i,1}),\iota(h_{i,2})}.
	\end{split}
	\end{equation}
	Since  $((c'_{1},c'_{2}),r_{i})$ belong to $E_{2,h}$, a symmetric argument shows that 
	\begin{equation}\label{2:gsf3}
	\begin{split}
	 f'_{1}+f'_{2}\equiv
	\xi(a'_{1})+\xi(a'_{2})+\xi(h_{i,1})+\xi(h_{i,2}) \mod J^{M}_{\iota(h),\iota(\pi(c'_{1})),\iota(h_{i,1}),\iota(h_{i,2})}
	\end{split}
	\end{equation}
	for some $(a'_{1},a'_{2})\in S(D_{i})$.
	Since $(a_{1},a_{2}),(a'_{1},a'_{2})\in S(D_{i})$, we have that 
		\begin{equation}\label{2:gsf4}
		\begin{split}
		\xi(a_{1})+\xi(a_{2})\equiv D_{i}\equiv \xi(a'_{1})+\xi(a'_{2}) \mod J^{M}_{\iota(a_{1}),\iota(a_{2}),\iota(a'_{1}),\iota(a'_{2})}.
		\end{split}
		\end{equation}	
	Combining (\ref{2:gsf2}), (\ref{2:gsf3}) and (\ref{2:gsf4}), we have that 
	\begin{equation}\nonumber
	\begin{split}
 f_{1}+f_{2}\equiv f'_{1}+f'_{2} \mod J^{M}_{\iota(h),\iota(\pi(c_{1})),\iota(\pi(c'_{1})),\iota(h_{i,1}),\iota(h_{i,2})}
	\end{split}
	\end{equation}
	for all $1\leq i\leq s+4$.
	Therefore,
	\begin{equation}\label{2:gsf5}
	\begin{split}
 f_{1}+f_{2}\equiv f'_{1}+f'_{2} \mod \cap_{i=1}^{s+4}J^{M}_{\iota(h),\iota(\pi(c_{1})),\iota(\pi(c'_{1})),\iota(h_{i,1}),\iota(h_{i,2})}.
	\end{split}
	\end{equation}

 Since $w$ is good, by Proposition \ref{2:gri} (setting $m=3$ and $r=2$), if $p\gg_{d} 1$ and $d\geq 2s+15$, then (\ref{2:gsf5}) implies that
	\begin{equation}\nonumber
	\begin{split}  
	f_{1}+f_{2}\equiv f'_{1}+f'_{2} \mod J^{M}_{\iota(h),\iota(\pi(c_{1})),\iota(\pi(c'_{1}))}.
	\end{split}
	\end{equation}
	So   $c_{1}\+ c_{2}\sim c'_{1}\+ c'_{2}$. In other words, $(t_{1},t_{2})\in E'_{2,h}$.

	\
	
	We next show that $G''_{2,h}$ is  $O_{\d,s}(1)^{-1}$-dense.  Let $t\in X_{2,h}$. By Lemma \ref{2:gbsg4}, the number of tuples $w=(r_{1},\dots,r_{s+4})\in Y^{s+4}_{2,h}$ such that $(t,r_{1}),\dots,(t,r_{s+4})\in E_{2,h}$ is $\gg_{\d,s}K^{(2s+8)d}$. Since $\vert Z_{2,h}\vert\leq K^{(2s+8)d}$ and there are at most $(2s+8)K^{(2s+8)d}/p^{d-2s-7}$ many $(r_{1},\dots,r_{s+4})\in Y^{s+4}_{2,h}$ which are not good by Lemma \ref{2:iiddpp},  we have that $G''_{2,h}$ is of density $\gg_{\d,s} 1$ provided that $p\gg_{\d,d} 1$ and $d\geq 2s+8$.	
\end{proof}

    We are now ready to complete the proof of Theorem \ref{2:gbig1}.
	Consider the graph $G_{3}=(H,E_{3})$, where $(h,h')\in E_{3}$ if and only if $(h,h')\in S(c)$ for some $c\in\mathcal{X}'$. By Proposition \ref{2:gbsg1}, $\vert E_{3}\vert\gg_{\d,s}K^{2d}$.
	Again we use the intersection method.
	Let $a_{h}$ denote the number of edges in $G_{3}$ with one of the vertices being $h$. The number of tuples $(h,h_{1},\dots,h_{s+4})\in (\Vk)^{s+5}$ such that $(h,h_{1}), \dots,(h,h_{s+4})\in E_{3}$ is at least
	$$\sum_{h\in\Vk}a^{s+4}_{h}\geq\frac{(\sum_{h\in\Vk}a_{h})^{s+4}}{K^{(s+3)d}}=\frac{(2\vert E_{3}\vert)^{s+4}}{K^{(s+3)d}}\gg_{\d,s}K^{(s+5)d}.$$
	For each $w=(h_{1},\dots,h_{s+4})\in(\Vk)^{s+4}$, let $W_{w}$ denote the set $h\in \Vk$ such that $(h,h_{1}),\dots,$ $(h,h_{s+4})\in E_{3}$. Since $\vert W_{w}\vert\leq K^{d}$ for all $w$, the number of $w\in (\Vk)^{s+4}$ such that $\vert W_{w}\vert\gg_{\d,s}K^{d}$ is $\gg_{\d,s}K^{(s+4)d}$.
	
	We say that $w=(h_{1},\dots,h_{s+4})\in(\Vk)^{s+4}$ is \emph{good} if $h_{1},\dots,h_{s+4}$ are $p$-linearly independent.
	Since the number of $w\in (\Vk)^{s+4}$ which is not good is at most 
	$(s+4)K^{(s+4)d}/p^{d-s-3}$
	 by Lemma \ref{2:iiddpp},
	if $p\gg_{\d,d}1$ and $d\geq s+4$, then we may choose a good $w\in (\Vk)^{s+4}$ such that $\vert W_{w}\vert\gg_{\d,s}K^{d}$. 	 
	
	Fix such a $w=(h_{1},\dots,h_{s+4})$ and let $H'=W_{w}$. Then $\vert H'\vert\gg_{\d,s}K^{d}$. It remains to show that $\dep(\tilde{\xi}(H')\+  \tilde{\xi}(H'))=O_{\d,s}(1)$. Let $G_{4}=(\tilde{\xi}(H')\+  \tilde{\xi}(H'),E_{4})$ be the relation graph of $\tilde{\xi}(H')\+  \tilde{\xi}(H')$. Clear, we have a partition
	 $G_{4}=\cup_{h\in\V}(V_{4,h},E_{4,h})$, where $V_{4,h}:=\pi^{-1}(h)\cap(\tilde{\xi}(H')\+  \tilde{\xi}(H'))$. 
	
		For all $x_{1},x_{2}\in H'$ with $x_{1}+x_{2}=h$, by definition, for $1\leq i\leq s+4$, $(x_{1},h_{i})\in S(c_{i,1})$ and $(x_{2},h_{i})\in S(c_{i,2})$ for some $c_{i,1},c_{i,2}\in \mathcal{X}'$.
		Then $\tilde{\xi}(x_{1})\+  \tilde{\xi}(x_{2})\+ \tilde{\xi}(h_{i})\+ \tilde{\xi}(h_{i})\sim c_{i,1}\+ c_{i,2}$. Since $\pi(c_{i,1})+\pi(c_{i,2})=x_{1}+x_{2}-2h_{i}=h-2h_{i}$, we have that $c_{i,1}\+ c_{i,2}\in X_{2,h-2h_{i}}$.

	Let $G'_{4,h}=(V_{4,h},Z_{2,h-2h_{1}}\times\dots\times Z_{2,h-2h_{s+4}},E'_{4,h})$ be the bipartite graph such that $(u,(z_{1},\dots,$ $z_{s+4}))\in E'_{4,h}$ if and only if there exist $x_{1},x_{2}\in H'$ with $x_{1}+x_{2}=h$, $c_{i,1},c_{i,2}\in\mathcal{X}', 1\leq i\leq s+4$ such that $u=\tilde{\xi}(x_{1})\+ \tilde{\xi}(x_{2})$, $(x_{1},h_{i})\in S(c_{i,1})$, $(x_{2},h_{i})\in S(c_{i,2})$, $(c_{i,1}\+ c_{i,2},z_{i})\in E''_{2,h-2h_{i}}$ for all $1\leq i\leq s+4$. For convenience for say that $c_{i,1},c_{i,2},1\leq i\leq s+4$ are the \emph{parameters} associated to $(u,(z_{1},\dots,z_{s+4}))$.	
	Note that for all $x_{1},x_{2}\in H'$ with $x_{1}+x_{2}=h$, by the construction of $H'$, 
	$(x_{1},h_{i})\in S(c_{i,1})$, $(x_{2},h_{i})\in S(c_{i,2})$ for some $c_{i,1},c_{i,2}\in\mathcal{X}'$ for all $1\leq i\leq s+4$.
	On the other hand, the number of choices for $z_{i}$ with $(c_{i,1}\+ c_{i,2},z_{i})\in E''_{2,h-2h_{i}}$ is $\gg_{\d,s} \vert Z_{2,h-2h_{i}}\vert$ since $G''_{2,h}=(X_{2,h},Z_{2,h},E''_{2,h})$ is an $O_{\d,s}(1)^{-1}$-dense auxiliary graph of $G'_{2,h}$ by Proposition \ref{2:gcf1}.
	This means that $G'_{4,h}$ is an $O_{\d,s}(1)^{-1}$-dense graph.

	So in order to show that $\dep(\tilde{\xi}(H')\+ \tilde{\xi}(H'))=O_{\d,s}(1)$, it suffices to show that $G'_{4,h}$ is an auxiliary graph of $(V_{4,h},E_{4,h})$ for all $h\in\Vk$. Let  $x_{1},x_{2},x'_{1},x'_{2}\in H'$ with $x_{1}+x_{2}=x'_{1}+x'_{2}=h$ and  $ \bold{z}:=(z_{1},\dots,z_{s+4}), z_{i}\in Z_{2,h-2h_{i}}, 1\leq i\leq s+4$ be such that $(\tilde{\xi}(x_{1})\+ \tilde{\xi}(x_{2}), \bold{z}), (\tilde{\xi}(x_{1})\+ \tilde{\xi}(x_{2}), \bold{z})\in E'_{4,h}$.
Let  $c_{i,1},c_{i,2},1\leq i\leq s+4$ be the parameters associated to $(\tilde{\xi}(x_{1})\+ \tilde{\xi}(x_{2}),\bold{z})$, and $c'_{i,1},c'_{i,2},1\leq i\leq s+4$ be the parameters associated to $(\tilde{\xi}(x'_{1})\+ \tilde{\xi}(x'_{2}),\bold{z})$.
	For all $1\leq i\leq s+4$ and $j=1,2$, since $\tilde{\xi}(x_{j})\+ \tilde{\xi}(h_{i})\sim c_{i,j}$ and $c_{i,j}\in\Gamma^{s}_{1}(\Vk,M)$, we have that
	$$\tilde{\xi}(x_{1})\+ \tilde{\xi}(x_{2})\+ \tilde{\xi}(h_{i})\+ \tilde{\xi}(h_{i})\sim c_{i,1}\+ c_{i,2}$$
	and 
	$$\tilde{\xi}(x'_{1})\+ \tilde{\xi}(x'_{2})+\tilde{\xi}(h_{i})\+ \tilde{\xi}(h_{i})\sim c'_{i,1}\+ c'_{i,2}.$$
	Since $(c_{i,1}\+ c_{i,2},z_{i}),(c'_{i,1}\+ c'_{i,2},z_{i})$ both belong to $E''_{2,h-2h_{i}}$,  by Proposition \ref{2:gcf1}, we have that $c_{i,1}\+ c_{i,2}\sim c'_{i,1}\+ c'_{i,2}$ and so 
		$$\tilde{\xi}(x_{1})\+  \tilde{\xi}(x_{2})\+ \tilde{\xi}(h_{i})\+ \tilde{\xi}(h_{i})\sim\tilde{\xi}(x'_{1})\+  \tilde{\xi}(x'_{2})\+ \tilde{\xi}(h_{i})\+ \tilde{\xi}(h_{i}).$$
		So
		$$\xi(x_{1})+  \xi(x_{2})\equiv\xi(x'_{1})+\xi(x'_{2})\mod \cap_{i=1}^{s+4} J^{M}_{\iota(x_{1}),\iota(x_{2}),\iota(x'_{1}),\iota(x'_{2}),\iota(h_{i})}.$$
		Since $w=(h_{1},\dots,h_{s+4})$ is good and $\dim(\sp_{\F_{p}}\{\iota(x_{1}),\iota(x_{2}),\iota(x'_{1}),\iota(x'_{2})\})\leq 3$ (note that $x_{1}+x_{2}=x'_{1}+x'_{2}=\pi(v_{1})-2h_{1}$), by Proposition \ref{2:gri} (setting $m=3$ and $r=1$), if $d\geq 13$, then
		$$\xi(x_{1})+  \xi(x_{2})\equiv\xi(x'_{1})+\xi(x'_{2})\mod   J^{M}_{\iota(x_{1}),\iota(x_{2}),\iota(x'_{1}),\iota(x'_{2})}.$$
		So by Lemma \ref{2:spsp1},
		$\tilde{\xi}(x_{1})\+  \tilde{\xi}(x_{2})\sim \tilde{\xi}(x'_{1})\+  \tilde{\xi}(x'_{2})$
	and thus $G'_{4,h}$ is an auxiliary graph of $V_{4,h}$. This completes the proof of Theorem \ref{2:gbig1}.

\subsection{Ruzsa's triangle inequality}

Ruzsa's triangle inequality is an important tool in additive combinatorics, which says that for all subsets $A,B,C$ of an abelian group $G$, we have that
$\vert A-C\vert\leq \frac{\vert A-B\vert\cdot\vert B-C\vert}{\vert B\vert},$
or equivalently, $d(A,C)\leq d(A,B)+d(B,C)$ where the ``distance function" is defined as  $d(A,B):=\log \frac{\vert A-B\vert}{\vert A\vert^{1/2}\vert B\vert^{1/2}}$ (see for example Lemma 2.6 of \cite{TV06}). It is natural to ask if Ruzsa's triangle inequality also applies to shifted modules. In our setting, the ``distance function" $d(A,B)$ is replaced by the density dependence number of the set $A\- B$.

\begin{prop}[Ruzsa's quasi triangle inequality for $\Gamma^{s}(\Vk,M)$]\label{2:grt}
	Let $d,D,k,K\in\N_{+}$, $s\in\N$, $\d>0$, $p\gg_{\d,d,D} 1$ be a prime dividing $K$, and $M\colon\V\to\F_{p}$ be a non-degenerate quadratic form. Let $A,C\subseteq \Gamma^{s}_{k}(\Vk,M)$ and $B\subseteq \Gamma^{s}_{1}(\Vk,M)$ with $\vert B\vert\geq \d K^{d}$ such that $\pi\colon B\to \Vk$ is an injection. Suppose that  $\dep(A\- B)$, $\dep(B\- C)\leq D$. If $d\geq \max\{4k+s,8k+5\}$, then we have $\dep(A\- C)=O_{\d,D,k,s}(1)$.
\end{prop}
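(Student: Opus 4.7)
The strategy adapts the classical Ruzsa triangle inequality to the $\sim$-setting. Fix $h\in\V$ and set $X_{h}:=\pi^{-1}(h)\cap(A\hat{-}C)$; the goal is to construct a $C^{-1}$-dense auxiliary graph of $X_{h}$ for some $C=O_{\delta,D,k,s}(1)$. For each $x\in X_{h}$ fix a representation $x=a_{x}\hat{-}c_{x}$ with $a_{x}\in A$, $c_{x}\in C$, and denote $\alpha_{x}:=\pi(a_{x})$, so $\pi(c_{x})=h-\alpha_{x}$. The classical pivot idea is that the map $(x,b)\mapsto(a_{x}\hat{-}b,b\hat{-}c_{x})\in(A\hat{-}B)\times(B\hat{-}C)$ is ``injective up to $\sim$'' once one intersects over many pivots $b\in B$.

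Concretely, the hypothesis $\dep(A\hat{-}B),\dep(B\hat{-}C)\leq D$ together with Lemma \ref{2:duplicate} provide, for each projection, a $D^{-1}$-dense auxiliary graph with witness-side size normalized uniformly to $N_{1}$ (resp.\ $N_{2}$). I take $n=O_{k,s}(1)$ large enough for Proposition \ref{2:grm} to apply with $r=1$ and $m\leq 2k$, where the hypothesis $d\geq\max\{4k+s,8k+5\}$ supplies the required dimension. The auxiliary graph $(X_{h},Y,E)$ has $Y$ consisting of tuples $(\vec{b},\vec{y}^{(1)},\vec{y}^{(2)})$ with $\vec{b}=(b_{1},\ldots,b_{n})\in B^{n}$ having $\pi(b_{1}),\ldots,\pi(b_{n})$ generic (linearly independent modulo the relevant $O_{k}(1)$-dimensional subspaces, a $1-o(1)$ fraction by Lemma \ref{2:iiddpp} for $p$ large), and witness coordinates $y^{(j)}_{k}$ drawn from the renormalized witness spaces. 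An edge $(x,(\vec{b},\vec{y}^{(1)},\vec{y}^{(2)}))\in E$ demands that for each $k$, $y^{(1)}_{k}\in Y_{1,\alpha_{x}+\pi(b_{k})}$ is a neighbor of $a_{x}\hat{-}b_{k}$ in the component auxiliary graph, and analogously $y^{(2)}_{k}\in Y_{2,\pi(b_{k})+h-\alpha_{x}}$ is a neighbor of $b_{k}\hat{-}c_{x}$.

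For the auxiliary-graph property, a common neighbor $(\vec{b},\vec{y}^{(1)},\vec{y}^{(2)})$ of $x,x'\in X_{h}$ forces $\alpha_{x}=\alpha_{x'}$ (the projection carried by $y^{(1)}_{k}$ must equal both $\alpha_{x}+\pi(b_{k})$ and $\alpha_{x'}+\pi(b_{k})$), and then produces $a_{x}\hat{-}b_{k}\sim a_{x'}\hat{-}b_{k}$ and $b_{k}\hat{-}c_{x}\sim b_{k}\hat{-}c_{x'}$ for every $k$, i.e.\ $f_{a_{x}}-f_{a_{x'}}\in J^{M}_{V_{a_{x}}+V_{a_{x'}}+\sp_{\F_{p}}\{\pi(b_{k})\}}$ for each $k$, and the analogous statement on the $c$-side. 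Genericity of the $\pi(b_{k})$'s together with Proposition \ref{2:grm} strips the $\sp_{\F_{p}}\{\pi(b_{k})\}$ factor, yielding $f_{a_{x}}-f_{a_{x'}}\in J^{M}_{V_{a_{x}}+V_{a_{x'}}}$ and $f_{c_{x}}-f_{c_{x'}}\in J^{M}_{V_{c_{x}}+V_{c_{x'}}}$; adding these gives $x\sim x'$. The main obstacle will be the density count: the witness spaces $Y_{j,h_{j}}$ naturally live in projection-indexed disjoint unions, and a naive setup would crush the density by a factor of $p^{2nd}$. The remedy is to renormalize witnesses via Lemma \ref{2:duplicate} so that ``being a valid witness'' remains a $D^{-1}$-fraction condition independent of the underlying projection, while the common-neighbor logic simultaneously enforces projection matching and thus $\alpha_{x}=\alpha_{x'}$; this careful bookkeeping is the technical heart of the argument, and yields overall density $\Omega(D^{-2n})=\Omega_{\delta,D,k,s}(1)$.
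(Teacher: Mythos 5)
Your high-level strategy — pivoting through $B$, using the witness graphs of $A\hat{-}B$ and $B\hat{-}C$, and stripping the extra $\sp_{\F_{p}}\{\pi(b_{k})\}$ factor by an intersection argument — is the same as the paper's. However, there is a genuine gap in your density count that your proposed remedy does not fix.

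You put $\vec{b}\in B^{n}$ into the witness set $Y$, and your edge condition forces each $y^{(1)}_{k}$ into the specific component $Y_{1,\alpha_{x}+\pi(b_{k})}$ of the disjoint union $\sqcup_{h'}Y_{1,h'}$. This double-counts the same degree of freedom: the $b_{k}$'s are already encoded implicitly in the projection index of $y^{(1)}_{k}$ (since $\pi$ is injective on $B$). With $\vec{b}$ explicit and the witness coordinates living in projection-indexed disjoint unions, $\vert Y\vert$ has an extra factor of roughly $\vert B\vert^{n}\cdot p^{nd}\asymp p^{2nd}$ compared to the number of valid witnesses per vertex, so the density is $\asymp D^{-2n}/p^{2nd}\to 0$. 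You notice this, but the ``renormalization'' remedy is internally contradictory: if you identify the witness sets $Y_{1,h'}$ across different $h'$ with a single abstract index set (so that being a valid witness is a $D^{-1}$-fraction condition ``independent of the underlying projection''), then your auxiliary-graph step collapses, because a common index $y^{(1)}_{k}$ being a neighbor of $a_{x}\hat{-}b_{k}$ in $Y_{1,\alpha_{x}+\pi(b_{k})}$ and a neighbor of $a_{x'}\hat{-}b_{k}$ in $Y_{1,\alpha_{x'}+\pi(b_{k})}$ uses two unrelated component auxiliary graphs when $\alpha_{x}\neq\alpha_{x'}$, and yields no relation. The ``projection matching'' that forces $\alpha_{x}=\alpha_{x'}$ requires the witness to carry the projection; the density fix requires it not to. You cannot have both.

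The paper's resolution removes $\vec{b}$ from $Y$ entirely and, crucially, ties the two witness coordinates per slot into a single pair: $Y=(\sqcup_{h'}Y_{A,h'}\times Y_{C,h-h'})^{4k+s}$, so each slot carries one index $h_{i}$ of size only $p^{d}K^{2}$ rather than $(p^{d}K)^{2}$. The $b_{i}$'s appear only existentially in the edge condition; since $\pi$ is injective on $B$, different good $(b_{1},\dots,b_{4k+s})$ produce different $(h_{1},\dots,h_{4k+s})$ and hence contribute to disjoint components, so the edge count for each $u$ sums multiplicatively over $\vec{b}$ and gives density $\geq(\vert B\vert/p^{d})^{4k+s}D^{-2(4k+s)}/2$. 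Because $\vec{b}$ is not shared, the auxiliary property must then handle different parameters $b_{i},b'_{i}$ for $u,u'$ with a common witness; the paper does this by noting $\pi(b'_{i})\in V_{a}+V_{a'}+\sp_{\F_{p}}\{\pi(b_{i})\}$ (since $\pi(a)-\pi(b_{i})=\pi(a')-\pi(b'_{i})=h_{i}$), which lets one strip only $\sp_{\F_{p}}\{\pi(b_{i})\}$ by Proposition \ref{2:gri} with $m\leq 4k-1$, $r=1$, $N=4k+s$. (As a side note, your $m\leq 2k$ is an underestimate, and the relevant tool is Proposition \ref{2:gri} rather than \ref{2:grm}: the $\pi(b_{i})$'s here form a fixed generic tuple, not a positive-density subset.)
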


Note that Proposition \ref{2:grt} does not imply an inequality of the form $\dep(A\- B)+\dep(B\- C)$ $\geq\dep(A\- C)$, but it provides us with a quasi triangle inequality in the sense that the smallness of  $\dep(A\- B)$ and $\dep(B\- C)$ implies the  smallness of $\dep(A\- C)$. 

We remark that the set $B$ in Proposition \ref{2:grt} is very restrictive compared with the sets $A$ and $C$. It seems that our method can be adapted to prove Proposition \ref{2:grt} with less restrictions imposed on the set $B$. However, we do not pursuit this improvement in this paper since  Proposition \ref{2:grt} is good enough for our purposes.

The original Ruzsa triangle inequality is a direct consequence of the injectivity  of the map $(A-C)\times B\mapsto (A-B)\times (B-C), (a-c,b)\mapsto (a-b,b-c)$. However, this method does not apply to Proposition \ref{2:grt} and so we need to use a different method.

\begin{proof}[Proof of Proposition \ref{2:grt}]
	For convenience denote $X_{h}:=\pi(h)^{-1}\cap X$ for all $h\in\Vk$ and $X\subseteq \Gamma^{s}(\Vk,M)$. Since $\dep(A\- B)$, $\dep(B\- C)\leq D$, the relation graph of each $(A\- B)_{h}$ has a $D^{-1}$-dense auxiliary graph $G_{A,h}=((A\- B)_{h},Y_{A,h},E_{A,h})$, and the relation graph of each $(B\- C)_{h}$ has a $D^{-1}$-dense auxiliary graph $G_{C,h}=((B\- C)_{h},Y_{C,h},E_{C,h})$.
By Lemma \ref{2:duplicate}, we may assume without loss of generality that all of $Y_{A,h}, Y_{C,h}, h\in\Vk$ have the same cardinality, which we denote by $L$.
	
	We need to use the intersection method.
		We say that $(b_{1},\dots,b_{4k+s})\in B^{4k+s}$ is \emph{good} if $\pi(b_{1}),\dots,\pi(b_{4k+s})$ $p$-are linearly independent. Consider the bipartite graph $$G'_{h}=((A\- C)_{h},(\sqcup_{h'\in\Vk}Y_{A,h'}\times Y_{C,h-h'})^{4k+s},E'_{h})$$    defined in the following way:
		 for any $u\in (A\- C)_{h}$, $h_{i}\in\Vk$,
		$y_{i}=(y_{i,1},y_{i,2})\in Y_{A,h_{i}}\times Y_{C,h-h_{i}}, 1\leq i\leq 4k+s$,
		set $(u,(y_{1},\dots,y_{4k+s}))\in E'_{h}$ if there exist $a\in A, c\in C$, and some good $(b_{1},\dots,b_{4k+s})\in B^{4k+s}$ such that $u=a\- c$ (which enforces $\pi(a)-\pi(c)=h$) and that $(a\- b_{i},y_{i,1})\in E_{A,h_{i}}$ and $(b_{i}\- c,y_{i,2})\in E_{C,h-h_{i}}$ for all $1\leq i\leq 4k+s$ (which enforces $\pi(b_{i})=\pi(a)-h_{i}$).
	For convenience we say that $a,c,b_{1},\dots,b_{4k+s}$ are the \emph{parameters} associated to the edge $(u,(y_{1},\dots,y_{4k+s}))$ (note that the parameters of an edge is not necessarily unique).

	Note that if $d\geq 4k+s$ and $p\gg_{\d,d} 1$, then by Lemma \ref{2:iiddpp}, the number of good $(b_{1},\dots,b_{4k+s})\in B^{4k+s}$	 is at least $\vert B\vert^{4k+s}/2\geq \d^{4k+s}K^{(4k+s)d}/2$. 
	On the other hand, for each $u=a\- c$ in $(A\- C)_{h}$ and good $(b_{1},\dots,b_{4k+s})\in B^{4k+s}$, 	writing $h_{i}=\pi(a)-\pi(b_{i})$, there exist at least $D^{-2}\vert Y_{A,h_{i}}\vert\cdot \vert Y_{C,h-h_{i}}\vert$ many $(y_{i,1},y_{i,2})\in Y_{A,h_{i}}\times Y_{C,h-h_{i}}$ such that  $(a\- b_{i},y_{i,1})\in E_{A,h_{i}}$ and $(b_{i}\- c,y_{i,2})\in E_{C,h-h_{i}}$ for all $1\leq i\leq 4k+s$. Since $\pi\colon B\to\Vk$ is an injection, the tuples $(h_{1},\dots,h_{4k+s})$ associated with different $(b_{1},\dots,b_{4k+s})$ are different. This means that every $u\in (A\- C)_{h}$ is the vertices of at least $(\d^{4k+s}K^{(4k+s)d}/2)\cdot(D^{-2}L^{2})^{4k+s}$ edges in $E'_{h}$. Since the set $(\sqcup_{h'\in\V}Y_{A,h'}\times Y_{C,h-h'})^{4k+s}$ is of cardinality $(K^{d}L^{2})^{4k+s}$, we have that $G'_{h}$ is $O_{\d,D,k,s}(1)^{-1}$-dense.
		To show that  $\dep(A\- C)=O_{\d,D,k,s}(1)$, it suffices to show that $G'_{h}$ is an auxiliary graph of the relation graph of  $(A\- C)_{h}$ for all $h\in\Vk$.

		Let $u,u'\in (A\- C)_{h}$,  $h_{i}\in\Vk$, and
		$y_{i}=(y_{i,1},y_{i,2})\in Y_{A,h_{i}}\times Y_{C,h-h_{i}}, 1\leq i\leq 4k+s$  be such that $(u,(y_{1},\dots,y_{4k+s}))$, $(u',(y_{1},\dots,y_{4k+s}))\in E'_{h}$, with parameters $a,c,b_{1},\dots,b_{4k+s}$ and $a',c',b'_{1},\dots,b'_{4k+s}$ respectively.
			For $y=a,c,a'$ and $c'$, assume that $y=(\pi(y),J^{M}_{V_{y}}+f_{y})$. 
		Since $(a\- b_{i},y_{i,1}),(a'\- b'_{i},y_{i,1})\in E_{A,h_{i}}$ and  $(b_{i}\- c,y_{i,1}),(b'_{i}\- c',y'_{i,1})\in E_{C,h-h_{i}}$, we have that 
			$$f_{a}-f_{c}\equiv (f_{a}-f_{b_{i}})+(f_{b_{i}}-f_{c})\equiv(f_{a'}-f_{b'_{i}})+(f_{b'_{i}}-f_{c'})\equiv f_{a'}-f_{c'} \mod
			J^{M}_{V+\sp_{\F_{p}}\{\iota(\pi(b_{i})),\iota(\pi(b'_{i}))\}},$$
			where $V:=V_{a}+V_{c}+V_{a'}+V_{c'}$.
			Since $\pi(a)-\pi(b_{i})=\pi(a')-\pi(b'_{i})=h_{i}$, we have that $\iota(\pi(b'_{i}))\in V_{a}+V_{a'}\sp_{\F_{p}}\{\iota(\pi(b_{i}))\}$. So 	
			$$f_{a}-f_{c}\equiv f_{a'}-f_{c'} \mod
			J^{M}_{V+\sp_{\F_{p}}\{\iota(\pi(b_{i}))\}}.$$
			Ranging $i$ over $1,\dots, 4k+s$, we have that 
			\begin{equation}\label{2:gree1}
			f_{a}-f_{c}\equiv f_{a'}-f_{c'} \mod \cap_{i=1}^{4k+s}J^{M}_{V+\sp_{\F_{p}}\{\iota(\pi(b_{i}))\}}.
			\end{equation}
			
			Since $h\in (V)$, we have that
			$\dim(V)\leq 4k-1$. Since $\pi(b_{1}),\dots,\pi(b_{4k+1})$ are $p$-linearly independent, by Proposition \ref{2:gri} (setting $m=4k-1$ and $r=1$), since $d\geq 8k+5$ and $p\gg_{d} 1$, 
			we deduce from (\ref{2:gree1})  that
			$a\- c\sim a'\- c'$. So $G'_{h}$ is an auxiliary graph of the relation graph of  $(A\- C)_{h}$ for all $h\in\Vk$ and we are done.
\end{proof}

\subsection{Pl\"unnecke-Rusza theorem}

Let $A$ be a subset of an abelian group $G$. The Pl\"unnecke-Rusza theorem says that for any fixed $k,\ell\in\N_{+}$, the smallness of $\vert A+A\vert/\vert A\vert$ implies the smallness of $\vert kA-\ell A\vert/\vert A\vert$
(see for example Corollary 6.29 \cite{TV06}).
In our setting,
for $A\in\Gamma^{s}(\Vk,M)$ and $k,\ell\in\N$ with $(k,\ell)\neq (0,0)$, we use the notation $kA\- \ell A$ to denote the set $$(A\+ A\+ \dots\+ A)\- (A\+ A\+ \dots\+ A),$$ where there are $k$ copies of $A$ in the first bracket and $\ell$ copies of $A$ in the second bracket. 
It is natural to ask whether the smallness of $\dep(A\+ A)$ implies the smallness of $\dep(kA\- \ell A)$. 

\begin{conj}[Pl\"unnecke-Rusza theorem for $\Gamma^{s}(\Vk,M)$]\label{2:00ga3}
	Let $d,D,K\in\N_{+}$, $k,\ell,s\in \N$ with $(k,\ell)\neq (0,0)$, $\d>0$, and $p\gg_{\d,d,D} 1$ be a prime dividing $K$. Let $M\colon\V\to\F_{p}$ be a non-degenerate quadratic form. Let $A\in\Gamma^{s}_{1}(\Vk,M)$ be such that $\vert A\vert\geq \d K^{d}$ and that the map $\pi\colon A\to\Vk$ is injective. Suppose that $\dep(A\+ A)\leq D$. If $d\gg_{k,\ell,s} 1$, then   $\dep(kA\- \ell A)= O_{\d,d,D}(1)$.
\end{conj}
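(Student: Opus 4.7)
My plan is to mirror Petridis's modern proof of the classical Plünnecke-Ruzsa inequality, using Ruzsa's quasi triangle inequality (Proposition \ref{2:grt}) as the central tool, together with some new lemmas tailored to shifted ideals. First, I would try to verify the base case $\dep(A \- A) = O_{\delta,d,D}(1)$. A direct attempt is to relate $A \- A$ to $A \+ A$ via an involution on polynomial parts: setting $A^{*} := \{(h, J^{M}_{h} - f) \colon (h, J^{M}_{h} + f) \in A\}$, one checks that $A \- A = A \+ A^{*}$, so it would suffice to transfer the dep bound on $A \+ A$ to $A \+ A^{*}$. The cleanest route here is to combine Proposition \ref{2:glargepart} (extracting large additive energy from small dep, which runs essentially symmetrically under the sign swap) with the Balog-Gowers-Szemerédi theorem (Theorem \ref{2:gbig1}) to round-trip from dep back to dep via $M$-energy.

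Second, to handle general $(k,\ell)$, I would use induction and Ruzsa's quasi triangle inequality (Proposition \ref{2:grt}) with the intermediate set $B = A$, which is the only natural candidate in $\Gamma^{s}_{1}(M)$ available with $\pi$ injective and $|B| \geq \delta p^{d}$. The schematic reduction is
\[
\dep(kA \- \ell A) \lesssim_{\delta,d} \dep(kA \- A) + \dep(A \- \ell A),
\]
where the implicit constant depends on the previous ones via Proposition \ref{2:grt}. The catch is that the two right-hand terms are $(k,1)$ and $(1,\ell)$ cases, whose total sizes $k+1$ and $\ell+1$ are not strictly smaller than $k+\ell$ unless both $k, \ell \geq 2$. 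So naive induction on $k+\ell$ does not close.

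To break this circularity, I would adapt Petridis's key lemma. The classical statement is: letting $A' \subseteq A$ be a subset minimizing $|A' + B|/|A'|$ with minimum ratio $K'$, one has $|A' + B + C| \leq K'|A' + C|$ for every $C$. Translated here, I would look for $A' \subseteq A$ achieving the minimum ``additive spread'' in the dep sense relative to $A \+ A$, and try to prove the analogue
\[
\dep\bigl((A' \+ A) \+ C\bigr) \lesssim_{\delta,d} \dep(A' \+ C) \cdot (\text{absolute factor depending on } D)
\]
for every test set $C \subseteq \Gamma^{s}_{k_{0}}(M)$ of interest. Iterating this with $C$ ranging over $2A, 3A, \ldots, (\ell-1)A$ would bound $\dep(kA)$ by an exponential in $k+\ell$, and then one application of Proposition \ref{2:grt} with $B = A$ converts the sum bounds into the desired bound on $\dep(kA \- \ell A)$.

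The main obstacle, and the reason I would still call this strategy a plan rather than a proof, is that Petridis's lemma in the classical setting relies crucially on the set-theoretic identity $(X+B) \setminus (A'+B) \subseteq ((X \setminus A') + B) \cap \ldots$ together with cancellation via translations, neither of which makes immediate sense in $\gfv$ because $\+$ is non-cancellative and the ideal part genuinely grows under iteration (by Lemma \ref{2:a+b}, $kA \subseteq \Gamma^{s}_{k}(M)$, not $\Gamma^{s}_{1}(M)$). To work around the growth of ideals, I would pair Petridis's manipulation with repeated use of the intersection property of $M$-ideals (Proposition \ref{2:gri}) on ``good'' tuples in general position, exactly in the spirit of Propositions \ref{2:glargepart} and \ref{2:gcf1} earlier in the paper, at the cost of requiring $d \gg_{k,\ell,s} 1$. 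Whether this is enough to recover a genuinely useful Petridis-type inequality, as opposed to only a quantitatively weaker analogue, is the technical heart of the problem.
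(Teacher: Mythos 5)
The statement you were asked to prove is stated in the paper as a \emph{conjecture}, not a theorem, and the paper does not prove it. Immediately after stating Conjecture~\ref{2:00ga3} the paper reduces it to the Green--Ruzsa inequality (Conjecture~\ref{2:00ga31}) via the iterated Ruzsa quasi triangle inequality, and then explicitly says ``Unfortunately, we were unable to prove Conjecture~\ref{2:00ga31}.'' What the paper actually proves is the strictly weaker Proposition~\ref{2:ga3}, which only yields $\dep(kA'\- \ell A')=O(1)$ after passing to a subset $A'\subseteq A$ with $\vert A'\vert\geq\vert A\vert/2$. So the target you were given is genuinely open in the paper, and no complete proof exists for you to match.

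Your plan, which you yourself frame as a plan rather than a proof, runs into the same wall, and in fact would at best reproduce the paper's weak version. Concretely: your first step relates $A\-A$ to $A\+A^{*}$ and then proposes to round-trip through Proposition~\ref{2:glargepart} and the BGS theorem (Theorem~\ref{2:gbig1}), but BGS only returns a large subset $H'\subseteq H$, so already at the base case you would have lost a positive-density portion of $A$ --- exactly the concession made in Proposition~\ref{2:ga31}. Your second step correctly identifies that a naive induction via Proposition~\ref{2:grt} with $B=A$ does not close at $(k,1)$ or $(1,\ell)$; the paper closes this loop by first proving $\dep(mA')=O(1)$ for all $m$ (using $(i+1)A'=(i-1)A'\-(\-2A')$ and iterated Ruzsa), then combining, but again only on the subset $A'$. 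Your third step (a Petridis-type minimization lemma) is a genuinely different idea from what the paper tries; you correctly flag that Petridis's identity relies on cancellation and translation invariance, which fail for $\+$ in $\gfv$, and on the fact that iterated sumsets stay in a fixed ambient group, whereas here the ideal part grows (Lemma~\ref{2:a+b} puts $kA$ into $\Gamma^{s}_{k}(M)$, not $\Gamma^{s}_{1}(M)$). Whether intersection-method patches (Proposition~\ref{2:gri}) can rescue a Petridis-type argument is precisely the kind of question the paper leaves open in Section~\ref{2:s:op}. In short: there is no gap to point to because there is no target proof; your proposal diagnoses the right obstructions but, like the paper, does not overcome them, and its fallback route would only recover the known Proposition~\ref{2:ga3}.
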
	

Thanks to Rusza's quasi triangle inequality Proposition \ref{2:grt}, to prove Conjecture \ref{2:00ga3}, it suffices to show that  the smallness of $\dep(A\+ A)$ implies the smallness of $\dep(A\- A\+ A)$ (which is known as the Green-Ruzsa inequality):

\begin{conj}[Green–Ruzsa inequality  for $\Gamma^{s}(\Vk,M)$]\label{2:00ga31}
	Let $d,D,K\in\N_{+}$, $s\in\N$, $\d>0$ and $p\gg_{\d,d,D} 1$ be a prime dividing $K$. Let $M\colon\V\to\F_{p}$ be a non-degenerate quadratic form.
	Let  $A\in\Gamma^{s}_{1}(\Vk,M)$ be such that $\vert A\vert\geq \d K^{d}$ and that the map $\pi\colon A\to\Vk$ is injective. Suppose that $\dep(A\+ A)\leq D$. If $d\gg_{s} 1$, then  $\dep(A\- A\+ A)=O_{\d,d,D}(1)$.
\end{conj}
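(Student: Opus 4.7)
The plan is to construct a dense auxiliary graph for $A \- A \+ A$ directly from the $D^{-1}$-dense auxiliary graph $G_0 = ((A \+ A), Y, E_0)$ that witnesses $\dep(A \+ A) \leq D$, in a manner parallel to the proof of Ruzsa's quasi triangle inequality (Proposition \ref{2:grt}). The guiding principle is that each $u \in A \- A \+ A$ can be written as $u = a_1 \- b \+ a_3$ with $b \in A$ acting as a ``middle term'', so that $a_1 \+ a_3 \in A \+ A$ sits at level $\pi(u) + \pi(b)$. Two elements $u, u'$ sharing a common middle term $b$ and whose sums $a_1 \+ a_3$, $a'_1 \+ a'_3$ are tied together by a common auxiliary vertex in $G_0$ should then become $\sim$-equivalent modulo an ideal controlled by $b$.

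Concretely, I would build the auxiliary graph $G' = \bigsqcup_h G'_h$ whose right-vertex set at level $h$ consists of tuples $((b_i, y_i))_{i=1}^{N}$, where $N = N(s)$ is chosen large enough to invoke the intersection property Proposition \ref{2:gri}, the projections $\pi(b_1),\dots,\pi(b_N)$ are linearly independent, and $y_i \in Y_{h + \pi(b_i)}$. An edge $(u, ((b_i, y_i))_i) \in E'_h$ would be declared when, for every $i$, there exists a representation $u = a_{i,1} \- b_i \+ a_{i,3}$ with $(a_{i,1} \+ a_{i,3}, y_i) \in E_0$. Density of $G'$ follows, after discarding non-generic tuples via Lemma \ref{2:iiddpp}, provided a positive density of $b \in A$ can serve as middle terms for each $u$ in a large subset of $A \- A \+ A$. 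The auxiliary property $u \sim u'$ for vertices with a common right-endpoint then comes from intersecting the congruences $u \sim u' \mod J^M_{V_i + \sp_{\F_{p}}\{\pi(b_i)\}}$ (each produced from the auxiliary equivalence $a_{i,1} \+ a_{i,3} \sim a'_{i,1} \+ a'_{i,3}$ in $G_0$ together with the cancellation of the common $b_i$) and applying Proposition \ref{2:gri}, yielding $\dep(A \- A \+ A) = O_{\delta, d, D}(1)$ as required.

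The hard part will be the density analysis: quantifying, for each $u$ in a large subset of $A \- A \+ A$, the number of distinct $b \in A$ that can serve as middle terms, i.e.\ the fiber size (in the $b$-coordinate) of the map $A^{3} \to A \- A \+ A$. In the classical cardinality setting this is controlled via Petridis's lemma or the Pl\"unnecke graph-magnification inequality, but adapting such arguments to the ideal setting---where one must simultaneously track the $\pi$-projection and the shifted-ideal data---is the technical crux. If a direct approach proves intractable, an alternative route is to first establish $\dep(A \- A) = O_{\delta, d, D}(1)$ via a Balog--Gowers--Szemer\'edi-style additive-energy argument (analogous to Theorem \ref{2:gbig1}), and then apply Ruzsa's quasi triangle inequality (Proposition \ref{2:grt}) with $B = A$ (on which $\pi$ is injective) and suitable choices of $A'$ and $C$ drawn from $A \+ A$ and $A$; this alternative shifts the difficulty onto verifying the dimension hypotheses of Proposition \ref{2:grt} and establishing the intermediate bound on $\dep(A \- A)$ independently.
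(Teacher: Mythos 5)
The statement you are trying to prove is stated in the paper as Conjecture \ref{2:00ga31}, and the authors explicitly report that they were \emph{unable} to prove it: immediately after the conjecture the paper says ``Unfortunately, we were unable to prove Conjecture \ref{2:00ga31}'' and instead establishes only the weaker Proposition \ref{2:ga31}, which concludes $\dep(A'\- A'\+ A')=O_{\d,D,s}(1)$ after passing to a subset $A'\subseteq A$ with $\vert A'\vert\geq\vert A\vert/2$. So there is no paper proof to compare against, and your proposal should be assessed as an attempt at an open problem.

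The gap you flag yourself is genuine and is precisely the obstruction. In the classical setting, the Green--Ruzsa step $\vert A-A+A\vert\lesssim \vert A+A\vert^{3}/\vert A\vert^{2}$ relies on a magnification or Petridis-type argument, where one controls the fiber sizes of the composition map $A^{3}\to A-A+A$. In the shifted-ideal setting, where cardinality is replaced by $\dep$ and the relation $\sim$ is not transitive, there is no available analogue of Petridis's lemma, and the first part of your proposal stops exactly at the point where one would need it: you must show that for each $u\in A\- A\+ A$ there are $\gg p^{d}$ many $b\in A$ serving as middle terms with $u=a_{1}\- b\+ a_{3}$ and $a_{1}\+ a_{3}$ wired into the auxiliary graph of $A\+ A$, and nothing in the proposal justifies this. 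The paper's workaround for the weak version is a qualitatively different device: for every $B\subseteq A$ it iteratively builds a small set $X_{B}$ (bounded via Lemma \ref{2:lonely} and the estimate $\sum_{h}\dep(\pi^{-1}(h)\cap(X_{B}\+ B))\leq Dp^{d}$), arranges that every $b\in B$ has $\geq\vert B\vert/2$ many $a$ with $a\+ b\sim c$ for some $c\in X_{B}\+ B$, and then runs an intersection argument over ``admissible'' chains $(x_{1},\dots,x_{s+6})$ of such elements. This construction is what forces the passage to $A'$ and is the reason only the weak statement is obtained; no version of your direct auxiliary-graph construction is known to avoid this.

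Your alternative route via Proposition \ref{2:grt} is circular. To get $\dep(A\- A\+ A)$ from Ruzsa's quasi triangle inequality you would take $A_{0}=A\+ A$, $C=A$, and $B\subseteq\Gamma^{s}_{1}(M)$ with $\pi$ injective; the natural choice $B=A$ then requires $\dep(A_{0}\- B)=\dep((A\+ A)\- A)=\dep(A\- A\+ A)$ as a \emph{hypothesis}, which is the target. No other choice of $B$ from the available sets avoids this, and establishing $\dep(A\- A)=O_{\d,d,D}(1)$ (itself nontrivial and not proved in the paper) does not feed into Proposition \ref{2:grt} in a way that yields $A\- A\+ A$, since $A\- A$ lives in $\Gamma^{s}_{2}(M)$ and so cannot play the role of $B$. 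In short, the proposal does not resolve the conjecture, and if you wish to match what the paper actually achieves you should instead aim at Proposition \ref{2:ga31}, whose proof hinges on the $X_{B}$ construction and admissible tuples rather than a direct Ruzsa-style auxiliary graph.
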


Conjecture \ref{2:00ga3} can be proved by a simple induction argument combining the Rusza's quasi triangle inequality (Proposition \ref{2:grt}) and Conjecture \ref{2:00ga31}  (see the proof of Proposition \ref{2:ga3} for details).

Unfortunately, we were unable to prove Conjecture \ref{2:00ga31}. However, we have the following weaker version of Conjecture \ref{2:00ga31}, which turns out to be good enough for the purposes of this paper.

\begin{prop}[Weak Green–Ruzsa inequality  for $\Gamma^{s}(\Vk,M)$]\label{2:ga31}
		Let $d,D,K\in\N_{+}$, $s\in\N$, $\d>0$ and $p\gg_{\d,d,D} 1$ be a prime dividing $K$. Let $M\colon\V\to\F_{p}$ be a non-degenerate quadratic form.
		Let  $A\in\Gamma^{s}_{1}(\Vk,M)$ be such that $\vert A\vert\geq \d K^{d}$ and that the map $\pi\colon A\to\Vk$ is injective. Suppose that $\dep(A\+ A)\leq D$. If $d\geq \max\{s+7,19\}$ , then there exists $A'\subseteq A$ with $\vert A'\vert\geq \vert A\vert/2$ such that  $\dep(A'\- A'\+ A')=O_{\d,D,s}(1)$.
\end{prop}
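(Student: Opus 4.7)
The plan is to extract canonical $\Gamma^s_1(M)$-representatives for the equivalence classes of $A \+ A$ from the hypothesis $\dep(A\+ A)\leq D$, and then to exploit a key identity to collapse $(A' \- A' \+ A')_h$ onto essentially a single equivalence class (plus a controlled exceptional set) at each level $h$.

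Writing $A = \{a_h = (h, J^M_h + \xi(h)) : h \in H\}$ with $H = \pi(A)$, I first observe that $\dep(A \+ A) \leq D$ combined with Lemma \ref{2:basicdn}(iii) and Cauchy--Schwartz implies that the $M$-energy of $\xi$ is at least $\gg_{\delta, D} p^{3d}$. I then apply the sum-version ($C'_h$) of Proposition \ref{2:glargepart} to produce a subset $H^* \subseteq H$ of the required size, setting $A' = \{a_h : h \in H^*\}$, together with, for each $h \in H^*$, a set $C'_h \subseteq (H^*)^2$ of sum-pairs with $|C'_h| \gg p^d$ for which all $a_{h_1} \+ a_{h_2}$ with $(h_1, h_2) \in C'_h$ are pairwise $\sim$-equivalent. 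Discarding those pairs whose spans meet a low-dimensional obstacle subspace (as permitted by Proposition \ref{2:gwts}), each major class becomes a strong equivalence class, yielding canonical representatives $r_h = (h, J^M_h + g_h) \in \Gamma^s_1(M)$ with $\xi(h_1) + \xi(h_2) \equiv g_h \mod J^M_{h_1, h_2}$ for all $(h_1, h_2) \in C'_h$.

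The crux is the following key identity: for $\alpha, h \in H^*$ with $(\alpha - h, h) \in C'_\alpha$, one has $r_\alpha \- a_{\alpha - h} \sim a_h$. Indeed, since $\sp_{\F_{p}}\{\alpha, \alpha - h\} = \sp_{\F_{p}}\{\alpha, h\}$ one has $J^M_{\alpha, \alpha - h} = J^M_{\alpha, h}$, so the claim reduces to $(g_\alpha - \xi(\alpha - h)) - \xi(h) \in J^M_{\alpha, h}$, which is immediate from the defining property $g_\alpha \equiv \xi(\alpha - h) + \xi(h) \mod J^M_{\alpha - h, h}$ together with $J^M_{\alpha - h, h} = J^M_{\alpha, h}$. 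Now every $u = a_y \- a_z \+ a_w \in (A' \- A' \+ A')_h$ with a ``good'' sum decomposition $(y, w) \in C'_{y + w}$ satisfies $u = (a_y \+ a_w) \- a_z \sim r_{y + w} \- a_z$, and when further $(z, h) \in C'_{y+w}$ (noting $z + h = y + w$) the key identity gives $u \sim a_h$. Hence for $h \in H^*$, the overwhelming majority of elements of $(A' \- A' \+ A')_h$ lie in the single equivalence class of $a_h$, contributing only $O(1)$ to the density dependence number at that level.

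To derive the uniform bound $\dep((A' \- A' \+ A')_h) = O_{\delta, D, s}(1)$ across all levels $h$, I would construct for each $h$ an explicit auxiliary graph $G'_h$ whose witness side consists of tuples $(\mu_1, \ldots, \mu_K) \in (H^*)^K$ indexing multiple simultaneous good-pair certifications; the density of $G'_h$ follows from standard counting using Lemma \ref{2:iiddpp}, and the auxiliary property follows by applying the key identity along each $\mu_i$ to obtain $u - u' \in J^M_{\dots, \mu_i}$, then intersecting across the $\mu_i$'s via Proposition \ref{2:gri} to conclude $u \sim u'$. The main obstacle is handling levels $h \notin H^*$ (where $a_h$ itself is undefined) and the few isolated elements $u$ admitting no good decomposition: the former is addressed by using a surrogate canonical element of the form $r_{h + c} \- a_c$ for auxiliary $c \in H^*$, which is well-defined whenever $h + c, c \in H^*$ (holding for a positive fraction of $c$), with the consistency between different surrogates verified by chaining the key identity through the difference structure $C_h$ that Proposition \ref{2:glargepart} simultaneously produces; the latter is absorbed into a further $O_D(1)$ additive correction through iterated application of Lemma \ref{2:lonely} to the finitely many exceptional vertices.
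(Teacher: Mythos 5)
Your key identity $r_\alpha \- a_{\alpha-h} \sim a_h$ (for $(\alpha-h,h)\in C'_\alpha$) is correct, and the initial reduction from $\dep(A\+A)\leq D$ to the $M$-energy of $\xi$ being $\gg_{\delta,D}p^{3d}$ is a fine preliminary. However, the rest of the argument has genuine gaps, and the architecture you propose is not the paper's.

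The first gap is in the claim that for $h\in H^*$, the overwhelming majority of $u\in(A'\- A'\+ A')_h$ are $\sim a_h$. This requires $(\alpha-h,h)\in C'_\alpha$ for many apexes $\alpha=y+w$, but for a fixed $h$ the set $C'_\alpha$ contains at most one pair whose second coordinate is $h$, namely $(\alpha-h,h)$; $|C'_\alpha|\gg p^d$ places no lower bound on $|\{\alpha:(\alpha-h,h)\in C'_\alpha\}|$ for a specific $h$, so the "overwhelming majority" claim does not follow from Proposition \ref{2:glargepart}. The second, more serious, gap concerns density of the proposed auxiliary graph. Any element $u\in(A'\- A'\+ A')_h$ determines the span $V=\sp_{\F_p}\{y,z,w\}$ of every one of its decompositions (since $J^M_V$ determines $V$ when $d$ is large); consequently every valid apex $\mu=y+w$ lies inside the at most $3$-dimensional subspace $V$, so the set of valid witness tuples $(\mu_1,\dots,\mu_K)$ attached to $u$ has size at most $p^{3K}$ against a witness set of size $\approx p^{dK}$. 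There is no free parameter producing an $\epsilon$-dense auxiliary graph with $\epsilon\gg_{\delta,D,s}1$. Finally, the assertion that the elements admitting no good decomposition form a set of \emph{finitely many} exceptional vertices to be handled by Lemma \ref{2:lonely} is unsubstantiated; nothing you have shown prevents $\sim p^{2d}$ such elements per level, in which case the iterated Lemma \ref{2:lonely} argument blows up.

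The paper's proof is built around a mechanism you do not have. For each $B\subseteq A$ it constructs, greedily, a small pivot set $X_B\subseteq B$ (with $|X_B|\leq 2Dp^d/|B|$, bounded via Lemma \ref{2:lonely} rather than applied to the output), so that for every $b\in B$ at least half the $a\in B$ have $a\+ b\sim a'\+ x$ for some $a'\in B$, $x\in X_B$. This free parameter $a$ is precisely what produces the density of the bipartite graph $G_B=(B\times A^2,\,X_B\times(A\+ A)^2,\,E_B)$: each $(b_1,b_2,b_3)$ has $\geq|B|/2$ witness triples $(x,c,c')$ with distinct $\tilde{\pi}$-values (Lemma \ref{2:gcp2}). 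After Proposition \ref{2:rhob} bounds $\dep(\hat{U}_{B,h,x})$, the leftover ideal $J^M_{\pi(x)}$ is removed by intersecting over $s+6$ pivots selected via an admissible-tuple construction, which is where the subset $A'$ with $|A'|\geq|A|/2$ appears. Your proposal has no analogue of $X_B$, of the free parameter $a$, or of the admissible tuples; without them the auxiliary graph you sketch cannot be made dense.
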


By Propositions \ref{2:grt} and \ref{2:ga31}, we have the following weak version of Pl\"unnecke-Rusza theorem:

\begin{prop}[Weak Pl\"unnecke-Rusza theorem for $\Gamma^{s}(\Vk,M)$]\label{2:ga3}
		Let $d,D,K\in\N_{+}$, $k,\ell,s\in \N$ with $(k,\ell)\neq (0,0)$, $\d>0$, and $p\gg_{\d,d,D} 1$ be a prime dividing $K$. Let $M\colon\V\to\F_{p}$ be a non-degenerate quadratic form. Let $A\in\Gamma^{s}_{1}(\Vk,M)$ be such that $\vert A\vert\geq \d K^{d}$ and that the map $\pi\colon A\to\Vk$ is injective. Suppose that $\dep(A\+ A)\leq D$. If $$d\geq \max\{s+7,19,4k+s,8k+5,4\ell+s,8\ell+5\},$$
		 then there exists $A'\subseteq A$ with $\vert A'\vert\geq \d K^{d}/2$ such that $\dep(kA'\- \ell A)\leq O_{\d,D,k,\ell,s}(1)$.
\end{prop}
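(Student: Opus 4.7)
The plan is to derive Proposition \ref{2:ga3} from Propositions \ref{2:grt} and \ref{2:ga31} by applying Proposition \ref{2:grt} at the outermost level with bridge $B = A$, after establishing the two required inputs $\dep(kA' \- A) = O(1)$ and $\dep(A \- \ell A) = O(1)$ via joint inductions carefully staged to stay within the dimensional budget.

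First, I would apply Proposition \ref{2:ga31} to produce $A' \subseteq A$ with $|A'| \geq |A|/2 \geq \delta p^d/2$ and $\dep(A' \- A' \+ A') = O_{\delta,D,s}(1)$. Monotonicity (Lemma \ref{2:basicdn}(ii)) gives $\dep(A' \+ A') \leq \dep(A \+ A) \leq D$, and $\dep(A') = 1$ since $\pi$ is injective on $A$. Define $-A' := \{(\bold{0}, J^M)\} \- A' \subseteq \Gamma^s_1(M)$, which has injective $\pi$, $|-A'| = |A'|$, and satisfies $u \- (-v) = u \+ v$; the sign-flip $-(u \- v) = v \- u$ preserves $\sim$, so $\dep(-X) = \dep(X)$. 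This lets us convert $\+$-quantities such as $\dep(A' \+ A') \leq D$ and $\dep(A' \+ A) \leq D$ into $\-$-inputs for Proposition \ref{2:grt} via the $-A'$ (or $-A$) bridge.

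Second, I would prove the joint statement $P(m)$: ``$\dep(m A') = O(1)$ and $\dep(m A' \- A') = O(1)$'', by induction on $m$ up to $m = k$. The bases $P(1)$ and $P(2)$ come from $\dep(A') = 1$, $\dep(A' \- A') = O(1)$ (via Proposition \ref{2:grt} with bridge $-A'$ at common level $1$), $\dep(2A') \leq D$, and $\dep(2A' \- A') = O(1)$ (Proposition \ref{2:ga31}). The inductive step uses Proposition \ref{2:grt} twice: for $\dep((m+1) A') = \dep(m A' \- (-A'))$, at common level $m$ with bridge $A'$, inputs $\dep(m A' \- A') = O(1)$ (by $P(m)$) and $\dep(A' \- (-A')) = \dep(2A') \leq D$; for $\dep((m+1) A' \- A') = \dep((m A' \- A') \- (-A'))$, at common level $m+1$ with bridge $A'$, inputs $\dep((m A' \- A') \- A') = \dep(m A' \- 2 A')$ and $\dep(2A') \leq D$, where $\dep(m A' \- 2 A') = O(1)$ is proved by a parallel joint induction. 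The symmetric argument with $A$ in place of the ``left'' $A'$ establishes $\dep(A' \- j A) = O(1)$, $\dep(j A) = O(1)$, and $\dep(A \- j A) = O(1)$ for $j \leq \ell$ within $d \geq 8\ell + 5$, using the symmetric bridges $-A$ or $-A'$.

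Finally, having $P(k)$, I would derive $\dep((k+1) A') = O(1)$ by one extra Proposition \ref{2:grt} at common level $k$ (bridge $A'$, inputs from $P(k)$), and then bound $\dep(k A' \- A) = O(1)$ by Proposition \ref{2:grt} at common level $k$ with $A_{\text{prop}} = k A'$, $C_{\text{prop}} = A$, bridge $B = -A'$, using $\dep(k A' \- (-A')) = \dep((k+1) A') = O(1)$ and $\dep(-A' \- A) = \dep(A' \+ A) \leq D$. The proof concludes by applying Proposition \ref{2:grt} at common level $\max(k, \ell)$ with $A_{\text{prop}} = k A'$, $C_{\text{prop}} = \ell A$, bridge $B = A$, invoking the just-established $\dep(k A' \- A) = O(1)$ and the earlier $\dep(A \- \ell A) = O(1)$. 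The main obstacle is synchronizing the joint inductions so that each intermediate Proposition \ref{2:grt} invocation's common level $k_{\text{prop}}$ fits within the hypothesis: the step deducing $\dep((m+1) A' \- A')$ requires $k_{\text{prop}} = m + 1$, hence $d \geq 8(m+1)+5$, so the induction can be run only up to $m = k$ (yielding $P(k)$, which is then amplified to $\dep((k+1) A') = O(1)$ via a single Proposition \ref{2:grt} at the cheaper common level $k$); an analogous constraint bounds the $A$-side induction at $j = \ell$.
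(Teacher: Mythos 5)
Your overall strategy — obtain $A'$ from Proposition \ref{2:ga31} and then iterate Ruzsa's quasi triangle inequality (Proposition \ref{2:grt}) with one-dimensional bridges $\pm A'$ to build up $\dep(mA')$ — is exactly the paper's route, and your $A'$-side induction is sound (the paper's version is slightly leaner: it tracks only $\dep(iA')$ and writes $(i+1)A'=(i-1)A'\hat{-}(\hat{-}2A')$ so that the second triangle-inequality input is always $\dep(2A'\hat{-}A')=\dep(A'\hat{-}A'\hat{+}A')=O(1)$ straight from Proposition \ref{2:ga31}, which avoids your parallel joint inductions on $\dep(mA'\hat{-}A')$ and $\dep(mA'\hat{-}2A')$).

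The genuine gap is on the $A$-side. You assert that ``the symmetric argument with $A$ in place of the left $A'$ establishes $\dep(jA)=O(1)$ and $\dep(A\hat{-}jA)=O(1)$ for $j\le\ell$,'' but that symmetric argument has no base case: already for $j=2$ every bridge choice forces you to control $\dep(A\hat{+}A\hat{-}A)$ for the \emph{full} set $A$, and Proposition \ref{2:ga31} is only a \emph{weak} Green--Ruzsa inequality — it yields $\dep(A'\hat{-}A'\hat{+}A')=O(1)$ only after passing to the subset $A'$, and you cannot pass to a subset on the right because your target keeps the full $\ell A$ there. (Controlling $A\hat{-}A\hat{+}A$ for the full $A$ is precisely the open Conjecture \ref{2:00ga31}.) In fairness, the culprit is a typo in the statement: the paper's own proof concludes $\dep(kA'\hat{-}\ell A')=O_{\d,D,k,\ell,s}(1)$, with $A'$ on both sides, and that is also how the proposition is invoked in the proof of Theorem \ref{2:aadd} (for $8\tilde{\xi}(H_2)\hat{-}8\tilde{\xi}(H_2)$). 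For the corrected conclusion $kA'\hat{-}\ell A'$ your argument goes through: run your induction for both $kA'$ and $\ell A'$ and combine with one final application of Proposition \ref{2:grt} with bridge $\hat{-}A'$, exactly as in the paper. If you insist on the literal statement with $\ell A$, you would need a new idea to handle $\ell\ge 2$.
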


\begin{proof}[Proof of Proposition \ref{2:ga3} assuming Proposition \ref{2:ga31}]
	By Proposition \ref{2:ga31}, there exists $A'\subseteq A$ with $\vert A'\vert\geq \d p^{d}/2$ such that  $\dep(A'\- A'\+ A')=O_{\d,D,s}(1)$.
	Since $3A'=(A'\+ A')\- (\- A')$, and both $(A'\+ A')\- A'=A'\- A'\+ A'$ and $A'\- (\- A')=A'\+ A'$ has density dependence number $O_{\d,D,s}(1)$,
	by Proposition \ref{2:grt}, $3A'$ has density dependence number $O_{\d,D,s}(1)$.
	
	It is obvious that $\dep(A')=1$.
	Suppose that we have shown  $\dep(iA')=O_{\d,D,i,s}(1)$ for some $i\in \N_{+}$. Since $(i+1)A'=(i-1)A'\- (\- 2A')$, and both $(i-1)A'\- (\- A')=iA'$ and $2A'\- (\- A')=3A'$ have density dependence number $O_{\d,D,i,s}(1)$ by induction hypothesis, by Proposition \ref{2:grt}, we have $\dep((i+1)A')=O_{\d,D,i,s}(1)$. So we have that $\dep(kA')=O_{\d,D,k,s}(1)$ for all $k\in\N$.
	
	Finally, for all $k,\ell\in \N$, since $kA'\- (\- A')=(k+1)A'$ and $(\- A')\- \ell A'=\- (\ell+1)A'$ have  density dependence numbers $O_{\d,d,D,k,s}(1)$ and $O_{\d,D,\ell,s}(1)$ respectively, by Proposition \ref{2:grt}, we have $\dep(kA'\- \ell A')=O_{\d,D,k,\ell,s}(1)$.
\end{proof}

The rest of the section is devoted to the proof of Proposition \ref{2:ga31}.

For every $B\subseteq A$,
we construct a subset $X_{B}\subseteq B$ as follows. We start with $X_{B}$ being the empty set. If there exists some $b\in B$ such that the number of $a\in B$ for which $a\+ b\sim u$ for some $u\in X_{B}\+ B$ is at most $\vert B\vert/2$, 
then we add one such $b$ in $X_{B}$ and repeat this process. If such $b$ does not exist, we then terminate the construction. The construction of the set $X_{B}$ is inspired by the proof of Lemma 2.17 of \cite{TV06}. However, as we shall see later, the set $X_{B}$ in our setting is much more difficult to handle compared with Lemma 2.17 of \cite{TV06}. 

Suppose that we added $b\in B$ into $X_{B}$ at some step. Let $Q$ denote the set of $a\in B$ such that $a\+ b\not\sim u$ for any $u\in X_{B}\+ B$ (here $X_{B}$ denotes the set before adding $b$ into it).
Then by definition $\vert Q\vert\geq \vert B\vert/2$.  
Since  the map $\pi\colon A\to\Vk$ is injective, we have
$$(\pi^{-1}(\pi(a\+ b))\cap((X_{B}\cup\{b\})\+ B))\backslash(\pi^{-1}(\pi(a\+ b))\cap(X_{B}\+ B))=\{a\+ b\}$$ for all
$a\in Q$.
By Lemma \ref{2:lonely}, we have $$\dep(\pi^{-1}(\pi(a\+ b))\cap((X_{B}\cup\{b\})\+ B))\geq \dep(\pi^{-1}(\pi(a\+ b))\cap(X_{B}\+ B))+1$$
for all
$a\in Q$.
  Since the map $\pi\colon A\to\Vk$ is injective and $\vert Q\vert\geq \vert B\vert$/2, we have that
  $$\sum_{h\in\Vk}\dep(\pi^{-1}(h)\cap((X_{B}\cup\{b\})\+ B))-\sum_{h\in\Vk}\dep(\pi^{-1}(h)\cap(X_{B}\+ B))\geq \vert B\vert/2.$$
  In other words, each time we add some $b\in B$ into the set $X_{B}$, the quantity $$\sum_{h\in\Vk}\dep(\pi^{-1}(h)\cap(X_{B}\+ B))$$ is increased by at least $\vert B\vert/2$. Since $\sum_{h\in\Vk}\dep(\pi^{-1}(h)\cap(A\+ A))\leq DK^{d}$, we have that 
  \begin{equation}\label{2:upbdfb}
  \vert X_{B}\vert\leq 2DK^{d}/\vert B\vert.
  \end{equation}
Finally by the construction of $X_{B}$, for all $b\in B$, the set $$Q_{B}(b):=\{a\in B\colon a\+ b\sim c \text{ for some } c\in  X_{B}\+ B\}$$ is of cardinality at least $\vert B\vert/2$. 

For every $B\subseteq A$,
let $G_{B}=(B\times A^{2},X_{B}\times (A\+ A)^{2},E_{B})$ be the bipartite graph such that $((b_{1},b_{2},b_{3}),(x,c,c'))\in E_{B}$ if   $$\text{there exist $a, a'\in A$ such that $a\+ b_{1}\sim a'\+ x$, $b_{2}\+ a=c$ and $b_{3}\+ a'=c'$.}$$ 
We say that $x\in X_{B}$ is a \emph{major} of some $(b_{1},b_{2},b_{3})\in B\times A^{2}$ in the graph $G_{B}$ if the number of edges in $E_{B}$ of the form $((b_{1},b_{2},b_{3}),(x,c,c'))$  is at least $\vert B\vert/2\vert X_{B}\vert$.

For $(x,c,c')\in X_{B}\times (A\+ A)^{2}$, denote $\tilde{\pi}(x,c,c'):=\pi(c)$.

\begin{lem}\label{2:gcp2}
	Let $B\subseteq A$.
	\begin{enumerate}[(i)]
\item For every $\bold{b}\in B\times A^{2}$,
	there exists a subset $R_{\bold{b}}$ of $X_{B}\times(A\+ A)^{2}$ of cardinality at least $\vert B\vert/2$ such that $(\bold{b},r)\in E_{B}$ for all $r\in R_{\bold{b}}$. Moreover, we may require that $\tilde{\pi}(r)\neq \tilde{\pi}(r')$ for all distinct $r,r'\in R_{\bold{b}}$.

	\item For every $\bold{b}\in B\times A^{2}$, there exists  $x\in X_{B}$ which is a major of $\bold{b}$ in the graph $G_{B}$.
	
\item If $((b_{1},b_{2},b_{3}),(x,c,c'))\in E_{B}$, then  writing $z=(\pi(z),J^{M}_{V_{z}}+f_{z})$ for $z=x,b_{1},b_{2},b_{3},c,$ and $c'$, we have that $V_{z}=\sp_{\F_{p}}\{\iota(\pi(z))\}$ for $z=x,b_{1},b_{2},b_{3}$, $V_{c}=\sp_{p}\{\iota(\pi(c)),$ $\iota(\pi(b_{2}))\}$, $V_{c'}=\sp_{p}\{\iota(\pi(c')),\iota(\pi(b_{3}))\}$ and
$$f_{b_{1}}-f_{b_{2}}+f_{b_{3}}\equiv f_{x}-f_{c}+f_{c'} \mod J^{M}_{\iota(\pi(b_{1})),\iota(\pi(b_{2})),\iota(\pi(b_{3})),\iota(\pi(c)),\iota(\pi(x))}$$
(in particular $b_{1}\- b_{2}\+ b_{3}\sim x\- c\+ c'$).
	\end{enumerate}
\end{lem}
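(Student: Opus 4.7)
The plan is to prove each of the three parts by directly unpacking the greedy construction of $X_{B}$ and the definition of $E_{B}$; no deep input from Section \ref{2:s:a1} is needed and the argument is essentially bookkeeping of $\pi$-values and generators of $M$-ideals.

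For Part (i), fix $\bold{b}=(b_{1},b_{2},b_{3})\in B\times A^{2}$. By the paragraph preceding the statement, $Q_{B}(b_{1})$ has cardinality at least $\vert B\vert/2$. For each $a\in Q_{B}(b_{1})$, pick witnesses $x_{a}\in X_{B}$ and $a'_{a}\in B$ with $a\+ b_{1}\sim a'_{a}\+ x_{a}$, and set $r_{a}:=(x_{a},b_{2}\+ a,b_{3}\+ a'_{a})\in X_{B}\times(A\+ A)^{2}$. Then $(\bold{b},r_{a})\in E_{B}$ directly from the definition of $E_{B}$. Moreover $\tilde{\pi}(r_{a})=\pi(b_{2})+\pi(a)$, and since $\pi\colon A\to\V$ is injective, distinct $a\in Q_{B}(b_{1})$ yield distinct $\tilde{\pi}(r_{a})$. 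Taking $R_{\bold{b}}:=\{r_{a}:a\in Q_{B}(b_{1})\}$ gives the required set.

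For Part (ii), Part (i) supplies at least $\vert B\vert/2$ edges of $E_{B}$ incident to $\bold{b}$, each of whose other endpoint has first coordinate lying in $X_{B}$. By the pigeonhole principle some $x\in X_{B}$ is the first coordinate of at least $\vert B\vert/2\vert X_{B}\vert$ such edges, hence is a major of $\bold{b}$.

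For Part (iii), I trace through the definitions. Since $b_{1},b_{2},b_{3},x\in A\subseteq\Gamma^{s}_{1}(M)$ we have $V_{z}=\sp_{\F_{p}}\{\pi(z)\}$ for $z=b_{1},b_{2},b_{3},x$. The definition of $E_{B}$ gives $a,a'\in A$ with $c=b_{2}\+ a$ and $c'=b_{3}\+ a'$, so (using $J^{M}_{V}+J^{M}_{V'}=J^{M}_{V+V'}$) $V_{c}=\sp_{\F_{p}}\{\pi(b_{2}),\pi(a)\}=\sp_{\F_{p}}\{\pi(b_{2}),\pi(c)\}$ via $\pi(a)=\pi(c)-\pi(b_{2})$, and likewise $V_{c'}=\sp_{\F_{p}}\{\pi(b_{3}),\pi(c')\}$. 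The relation $a\+ b_{1}\sim a'\+ x$ means $f_{a}+f_{b_{1}}\equiv f_{a'}+f_{x}\mod J^{M}_{\pi(a),\pi(b_{1}),\pi(a'),\pi(x)}$, and substituting $f_{a}=f_{c}-f_{b_{2}}$ and $f_{a'}=f_{c'}-f_{b_{3}}$ rearranges to $f_{b_{1}}-f_{b_{2}}+f_{b_{3}}\equiv f_{x}-f_{c}+f_{c'}$ modulo the same ideal. Finally $\pi(a)\in\sp_{\F_{p}}\{\pi(b_{2}),\pi(c)\}$ and $\pi(a')=\pi(a)+\pi(b_{1})-\pi(x)\in\sp_{\F_{p}}\{\pi(b_{1}),\pi(b_{2}),\pi(c),\pi(x)\}$, so $J^{M}_{\pi(a),\pi(b_{1}),\pi(a'),\pi(x)}\subseteq J^{M}_{\pi(b_{1}),\pi(b_{2}),\pi(b_{3}),\pi(c),\pi(x)}$, yielding the claimed congruence. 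The ``in particular'' assertion then follows from $\pi(b_{1})-\pi(b_{2})+\pi(b_{3})=\pi(x)-\pi(c)+\pi(c')$ (a direct consequence of $\pi(a)+\pi(b_{1})=\pi(a')+\pi(x)$), since the combined ideal $J^{M}_{\pi(b_{1}),\pi(b_{2}),\pi(b_{3})}+J^{M}_{\pi(x),\pi(b_{2}),\pi(c),\pi(b_{3}),\pi(c')}$ controlling $\sim$ between $b_{1}\- b_{2}\+ b_{3}$ and $x\- c\+ c'$ contains the smaller ideal $J^{M}_{\pi(b_{1}),\pi(b_{2}),\pi(b_{3}),\pi(c),\pi(x)}$. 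There is no genuine hard step in this lemma; the only care required is tracking which generators appear in each $M$-ideal when converting $a,a'$ into expressions in $c,c'$ and the $b_{i},x$.
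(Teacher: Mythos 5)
Your proof is correct and matches the paper's own argument step by step: Part (i) via the injection from $Q_B(b_1)$ to $X_B\times(A\+A)^2$ detected through the $\tilde\pi$-coordinate, Part (ii) by pigeonhole, and Part (iii) by unpacking $c=b_2\+a$, $c'=b_3\+a'$ together with $a\+b_1\sim a'\+x$ and verifying the ideal containment $J^{M}_{\pi(a),\pi(b_1),\pi(a'),\pi(x)}\subseteq J^{M}_{\pi(b_1),\pi(b_2),\pi(b_3),\pi(c),\pi(x)}$. The only cosmetic difference is that the paper phrases Part (i) in terms of injectivity of a map $f_{\bold b}$ whereas you argue directly that the $\tilde\pi$-values are distinct; these are equivalent.
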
	
\begin{proof}
	We start with Part (i). Fix $\bold{b}=(b_{1},b_{2},b_{3})\in B\times A^{2}$ and let $a\in Q_{B}(b_{1})$.  By definition, there exists $c_{0}=a'\+ x\in B\+ X_{B}$ for some $a'\in B$ and $x\in X_{B}$ such that $a\+ b_{1}\sim c_{0}$. Set  $c=b_{2}\+ a$ and $c'=b_{3}\+ a'$. In this way, we may define a map $f_{\bold{b}}\colon Q_{B}(b_{1})\to X_{B}\times(A\+ A)^{2}$ given by $f_{\bold{b}}(a)=(x,b_{2}\+ a,b_{3}\+ a')$.
	Let $R_{\bold{b}}:=f_{\bold{b}}(Q_{B}(b_{1}))$.
	
	Since $\vert Q_{B}(b_{1})\vert\geq \vert B\vert/2$, to show that $R_{\bold{b}}$ is of cardinality at least $\vert B\vert/2$, it suffices to show that $f_{\bold{b}}$ is injective. Suppose that $f_{\bold{b}}(a)=f_{\bold{b}}(\tilde{a})=(x,c,c')$. Then $\pi(a)=\pi(c)-\pi(b_{2})=\pi(\tilde{a})$. Since the map $\pi\colon A\to\V$ is injective, we have that $a=\tilde{a}$. So  $f_{\bold{b}}$ is injective.
	
	On the other hand, for each $(x,c,c')\in R_{\bold{b}}$,  $\pi(c)=\pi(a)+\pi(b_{2})$ is uniquely determined by the choice of $a$. So $\pi(c)\neq \pi(\tilde{c})$ for all distinct $(x,c,c'),(\tilde{x},\tilde{c},\tilde{c}')\in R_{\bold{b}}$.
	This completes the proof of Part (i).
	Part (ii) Follows from Part (i) and the Pigeonhole Principle.

	We now prove Part (iii). Write also $z=(\pi(z),J^{M}_{V_{z}}+f_{z})$ for $z=a,a'$. For   $z=a,a',x,b_{1},b_{2},b_{3}$, since $z\in\Gamma^{s}_{1}(\Vk,M)$, we have $V_{z}=\sp_{\F_{p}}\{\iota(\pi(z))\}$.
	Since $b_{2}\+ a=c$ and $b_{3}\+ a'=c'$, 
	 we have that 
	  $$(\pi(c),J^{M}_{V_{c}}+f_{c})=(\pi(a)+\pi(b_{2}),J^{M}_{\iota(\pi(a)),\iota(\pi(b_{2}))}+f_{a}+f_{b_{2}})$$
	  and $$(\pi(c'),J^{M}_{V_{c'}}+f_{c'})=(\pi(a')+\pi(b_{3}),J^{M}_{\iota(\pi(a')),\iota(\pi(b_{3}))}+f_{a'}+f_{b_{3}}).$$
	  In particular, $\pi(c)=\pi(a)+\pi(b_{2})$ and $\pi(c')=\pi(a')+\pi(b_{3})$.
	  So $J^{M}_{\iota(\pi(a)),\iota(\pi(b_{2}))}=J^{M}_{\iota(\pi(c)),\iota(\pi(b_{2}))}$ and $J^{M}_{\iota(\pi(a')),\iota(\pi(b_{3}))}=J^{M}_{\iota(\pi(c')),\iota(\pi(b_{3}))}$.
	 	  
	  Since $a\+ b_{1}\sim a'\+ x$, we have that $\pi(b_{1})+\pi(a)=\pi(a')+\pi(x)$.
	So $\pi(b_{1}\- b_{2}\+ b_{3})=\pi(x\- c\+ c')$
	and $$J^{M}_{\sp_{\F_{p}}\{\iota(\pi(x))\}+V_{c}+V_{c'}}=J^{M}_{\iota(\pi(x)),\iota(\pi(b_{2})),\iota(\pi(a)),\iota(\pi(b_{3})),\iota(\pi(a'))}=J^{M}_{\iota(\pi(b_{1})),\iota(\pi(b_{2})),\iota(\pi(b_{3})),\iota(\pi(c)),\iota(\pi(x))}.$$
	Finally,  $a\+ b_{1}\sim a'\+ x$ implies that $f_{b_{1}}+f_{a}\equiv f_{a'}+f_{x} \mod J^{M}_{\iota(\pi(a)),\iota(\pi(b_{1})),\iota(\pi(a')),\iota(\pi(b_{x}))}$. Therefore,
	$f_{b_{1}}-f_{b_{2}}+f_{b_{3}}\equiv f_{x}-f_{c}+f_{c'} \mod J^{M}_{\iota(\pi(a)),\iota(\pi(b_{1})),\iota(\pi(a')),\iota(\pi(b_{x}))}$.
	 We are done since $J^{M}_{\iota(\pi(a)),\iota(\pi(b_{1})),\iota(\pi(a')),\iota(\pi(b_{x}))}\subseteq J^{M}_{\iota(\pi(b_{1})),\iota(\pi(b_{2})),\iota(\pi(b_{3})),\iota(\pi(c)),\iota(\pi(x))}=J^{M}_{\sp_{\F_{p}}\{\iota(\pi(b_{1})),\iota(\pi(b_{2})),\iota(\pi(b_{3})),\iota(\pi(x))\}+V_{c}+V_{c'}}$.
\end{proof}

Let $B\subseteq A$. Clearly, the graph $G_{B}$ has a disjoint partition $G_{B,h}=(U_{B,h}, V_{B,h}, E_{B,h}), h\in \Vk$, where $$U_{B,h}=\{(b_{1},b_{2},b_{3})\in B\times A^{2}\colon \pi(b_{1})-\pi(b_{2})+\pi(b_{3})=h\}$$ and $$V_{B,h}=\{(x,c,c')\in X_{B}\times (A\+ A)^{2}\colon \pi(x)-\pi(c)+\pi(c')=h\}.$$ For each $x\in X_{B}$, let $U_{B,h,x}$ denote the set of all $(b_{1},b_{2},b_{3})\in U_{B,h}$  having $x$ as a major (by Lemma \ref{2:gcp2} Part (ii), we have that $U_{B,h}=\cup_{x\in X_{B}}U_{B,h,x}$), and let $V_{B,h,x}$ be the set of elements in $V_{B,h}$ whose first entry is $x$.
Let $$\hat{U}_{B,h,x}:=\{b_{1}\- b_{2}\+ b_{3}\+ (\bold{0},J^{M}_{\pi(x)})\colon (b_{1},b_{2},b_{3})\in U_{B,h,x}\}.$$

\begin{prop}\label{2:rhob}
	Denote $\rho:=\vert B\vert/p^{d}$.
	If $p\gg_{d,D,s,\rho} 1$ and $d\geq\max\{s+7,19\}$, then  $\dep(\hat{U}_{B,h,x})=O_{D,s,\rho}(1)$ for all $h\in\Vk$.
\end{prop}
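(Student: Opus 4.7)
The plan is to construct an explicit auxiliary graph on $\hat{U}_{B,h,x}$ whose auxiliary property is verified via Lemma \ref{2:gcp2}(iii) together with the intersection property for $M$-ideals (Proposition \ref{2:gri}). For each $u \in \hat{U}_{B,h,x}$, fix a representative $(b_1^u, b_2^u, b_3^u) \in U_{B,h,x}$, and let $S(u) := \{v \in V_{B,h,x} : ((b_1^u, b_2^u, b_3^u), v) \in E_B\}$. By Lemma \ref{2:gcp2}(ii), $x$ is a major of this representative, hence $\vert S(u)\vert \geq \vert B\vert/(2\vert X_B\vert)$; combined with the bound $\vert X_B\vert \leq 2Dp^d/\vert B\vert$ from (\ref{2:upbdfb}), this yields $\vert S(u)\vert \geq \vert B\vert^2/(4Dp^d) \geq \rho^2 p^d/(4D)$, and by Lemma \ref{2:gcp2}(i) the map $v = (x,c,c') \mapsto \pi(c)$ is injective on $S(u)$.

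The heart of the argument is the following intersection identity: if $u, u' \in \hat{U}_{B,h,x}$ share a common witness $v = (x, c, c') \in S(u) \cap S(u')$, two applications of Lemma \ref{2:gcp2}(iii), one per representative, combine by subtraction to give $f_u - f_{u'} \in J^M_{V_u + V_{u'} + \sp_{\F_p}\{\pi(c)\}}$, where $V_u, V_{u'}$ are the ideal subspaces of $u, u'$. Since $V_u + V_{u'}$ is generated by $\pi(b_1^u), \pi(b_2^u), \pi(b_3^u), \pi(b_1^{u'}), \pi(b_2^{u'}), \pi(b_3^{u'}), \pi(x)$ subject to the single relation $\pi(b_1^u) - \pi(b_2^u) + \pi(b_3^u) = \pi(b_1^{u'}) - \pi(b_2^{u'}) + \pi(b_3^{u'}) = h$, we have $\dim(V_u + V_{u'}) \leq 6$. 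Consequently, if $u$ and $u'$ share $N = s+7$ witnesses $v_1, \ldots, v_{s+7}$ whose $\pi(c_i)$'s are linearly independent, then $f_u - f_{u'} \in \bigcap_i J^M_{V_u + V_{u'} + \sp_{\F_p}\{\pi(c_i)\}}$, and Proposition \ref{2:gri} with $V = V_u + V_{u'}$ of dimension $m \leq 6$, $V_i = \sp_{\F_p}\{\pi(c_i)\}$ of dimension $r = 1$, and $N = s+7 \geq s+m+1$ collapses the intersection to $J^M_{V_u + V_{u'}}$, i.e., $u \sim u'$. The dimensional requirement $d \geq 2m + 2N(r-1) + 7 = 19$ is matched exactly by the hypothesis $d \geq \max\{s+7, 19\}$ since $r - 1 = 0$.

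Take the auxiliary graph $H = (\hat{U}_{B,h,x}, Y, E')$ where $Y$ consists of ordered $(s+7)$-tuples $(v_1, \ldots, v_{s+7}) \in V_{B,h,x}^{s+7}$ whose $\pi(c_i)$'s are linearly independent, and $(u, \mathbf{v}) \in E'$ iff $v_i \in S(u)$ for all $i$. The intersection identity verifies the auxiliary property, and Lemma \ref{2:iiddpp} shows that each $u$ retains at least $\tfrac{1}{2}\vert S(u)\vert^{s+7}$ edges once $p \gg_{d,D,s,\rho} 1$, the proportion of $(s+7)$-tuples from $S(u)$ failing general position being negligible.

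The main obstacle will be securing density $\Omega_{D,s,\rho}(1)$ relative to $\vert Y\vert \leq \vert V_{B,h,x}\vert^{s+7}$, since a priori $\vert V_{B,h,x}\vert$ is controlled only by additive-energy-type quantities on $A \hat{+} A$ and can exceed $p^d$. I plan to resolve this by fibering $Y$ over the injective projection $v \mapsto \pi(c)$: restrict the second factor to $(s+7)$-tuples whose $\pi$-signatures lie in $\pi\bigl(\bigcup_u S(u)\bigr) \subseteq \V$ of size at most $p^d$, and augment each coordinate with bounded auxiliary data from the $D^{-1}$-dense auxiliary graph of $A \hat{+} A$ (which exists by the hypothesis $\dep(A \hat{+} A) \leq D$) to certify that witnesses with matching $\pi$-signatures feed into the intersection identity. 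Using Lemma \ref{2:duplicate} to normalize sizes then produces the desired $O_{D,s,\rho}(1)^{-1}$-dense auxiliary graph, yielding $\dep(\hat{U}_{B,h,x}) = O_{D,s,\rho}(1)$.
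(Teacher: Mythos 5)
Your plan is fundamentally the paper's plan — bound $\vert X_B\vert$ via (\ref{2:upbdfb}), use Lemma \ref{2:gcp2}(i)--(iii) for structure and lower bound on witnesses, apply Proposition \ref{2:gri} with $m\le 6$, $r=1$, $N=s+7$ (matching the threshold $d\ge 19$), and build a bipartite auxiliary graph from $(s+7)$-tuples — and you correctly diagnose the density obstruction coming from $\vert V_{B,h,x}\vert$ being uncontrolled. Where the proposal stops short is precisely the point where the paper does the real work: after you replace raw witnesses $(x,c,c')$ by pairs of auxiliary vertices $(y_{i,1},y_{i,2})\in Y_{h_i}\times Y_{\pi(c'_i)}$ (the auxiliary graph of $A\+A$), your ``intersection identity'' no longer applies verbatim, because two vertices $u,u'$ connected to the same auxiliary tuple no longer share a witness --- $u$ has witnesses $(x,c_i,c'_i)$ and $u'$ has $(x,\tilde c_i,\tilde c'_i)$ with only $\pi(c_i)=\pi(\tilde c_i)$ and $\pi(c'_i)=\pi(\tilde c'_i)$. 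You need a \emph{modified} identity: from $(c_i,y_{i,1}),(\tilde c_i,y_{i,1})\in E'_{h_i}$ and $(c'_i,y_{i,2}),(\tilde c'_i,y_{i,2})\in E'_{\pi(c'_i)}$ one gets $c_i\sim\tilde c_i$ and $c'_i\sim\tilde c'_i$, hence $c'_i\- c_i\sim\tilde c'_i\-\tilde c_i$, and then the two applications of Lemma \ref{2:gcp2}(iii) must be combined with this third congruence to land $f_u-f_{u'}$ in $J^M_{\pi(b^u_1),\dots,\pi(b^{u'}_3),\pi(x),h_i}$ before invoking Proposition \ref{2:gri}.

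The modulus in that combined congruence is slightly larger than $V_u+V_{u'}+\sp_{\F_p}\{\pi(c_i)\}$: one picks up the ideal subspaces $V_{c_i},V_{c'_i},V_{\tilde c_i},V_{\tilde c'_i}$ from the relation $c'_i\- c_i\sim\tilde c'_i\-\tilde c_i$; by Lemma \ref{2:gcp2}(iii) these are spanned by $\pi(c_i),\pi(c'_i),\pi(b^u_2),\pi(b^u_3),\pi(b^{u'}_2),\pi(b^{u'}_3)$, and $\pi(c'_i)$ lies in the span of $h,\pi(x),\pi(c_i)$, so the combined modulus still reduces to $J^M_{V_u+V_{u'}+\sp_{\F_p}\{h_i\}}$ with $\dim(V_u+V_{u'})\le 6$ and the computation goes through unchanged. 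This step is not hard, but it is the step, and ``augment each coordinate with bounded auxiliary data \dots to certify that witnesses with matching $\pi$-signatures feed into the intersection identity'' leaves it unverified. You also need to check the density of the fixed graph explicitly: the lower bound on edges per $u$ must be compared against $\vert\sqcup_{h'}Y_{h'}\times Y_{\pi(c')}\vert^{s+7}=(K^2p^d)^{s+7}$ after normalizing with Lemma \ref{2:duplicate}, which is what delivers the $O_{D,s,\rho}(1)^{-1}$-density. The rest of the sketch — injectivity of $\pi$ on $S(u)$ via Lemma \ref{2:gcp2}(i), the non-degeneracy count via Lemma \ref{2:iiddpp}, the dimension bookkeeping for Proposition \ref{2:gri} — is correct and mirrors the paper.
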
	
\begin{proof}
	Fix $h\in\Vk$ and
	 denote $(A\+ A)_{h'}:=\pi^{-1}(h')\cap (A\+ A)$ for all $h'\in\Vk$.
	 Since $\dep(A\+ A)\leq D$, the relation graph of $(A\+ A)_{h'}$ admits a $D^{-1}$-dense auxiliary graph $G'_{h'}=((A\+ A)_{h'},Y_{h'},E'_{h'})$.
	 By Lemma \ref{2:duplicate}, we may assume without loss of generality that all of $Y_{h'}, h'\in\Vk$ have the same cardinality, which we denote by $L$.
	 
	 We need to use the intersection method.
	  Consider the bipartite graph $$G_{B,h,x}=(\hat{U}_{B,h,x},(\sqcup_{h'\in\Vk} Y_{h'}\times Y_{\pi(x)-h-h'})^{s+7},E_{B,h,x})$$ defined in the following way. For all $u\in \hat{U}_{B,h,x}$, $h_{i}\in\Vk$, $y_{i}=(y_{i,1},y_{i,2})\in Y_{h_{i}}\times Y_{\pi(x)-h-h_{i}}, 1\leq i\leq s+7$,
	   $(u,(y_{1},\dots,y_{s+7}))\in E_{B,h,x}$ if and only if there exist $(b_{1},b_{2},b_{3})\in U_{B,h,x}$, $c_{i},c'_{i}\in A\+ A, 1\leq i\leq s+7$ such that the following holds:
	 \begin{itemize}
	 	\item $u=b_{1}\- b_{2}\+ b_{3}\+ (\bold{0},J^{M}_{\iota(\pi(x))})$;
	 	\item $\pi(c_{1}),\dots,\pi(c_{s+7})$ are $p$-linearly independent;
	 	\item $((b_{1},b_{2},b_{3}),(x,c_{i},c'_{i}))\in E_{B,h}$ for all $1\leq i\leq s+7$;
	 	\item $(c_{i},y_{i,1})\in Y_{h_{i}}$, $(c_{i},y_{i,2})\in Y_{\pi(x)-h-h_{i}}$ for all $1\leq i\leq s+7$ (this enforces $c_{i}\in (A\+ A)_{h_{i}}$ and $c'_{i}\in (A\+ A)_{\pi(x)-h-h_{i}}$).
	 \end{itemize}	
	For convenience we say that $b_{1},b_{2},b_{3},c_{i},c'_{i}, 1\leq i\leq s+7$ are the \emph{parameters} associated to the edge $(u,(y_{1},\dots,y_{s+7}))$ (note that the parameters of an edge is not necessarily unique).
	
	Fix $u=b_{1}\- b_{2}\+ b_{3}\+ (\bold{0},J^{M}_{\pi(x)})$ for some $(b_{1},b_{2},b_{3})\in U_{B,h,x}$. 
	Let $W$ be the set of
	$\w=((x,c_{1},c'_{1}),\dots,(x,c_{s+7},c'_{s+7}))\in V_{B,h,x}^{s+7}$ such that $((b_{1},b_{2},b_{3}),(x,c_{i},c'_{i}))\in E_{B,h}$ for all $1\leq i\leq s+7$. 
	By Lemma \ref{2:gcp2} (ii), 
 $\vert W\vert\geq (\vert B\vert/2\vert X_{B}\vert)^{s+7}\gg_{D,s,\rho} K^{(s+7)d}$. By Lemma \ref{2:gcp2} (i), the tuple $(\pi(c_{1}),\dots,\pi(c_{s+7}))$ is different for all $\w\in W$. By Lemma \ref{2:iiddpp}, if  $p\gg_{d,D,s,\rho} 1$ and $d\geq s+7$, then the set $W'$ of $\w\in W$ such that  $\pi(c_{1}),\dots,\pi(c_{s+7})$ are $p$-linearly independent is of cardinality   $\gg_{D,s,\rho}K^{(s+7)d}$.  
 On the other hand, for every $\w=((x,c_{1},c'_{1}),\dots,(x,c_{s+7},c'_{s+7}))\in W'$,
 the number of $(y_{i,1},y_{i,2})\in Y_{\pi(c_{i})}\times Y_{\pi(x)-h-\pi(c_{i})}$ with $(c_{i},y_{i,1})\in Y_{h_{i}}$, $(c_{i},y_{i,2})\in Y_{\pi(x)-h-h_{i}}$ is at least $D^{-2}L^{2}$ for all $1\leq i\leq s+7$. So there are  $\gg_{D,s,\rho}p^{(s+7)d}\cdot (D^{-2}K^{2})^{s+7}$ many edges in $G_{B,h,x}$ with $u$ as a vertex. Since $\vert (\sqcup_{h'\in\Vk} Y_{h'}\times Y_{\pi(x)-h-h'})^{s+7}\vert=(L^{2}p^{d})^{s+7}$,
	we have that $G_{B,h,x}$ is $O_{D,s,\rho}(1)^{-1}$-dense.
	To show that  $\dep(\hat{U}_{B,h,x})=O_{D,s,\rho}(1)$, it suffices to show that $G_{B,h,x}$ is an auxiliary graph of the relation graph of  $\hat{U}_{B,h,x}$.

Let $u,u'\in \hat{U}_{B,h,x}$, $h_{i}\in\Vk$ and $y_{i}=(y_{i,1},y_{i,2})\in Y_{h_{i}}\times Y_{\pi(x)-h-h_{i}}, 1\leq i\leq s+7$ be such that $(u,(y_{1},\dots,y_{s+7}))$ and $(u',(y_{1},\dots,y_{s+7}))$ are edges in $E_{B,h,x}$ with parameters $b_{1},b_{2},b_{3},c_{i},c'_{i}, 1\leq i\leq s+7$ and $b'_{1},b'_{2},b'_{3},\tilde{c}_{i},\tilde{c}'_{i}, 1\leq i\leq s+7$ respectively. Clearly, $\pi(c_{i})=\pi(\tilde{c}_{i})=h_{i}$.
For convenience write  $z=(\pi(z),J^{M}_{V_{z}}+f_{z})$ for $z=x,b_{j},b'_{j},c_{i},c'_{i},\tilde{c}_{i}$ and $\tilde{c}_{i'}$. 
Since $((b_{1},b_{2},b_{3}),(x,c_{i},c'_{i}))\in E_{B,h}$,	by Lemma \ref{2:gcp2} (iii),
$$f_{c'_{i}}- f_{c_{i}}+f_{x}\equiv f_{b_{1}}- f_{b_{2}}+ f_{b_{3}} \mod J^{M}_{\iota(\pi(b_{1})),\iota(\pi(b_{2})),\iota(\pi(b_{3})),\iota(\pi(x)),\iota(h_{i})}.$$
	Similarly,
	$$f_{\tilde{c}'_{i}}- f_{\tilde{c}_{i}}+f_{x}\equiv f_{b'_{1}}- f_{b'_{2}}+ f_{b'_{3}} \mod J^{M}_{\iota(\pi(b'_{1})),\iota(\pi(b'_{2})),\iota(\pi(b'_{3})),\iota(\pi(x)),\iota(h_{i})}.$$ 
	Since  $(c_{i},y_{i,1}),(\tilde{c}_{i},y_{i,1})\in Y_{h_{i}}$, $(c'_{i},y_{i,2}),(\tilde{c}'_{i},y_{i,2})\in Y_{\pi(x)-h-h_{i}}$, we have that 
	$$c'_{i}\- c_{i}\sim \tilde{c}'_{i}\- \tilde{c}_{i}.$$
	Combining the above equations and the description of $V_{z}$ in Lemma \ref{2:gcp2} (iii) for $z=c_{i},c'_{i},\tilde{c}_{i}$ and $\tilde{c}_{i'}$, we have that 
	$$f_{b_{1}}- f_{b_{2}}+ f_{b_{3}}\equiv f_{b'_{1}}- f_{b'_{2}}+ f_{b'_{3}}\mod \cap_{i=1}^{s+7}J^{M}_{\iota(\pi(b_{1})),\iota(\pi(b_{2})),\iota(\pi(b_{3})),\iota(\pi(b'_{1})),\iota(\pi(b'_{2})),\iota(\pi(b'_{3})),\iota(\pi(x)),\iota(h_{i})}.$$	
	 Since $\sp_{\V}\{\iota(\pi(b_{i})),\iota(\pi(b'_{i})),\iota(\pi(x))\colon 1\leq i\leq 3\}$ is of dimension at most 6 (note that $\pi(b_{1})-\pi(b_{2})+\pi(b_{3})=h=\pi(b'_{1})-\pi(b'_{2})+\pi(b'_{3})$), $h_{i}=\pi(c_{i}), 1\leq i\leq s+7$  are $p$-linearly independent, and $d\geq 19$, by Proposition \ref{2:gri} (setting $m=6$ and $r=1$),  we have that
	 $$f_{b_{1}}- f_{b_{2}}+ f_{b_{3}}\equiv f_{b'_{1}}- f_{b'_{2}}+ f_{b'_{3}} \mod J^{M}_{\iota(\pi(b_{1})),\iota(\pi(b_{2})),\iota(\pi(b_{3})),\iota(\pi(b'_{1})),\iota(\pi(b'_{2})),\iota(\pi(b'_{3})),\iota(\pi(x))},$$
	 or equivalently,
	 $$b_{1}\- b_{2}\+ b_{3}\+ (\bold{0}, J^{M}_{\iota(\pi(x))})\sim b'_{1}\- b'_{2}\+ b'_{3}\+ (\bold{0}, J^{M}_{\iota(\pi(x))}).$$
	 This means that $G_{B,h,x}$ is an auxiliary graph of the relation graph of  $\hat{U}_{B,h,x}$ and we are done.
\end{proof}

We are now ready to complete the proof of Proposition \ref{2:ga31}.
It follows from Proposition \ref{2:rhob} that for each $x\in X_{A}$, the set $\hat{U}_{A,h,x}$ has a small density dependence number. However, this is insufficient to conclude that $A\- A\+ A$ has a small density dependence number because of the loss of information caused by modulo $J^{M}_{\iota(\pi(x))}$ in the set $\hat{U}_{A,h,x}$. To overcome this difficulty, we show that  $\hat{U}_{B,h,x_{i}}$ has a small density dependence number for some large subset $B\subseteq A$ and for many $x_{i}\in\Vk$ in general positions, and then we use the intersection method to  recover the loss of information.
Unfortunately, in the constructions we have to pass to a subset $B$ of $A$ (although the density of $B$ can be arbitrarily close to $A$). This is the reason why we have to pass to a subset of $A$ in the statement of Proposition \ref{2:ga31}.

	We now construct the series of special vectors $x_{i_{1},\dots,i_{r}}\in \Vk, 1\leq r\leq s+6, i_{j}\in A$ we need as follows. 
	For $x\in A$, we say that $x$ is \emph{admissible} if $x\in X_{A}$ and $\iota(\pi(x))\neq \bold{0}$. For $2\leq r\leq s+6$, we say that a sequence $(x_{1},\dots,x_{r})\in A^{r}$ is \emph{admissible} if $(x_{1},\dots,x_{r-1})$ is admissible and $x_{r}\in X_{A\backslash\pi^{-1}(\sp_{\F_{p}}\{\iota(\pi(x_{1})),\dots,\iota(\pi(x_{r-1}))\})}$. We call $r$ the \emph{length} of the tuple.
	Since $d\geq s+6$ and $$\vert X_{A\backslash\pi^{-1}(\sp_{\F_{p}}\{\iota(\pi(x_{1})),\dots,\iota(\pi(x_{r-1}))\})}\vert\leq 2DK^{d}/(\vert A\vert-K^{s+5})\leq 2DK^{d}/(\d K^{d}-K^{s+5})$$
	for all $1\leq r\leq s+6$ by (\ref{2:upbdfb}),	
	we have that there are in total $O_{\d,D,s}(1)$ many admissible tuples of length $s+6$. Since $X_{B}$ is non-empty for all non-empty $B$, there is at least one admissible tuple of length $s+6$. 
	Let $$A':=A\backslash \pi^{-1}(\cup_{(x_{1},\dots,x_{s+6}) \text{ is admissible}}\sp_{\F_{p}}\{\iota(\pi(x_{1})),\dots,\iota(\pi(x_{s+6}))\}).$$	
	By (\ref{2:upbdfb}), there are  $O_{\d,D,s}(1)$ many admissible tuples of length $s+6$. So we have that $\vert A'\vert\geq \vert A\vert/2$ if $p\gg_{\d,d,D} 1$. It then suffices to show that $A'\- A'\+ A'$ has density dependence number $O_{\d,D,s}(1)$. 
	
	Fix  $h\in\V$ and fix any $u\in U_{A',h}$. 
	Since $u\in U_{A',h}\subseteq  U_{A,h}$, by Lemma \ref{2:gcp2} (ii), there exists $x_{1}\in X_{A'}$ which is a major of $u$. In other words, $u\in U_{A',h,x_{1}}$. Suppose that we have found an admissible tuple $(x_{1},\dots,x_{r})\in A^{r}$ for some $1\leq r\leq s+5$ such that $u\in U_{A',h,x_{i}}$ for all $1\leq i\leq r$. 
	Denote $B:=A\backslash\pi^{-1}(\sp_{\F_{p}}\{\iota(\pi(x_{1})),\dots,\iota(\pi(x_{r}))\})$. Since $A'\subseteq B$, we have
	$u\in U_{B,h}$. By Lemma \ref{2:gcp2} (ii), there exists $x_{r+1}\in X_{B}$ which is a major of $u$.   In other words, $u\in U_{A',h,x_{r+1}}$. Since $x_{r+1}\in X_{B}$, we have that $(x_{1},\dots,x_{r+1})$ is admissible. By induction, there exists an admissible tuple $(x_{1},\dots,x_{s+6})\in A^{s+6}$ such that $u\in U_{A',h,x_{i}}$ for all $1\leq i\leq s+6$.
	
	Let $J$ be the set of all admissible tuples of length $s+6$. Recall that $\vert J\vert= O_{\d,D,s}(1)$.
	For any $\x:=(x_{1},\dots,x_{s+6})\in J$, since $\dep(\hat{U}_{A',h,x_{i}})=O_{\d,D,s}(1)$
	by Proposition \ref{2:rhob}, the relation graph of $\hat{U}_{A',h,x_{i}}$ has an $O_{\d,D,s}(1)^{-1}$-dense auxiliary graph $G'_{h,x_{i}}=(\hat{U}_{A',h,x_{i}},Y_{h,x_{i}},E'_{h,x_{i}})$ for all $1\leq i\leq s+6$.
	 By Lemma \ref{2:duplicate}, we may assume without loss of generality that all of $Y_{h,x_{i}}$ have the same cardinality, which we denote by $K$.

	Let
	$\hat{U}_{h}=\{b_{1}\- b_{2}\+ b_{3}\colon (b_{1},b_{2},b_{3})\in U_{A',h}\}$ and
	 $Y_{h}:=\sqcup_{(x_{1},\dots,x_{s+6})\in J} Y_{h,x_{1}}\times\dots\times Y_{h,x_{s+6}}.$
	Consider the bipartite graph $\tilde{G}_{h}=(\hat{U}_{h},Y_{h},\tilde{E}_{h})$ given in the following way. For all $u\in \hat{U}_{h}$   and $y=(y_{1},\dots,y_{s+6})\in Y_{h}$, $(u,y)\in\tilde{E}_{h}$ if and only if there exist $(b_{1},b_{2},b_{3})\in U_{A',h}$ and $\x=(x_{1},\dots,x_{s+6})\in J$  such that
	\begin{itemize}
		\item $u=b_{1}\- b_{2}\+ b_{3}$;
		\item $y\in Y_{h,x_{1}}\times\dots\times Y_{h,x_{s+6}}$;
		\item $(b_{1},b_{2},b_{3})\in U_{A',h,x_{i}}$ (which implies that $u\+ (\bold{0},J^{M}_{\iota(\pi(x_{i}))})\in \hat{U}_{A',h,x_{i}}$) for all $1\leq i\leq s+6$;
		\item $(u\+ (\bold{0},J^{M}_{\iota(\pi(x_{i}))}),y_{i})\in E'_{h,x_{i}}$ for all $1\leq i\leq s+6$.
	\end{itemize}	
		For convenience we say that $b_{1},b_{2},b_{3},\x=(x_{1},\dots,x_{s+6})$ are the \emph{parameters} associated to the edge $(u,y)$ (note that the parameters of an edge is not necessarily unique).

		For any $u\in\hat{U}_{h}$, by definition we may write $u=b_{1}\- b_{2}\+ b_{3}$ for some $(b_{1},b_{2},b_{3})\in U_{A',h}$. By the previous discussions, there exists at least one of $\x\in J$ with $(b_{1},b_{2},b_{3})\in \cap_{i=1}^{s+6}U_{A',h,x_{i}}$. For this $\x$, the number of $y\in Y_{h,x_{1}}\times\dots\times Y_{h,x_{s+6}}$ such that $(u\+ (\bold{0},J^{M}_{\iota(\pi(x_{i}))}),y_{i})\in E'_{h,x_{i}}$ for all $1\leq i\leq s+6$ is $O_{\d,D,s}(1)^{-1}L^{s+6}$. So there are at least $O_{\d,D,s}(1)^{-1}L^{s+6}$ edges in $\tilde{E}_{h}$ having $u$ as a vertex. Since $\vert Y_{h}\vert=\vert J\vert K^{s+6}=O_{\d,D,s}(L^{s+6})$, we have that
		$\tilde{G}_{h}$ is  $O_{\d,D,s}(1)^{-1}$-dense.
		To show that  $\dep(A'\- A'\+ A')=O_{\d,D,s}(1)$, it suffices to show that for all $h\in\Vk$, $\tilde{G}_{h}$ is an auxiliary graph of the relation graph of  $\hat{U}_{h}$.

      Let $u,u'\in \hat{U}_{h}$   and $y=(y_{1},\dots,y_{s+6})\in Y_{h}$ be such that $(u,y),(u',y)$ are edges of $\tilde{E}_{h}$ with parameters $b_{1},b_{2},b_{3},\x=(x_{1},\dots,x_{s+6})$ and $b'_{1},b'_{2},b'_{3},\x'=(x'_{1},\dots,x'_{s+6})$ respectively. Then we must have $\x=\x'$. Since $(u\+ (\bold{0},J^{M}_{\iota(\pi(x_{i}))}),y_{i}), (u'\+ (\bold{0},J^{M}_{\iota(\pi(x_{i}))}),y_{i})\in E'_{h,x_{i}}$ for all $1\leq i\leq s+6$,
      we have that $$u\+ (\bold{0},J^{M}_{\iota(\pi(x_{i}))})=b_{1}\- b_{2}\+ b_{3}\+ (\bold{0},J^{M}_{\iota(\pi(x_{i}))})\sim b'_{1}\- b'_{2}\+ b'_{3}\+ (\bold{0},J^{M}_{\iota(\pi(x_{i}))})=u'\+ (\bold{0},J^{M}_{\iota(\pi(x_{i}))}).$$ Writing $z=(\pi(z),J^{M}_{\iota(\pi(z))}+f_{z})$ for $z=b_{j},b'_{j}, 1\leq j\leq 3$, we have that
      $$f_{b_{1}}-f_{b_{2}}+f_{b_{3}}\equiv f_{b'_{1}}-f_{b'_{2}}+f_{b'_{3}} \mod J^{M}_{\iota(\pi(b_{1})),\iota(\pi(b_{2})),\iota(\pi(b_{3})),\iota(\pi(b'_{1})),\iota(\pi(b'_{2})),\iota(\pi(b'_{3})),\iota(\pi(x_{i}))}$$
      for all $1\leq i\leq s+6$. Since $\x$ is admissible, $\iota(\pi(x_{r}))\notin \sp_{\F_{p}}\{\iota(\pi(x_{1})),\dots,\iota(\pi(x_{r-1}))\}$ for all $1\leq r\leq s+6$. So $\pi(x_{1}),\dots,\pi(x_{s+6})$ are $p$-linearly independent. Since $d\geq 17$ and $$\dim(\sp_{\F_{p}}\{\iota(\pi(b_{i})),\iota(\pi(b'_{i}))\colon 1\leq i\leq 3\})\leq 5$$
      (note that $\pi(b_{1})-\pi(b_{2})+\pi(b_{3})=\pi(u)=h=\pi(u')=\pi(b'_{1})-\pi(b'_{2})+\pi(b'_{3})$),
       by Proposition \ref{2:gri} (setting $m=5$ and $r=1$), we have that
       $$f_{b_{1}}-f_{b_{2}}+f_{b_{3}}\equiv f_{b'_{1}}-f_{b'_{2}}+f_{b'_{3}} \mod J^{M}_{\iota(\pi(b_{1})),\iota(\pi(b_{2})),\iota(\pi(b_{3})),\iota(\pi(b'_{1})),\iota(\pi(b'_{2})),\iota(\pi(b'_{3}))},$$
       or equivalently,
       $u=b_{1}\- b_{2}\+ b_{3}\sim b'_{1}\- b'_{2}\+ b'_{3}=u'$. This means that $\tilde{G}_{h}$ is an auxiliary graph of the relation graph of  $\hat{U}_{h}$ and completes the proof of Proposition \ref{2:ga31}.

\section{Decomposition results for relation graphs}\label{2:s:116}

 As is mentioned in Remark \ref{2:dpcc}, it is more natural to relate the cardinality of additive sets to the clique covering numbers of the relation graphs rather than the density dependence numbers. 
 The purpose of this section is to study the connections between these two numbers.
 The proof of Lemma \ref{2:basicdn} (i) implies that for any graph $G$, we have $\cc(G)\geq \dep(G)$. On the other hand,
 for a general graph $G$, $\cc(G)$ can be much larger than $\dep(G)$.

\begin{ex}\label{2:exex001}
Let $G=(X,E)$ be a graph. The \emph{Mycielskian} of $G$ is the graph $\mu(G)$ whose vertices are $X\sqcup X'\sqcup \{a\}$ (where $X'=\{x'\colon x\in X\}$ is an identity copy of $X$) and the set of edges of $\mu(G)$ is $E\cup\{\{x,y'\}\colon \{x,y\}\in E\}\cup \{\{y',a\}\colon y'\in V'\}$.  Let $M_{2}$ be the graph with two points connected by an edge and $M_{i}:=\mu(M_{i-1})$ for all $i\geq 3$. 
It is known that $M_{i}=(X_{i},E_{i})$ is triangle free, which implies that $\cc(M_{i})\geq \lceil \frac{\vert M_{i}\vert}{2}\rceil=3\cdot 2^{i-3}$ if $i\geq 3$. On the other hand, it is known that the chromatic number of $M_{i}$ is $i$ (i.e. there exists a coloring of the vertices of $M_{i}$ in $i$ colors such that no two adjacent vertices have the same color), which implies that $M_{i}$ admits an auxiliary graph of density $1/i$, meaning that $\dep(M_{i})\leq i$. So we see that $\cc(M_{i})$ is much larger than $\dep(M_{i})$ if $i$ is large.
\end{ex}
 
 Based on the intuition that the structure described in Example \ref{2:exex001} should rarely occur for relation graphs for subsets of $\Gamma^{s}(\Vk,M)$,
 it is natural to make the following conjecture:

 \begin{conj}[Smallness of $\dep(G)$ implies smallness of $\cc(G)$]\label{2:ccsdp}
 Let $d,k,K\in\N_{+}$, $s\in\N$, $p$ be a prime  dividing $K$ and $M\colon\V\to \F_{p}$ be a non-degenerate quadratic form. Let $G$ be the relation graph of a subset of $\Gamma^{s}_{k}(\Vk,M)$. If $d\gg_{k,s} 1$ and $p\gg_{d} 1$, then $\cc(G)=O_{d,\dep(G)}(1)$. 
 \end{conj}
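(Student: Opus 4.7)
The plan is to reduce to a single fiber $\pi^{-1}(h)\cap X$ (since $\cc(G)=\max_{h\in\V}\cc(\pi^{-1}(h)\cap X)$ and likewise for $\dep(G)$), fix a witness auxiliary graph for $\dep(G)\leq D$, and then match large cliques in the auxiliary cover with either strong equivalence classes or low-dimensional obstructions using Proposition~\ref{2:gwts}. First I would fix $h\in\V$, set $X_h:=\pi^{-1}(h)\cap X$, and select a $D^{-1}$-dense auxiliary graph $G'=(X_h,Y,E')$. For each $y\in Y$, the neighborhood $C_y:=\{x\in X_h:(x,y)\in E'\}$ is a clique in $G$ by the definition of an auxiliary graph, and $\{C_y\}_{y\in Y}$ forms a fractional clique cover in which every vertex lies in at least $|Y|/D$ of the $C_y$. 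By averaging, some $C_y$ has size at least $|X_h|/D$.

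Next I would invoke Proposition~\ref{2:gwts}(i), which applies under the standing hypotheses $d\gg_{k,s}1$ and $p\gg_{d}1$: for each such large clique $C_y$, either (a) $C_y$ is a strong equivalence class with a center $u_y\in\Gamma^{s}_{1}(M)$, in which case it embeds into the potentially larger clique $\tilde{C}_{u_y}:=\{x\in X_h:x\sim u_y\}$ (which is itself a clique by Lemma~\ref{2:spsp1}); or (b) $C_y$ lies in the set $X_{W_y}:=\{(h,J^{M}_{V}+f)\in X_h:V\cap W_y\neq\sp_{\F_{p}}\{h\}\}$ for some $\V$-subspace $W_y$ of dimension $O_{k,s}(1)$. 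The plan is then to run a greedy procedure: at each step pick a large $C_y$, and either replace it with $\tilde{C}_{u_y}$ and remove it from $X_h$ in case (a), or in case (b) decompose $X_h$ into $X_{W_y}$ and $X_h\setminus X_{W_y}$ and recurse on each piece. In case (b) the restriction to $X_{W_y}$ is a problem of the same type but with an added linear constraint on the associated subspaces, effectively reducing a complexity parameter.

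The main obstacle, and likely the reason this remains a conjecture in the paper, will be converting this iteration into the claimed $O_{d,D}(1)$ bound rather than the naive $O_{d,D}(\log|X_h|)$. Each greedy step only guarantees the removal of a $1/D$-fraction of the remaining vertices, so straightforward induction yields logarithmically many cliques. To close the gap one would need a regularity-type dichotomy for $G'$: either a single $\sim$-class of centers in $\Gamma^{s}_{1}(M)$ already accounts for a positive constant fraction of $X_h$ depending only on $D$, or a bounded number of centers together with bounded-dimensional obstruction subspaces suffice to cover $X_h$ entirely. Proving such a dichotomy would seemingly require a strong Pl\"unnecke--Ruzsa-type theorem for $\Gamma^{s}(M)$, whereas the present paper only obtains Proposition~\ref{2:ga3}, a weak form that passes to a dense subset of $A$ and does not appear to suffice for uniformly closing the induction.
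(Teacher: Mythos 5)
Your proposal correctly identifies that the labeled statement is a conjecture; the paper does not prove it, so there is no paper proof to compare your argument against. What the paper does establish is the weaker Proposition~\ref{2:gweakcore1}, which realizes exactly the plan you outline except that an uncontrolled exceptional set $X_b$ (consisting of elements whose underlying subspace $V$ meets a bounded union of low-dimensional subspaces nontrivially) is split off and left in the output. Your treatment of the good part is in the right spirit, and your observation that a strong equivalence class $C_y$ with center $u_y\in\Gamma^{s}_{1}(M)$ extends to the maximal clique $\{x\in X_h:x\sim u_y\}$ via Lemma~\ref{2:spsp1} is correct; but your diagnosis of where the argument fails is mistaken, and the step you propose to close the gap is unjustified.

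You attribute the difficulty to your greedy clique extraction yielding $O_{d,D}(\log|X_h|)$ cliques because each step removes only a $1/D$-fraction of vertices. That is indeed a defect of cardinality-based peeling, but it is not the actual obstruction, because the paper's proof of Proposition~\ref{2:gweakcore1} already circumvents it: rather than peeling a clique and recursing by size, one selects a chain $x_1,\dots,x_r$ (with $r\leq K:=s+2k-1$) which is pairwise equivalent and has linearly independent underlying subspaces, sets $B_1=\{x\in X_h:x\sim x_i\ \forall i\}$, partitions $B_2=X_h\setminus B_1=\cup_{i\leq r}B_{2,i}$ so that $x_i\not\sim x$ for every $x\in B_{2,i}$, and applies Lemma~\ref{2:lonely} to conclude $\dep(B_{2,i})\leq D-1$; induction on $\dep$, not on $|X_h|$, then bounds the clique count of the good part by roughly $K^{D}$, which is $O_{d,D}(1)$ with no logarithm. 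The genuine gap is your case~(b): when Proposition~\ref{2:gwts}(i) declines to yield a strong center and instead produces an obstruction subspace $W_y$ of dimension $O_{k,s}(1)$, the set $X_{W_y}:=\{(h,J^{M}_{V}+f)\in X_h:V\cap W_y\neq\sp_{\F_{p}}\{h\}\}$ has no reason to have smaller $\dep$ than $X_h$, and the constraint $V\cap W_y\neq\sp_{\F_{p}}\{h\}$ does not place $X_{W_y}$ inside $\Gamma^{s}_{k'}(M)$ for any $k'<k$, nor does it reduce $s$; your claim that it ``effectively reduces a complexity parameter'' is an assertion, not an argument, and it is precisely the missing idea a proof would have to supply. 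Remark~\ref{2:coreimprovement} concedes the paper does not know how to dispose of $X_b$, which is why the statement is recorded as a conjecture; your closing speculation that a stronger Pl\"unnecke--Ruzsa theorem would close the gap is a plausible direction but is not substantiated by anything in your proposal.
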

 
  If Conjecture \ref{2:ccsdp} holds, then all the results in Section \ref{2:s:115}  still apply if we replace the density dependence numbers by the clique covering numbers (under a slight modification on relevant constants). 
 Unfortunately, we are unable to prove Conjecture \ref{2:ccsdp}. Instead, we  have the following   weaker version of it.

\begin{prop}[Weak decomposition result for relation graphs]\label{2:gweakcore1}
	Let $d,D,k,K\in\N_{+},s\in\N$ be such that $d\geq 2(k-2)s+6k-1$ if $k\geq 2$, $p\gg_{d,D} 1$ be a prime dividing $K$, and  $M\colon\V\to\F_{p}$ be a non-degenerate quadratic form. There exist $C:=C(d,D), N:=N(d,D)\in \N$  such that for every  subset $X$ of $\Gamma^{s}_{k}(\Vk,M)$ with $\pi(X)=\{\bold{0}\}$ and $\dep(X)\leq D$,
	there exists $Y\subseteq \V$ which is the union of at most $N$ subspaces of $\V$ of dimension at most $(s+2k-2)(k-1)$  such that 
	we can partition $X$ into $X_{b}\cup X_{g}$ such that 
	\begin{itemize}
		\item for all $(\bold{0},J^{M}_{V}+f)\in X_{b}$, we have that $V\cap Y\neq \{\bold{0}\}$;
		\item $\cc(X_{g})\leq C$.\footnote{Note that Proposition \ref{2:gweakcore1} holds trivially if $d\leq (s+2k-1)(k-1)$.} 
	\end{itemize}		
\end{prop}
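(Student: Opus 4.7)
The plan is to prove Proposition \ref{2:gweakcore1} via an iterative clique-extraction procedure combining Lemma \ref{2:basicdn} (for extracting cliques from subsets with bounded density dependence number) with Proposition \ref{2:gwts} (for classifying each extracted clique as either a strong equivalence class or one admitting a low-dimensional obstruction). Initialize $X^{(0)} = X$, empty lists of obstruction subspaces and of cliques (to become $Y$ and the cover of $X_g$ respectively), and $X_b = \emptyset$. At iteration $i$, since $\dep(X^{(i)}) \leq \dep(X) \leq D$ by Lemma \ref{2:basicdn}(ii), Lemma \ref{2:basicdn}(iii) produces a clique $C^{(i)} \subseteq X^{(i)}$ of size at least $|X^{(i)}|/D$. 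This $C^{(i)}$ is a weak equivalence class in $\Gamma^s_k(M)$ with $\pi(C^{(i)}) = \{\bold{0}\}$, so Proposition \ref{2:gwts}(i) yields a dichotomy: either $C^{(i)}$ is a strong equivalence class with some representative $u^{(i)} \in \Gamma^s_1(M)$, or there is an obstruction subspace $Y^{(i)}$ of dimension at most $(s+k-1)(k-1) \leq (s+2k-2)(k-1)$.

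In the latter ``obstruction'' case, I would append $Y^{(i)}$ to the obstruction list and move into $X_b$ every $x \in X^{(i)}$ (in particular all of $C^{(i)}$) whose defining subspace meets $Y^{(i)}$ nontrivially. In the strong-equivalence case, extend to $S^{(i)} := \{x \in X^{(i)} : x \sim u^{(i)}\}$, which by Lemma \ref{2:spsp1} remains a strong equivalence class containing $C^{(i)}$, and then apply Proposition \ref{2:gwts}(ii) to $S^{(i)}$ with $k' = 2k - 1$. The choice $k' = 2k - 1$ is forced by the two matching identities $2(2k-1) + 2(s+1)(k-2) + 5 = 2(k-2)s + 6k - 1$ and $(s + k' - 1)(k - 1) = (s + 2k - 2)(k - 1)$, so that both the hypothesis on $d$ and the dimension bound on the obstruction match the statement of Proposition \ref{2:gweakcore1} exactly. (Heuristically, this choice of $k'$ reflects Lemma \ref{2:a+b}: the sum of two $\pi = \bold{0}$ elements of $\Gamma^s_k(M)$ lives in $\Gamma^s_{2k-1}(M)$.) The conclusion of Proposition \ref{2:gwts}(ii) then gives either a further obstruction $\tilde{Y}^{(i)}$ of dimension $\leq (s+2k-2)(k-1)$ (handled as in the ``obstruction'' branch above), or the stability property, in which case $S^{(i)}$ is added as a single clique to the cover of $X_g$ and removed from $X^{(i)}$.

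In each iteration $|X^{(i+1)}| \leq (1 - 1/D)|X^{(i)}|$, so a definite fraction is removed per step. The step I expect to be the main obstacle is bounding the total number of iterations — and hence both the number $N$ of subspaces composing $Y$ and the clique cover number $C$ of $X_g$ — by a function of $d$ and $D$ alone, independently of $|X|$ (which can be as large as $p^d$). The natural plan is twofold: bound the ``obstruction'' iterations by a dimension-type argument (each added $Y^{(i)}$ has dimension at most $(s+2k-2)(k-1)$, so once sufficiently many are accumulated the residual elements can be swept wholesale into $X_b$), and bound the ``strong-equivalence'' iterations by exploiting that distinct extracted classes $S^{(i)}$ have pairwise inequivalent representatives $u^{(i)} \in \Gamma^s_1(M)$, whose multiplicity should be controlled by the density dependence number $D$. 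Making these heuristics quantitative — in particular, arguing that once the accumulated obstruction subspaces reach a critical ``capacity'' every $V$ of dimension at most $k - 1$ occurring in $X^{(i)}$ must meet some $Y^{(j)}$ nontrivially — is where the intersection properties developed in Section \ref{2:s:a1} are expected to enter as an essential input.
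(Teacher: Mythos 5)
Your iterative clique-extraction scheme has a genuine gap precisely where you flag the ``main obstacle'': bounding the number of iterations by a function of $d$ and $D$ alone. The hope that the pairwise inequivalent representatives $u^{(i)} \in \Gamma^s_1(M)$ have ``multiplicity controlled by the density dependence number $D$'' is, in effect, asking for the clique cover number to be bounded in terms of $\dep$ for relation graphs of subsets of $\Gamma^s_1(M)$. That is exactly Conjecture~\ref{2:ccsdp}, which the paper explicitly states it is \emph{unable} to prove (see Example~\ref{2:exex001} for why this fails for general graphs). Likewise, the ``capacity'' heuristic for the obstruction iterations does not close on its own: accumulating $N$ subspaces of dimension $(s+2k-2)(k-1)$ does not eventually force every remaining $V$ of dimension $\leq k-1$ to meet one of them, since $d$ can be arbitrarily large compared to $N \cdot (s+2k-2)(k-1)$. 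In short, the greedy extraction step is fine, but the termination argument is missing and the two heuristics you propose to supply it are either equivalent to an open problem or simply false.

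The paper proves the proposition by a different mechanism: induction on $D = \dep(X)$, powered by Lemma~\ref{2:lonely}. One greedily builds a chain $x_1, \dots, x_r$ ($r \leq K := s+2k-1$) forming an equivalence class with linearly independent $V_i$'s, and looks at $B_1 = \{x \in X : x \sim x_i \ \forall i\}$ and $B_2 = X \setminus B_1$. The key observation is that $B_2$ can be partitioned into at most $K$ pieces $B_{2,i}$ where every $x \in B_{2,i}$ satisfies $x \not\sim x_i$; since $x_i$ is ``isolated'' from $B_{2,i}$, Lemma~\ref{2:lonely} gives $\dep(B_{2,i}) \leq D - 1$, and the induction hypothesis applies to each piece. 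The set $B_1$ is then either an equivalence class (when $r = K$, via Proposition~\ref{2:gri}) and goes into $X_g$, or has all its $V$'s meeting the fixed low-dimensional subspace $V_1 + \dots + V_r$ (when $r < K$) and goes into $X_b$. The recursion tree has depth at most $D$ and branching factor at most $K = K(d)$, so $C$ and $N$ end up being $O_{d,D}(1)$ without any appeal to Conjecture~\ref{2:ccsdp}. This decrement of $\dep$ at each recursion level is the essential idea your proposal is missing, and it is what lets the paper terminate where the greedy-plus-capacity approach cannot.
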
	

Roughly speaking, Proposition \ref{2:gweakcore1} says that
  for every relation graph of a subset $X$ of $\Gamma^{s}_{k}(\Vk,M)$ with $\pi(X)=\{\bold{0}\}$, we may partition $X$ into a 	``good" part $X_{g}$ and a ``bad" part $X_{b}$ such that $\cc(X_{g})$ is controlled by $\dep(X)$, and that the elements of $X_{b}$ has nontrivial intersection with a low dimensional object $Y$ in $\V$.
  In practice, we will use Proposition \ref{2:gweakcore1} for $X$ of the form $kA\- \ell A$ for some $A\subseteq\Gamma^{s}_{1}(\Vk,M)$.
  Although $\vert X_{b}\vert$ is much smaller than $\vert X\vert$, unfortunately it turns out that we can not get rid of the error set $X_{b}$ by passing to a large subset of $A$.
  
  \begin{rem}\label{2:coreimprovement}
  Proposition \ref{2:gweakcore1} is the major reason why we need to impose a rather large lower bound for the dimension $d$ in Theorem \ref{2:aadd}. As we shall see in later sections, the dimension of the error set $X_{b}$ continues to accumulate in the proof of Theorem \ref{2:aadd}, which eventually ends up with a very large dimension. If one can prove Conjecture  \ref{2:ccsdp} (i.e. prove Proposition \ref{2:gweakcore1} with $X_{b}=\emptyset$), then the lower  bound for  $d$ in Theorem \ref{2:aadd} can be improved significantly.
  \end{rem}

\begin{proof}[Proof of Theorem \ref{2:gweakcore1}]
By Lemma \ref{2:spsp1}, $\sim$ is an equivalent relation in $\Gamma_{1}^{s}(\Vk,M)$.
	 So if $k=1$, then the relation graph of $X$ is a finite union of $t$ complete graphs with disjoint vertices for some $t\in\N$. Moreover, it is not hard to see that $t=\cc(X)=\dep(X)\leq D$.
	 So we are done by setting $C=D$ and $N=0$. From now on we assume that $k\geq 2$.	
	
	If $D=1$, then $X$ is an equivalence class by Lemma \ref{2:basicdn} (iv).
	So we can simply take $C=1$, $N=0$ and $X_{1}=X$.
	Suppose that the conclusion holds for $D-1$ for some $D\geq 2$, we show that the conclusion also holds for $D$.

	For convenience denote $L:=s+2k-1$.
	Pick an arbitrary $x_{1}=(\bold{0},J^{M}_{V_{1}}+f_{1})\in X$. Suppose that we have chosen $x_{i}=(\bold{0},J^{M}_{V_{i}}+f_{i})\in X$ for $1\leq i\leq r$ for some $1\leq r\leq L$ such that $V_{1},\dots,V_{r}$ are linearly independent, and that $\{x_{1},\dots,x_{r}\}$ is an equivalence class. Let $B_{1}$ be the set of all $x\in X$ such that $x\sim x_{i}$ for all $1\leq i\leq r$, and set $B_{2}=X\backslash B_{1}$. If for all $x=J^{M}_{V}+f\in B_{1}$, we have that $(V_{1}+\dots+V_{r})\cap V\neq \{\bold{0}\}$. Then we stop the construction. 
	If $r=L$, then we also stop the construction. 
	Otherwise, if $r<L$ and there exists  $x=(\bold{0},J^{M}_{V}+f)\in B_{1}$ such that $(V_{1}+\dots+V_{r})\cap V=\{\bold{0}\}$ then we pick $x_{r+1}=x$. Since $x_{r+1}\in B_{1}$, $\{x_{1},\dots,x_{r+1}\}$ is an equivalence class. Since $(V_{1}+\dots+V_{r})\cap V=\{\bold{0}\}$, $V_{1},\dots,V_{r+1}$ are linearly independent. Thus we can repeat the construction for $x_{1},\dots,x_{r+1}$.
	
	After at most $L$ inductions, we arrive at a sequence of elements $x_{i}=(\bold{0},J^{M}_{V_{i}}+f_{i})\in X$ for $1\leq i\leq r$ for some $1\leq r\leq L$ such that writing $B_{1}$ to  be the set of all $x\in X$ such that $x\sim x_{i}$ for all $1\leq i\leq r$, and $B_{2}=X\backslash B_{1}$, we have that either $r=L$ or  $(V_{1}+\dots+V_{r})\cap V\neq \{\bold{0}\}$ for all $x=(\bold{0},J^{M}_{V}+f)\in B_{1}$.
	
	We may further partition the set $B_{2}$ as the $\cup_{i=1}^{r}B_{2,i}$, where $x_{i}\not\sim x$ for all $x\in B_{2,i}$. Since $\dep(B_{2,i}\cup\{x_{i}\})\leq \dep(X)\leq D$, by Lemma \ref{2:lonely}, $\dep(B_{2,i})\leq D-1$.
	Since $r\leq L$,
	Applying the induction hypothesis to each of $B_{2,i}$, we have that 
	there exist $C':=C'(d,D,L), N':=N'(d,D,L)\in \N$, a set $Y'\subseteq \V$ which is a union of at most $N'$ subspaces of $\V$ of dimension at most $(s+2k-2)(k-1)$ such that
	we can partition $B_{2}=\cup_{i=1}^{r}B_{2,i}$ into $X'_{g}\cup X'_{b}$ such that 
	\begin{itemize}
		\item for all $(\bold{0},J^{M}_{V}+f)\in X'_{b}$, we have that $V\cap Y'\neq \{\bold{0}\}$;
		\item
		$\cc(X'_{g})\leq C'$.
	\end{itemize}
	
	If $r<L$, then $(V_{1}+\dots+V_{r})\cap V\neq \{\bold{0}\}$ for all $x=(\bold{0},J^{M}_{V}+f)\in B_{1}$. 
	Since $\pi(X)=\{\bold{0}\}$, by the definition of $\Gamma^{s}_{k}(\Vk,M)$, $V_{i}$ is of dimension at most $k-1$ and so
	$\dim(V_{1}+\dots+V_{r})\leq r(k-1)\leq (s+2k-2)(k-1)$, the conclusion holds by setting $C=C', N=N'+1$, $Y=Y'\cup (V_{1}+\dots+V_{r})$, $X_{g}=X'_{g}$ and $X_{b}=X'_{b}\cup B_{1}$.
	
	We now consider the case $r=L$. If $B_{1}$ is an equivalence class, then the conclusion holds by setting $C=C'+1, N=N'$, $Y=Y'$, $X_{b}=X'_{b}$ and $X_{g}=X'_{g}\cup B_{1}$. So it suffices to show that $B_{1}$ is an equivalence class. For all $x=(\bold{0},J^{M}_{V}+f),x'=(\bold{0},J^{M}_{V'}+f')\in B_{1}$, by definition, $x\sim x_{i}\sim x'$ for all $1\leq i\leq L$. So
	$f\equiv f_{i}\equiv f' \mod J^{M}_{V+V'+V_{i}}$. Thus 
	$$f\equiv f' \mod \cap_{i=1}^{L}J^{M}_{V+V'+V_{i}}.$$
	Since $d\geq 2(k-2)s+6k-1$, $V_{1},\dots, V_{L}$ are linearly independent and 
	$\dim(V+V')\leq 2(k-1)$,
	by Proposition \ref{2:gri} (setting $m=2(k-1)$ and $r=k-1$),
	 we have that 
	$f\equiv f' \mod J^{M}_{V+V'}$ and so $x\sim x'$.
	So  $B_{1}$ is an equivalence class and we are done.
\end{proof}

We need a variation of Proposition \ref{2:gweakcore1}  for later sections. Instead of requiring $X_{g}$ to be covered by at most $C$ complete subgraphs (or equivalence classes), sometimes we need to further require that each complete subgraph is also a strong equivalence class. For this purpose, we introduce the following definition.

\begin{defn}[$(L,C,D)$-structure-obstacle pair and classification]
	Let $C,D,K,L,s\in\N$, $d\in\N_{+}$ and $p$ be a prime dividing $K$. For
	$\mathcal{C}_{0}\subseteq \st_{\zp,d}(s)$ and $Y\subseteq \V$, we say that $(\mathcal{C}_{0},Y)$  is a \emph{$(L,C,D)$-structure-obstacle pair} (or simply a \emph{structure-obstacle pair}) if $\mathcal{C}_{0}$ is of cardinality at most $L+1$ and contains $0$ with $f\not\equiv g \mod J^{M}$ for all distinct $f,g\in\mathcal{C}_{0}$, and $Y$ is the union of at most $C$ subspaces of $\V$ of dimensions at most $D$.
	
	Let $M\colon\V\to\F_{p}$ be a non-degenerate quadratic form and $X$ be a subset of $\Gamma^{s}(\Vk,M)$ with $\pi(X)=\{\bold{0}\}$. We say that a  $(L,C,D)$-structure-obstacle pair $(\mathcal{C}_{0},Y)$  is a \emph{$(L,C,D)$-classification} (or simply a \emph{classification}) of $X$ if for all $(\bold{0},J^{M}_{V}+f)\in X$, either $V\cap Y\neq \{\bold{0}\}$ or  $f\equiv g \mod J^{M}_{V}$ for some $g\in \mathcal{C}_{0}$.
\end{defn}	

Informally, if $X$ admits a classification $(\mathcal{C}_{0},Y)$, then the set $\mathcal{C}_{0}$ describes the good part of $X$, which has the structure of unions of strong equivalent classes, while the set $Y$ describes the bad part of $X$, which is the obstacle that prevent us from concluding $\cc(X)=O_{\dep(X)}(1)$ in Proposition \ref{2:gweakcore1}.

Combining Propositions \ref{2:gwts} (i) and  \ref{2:gweakcore1},
we have the following variation of   Proposition \ref{2:gweakcore1}:

\begin{prop}[Strong decomposition result for relation graphs]\label{2:gweakcore2}
	Let $d,D,k,K\in\N_{+},s\in\N$ be such that $d\geq 2(k-2)s+6k-1$ 	if $k\geq 2$,  $p\gg_{d,D} 1$ be a prime dividing $K$, and $M\colon\V\to\F_{p}$ be a non-degenerate quadratic form. 	Then every $X\subseteq \Gamma^{s}_{k}(\Vk,M)$ with $\pi(X)=\{\bold{0}\}$ and $\dep(X)\leq D$ admits an $(O_{d,D}(1),O_{d,D}(1),(s+2k-2)(k-1))$-classification.	
\end{prop}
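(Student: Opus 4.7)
The plan is to combine Proposition \ref{2:gweakcore1} with Proposition \ref{2:gwts}(i). First I will apply Proposition \ref{2:gweakcore1} to $X$ to obtain a partition $X = X_b \cup X_g$, a union $Y'$ of at most $N = N(d,D)$ subspaces of $\V$ each of dimension at most $(s+2k-2)(k-1)$, and a bound $\cc(X_g) \leq C = C(d,D)$, such that every $(\bold{0}, J^{M}_V + f) \in X_b$ satisfies $V \cap Y' \neq \{\bold{0}\}$. The hypothesis $d \geq 2(k-2)s + 6k - 1$ (for $k \geq 2$) is exactly what Proposition \ref{2:gweakcore1} requires, and for $k = 1$ the decomposition is vacuous since subspaces of dimension $(s+2k-2)(k-1) = 0$ cannot yield nontrivial intersections.

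Next, since $\cc(X_g) \leq C$, I will partition $X_g$ into weak equivalence classes $E_1, \dots, E_{C'}$ with $C' \leq C$, each satisfying $\pi(E_i) = \{\bold{0}\}$. To apply Proposition \ref{2:gwts}(i) to each $E_i$, I need $d \geq 2k + 2(s+1)(k-2) + 5$; comparing with the hypothesis $d \geq 2(k-2)s + 6k - 1$, the difference is $2k - 2 \geq 0$ for $k \geq 2$, so the condition holds (and is trivial for $k = 1$ by Lemma \ref{2:spsp1}, since weak and strong equivalence coincide on $\Gamma^{s}_1(M)$). Proposition \ref{2:gwts}(i) then gives, for each $i$, one of two alternatives: either $E_i$ is a strong equivalence class, in which case there exists $u_i = (\bold{0}, J^M + g_i) \in \Gamma^{s}_1(M)$ with $u_i \sim a$ for every $a \in E_i$; or there exists a subspace $Y_i$ of $\V$ of dimension at most $(s+k-1)(k-1) \leq (s+2k-2)(k-1)$ such that $V \cap Y_i \neq \{\bold{0}\}$ for every $(\bold{0}, J^{M}_V + f) \in E_i$.

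Finally I will assemble the classification: take $\mathcal{C}_0 \subseteq \st_d(s)$ to consist of $0$ together with the representatives $g_i$ arising from strong equivalence classes, discarding duplicates modulo $J^M$ so that no two distinct elements of $\mathcal{C}_0$ are equivalent; let $Y$ be the union of $Y'$ and all subspaces $Y_i$ obtained from the non-strong classes. Then $|\mathcal{C}_0| \leq C + 1 = O_{d,D}(1)$, and $Y$ is a union of at most $N + C = O_{d,D}(1)$ subspaces of $\V$ of dimension at most $(s+2k-2)(k-1)$, so $(\mathcal{C}_0, Y)$ is a structure-obstacle pair of the required type. Verification is immediate from the construction: an element of $X_b$ hits $Y'$; an element of a non-strong $E_i$ hits $Y_i$; and an element of a strong $E_i$ satisfies $J^{M}_V + f \sim u_i$, hence $\sim g_i \in \mathcal{C}_0$.

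There is no real obstacle in this argument—all the substantial work was already done in Propositions \ref{2:gweakcore1} and \ref{2:gwts}(i). The only subtle point is the arithmetic check that the dimension hypothesis of Proposition \ref{2:gweakcore2} suffices to invoke Proposition \ref{2:gwts}(i), which I have verified above.
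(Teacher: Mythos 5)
Your argument is correct and is precisely the combination the paper intends: apply Proposition \ref{2:gweakcore1} to split $X$ into a bad part hitting the union $Y'$ of small subspaces and a good part covered by $\leq C$ cliques, then run Proposition \ref{2:gwts}(i) on each clique to either extract a $\Gamma^{s}_{1}(M)$-representative (feeding $\mathcal{C}_0$) or absorb the clique into the obstacle set $Y$ via a further subspace of dimension $(s+k-1)(k-1) \leq (s+2k-2)(k-1)$. The dimension arithmetic ($2(k-2)s + 6k - 1 \geq 2k + 2(s+1)(k-2) + 5$ with slack $2k-2$) and the $k=1$ base case via Lemma \ref{2:spsp1} are both checked correctly, so no gaps remain.
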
	

\begin{rem}
 Using a similar method, one can generalize Propositions \ref{2:gweakcore1} and \ref{2:gweakcore2} for the case when $\pi(X)=\{h\}$ for some $h\in\Vk\backslash\{\bold{0}\}$ (with a slight different lower bound for the dimension $d$  the dimension of the subspace $W$ defined in the above mentioned propositions). However, we do not need them in this paper.
\end{rem}

\section{Properties for  Freiman $M$-homomorphisms of order 16}\label{2:s:117}
In this section, we provide details for Step 3 described in Section \ref{2:s:rdm}.
Let $H$ be a large subset of an abelian group $G$. It is known that the $2H-2H$ has a rich structure in combinatorics. 
To see this, let $S$ be a subset of the Pontryagin dual $\hat{\Vk}$ of $\Vk$ and $\rho>0$. Denote
$$B(S,\rho):=\{h\in \Vk\colon \Vert \xi\cdot h\Vert_{\T}<\rho \text{ for all } \xi\in S\},$$
where $\Vert a\Vert_{\T}$ denotes the distance between $a$ and the closest integer.
For $H\subseteq \Vk$ and $x\in \Vk$, denote
$$R(H,h):=\{(h_{1},h_{2},h_{3},h_{4})\in H\colon h_{1}+h_{2}-h_{3}-h_{4}=h\}.$$
We have the following structure theorem for $2H-2H$.

\begin{prop}\label{2:g324}
	Let $d,K\in\N_{+}, 0<c\leq 1$, $\d>0$, $p$ be a prime dividing $K$, and $H\subseteq \Vk$ be such that $\vert H\vert\geq \d K^{d}$. Then there exist  
	 a set $S\subseteq \hat{\Vk}$ with $\vert S\vert\leq 2\d^{-2}$ and
	a proper and homogeneous generalized arithmetic progression 
		$$P=\{x_{1}v_{1}+\dots+x_{D}v_{D}\colon x_{i}\in \Z\cap(-L_{i},L_{i}), 1\leq i\leq D\}$$
	 in $\Vk$ of rank $D$ for some $D\leq \vert S\vert+d$ and cardinality $\gg_{\d,d} K^{d}$ such that the followings hold:
	\begin{enumerate}[(i)]
		\item $B(S,cD^{-2D}/4)\subseteq P(c)\subseteq P\subseteq B(S,1/4)\subseteq 2H-2H$;
		\item $\vert R(H,h)\vert\geq \d^{4}K^{3d}/2$ for all $h\in P$;
		\item there exists $H'\subseteq H$ with $\vert H'\vert\gg_{c,\d,d} K^{d}$ such that $2H'-2H'\subseteq P(c)$;
		\item the vectors $(\{\alpha\cdot v_{i}\})_{\alpha\in S}\in(\frac{1}{K}\Z)^{\vert S\vert}\subseteq \R^{\vert S\vert}, 1\leq i\leq D$ are linearly independent over $\R$, where $\{x\}$ is the embedding of $x$ from $\R$ to $\T$ in the fundamental domain $[0,1)$.
	\end{enumerate}	
\end{prop}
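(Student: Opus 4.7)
The plan is to combine a Fourier-analytic Bogolyubov argument with a geometry-of-numbers construction of a proper generalized arithmetic progression inside a Bohr set, following the classical approach in, e.g., Chapter 4 of Tao--Vu.

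First, I would apply the Bogolyubov argument to the convolution $\mathbf{1}_{H} \ast \mathbf{1}_{H} \ast \mathbf{1}_{-H} \ast \mathbf{1}_{-H}(h)$, whose value at $h$ equals $p^{-d}|R(H,h)|$ and whose support is contained in $2H-2H$. Expanding in characters,
\begin{equation}\nonumber
|R(H,h)| = p^{-d}\sum_{\xi \in \hat{\V}} |\widehat{\mathbf{1}_{H}}(\xi)|^{4} e_{p}(-\xi \cdot h).
\end{equation}
Setting $S := \{\xi \neq 0 : |\widehat{\mathbf{1}_{H}}(\xi)|^{2} \geq (\delta^{2}/2) p^{d}|H|\}$, Parseval gives $|S| \leq 2\delta^{-2}$. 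For $h \in B(S,1/4)$, one has $\mathrm{Re}(e_{p}(-\xi \cdot h)) \geq 0$ for all $\xi \in S$, while the tail contribution is bounded by $\frac{\delta^{2}}{2} p^{d}|H| \cdot p^{d}|H| = \frac{1}{2}|H|^{4}$. Hence $|R(H,h)| \geq \frac{1}{2 p^{d}}|H|^{4} \geq \frac{\delta^{4}}{2} p^{3d}$, yielding both (ii) and the inclusion $B(S,1/4) \subseteq 2H-2H$ in (i).

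Second, to extract a proper homogeneous GAP $P \subseteq B(S,1/4)$, I would run a geometry-of-numbers argument in the spirit of Lemma~4.22 of Tao--Vu. Enumerate $S = \{\alpha_{1},\dots,\alpha_{|S|}\}$ and consider the symmetric convex body
\begin{equation}\nonumber
K := \{(u, t) \in \R^{d} \times \R^{|S|} : \Vert u \Vert_{\infty} \leq p/4,\ \Vert t \Vert_{\infty} \leq 1/4\}
\end{equation}
together with the lattice $\Lambda \subseteq \R^{d+|S|}$ generated by $p\mathbb{Z}^{d} \times \{0\}$, $\{0\} \times \mathbb{Z}^{|S|}$ and the vectors $(e_{j}, (\tau(\alpha_{i}\cdot e_{j})/p)_{i})$, $1 \leq j \leq d$. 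Minkowski's second theorem then produces a basis of successive minima whose first $D \leq |S|+d$ vectors, projected back to $\V$, yield generators $v_{1},\dots,v_{D}$ and lengths $L_{i}$ so that the GAP $P = \{\sum x_{i} v_{i} : x_{i} \in \Z \cap (-L_{i},L_{i})\}$ is proper, homogeneous, satisfies $P \subseteq B(S,1/4)$, and has cardinality $|P| \gg_{\delta,d} p^{d}$. Linear independence of the vectors $(\{\alpha \cdot v_{i}\})_{\alpha \in S} \in \R^{|S|}$, i.e. (iv), is automatic from the Minkowski basis construction. Moreover, the inverse estimate on the coefficient-reading map (a standard consequence of bounding the norm of the inverse of a Minkowski basis in terms of the product of minima) yields $B(S, cD^{-2D}/4) \subseteq P(c)$, completing (i).

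Third, for (iii), I would argue by averaging. Set $T := B(S, cD^{-2D}/16)$, so that $2T - 2T \subseteq B(S, cD^{-2D}/4) \subseteq P(c)$. Then
\begin{equation}\nonumber
\mathbb{E}_{t\in \V}|H \cap (t+T)| \;=\; \frac{|H|\cdot|T|}{p^{d}} \;\gg_{c,\delta,d}\; p^{d},
\end{equation}
since $|T| \gg_{c,\delta,d} p^{d}$ by the previous step. Picking $t$ that attains the average and setting $H' := H \cap (t+T)$, we get $|H'| \gg_{c,\delta,d} p^{d}$ and $2H' - 2H' \subseteq 2T - 2T \subseteq P(c)$, as required.

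The main obstacle is the geometry-of-numbers step: one must simultaneously ensure properness of $P$, the quantitative sandwich $B(S,cD^{-2D}/4) \subseteq P(c) \subseteq P \subseteq B(S,1/4)$, and the linear-independence condition (iv). The finite-field structure forces the extra ``$+d$'' in $D \leq |S|+d$ (coming from the lattice $p\Z^{d}$), and the factor $D^{-2D}$ is the standard cost of inverting a Minkowski basis --- this book-keeping is the only delicate part, while everything else is a direct application of Bogolyubov and averaging.
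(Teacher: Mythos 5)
Your proposal is correct and follows essentially the same route as the paper: a Bogolyubov large-spectrum argument to produce $S$ and the inclusion $B(S,1/4)\subseteq 2H-2H$ with the $R(H,h)$ lower bound, a geometry-of-numbers construction of a proper homogeneous GAP inside $B(S,1/4)$, and a Bohr-set averaging step for (iii). The only real difference is presentational: the paper outsources each step to Lemmas 4.1, 6.3, 8.1, 8.2 and 10.4 of \cite{GT08b} (in particular Lemma 10.4 there produces $P$ as a proper GAP $P'$ of rank $\leq\vert S\vert$ plus a subgroup $Y$ with $\vert P\vert=\vert P'\vert\vert Y\vert$, which is where the extra ``$+d$'' in the rank and property (iv) come from), whereas you reconstruct those lemmas from first principles via Minkowski's second theorem. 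One small bookkeeping slip: in the tail estimate you write $\frac{\delta^{2}}{2}p^{d}\vert H\vert\cdot p^{d}\vert H\vert=\frac{1}{2}\vert H\vert^{4}$; this should be the inequality $\frac{\delta^{2}}{2}p^{2d}\vert H\vert^{2}\leq\frac{1}{2}\vert H\vert^{4}$, which uses $\vert H\vert\geq\delta p^{d}$ — the conclusion $R(H,h)\geq\tfrac{1}{2}p^{-d}\vert H\vert^{4}\geq\tfrac{\delta^{4}}{2}p^{3d}$ is unaffected.
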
	
\begin{proof}
	For $h\in \Vk$, denote $f\ast g(h):=\E_{h'\in \Vk}f(h')g(h-h')$ for all functions $f,g\colon \Vk\to \R$. It was proved in Lemma 6.3 of \cite{GT08b} (although not stated explicitly) that there exists $S\subseteq \hat{\Vk}$ with $\vert S\vert\leq D_{0}:=2\d^{-2}$ such that for all $h\in B(S,1/4)$,
	we have that 
	$$\bold{1}_{H}\ast\bold{1}_{H}\ast\bold{1}_{-H}\ast\bold{1}_{-H}(h)\geq \d^{4}/2,$$
	This means that $\vert R(H,h)\vert\geq \d^{4}K^{3d}/2$. In particular, $B(S,1/4)\subseteq 2H-2H$.

	By Lemma 10.4 of \cite{GT08b}, there exists a proper generalized arithmetic  progression $P'$ in $\V$ of rank $D'$ for some $0\leq D'\leq \vert S\vert$, 
	a subgroup $Y$ of $\Vk$ such that writing $P:=P'+Y$, we have that $\vert P\vert=\vert P'\vert\vert Y\vert$ (which implies that $P$ is a proper generalized arithmetic  progression in $\V$ of rank at most $D'+d\leq 2\d^{-2}+d$)
	and
	\begin{equation}\label{2:g10.2}
		B(S,cD^{-2D}/4)\subseteq P(c)\subseteq B(S,c/4) \text{ and } 	B(S,D^{-2D}/4)\subseteq P\subseteq B(S,1/4).\footnote{Strictly speaking, Lemma 10.4 of \cite{GT08b} only states that $B(S,D^{-2D}/4)\subseteq P\subseteq B(S,1/4).$ But the conclusion that $B(S,cD^{-2D}/4)\subseteq P(c)\subseteq B(S,c/4)$ follows from the proof of Lemma 10.4 of \cite{GT08b}.}
	\end{equation}	
	Moreover, the vectors $(\{\alpha\cdot v_{i}\})_{\alpha\in S}\in\R^{\vert S\vert}, 1\leq i\leq D$ can be made  linearly independent over $\R$ (and so Property (iv) is satisfied).
	It then follows from Lemma 8.1 of \cite{GT08b} that $\vert P\vert\gg_{\d,d}K^{d}$.
  So  Property (i) is satisfied.
	
	Since $P\subseteq B(S,1/4)$, $\vert R(H,h)\vert\geq \d^{4}K^{3d}/2$ for all $h\in P$. 
	This proves  Property (ii).
	
	We now prove Property (iii). Let $\rho=cD^{-2D}/16$.
Since $\vert H\vert\geq \d K^{d}$, by Lemma 4.1 of \cite{GT08b}, there exists $x_{0}\in \Vk$ such that
	$$\E_{h\in B(S,\rho)}\bold{1}_{H}(x_{0}+h)\geq \E_{h\in \Vk}\bold{1}_{H}(h)\geq \d.$$
	Set 
	$H':=\{x\in H\colon x-x_{0}\in B(S,\rho)\}.$
	We have that $\vert H'\vert\geq \d \vert B(S,\rho)\vert\gg_{c,\d,d}K^{d}$ by Lemma 8.2 of  \cite{GT08b}. Moreover, $2H'-2H'\subseteq   B(S,4\rho)\subseteq P(c)$ by (\ref{2:g10.2}). This finishes the proof.
\end{proof}

 We may derive the following description for  Freiman $M$-homomorphism of order 16 using Proposition \ref{2:g324} (recall the definition in Section \ref{2:s:ffh}):

\begin{prop}\label{2:gef1}
	Let $d,K\in\N_{+},s\in\N$ with $d\geq \max\{4s+19,35\}$, $0<c\leq 1$, $\d>0$, $p\gg_{\d,d} 1$ be a prime dividing $K$,  $M\colon\V\to\F_{p}$ be a non-degenerate quadratic form, and $H$ be a subset of $\Vk$ of cardinality at least $\d K^{d}$. For any Freiman $M$-homomorphism $\xi\colon H\to\st_{\zp,d}(s)$  of order 16,
	 there exist $0<\rho=\rho(\d,d)<1/4$,
	a homogeneous proper generalized arithmetic progression 
		$$P=\{x_{1}v_{1}+\dots+x_{D}v_{D}\colon x_{i}\in \Z\cap(-L_{i},L_{i}), 1\leq i\leq D\}$$
	 in $\Vk$ of rank $O_{\d,d}(1)$ and cardinality $\gg_{\d,d}K^{d}$, a set $S\subseteq \hat{\Vk}$ of cardinality $O_{\d}(1)$, and a map $\omega\colon P\to \st_{\zp,d}(s)$ such that the followings hold:
	\begin{enumerate}[(i)]
		\item $B(S,c\rho)\subseteq P(c)\subseteq P\subseteq B(S,1/4)\subseteq 2H-2H$;
		\item For all $h\in P$, the set $R(H,h):=\{(h_{1},h_{2},h_{3},h_{4})\in H\colon h_{1}+h_{2}-h_{3}-h_{4}=h\}$ is of cardinality $\gg_{\d} K^{3d}$;
			\item there exists $H'\subseteq H$ with $\vert H'\vert\gg_{c,\d,d} K^{d}$ such that $2H'-2H'\subseteq P(c)$;
			\item the vectors $(\{\alpha\cdot v_{i}\})_{\alpha\in S}\in\R^{\vert S\vert}, 1\leq i\leq D$ are linearly independent over $\R$;
		\item for all $u_{1},u_{2},u_{3},u_{4}\in H$ with $u_{1}+u_{2}-u_{3}-u_{4}\in P$, we have
		$$\xi(u_{1})+ \xi(u_{2})- \xi(u_{3})- \xi(u_{4})\equiv\omega(u_{1}+u_{2}-u_{3}-u_{4}) \mod J^{M}_{\iota(u_{1}),\iota(u_{2}),\iota(u_{3}),\iota(u_{4})};$$
		\item for all $u_{1},u_{2},u_{3},u_{4}\in P$ with $u_{1}+u_{2}=u_{3}+u_{4}$, we have
		$$\omega(u_{1})+ \omega(u_{2})\equiv\omega(u_{3})+ \omega(u_{4}) \mod J^{M}_{\iota(u_{1}),\iota(u_{2}),\iota(u_{3})},$$
		i.e. the map $\omega\colon P\to\st_{\zp,d}(s)$ is a Freiman $M$-homomorphism of order 4.
	\end{enumerate}	
\end{prop}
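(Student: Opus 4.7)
The plan is to construct $P$, $S$, and $H'$ using Proposition \ref{2:g324}, define $\omega$ via a canonical choice in $R(H, h)$ for each $h \in P$, and derive (v), (vi) from the order 16 Freiman hypothesis through the intersection method.

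First, apply Proposition \ref{2:g324} to $H$ with the given $c$ to obtain $S \subseteq \hat{\V}$ with $|S| = O_\delta(1)$, a proper homogeneous generalized arithmetic progression $P$ of rank $D = O_{\delta,d}(1)$ with $|P| \gg_{\delta,d} p^d$, and the subset $H' \subseteq H$; setting $\rho := cD^{-2D}/4$ then gives (i), and (ii), (iii), (iv) are inherited verbatim. For each $h \in P$ pick a fixed decomposition $(u_1^h, u_2^h, u_3^h, u_4^h) \in R(H, h)$ (possible by (ii)) and set
$$\omega(h) := \xi(u_1^h) + \xi(u_2^h) - \xi(u_3^h) - \xi(u_4^h) \in \st_d(s).$$

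For (v), given any $(u_1, u_2, u_3, u_4) \in R(H, h)$, the rearrangement $u_1 + u_2 + u_3^h + u_4^h = u_3 + u_4 + u_1^h + u_2^h$ is a balanced $8$-element equation in $H$; padding it with an element of $H$ chosen inside the existing span gives a balanced $16$-element equation to which the Freiman hypothesis of order 16 applies, yielding
$$\xi(u_1) + \xi(u_2) - \xi(u_3) - \xi(u_4) \equiv \omega(h) \mod J^M_{u_1, u_2, u_3, u_4, u_1^h, u_2^h, u_3^h}.$$
To descend from this ideal to $J^M_{u_1, u_2, u_3, u_4}$, I would use the intersection approach: since $|R(H, h)| \gg_\delta p^{3d}$, one can pick alternative decompositions $(u_1^{h,(j)}, u_2^{h,(j)}, u_3^{h,(j)}, u_4^{h,(j)}) \in R(H, h)$ for $1 \leq j \leq N$ with $N = s + O(1)$, such that the analogous congruence holds with $(u_1^h, u_2^h, u_3^h)$ replaced by $(u_1^{h,(j)}, u_2^{h,(j)}, u_3^{h,(j)})$ and such that the subspaces $\sp_{\F_p}\{u_1^{h,(j)}, u_2^{h,(j)}, u_3^{h,(j)}\}$ are linearly independent over $\sp_{\F_p}\{u_1, u_2, u_3, u_4\}$; a genericity count akin to Lemma \ref{2:iiddpp} makes this possible for $d$ large enough, and Proposition \ref{2:gri} with $m \leq 4$, $r \leq 3$ then collapses the intersection of the resulting $M$-ideals to $J^M_{u_1, u_2, u_3, u_4}$.

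For (vi), given $u_1, u_2, u_3, u_4 \in P$ with $u_1 + u_2 = u_3 + u_4$, write each $u_i = a_i + b_i - c_i - d_i$ with $a_i, b_i, c_i, d_i \in H$ (possible since $P \subseteq 2H - 2H$), so that
$$a_1 + b_1 + a_2 + b_2 + c_3 + d_3 + c_4 + d_4 = c_1 + d_1 + c_2 + d_2 + a_3 + b_3 + a_4 + b_4$$
is a balanced $16$-element equation in $H$. The order 16 Freiman hypothesis applied to this equation, combined with (v) used on each $u_i$, produces
$$\omega(u_1) + \omega(u_2) - \omega(u_3) - \omega(u_4) \equiv 0 \mod J^M_W,$$
where $W$ is the span of the sixteen intermediate vectors, of dimension up to roughly $12$. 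The target ideal $J^M_{u_1, u_2, u_3}$ has generating span of dimension at most $3$ (since $u_4 = u_1 + u_2 - u_3$), so a substantial dimensional descent is required.

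The main obstacle will be this final descent step. To handle it, I would vary the decompositions $u_i = a_i + b_i - c_i - d_i$ of each $u_i$ independently over the large set $R(H, u_i)$, obtaining a family of congruences of the form above, and intersect them via Proposition \ref{2:gri}. The careful tracking of the parameters $(m, r, N)$ across the several invocations of Proposition \ref{2:gri}, together with the genericity counts needed to choose linearly independent auxiliary directions, is what dictates the lower bound $d \geq \max\{4s + 19, 35\}$ in the statement.
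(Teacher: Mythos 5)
Your broad outline coincides with the paper's: invoke Proposition \ref{2:g324} to produce $S$, $P$, $\rho$, and $H'$ (giving (i)--(iv) verbatim), then build $\omega$ from decompositions in $R(H,h)$ and use the order-16 Freiman hypothesis together with the intersection method to descend from the large ideal to the target ideal in (v) and (vi). Your treatment of (vi) --- writing each $u_{i}=a_{i}+b_{i}-c_{i}-d_{i}$, forming the balanced 16-element equation, applying the Freiman hypothesis, then intersecting over the decompositions via Proposition \ref{2:gri}/\ref{2:grm} using the size of $\prod_{i}R(H,u_{i})$ --- is exactly what the paper does.

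The main difference is in how (v) is obtained. The paper observes that $\pi^{-1}(h)\cap(2\tilde{\xi}(H)\-2\tilde{\xi}(H))$ is an equivalence class and then invokes Proposition \ref{2:gwts} (with $k=4$) to upgrade it to a \emph{strong} equivalence class, which yields $\omega(h)$ abstractly; the lower bound $d\geq 4s+19$ is precisely the hypothesis of Proposition \ref{2:gwts}, and the dimension $3s+12$ appearing there is where the bound comes from. You instead fix one reference tuple, define $\omega(h)$ explicitly, and then try to prove well-definedness via a manual intersection. This is morally equivalent to re-deriving the strong-equivalence-class argument by hand, and it is what Proposition \ref{2:gwts} packages for you.

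There is, however, a genuine gap in your padding step for (v). You propose to promote the $8$-element equation $u_{1}+u_{2}+u_{3}^{h}+u_{4}^{h}=u_{3}+u_{4}+u_{1}^{h}+u_{2}^{h}$ to a $16$-element one by ``padding it with an element of $H$ chosen inside the existing span,'' and then write the resulting congruence modulo $J^{M}_{u_{1},u_{2},u_{3},u_{4},u_{1}^{h},u_{2}^{h},u_{3}^{h}}$ with no padding vector appearing. But the span of $\{u_{1},\dots,u_{4},u_{1}^{h},\dots,u_{4}^{h}\}$ has dimension at most $7$, hence at most $p^{7}$ elements, while $d$ is large; a set $H$ of density $\delta$ has no reason to intersect this low-dimensional subspace at all, and in general it will not. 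The correct move is to pad with an \emph{arbitrary} element $a\in H$ on both sides (so the $\xi(a)$'s cancel but $a$ joins the generating set of the ideal), and then include the padding direction in the subsequent intersection: one intersects over many choices of $(u_{1}^{h,(j)},u_{2}^{h,(j)},u_{3}^{h,(j)},a^{(j)})$ in general position, not merely over the reference tuples. As written, your intersection argument only varies the reference decomposition and never accounts for the padding direction, so the descent to $J^{M}_{u_{1},u_{2},u_{3},u_{4}}$ does not actually go through. This is exactly the kind of bookkeeping that Proposition \ref{2:gwts} absorbs; if you prefer the hands-on route, you must vary the padding element and re-run the genericity count with one more free direction.
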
	
\begin{proof}
	We always assume that $p\gg_{\d,d} 1$ in the proof.
	By Proposition \ref{2:g324}, there exist $P, S$ and $\rho$ satisfying Properties (i)-(iv). 
	Denote $\tilde{\xi}(h):=(h,J^{M}_{\iota(h)}+\xi(h))$ for $h\in H$.
	Since $\xi$ is a Freiman $M$-homomorphism of order 16,
  $\pi^{-1}(h)\cap (2\tilde{\xi}(H)\- 2\tilde{\xi}(H))$ is an equivalence class for all $h\in P$. 
  
  We claim that $\pi^{-1}(h)\cap (2\tilde{\xi}(H)\- 2\tilde{\xi}(H))$ is actually a strong equivalence class for all $h\in P$. Indeed, fix 
    $h\in P$.  Since $d\geq 4s+19$,
  by Proposition \ref{2:gwts}  (setting $k=4$),
  it suffices to show that for any subspace of $\V$  containing $\iota(h)$ of dimension at most $3s+12$, there exists $(h_{1},h_{2},h_{3},h_{4})\in R(H,h)$ such that $\sp_{\F_{p}}\{\iota(h_{1}),\iota(h_{2}),\iota(h_{3}),\iota(h_{4})\}\cap Y\neq \sp_{\F_{p}}\{\iota(h)\}$. This is clearly true since $d\geq 3s+15$.
  
  The claim implies that for all $h\in H$, there exists $\omega(h)\in \st_{\zp,d}(s)$ such that Property (v) holds.
	Since $\xi$ is a Freiman $M$-homomorphism of order 16,
	it follows from Property (v) that for all $u_{1},u_{2},u_{3},u_{4}\in P$ with $u_{1}+u_{2}=u_{3}+u_{4}$ and for all $(u_{i,1},u_{i,2},u_{i,3},u_{i,4})\in R(H,u_{i})$, $1\leq i\leq 4$, since  $u_{i,4}=u_{i,1}+u_{i,2}-u_{i,3}$, we have	
		$$\omega(u_{1})+ \omega(u_{2})\equiv\omega(u_{3})+ \omega(u_{4}) \mod J^{M}_{\sp_{\F_{p}}\{\iota(u_{j}),\iota(u_{i,j})\colon 1\leq i\leq 4, 1\leq j\leq 3\}}.$$	 
	Since  $\prod_{i=1}^{4}\vert R(H,u_{i})\vert\gg_{\d}K^{12d}$ and $d\geq \max\{15+s,35\}$, by Proposition \ref{2:grm} (setting $\dim(V)=3$, $m=12$, $k=0$, $\dim(U_{0}=0)$) and Lemma \ref{2:iiddpp},  we have that 
	$$\omega(u_{1})+ \omega(u_{2})\equiv\omega(u_{3})+ \omega(u_{4}) \mod J^{M}_{\iota(u_{1}),\iota(u_{2}),\iota(u_{3})}.$$
	So Property (vi) holds.
\end{proof}

To conclude Step 3 described in Section \ref{2:s:rdm}, we need the following result:

\begin{prop}\label{2:g1621}
	Let $C,D,K,L,s\in\N$, $d\in\N_{+}$, $\d>0$, $p$ be a prime dividing $K$, and $M\colon\V\to\F_{p}$ be a non-degenerate quadratic form. Let $H\subseteq \Vk$ with $\vert H\vert\geq \d K^{d}$, $\xi\colon H\to \st_{\zp,d}(s)$ be a map, and denote $\tilde{\xi}(h):=(h,J^{M}_{\iota(h)}+\xi(h))\in\Gamma^{s}_{1}(\Vk,M)$ for all $h\in H$. Suppose that $\pi^{-1}(\bold{0})\cap(8\tilde{\xi}(H)\- 8\tilde{\xi}(H))$ admits a $(L,C,D)$-classification $(\mathcal{C}_{0},Y)$.
	Then for any subspace $V$ of $\V$ with $J^{M}_{V}\cap\mathcal{C}_{0}=\{0\}$  and $V\cap Y=\{\bold{0}\}$, and any subset  $W\subseteq H\cap \iota^{-1}(V)$,\footnote{Here $\iota^{-1}(V)$ is understood as the set of $v\in\Vk$ with $\iota(v)\in V$.} there exists $W'\subseteq W$ with $\vert W'\vert\geq \vert W\vert/80^{L}$ such that  
$\xi$ is   a Freiman $M$-homomorphism of order 16 on $W'$.
\end{prop}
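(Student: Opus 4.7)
The plan is to translate the classification hypothesis into a $\mathcal{C}_{0}$-valued colouring of the valid $16$-tuples in $W^{16}$, and then iteratively thin $W$ so that only the zero colour survives.

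First I would unpack the hypothesis. For any $(h_{1},\dots,h_{16})\in W^{16}$ satisfying $h_{1}+\dots+h_{8}=h_{9}+\dots+h_{16}$, the element $\tilde{\xi}(h_{1})\+\dots\+\tilde{\xi}(h_{8})\-\tilde{\xi}(h_{9})\-\dots\-\tilde{\xi}(h_{16})$ equals $(\bold{0},J^{M}_{h_{1},\dots,h_{16}}+\delta(\vec{h}))$ with $\delta(\vec{h}):=\sum_{i=1}^{8}\xi(h_{i})-\sum_{i=9}^{16}\xi(h_{i})$, and it lies in $\pi^{-1}(\bold{0})\cap(8\tilde{\xi}(H)\-8\tilde{\xi}(H))$. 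Since every $h_{i}$ belongs to $V$ and $V\cap Y=\{\bold{0}\}$, the obstacle branch of the classification is excluded, so some $g(\vec{h})\in\mathcal{C}_{0}$ satisfies $\delta(\vec{h})\equiv g(\vec{h})\mod J^{M}_{h_{1},\dots,h_{15}}$ (using $h_{16}\in\sp_{\F}\{h_{1},\dots,h_{15}\}$). The assumption $J^{M}_{V}\cap\mathcal{C}_{0}=\{0\}$, together with $J^{M}_{h_{1},\dots,h_{15}}\subseteq J^{M}_{V}$, ensures that no non-zero element of $\mathcal{C}_{0}$ is $\equiv 0\mod J^{M}_{h_{1},\dots,h_{15}}$; hence ``$\xi$ is a Freiman $M$-homomorphism of order $16$ on $W'$'' is precisely the statement ``every valid $16$-tuple in $(W')^{16}$ has colour $g(\vec{h})=0$''.

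Next I would peel off the non-zero colours one at a time. Enumerating $\mathcal{C}_{0}\setminus\{0\}=\{g_{1},\dots,g_{K}\}$, I would build a nested chain $W=W_{0}\supseteq W_{1}\supseteq\dots\supseteq W_{K}$ with $|W_{j}|\geq|W_{j-1}|/80$ such that no valid $16$-tuple in $W_{j}^{16}$ carries any of the colours $g_{1},\dots,g_{j}$; an extra factor of $80$ absorbed as a base-case or final tidying step then yields $W':=W_{K}$ with $|W'|\geq|W|/80^{K+1}$ and only colour $0$ remaining. Each step from $W_{j-1}$ to $W_{j}$ is achieved by a ``signature pigeonhole'': one produces a map $\sigma_{j}\colon W_{j-1}\to\Sigma_{j}$ with $|\Sigma_{j}|\leq 80$ whose level sets are automatically free of colour $g_{j}$, and then $W_{j}$ is the preimage of the most populated value. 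The signature is built from an $\F$-linear functional $\chi$ on $(\langle\mathcal{C}_{0}\rangle+J^{M}_{V})/J^{M}_{V}$ satisfying $\chi(g_{j})\neq 0$ and $\chi(g_{k})=0$ for the already-eliminated colours, applied to $\xi$: on any level set of $\chi\circ\xi$ the signed $16$-sum vanishes, so colour $g_{j}$ cannot arise.

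The crux is showing that the signature $\sigma_{j}$ genuinely takes only a bounded number of values on $W$: the naive range $\F$ would give pigeonhole density only $1/p$. To obtain the universal constant $80$, I would exploit the additive-combinatorial constraint that the classification places on $\chi\circ\xi$. Padding a $4$-tuple $(h_{1},h_{2},h_{3},h_{4})\in W^{4}$ with $h_{1}+h_{2}=h_{3}+h_{4}$ by six copies of an arbitrary base-point $h_{0}\in W$ in each half produces a valid $16$-tuple, whose colour yields $\chi(\xi(h_{1}))+\chi(\xi(h_{2}))-\chi(\xi(h_{3}))-\chi(\xi(h_{4}))\in\chi(\bar{\mathcal{C}_{0}})$ modulo an ideal enlarged by $h_{0}$. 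Intersecting this constraint over many base-points via the intersection property for $M$-ideals (Proposition \ref{2:gri}), together with the closure of $\chi(\bar{\mathcal{C}_{0}})$ under negation obtained by reversing $16$-tuples, is what pins $\chi\circ\xi$ into a bounded union of $\F$-cosets and supplies the concrete constant $80$. Making this structural collapse rigorous, especially in the complete absence of any density hypothesis on $W$, is where I expect the main difficulty of the argument to lie.
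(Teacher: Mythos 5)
Your framework of iteratively colouring $16$-tuples and thinning $W$ to kill each colour is sound, and you correctly identify that the naive level-set pigeonhole only gives density $1/p$. But the fix you propose in the final paragraph — pinning $\chi\circ\xi$ into a bounded union of $\F$-cosets via base-point intersection — cannot work: there is no density hypothesis on $W$, and Proposition \ref{2:gri} requires supplying many linearly independent vectors which a small $W$ simply may not contain. More fundamentally, $\chi\circ\xi$ genuinely can take $\gg 1$ distinct values on $W$, and no structural collapse will change that. The missing idea is that one should not pigeonhole on \emph{level sets} of a functional but on \emph{intervals} in its range.

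The paper's mechanism is the separation lemma (Lemma \ref{2:gsp}): identifying $\st_d(s)\cong\F_p^S$ via coefficients and letting $\tilde V$ be the image of $J^M_V\cap\st_d(s)$, one builds a linear map $\Phi\colon\F_p^S\to\F_p^{K+1}$ so that for each nonzero $f\in\mathcal{C}_0$, the translate $\theta(f)+\Phi^{-1}(I^{K+1}_{p/10})$ is \emph{disjoint} from $\tilde V$ (not just missing it at a single point). One then partitions $\F_p^{K+1}$ into $80^{K+1}$ cubes of side $p/80$ and takes $W'$ to be the preimage in $W$ of the most populous cube under $\Phi\circ\theta\circ\xi$. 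The point is that on $W'$ the map $\Phi\circ\theta\circ\xi$ is not constant but \emph{almost} constant: any signed $16$-fold sum lands in the small box $I^{K+1}_{p/10}$. Combined with the classification (which forces $\theta(\delta(\vec h))\in\theta(f)+\tilde V$ for some $f\in\mathcal{C}_0$), the disjointness of the buffer zone from $\tilde V$ kills every nonzero $f$. This is where the concrete constant $80$ comes from: it is purely a pigeonhole on the \emph{range} $\F_p^{K+1}$, independent of $|W|$. Your one-colour-at-a-time scheme can be made to work by the same device (take $\chi$ with $\chi(J^M_V)=0$ and $\chi(g_j)$ outside a length-$p/10$ interval, then pigeonhole on the $80$ intervals of length $p/80$), but without replacing exact level sets by intervals and invoking a separation argument, the step from $W_{j-1}$ to $W_j$ has no control on the number of signature values and the proof does not go through.
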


Proposition \ref{2:g1621} should be viewed as the analog of Lemma 9.2 of \cite{GT08b}. Unfortunately, we were unable to prove that Proposition \ref{2:g1621}  holds with $V$ replaced $\V$ (we require the subspace $V$  to ``stay away" from $\mathcal{C}_{0}$ and $Y$). This incurs some additional difficulty in the final step of the proof of Theorem \ref{2:aadd}, as we shall see in Section \ref{2:s:a2}.

\begin{proof}[Proof of Proposition \ref{2:g1621}]
 Since $\mathcal{C}_{0}$ and $\Vk$ are finite sets, there exists $N\in\N$ such that $\mathcal{C}_{0}, \xi(H)\subseteq \st_{\Z/p^{N},d}(s)$. 
 Note that   $\st_{\Z/p^{N},d}(s)/(\st_{\Z/p^{N},d}(s)\cap J^{M}_{V})$ is a finite abelian group, and is thus isomorphic to groups of the form $\Z/p^{a_{1}}\Z\times \dots\times \Z/p^{a_{t}}\Z$ for some $t\in\N_{+}$ and $1\leq a_{1},\dots,a_{m}\leq N$. 
So there exist $1\leq m\leq \binom{d+s-1}{s}$ ($\binom{d+s-1}{s}$ is the number of monomials in $d$ variables of degree $s$), $v_{1},\dots,v_{m}\in \st_{\Z/p^{N},d}(s)$ and some $1\leq a_{1},\dots,a_{m}\leq N$ such that $p^{a_{i}}v_{i}\in \st_{\Z,d}(s)$  for all $1\leq i\leq m$, and that the map $\theta\colon G:=\prod_{i=1}^{m}\Z/p^{a_{i}}\Z\to \st_{\Z/p^{N},d}(s)/(\st_{\Z/p^{N},d}(s)\cap J^{M}_{V})$ induced by 
 $$\theta(x_{1},\dots,x_{m}):=x_{1}v_{1}+\dots+x_{m}v_{m}$$
 is a group isomorphism. 

  Enumerate $\mathcal{C}_{0}$ as $J=\{c_{0}=\bold{0},c_{1},\dots,c_{L}\}$.  We may assume without loss of generality that   $c_{i}\notin J_{V}^{M}$ for all $1\leq i\leq L$.
Then for all $1\leq i\leq L$,  we have that
\begin{equation}\label{2:ewfeff}
\theta^{-1}(c_{i} \mod \st_{\Z/p^{N},d}(s)\cap J^{M}_{V})\neq \bold{0}.
\end{equation}
Let $\phi_{i}\colon G\to\T$ be the linear map given by
$$\phi_{i}(x_{1} \mod p^{a_{1}}\Z,\dots,x_{m} \mod p^{a_{m}}\Z):=\frac{1}{p^{a_{1}}}b_{i,1}x_{1}+\dots+\frac{1}{p^{a_{m}}}b_{i,m}x_{m} \mod \Z$$
for some $b_{i,1},\dots,b_{i,m}\in\Z$. By (\ref{2:ewfeff}), it is 
  not hard to find some appropriate $b_{i,1},\dots,b_{i,m}\in \Z$ such that $$\phi_{i}\circ\theta^{-1}(c_{i} \mod\st_{\Z/p^{N},d}(s)\cap J^{M}_{V})\notin [-1/10,1/10].$$
Let $\Phi\colon G\to \T^{L}$ be the linear map given by 
$$\Phi(x)=(\phi_{1}(x),\dots,\phi_{L}(x))$$
for all $x\in  G$. Then 
$$\Phi\circ\theta^{-1}(c_{i} \mod \st_{\Z/p^{N},d}(s)\cap J^{M}_{V})\notin  [-1/10,1/10]^{L}$$ for all $1\leq i\leq L$.

We may cover $\T^{L}$ by cubes of the form $Q=[\frac{j_{1}}{80},\frac{j_{1}+1}{80})\times \dots\times [\frac{j_{L}}{80},\frac{j_{L}+1}{80})\subseteq \T^{L}$, for some $0\leq j_{1},\dots,j_{L}\leq 79$, and cover $G$ by sets of the form $\Phi^{-1}(Q)$.
	By the Pigeonhole Principle, there exists a cube $Q$ such that the set
	$$W':=\{h\in W\colon \Phi\circ\theta^{-1}(\xi(h) \mod \st_{\Z/p^{N},d}(s)\cap J^{M}_{V})\in Q\}$$
	is of cardinality at least $\vert W\vert/80^{L}$.
	
	It remains to show that $\xi$ is   a Freiman $M$-homomorphism of order 16 on $W'$.
	Let $h_{1},\dots,h_{16}\in W'$ with $h_{1}+\dots+h_{8}=h_{9}+\dots+h_{16}$. Denote
	$$\omega:=(\xi(h_{1})+\dots+ \xi(h_{8}))- (\xi(h_{9})+\dots+ \xi(h_{16})).$$
	It suffices to show that $\omega\in J^{M}_{\iota(h_{1}),\dots,\iota(h_{16})}$.
	Since $h_{1}+\dots+h_{8}=h_{9}+\dots+h_{16}$, $V\cap Y=\{\bold{0}\}$ and $(\mathcal{C}_{0},Y)$ is a classification of $\pi^{-1}(\bold{0})\cap(8\tilde{H}\- 8\tilde{H})$, there exists $f\in\mathcal{C}_{0}$ such that 
	$\eta:=f-\omega \in J^{M}_{\iota(h_{1}),\dots,\iota(h_{16})}\cap\st_{\Z/p^{N},d}(s)\subseteq J^{M}_{V}\cap\st_{\Z/p^{N},d}(s)$. Since 
	$$\Phi\circ\theta^{-1} (\xi(h_{j}) \mod \st_{\Z/p^{N},d}(s)\cap J^{M}_{V})\in Q$$ for all $1\leq j\leq 16$, we have that $$\Phi\circ\theta^{-1}(\omega \mod \st_{\Z/p^{N},d}(s)\cap J^{M}_{V})\in [-1/10,1/10]^{L}.$$ 
	Since 
	$\eta\in J^{M}_{V}\cap\st_{\Z/p^{N},d}(s)$, we have that 
	$$\Phi\circ\theta^{-1}(\eta \mod \st_{\Z/p^{N},d}(s)\cap J^{M}_{V})=\bold{0}.$$ 
	Since $f\in\mathcal{C}_{0}$, we have that 
	$$\Phi\circ\theta^{-1}(f \mod\st_{\Z/p^{N},d}(s)\cap J^{M}_{V})\notin  [-1/10,1/10]^{L}.$$
	unless $f=0$. Since $f-\omega=\eta$, we must have that $f=0$ and thus $\omega=\eta\in J^{M}_{V}$. This completes the proof.
\end{proof}

\section{Structure-obstacle decomposition}\label{2:s:a3}

Note that if one can further require the subspace $V$ appearing in Proposition  \ref{2:g1621} to be $\V$, then Theorem \ref{2:aadd} follows immediately from Theorem \ref{2:gsol}, Propositions \ref{2:gef1} and \ref{2:g1621}. Unfortunately we do not know if such an improvement of  Proposition  \ref{2:g1621} is possible.
To overcome this difficulty, we start by using  Proposition  \ref{2:g1621} to study the Freiman $M$-homomorphism structure on certain subspaces of $\V$.
To this end, we need to introduce the following decomposition of $\V$ which is suitable for our needs.

\begin{defn}[Structure-obstacle decomposition]
Let $d\in\N_{+},d',d'',s\in\N$ with $d=d'+d''$, and $p$ be a prime, and $M\colon \V\to\F_{p}$ be a quadratic form. 
Let $(\mathcal{C}_{0},Y)\subseteq \st_{\zp,d}(s)\times \V$  be a structure-obstacle pair. Let $T,U$ be subspaces of $\V$. We say that $(T,U)$ is an \emph{$(\mathcal{C}_{0},Y,M,d',d'')$-structure-obstacle decomposition} of $\V$ if the following conditions hold:
\begin{itemize}
	\item  $\rank(M\vert_{U})=dim(U)=d''$;
	\item $J_{U}^{M}\cap\mathcal{C}_{0}=\{0\}$ and $U\cap Y=\{\bold{0}\}$;
	\item $T=U^{\perp_{M}}$.
\end{itemize}
%
\end{defn}

Roughly speaking, if $(T,U)$ is a structure-obstacle decomposition, then $U$ is a subspace of $\V$ where Proposition  \ref{2:g1621} applies (and $M\vert_{U}$ is non-degenerate). Treating $\V$ as the unions of $U+v, v\in T$, this allows us 
describe the structure of $\xi\vert_{U+v}$ using the results proved in previous sections.

It follows from Lemma \ref{2:iissoo} that for any structure-obstacle decomposition $(T,U)$, $M\vert_{U}$ and $M\vert_{T}$ are non-degenerate,  $T\cap U=\{\bold{0}\}$, and $T+U=\V$.

The rest of this section is devoted to proving the following proposition:
 
\begin{prop}\label{2:laststand}
	Let $C,d',d'',D,L,s\in\N, d,K\in\N_{+}$ with $d=d'+d''$, $\d>0$, $p\gg_{C,\d,d,L} 1$ be a prime dividing $K$, and $M\colon\V\to\F_{p}$ be a non-degenerate quadratic form. Let $H\subseteq \Vk$ with $\vert H\vert\geq \d K^{d}$, $\xi\colon H\to \st_{\zp,d}(s)$ be a map, and denote $\tilde{\xi}(h):=(h,J^{M}_{\iota(h)}+\xi(h))\in\Gamma^{s}_{1}(\Vk,M)$ for $h\in H$. Suppose that $\pi^{-1}(\bold{0})\cap(8\tilde{\xi}(H)\- 8\tilde{\xi}(H))$ admits a $(L,C,D)$-classification $(\mathcal{C}_{0},Y)$, and let $(T_{0},U_{0})$ be a $(\mathcal{C}_{0},Y,M,d',d'')$-structure-obstacle decomposition of $\V$. If 
	$d'\geq \max\{s+1,D+3,7\}$ and $d''\geq \max\{4s+23,35\}$,	then there exist a subset $H'$ of $H$ of cardinality $\gg_{\d,d,L} K^{d}$ 
	such that for all $h_{1},\dots,h_{16}\in H'$ with $h_{1}+\dots+h_{8}=h_{9}+\dots+h_{16}$, we have that 
		$$\xi(h_{1})+\dots+\xi(h_{8})\equiv\xi(h_{9})+\dots+\xi(h_{16}) \mod J^{M}_{\sp_{\F_{p}}\{\iota(h_{1}),\dots,\iota(h_{15})\}+T_{0}}.$$
%
\end{prop}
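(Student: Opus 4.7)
The plan is to exploit the direct sum decomposition $\V = T \oplus U$ provided by the structure-obstacle decomposition: slice $H$ into cosets of $U$ indexed by $v \in T$, apply Proposition \ref{2:g1621} on each ``good'' coset via the auxiliary subspace $V_v := U + \sp_{\F_p}\{v\}$, and then patch the resulting coset-local Freiman structures into a global property modulo $J^M_T$.

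First, I would identify a large set $T' \subseteq T$, with $|T'| \gg_{\d,K,C,D} p^{d'}$, such that for every $v \in T'$ the subspace $V_v$ meets the three hypotheses needed to invoke Proposition \ref{2:g1621}: (i)~$|H \cap (U+v)| \geq \tfrac{\d}{2}p^{d''}$; (ii)~$V_v \cap Y = \{\bold{0}\}$; (iii)~$J^M_{V_v} \cap \mathcal{C}_0 = \{0\}$. Item (i) is Markov applied to the average $|H|/|T| \geq \d p^{d''}$. For (ii), the bad $v \in T$ lie in $T \cap \bigcup_i (U + Y_i)$, a union of at most $C$ subspaces of $T$ of dimension $\leq D$, hence negligible since $d' \geq D+3$. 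For (iii), using $J^M_U \cap \mathcal{C}_0 = \{0\}$ together with $\rank(M|_U) = d''$, the failure of (iii) for each of the $\leq K+1$ elements of $\mathcal{C}_0$ forces $v$ into a proper algebraic subvariety of $T$ (the preimage of the image of the multiplication-by-$(vA)\cdot n$ map modulo $J^M_U$), and a union bound leaves $\gg p^{d'}$ good $v$ provided $p$ is large.

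Next, for every $v \in T'$ apply Proposition \ref{2:g1621} with $V := V_v$ and $W := H \cap (U+v) \subseteq H \cap V_v$ to produce $H'_v \subseteq H \cap (U+v)$ with $|H'_v| \geq |W|/80^{K+1}$ on which $\xi$ is a (strong) Freiman $M$-homomorphism of order $16$, i.e.\ the defect is in $J^M_{\sp_{\F_p}\{h_1,\ldots,h_{15}\}}$. Let $H' := \bigsqcup_{v \in T'} H'_v$ (the union is automatically disjoint since the cosets are) and set $\eta := \xi|_{H'}$; then $|H'| \gg_{\d,K,d''} p^d$ and the compatibility $\eta(h) \equiv \xi(h) \bmod J^M_{\sp_{\F_p}\{h\}+T}$ is trivial.

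The remaining step---verifying the Freiman relation for $\eta$ modulo $J^M_{T + \sp_{\F_p}\{h_1,\ldots,h_{15}\}}$---is the main obstacle. Given a $16$-tuple in $H'$ with the sum-condition, write $h_i \in H'_{v_i}$ so the $v_i$ themselves satisfy a matching sum-condition in $T$. Setting $\omega := \sum_{i=1}^8 \xi(h_i) - \sum_{i=9}^{16}\xi(h_i)$, the classification gives (outside a low-dimensional degenerate set where $\sp\{h_1,\ldots,h_{15}\} \cap Y \neq \{\bold{0}\}$) that $\omega \equiv g \bmod J^M_{\sp\{h_1,\ldots,h_{15}\}}$ for some $g \in \mathcal{C}_0$, reducing the task to showing $g \in J^M_{T + \sp\{h_1,\ldots,h_{15}\}}$. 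When all $v_i$ coincide, the coset-local Freiman property combined with (iii) forces $g \in J^M_{V_v} \cap \mathcal{C}_0 = \{0\}$. The hard case is mixed $v_i$'s: the plan is to vary each $h_j$ within its own coset $H'_{v_j}$ (compensating through another point in the same coset so the global sum is preserved) and use coset-local Freiman at each step to show that $g$ is a function of the $v$-pattern alone; then, intersecting the congruences obtained from many $v$-patterns---an application of the intersection-type results of Section \ref{2:s:a1}---forces $g$ to lie in $\bigcap_{v \in T''} J^M_{V_v}$ for some large $T'' \subseteq T'$, which collapses to $J^M_T$ (modulo $J^M_{\sp\{h_1,\ldots,h_{15}\}}$) by another intersection argument in $T$. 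Carrying out this gluing while tracking the growing ``bad'' set of tuples for which the classification fails is where the bulk of the technical work lies.
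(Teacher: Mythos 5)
Your opening steps match the paper's Step 1: slice $\V = T \oplus U$, isolate a large set $T' \subseteq T$ of ``good'' $v$ satisfying conditions (i)--(iii) (this is essentially the paper's Claim 1), and invoke Proposition \ref{2:g1621} on each coset $v+U$. That part is right.

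The gap appears when you set $\eta := \xi\vert_{H'}$ with $H' := \bigsqcup_{v\in T'} H'_v$. There is no reason $\xi$ itself should satisfy the cross-coset Freiman relation modulo $J^M_T$. The cube-pigeonholing inside Proposition \ref{2:g1621} chooses an \emph{independent} cube $Q_v$ per coset, so it forces the defect to be $0$ only for tuples lying in a single coset. For a mixed tuple with all $v_i$ distinct, the classification merely guarantees the defect is some $g \in \mathcal{C}_0$, and if $g\neq 0$ there is nothing preventing it from lying outside $J^M_T$. Your proposed mechanism for excluding this --- vary each $h_j$ within its coset while compensating, then intersect ``over many $v$-patterns'' --- does not close the gap: for a \emph{fixed} mixed tuple there is no family of $v$'s to intersect over, and the compensation trick (which turns a $16$-term defect into a $4$-term one inside one coset) shows at most that $g$ is stable under such swaps, not that $g\in\bigcap_v J^M_{V_v}$. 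Worse, even if one could get $g\in\bigcap_v J^M_{V_v}$, by Proposition \ref{2:gri} this intersection collapses to $J^M_U$, not $J^M_T$; you'd still need $J^M_U\cap\mathcal{C}_0=\{0\}$ to conclude $g=0$, which is a separate observation not invoked in your final step.

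What's missing is exactly the content of the paper's Steps 2--5. The paper does not take $\eta = \xi$; instead it fixes a reference layer $v_0$, uses Proposition \ref{2:gef1} to build a generalized arithmetic progression $P_v$ and an order-$4$ Freiman map $\omega_v$ on each layer, compares $\omega_v$ against $\omega_{v_0}$ via the classification to extract a per-layer constant $b_v$ (and hence, via the intersection method, a constant $g_v$ such that $\sigma\circ\xi\vert_{v} \equiv \eta_{v_0} + g_v$ on a large $U'_v$), and only then proves that $v\mapsto g_v \bmod J^{M'}$ is an order-$16$ Freiman homomorphism on a positive-density subset $T_5\subseteq T$. That last step crucially uses the classical Balog--Szemer\'edi--Gowers theorem, Pl\"unnecke, and Lemma 9.2 of \cite{GT08b}, and it is the reason the final $H'$ is $\bigcup_{v\in T_5}(\phi(U'_v)+v)$ rather than $\bigsqcup_{v\in T'}H'_v$ --- these classical tools cost you a constant-fraction loss in $T$. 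The Freiman relation for the \emph{reconstructed} $\eta(h) := \eta_{v_0}(u) + g_v$ then holds by construction, combining the Freiman property of $\eta_{v_0}$ on a single layer with the Freiman property of $g_v$ on $T_5$; no appeal to the classification is needed for the cross-coset step. Your sketch omits the BSG/Pl\"unnecke input entirely and replaces it with an intersection heuristic that does not have a clear implementation, so the hard case (mixed $v_i$) remains open.
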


We briefly explain the idea of the proof of Proposition \ref{2:laststand}.
Roughly speaking, modulo $J^{M}_{T_{0}}$, Proposition \ref{2:g1621} ensures that $\xi$ is a Freiman $M$-homomorphism of order 16 on many layers of $U_{0}+v, v\in T_{0}$. We then apply Proposition \ref{2:gef1} to compare the structure of  $\xi$ on different layers, and conclude that $\xi$ is a Freiman $M$-homomorphism of order 16 on a large subset of $H$ (again modulo $J^{M}_{T_{0}}$).
To be more precise, the proof of Proposition \ref{2:laststand} is divided into the following steps.

\textbf{Step 0. Preparisions.}
 In the rest of the section we assume that $p\gg_{C,\d,d,L} 1$. Let $A$ be the matrix associated to $M$. We may assume without loss of generality that $M$ is homogenous, i.e. $M(n)=(nA)\cdot n$. 
 Since $H$ is a finite set, we may assume that $\xi(H)\subseteq \st_{\Z/p^{N},d}(s)$ for some $N\in\N_{+}$. Extending $\xi$ periodically to the domain $\Z_{p^{N}K}^{d}$ if necessary, we may assume without loss of generality that $f(n+Km)\equiv f(n) \mod\Z$ for all $f\in\st_{\Z/p^{N},d}(s)$ and $m,n\in\Z^{d}$. We may identify $\xi(h)$ as polynomial map from the ring $\Vk$ to $\Z/p^{N}$.

 We first need to extend the decomposition $\V=T_{0}+U_{0}$ to decompositions of  $\Vk$ and $\Z^{d}$. To achieve this, we need to following lemma.

\begin{lem}\label{2:unist}
    Let $d\in\N_{+}$, $p$ be a prime and $v_{1},\dots,v_{d}$ be a basis of $\F_{p}^{d}$. Then there exist $c_{1},\dots,c_{d}\in \{1,\dots,p-1\}$ and $u_{1},\dots,u_{d}\in\Z^{d}$ with $\iota(u_{i})=c_{i}v_{i}$ for all $1\leq i\leq d$ such that the determinant of the $d\times d$ matrix with $u_{1},\dots,u_{d}$ as row vectors is equal to 1.
\end{lem}
\begin{proof}
    For $u_{1},\dots,u_{d}\in\Z^{d}$, let $B(u_{1},\dots,u_{d})$ denote the $d\times d$ matrix haivng $u_{1},\dots,u_{d}$ as row vectors. Let $m\in\Z$ be the smallest integer in absolute value such that there exist $c_{1},\dots,c_{d}\in \{1,\dots,p-1\}$ and $u_{1},\dots,u_{d}\in\Z$ with $\iota(u_{i})=c_{i}v_{i}$ for all $1\leq i\leq d$ such that $\det(B(u_{1},\dots,u_{d}))=m$. Since $\iota(B(u_{1},\dots,u_{d}))$ is non-degenerate, we have that $m\notin p\Z$ and in particular $m\neq 0$. Replacing $u_{1}$ by $-u_{1}$ if necessary, we may assume without loss of generality $m>0$.
    
    It suffices to show that $m=1$. Suppose on the contrary that $m>1$. Let $b_{i,j}$ denote the $(i,j)$-th entry of $B(u_{1},\dots,u_{d})$, and $B_{i,j}$ be the $(d-1)\times (d-1)$ matrix obtained by removing the $i$-th row and $j$-th column of $B(u_{1},\dots,u_{d})$. Let $t$ be the greatest common divider for all the non-zero numbers among $\det(B_{1,1}),\dots,\det(B_{1,d})$. Then we must have that  $m=kt$ for some $k\in\N_{+}$. Since $m\notin p\Z$, we have that $t\notin p\Z$. If $k\neq 1$, then there exist $x,y\in\Z$ such that $xk+yp=1<k$. Clearly $x\notin p\Z$. Let $v_{1}\in\Z^{d}$ be such that $v_{1}\cdot (\det(B_{1,1}),\dots,\det(B_{1,d}))=t$.
    Then
    $$B(xu_{1}+ypv_{1},\dots,u_{d})=xB(u_{1},\dots,u_{d})+yB(v_{1},u_{2},\dots,u_{d})=xkt+ypt=t<kt=m.$$
    Since $\iota(xu_{1}+ypv_{1})=x\iota(u_{1})=\iota(xc_{1})v_{1}$ and since $x\notin p\Z$, we have a contradiction to the minimality of $m$.
    
    So we must have that $k=1$ and $m=t$. So $\det(B_{1,j})\in m\Z$ for all $1\leq j\leq d$. Using a similar argument for other rows of $B(u_{1},\dots,u_{d})$, we get a similar contradiction unless  $\det(B_{i,j})\in m\Z$ for all $1\leq i,j\leq d$. Since $\det(B(u_{1},\dots,u_{d}))=m$, by the formula of inverse matrices, $B(u_{1},\dots,u_{d})^{-1}$ is a matrix with integer entries. So $m^{-1}=\det(B(u_{1},\dots,u_{d}))^{-1}\in\Z$. This is only possible if $m=\pm 1$, a contradiction. So we must have that $m=1$ and we are done.    
 \end{proof}

Let $v_{1},\dots,v_{d''}$ be a basis of $U_{0}$ and $v_{d''+1},\dots,v_{d}$ be a basis of $T_{0}$. Let $c_{1},\dots,c_{d}$ and $u_{1},\dots,u_{d}$ be given by Lemma \ref{2:unist}. Let $\tilde{U}$ be the set of all linear combinations of $u_{1},\dots,u_{d''}$ over $\Z$, and $U\subseteq\Vk$ be the set of all linear combinations of $u_{1},\dots,u_{d''}$ over $\Z$ modulo $K\Z^{d}$. Define $\tilde{T}$ and $T$ similarly. 
Since $\iota(u_{i})=c_{i}v_{i}$, we have that $\iota(\tilde{T})=\iota(T)=T_{0}$ and  $\iota(\tilde{U})=\iota(U)=U_{0}$. 
For  $1\leq i\leq d''$, let $\phi_{\tilde{U}}\colon\Z^{d''}\to \Z^{d}$ be the linear map given by $\phi_{\tilde{U}}(e_{i}):=u_{i}$, $\phi_{U}\colon\Z_{K}^{d''}\to U$ be the linear map given by $\phi_{U}(e_{i}):=u_{i} \mod \Vk$, and $\phi_{U_{0}}\colon\F_{p}^{d''}\to U_{0}$ to be the linear map given by $\phi_{U_{0}}(e_{i}):=\iota(u_{i})=c_{i}v_{i}$, where $e_{i}$ is the $i$-th standard unit vector. 
Denote $\phi_{\tilde{T}},\phi_{T}$ and $\phi_{T_{0}}$ in similar ways.
It is not hard to see that all of $\tilde{\phi}_{U},\tilde{\phi}_{T},\phi_{U},\phi_{T},\phi_{U_{0}}$ and $\phi_{T_{0}}$ are bijections. Moreover, 
since the determinant of the $d\times d$ matrix with $u_{1},\dots,u_{d}$ as row vectors is equal to 1, 
 we have that $\tilde{T}+\tilde{U}=\Z^{d}$, $\tilde{T}\cap \tilde{U}=\{\bold{0}\}$, $T+U=\Vk$ and $T\cap U=\{\bold{0}\}$. 
Let $\pi_{T_{0}}\colon \V\to T_{0}$, $\pi_{U_{0}}\colon \V\to U_{0}$ be the projection maps with respect to the decomposition $\V=T_{0}+U_{0}$.


In this proof of Proposition \ref{2:laststand}, we need to frequently project vectors in $\Z^{d}$ to the $\tilde{U}$ direction.
%
For $m\in\Z^{d'}$,
Let $\sigma_{m}\colon \st_{\zpn,d}(s)\to \st_{\zpn,d''}(s)$ be the map given by $$\sigma_{m}(f)(n):=f(\phi_{\tilde{U}}(n)+p\phi_{\tilde{T}}(m)).$$ 
Let $M'\colon \F_{p}^{d''}\to\F_{p}$ be the quadratic form given by $M'=M\circ \phi_{U_{0}}$. Since $\rank(M\vert_{U_{0}})=d''$, $M'$ is non-degenerate. 
For $i\in\N_{+}$,
let $\Gamma^{s}_{i}(\Z_{K}^{d''},M')$ be the set of all $(h,I+f)$ where $h\in \Z_{K}^{d''}$, $f\in \st_{\zpn,d''}(s)$ and $I=J^{M'}_{V}$ for some subspace $V$ of $\F_{p}^{d''}$  containing $\iota(h)$ with $\dim(V)\leq i$ if $\iota(h)\neq \bold{0}$ and $\dim(V)\leq i-1$ if $\iota(h)=\bold{0}$. 
Let $\sim'$ be a relation defined on $\Gamma^{s}(\Z_{K}^{d''},M')$ such that
$(h,I+f)\sim' (h',I'+f')$ if and only if $h=h'$ and $f-f'\in I+I'$. 

\begin{prop}\label{2:redf}
Let $f\in\st_{\zpn,d}(s)$ and $V$ be a subspace  of $\V$ of dimension $k$. Suppose that $d\geq 2k+3$.  Then 
\begin{equation}\nonumber
 f\in J^{M}_{V}\Rightarrow \sigma_{m}(f)\in J^{M'}_{\phi_{U_{0}}^{-1}(\pi_{U_{0}}(V))} \text{ for all } m\in\Z^{d'}.
 \end{equation}
 Conversely, there exist $N=O_{d,s}(1)$ 
 such that 
 \begin{equation}\nonumber
  \sigma_{m}(f)\in J^{M'}_{\phi_{U_{0}}^{-1}(\pi_{U_{0}}(V))} \text{ for all } m\in [N]^{d'}\Rightarrow f\in J^{M}_{V+T_{0}}.
 \end{equation}
\end{prop}
\begin{proof} 
Let $v_{1},\dots,v_{k}$ be  a basis of $V$. 
Assume that $\phi_{U_{0}}(n)=nB$ for some $d''\times d$ matrix $B$. Then $BAB^{T}$ is the matrix associated with $M'$. 
Denote $L_{v}(n):=\frac{1}{p}(v\tau(A))\cdot n$ for all $v, n\in\Z^{d}$ and $L'_{v}(n):=\frac{1}{p}(v\tau(BAB^{T}))\cdot n$ for all $v, n\in\Z^{d''}$.


Fix any $m\in\Z^{d'}$ and $n\in\Z^{d''}$. Let $\tilde{M}$ and $\tilde{M}'$ be the regular liftings of $M$ and $M'$ respectively. Then
 \begin{equation}\label{2:redf4}
\sigma_{m}(\tilde{M})(n)\equiv\frac{1}{p}(\phi_{\tilde{U}}(n)\tau(A))\cdot \phi_{\tilde{U}}(n)\equiv \frac{1}{p}(n\tau(BAB^{T}))\cdot n=\tilde{M}'(n)\mod\Z.
 \end{equation}
If $v\in U_{0}$, then assuming that $v=\phi_{0}(w)$ for some $w\in\F_{p}^{d''}$, we have that 
\begin{equation}\nonumber
\begin{split}
 \sigma_{m}(L_{\tau(v)})(n)\equiv\frac{1}{p}(\tau(\phi_{U_{0}}(w))\tau(A))\cdot \phi_{\tilde{U}}(n)\equiv\frac{1}{p}\tau(wBAB^{T})\cdot n\equiv L'_{\tau(w)}(n)=L'_{\tau(\phi_{U_{0}}^{-1}(v))}(n) \mod\Z.
\end{split}
 \end{equation}
On the other hand, if $v\in T_{0}$, then since $T_{0}=U_{0}^{\perp_{M}}$, we have that 
$$\sigma_{m}(L_{\tau(v)})(n)\equiv\frac{1}{p}(\tau(vA)\cdot \phi_{\tilde{U}}(n))\equiv \frac{1}{p}\tau(vA\cdot \iota\circ\phi_{\tilde{U}}(n))=\frac{1}{p}\tau(vA\cdot \phi_{U_{0}}\circ\iota(n))=0 \mod\Z.$$
So
 \begin{equation}\label{2:redf3}
\text{$\sigma_{m}(L_{\tau(v)})(n)\equiv L'_{\tau\circ\phi_{U_{0}}^{-1}(\pi_{U_{0}}(v))}(n) \mod\Z$
for all $v\in\V$, $m\in\Z^{d'}$ and $n\in\Z^{d''}$.}
 \end{equation}

 For all $f\in J^{M}_{V}$, since $d\geq 2k+3$,
 by Proposition  \ref{2:basicpp12}, we may write
 \begin{equation}\nonumber
		f=\sum_{i=(i_{0},\dots,i_{k})\in\N^{k}\colon 2i_{0}+i_{1}+\dots+i_{k}\leq s}R_{i}\tilde{M}^{i_{0}}\prod_{j=1}^{k}L_{\tau(h_{j})}^{i_{j}}
		\end{equation}
	for some  $R_{i}\in \st_{\Z,d}(s-(2i_{0}+i_{1}+\dots+i_{k}))$. By  (\ref{2:redf4}) and (\ref{2:redf3}), we have that 
 \begin{equation}\nonumber
		\sigma_{m}(f)-\sum_{i=(i_{0},\dots,i_{k})\in\N^{k}\colon 2i_{0}+i_{1}+\dots+i_{k}\leq s}\sigma_{m}(R_{i}){\tilde{M'}}^{i_{0}}\prod_{j=1}^{k}{L'}_{\tau\circ\phi_{U_{0}}^{-1}\circ\pi_{U_{0}}(h_{j})}^{i_{j}}
		\end{equation}	
		is an integer valued polynomial with $\Z/p^{N}$-coefficients and thus must
	belong to $\st_{\Z,d}(s)$. So  $\sigma_{m}(f)\in J^{M'}_{\phi_{U_{0}}^{-1}(\pi_{U_{0}}(V))}.$

 



Conversely, suppose that 
$\sigma_{m}(f)\in J^{M'}_{\phi_{U_{0}}^{-1}(\pi_{U_{0}}(V))} \text{ for all } m\in [N]^{d'}$. 
Fix any $n\in\Z^{d''}$ with $\iota(n)\in V(M')\cap (\phi_{U_{0}}^{-1}(\pi_{U_{0}}(V)))^{\perp_{M'}}$.
 We have that 
$\sigma_{m}(f)(n)=f(\phi_{\tilde{U}}(n)+p\phi_{\tilde{T}}(m))\in\Z$
for all $m\in[N]^{d''}$.
%
 %
Since $f(\phi_{\tilde{U}}(n)+p\phi_{\tilde{T}}(\cdot))$ is a polynomial of degree at most $s$, by interpolation, 
 for some appropriate $N=O_{d,s}(1)$, then this implies that $f(\tilde{\phi}_{U}(n)+p\tilde{\phi}_{T}(m))\in\Z$ for all $n\in\Z^{d''}$ with $\iota(n)\in V(M')\cap (\phi_{U_{0}}^{-1}(\pi_{U_{0}}(V)))^{\perp_{M'}}$ and for all $m\in\Z^{d'}$.

 

We claim that for all $x\in\Z^{d''}$ and $y\in\Z^{d'}$ with $\iota(\phi_{\tilde{U}}(x)+\phi_{\tilde{T}}(y))\in V(M)\cap V^{\perp_{M}}$, we have that $\iota(x)\in V(M')\cap (\phi_{U_{0}}^{-1}(\pi_{U_{0}}(V)))^{\perp_{M'}}$ and that $y\in p\Z^{d'}$.
If the claim holds, the for any $n\in\Z^{d}$ with $\iota(h)\in V(M)\cap V^{\perp_{M}}$, we may write $n=\phi_{\tilde{U}}(x)+p\phi_{\tilde{T}}(y)$ for some $x\in\Z^{d''}$ and $y\in\Z^{d'}$. So $f(n)=\sigma_{y}(f)(x)\in\Z$ and we are done.

So it remains to prove the claim.
Projecting everything down to $\V$, it suffices to show that for all $x\in\F_{p}^{d''}$ and $y\in\F_{p}^{d'}$ with $\phi_{U_{0}}(x)+\phi_{T_{0}}(y)$ belonging to $V(M)\cap V^{\perp_{M}}$, we have that $x$ belongs to $V(M')\cap (\phi_{U_{0}}^{-1}(\pi_{U_{0}}(V)))^{\perp_{M'}}$ and that $y=\bold{0}$.
 Indeed, for all $t\in T_{0}$, we have that $0=((\phi_{U_{0}}(x)+\phi_{T_{0}}(y))A)\cdot t=(\phi_{T_{0}}(y)A)\cdot t$. Since $M\vert_{T_{0}}$ is non-degenerate, this implies that $\phi_{T_{0}}(y)=\bold{0}$ and thus $y=\bold{0}$. Therefore, $\phi_{U_{0}}(x)$ belonging to $V(M)\cap V^{\perp_{M}}$, which implies that $x$ belongs to $V(M')\cap (\phi_{U_{0}}^{-1}(\pi_{U_{0}}(V)))^{\perp_{M'}}$.
\end{proof}

\textbf{Step 1. structure of the solutions on different layers.}
We first provide a description of $\xi$ on many layers $U_{0}+v, v\in T_{0}$.

Fix any $m\in\Z^{d}$ and denote $\sigma:=\sigma_{m}$ for convenience. 
For $v\in T$ and $u\in \Z_{K}^{d''}$, let $$\eta_{v}(u):=\sigma\circ\xi(v+\phi_{U}(u))\in\st_{\zpn,d''}(s).$$
Since $\pi^{-1}(\bold{0})\cap(8\tilde{\xi}(H)\- 8\tilde{\xi}(H))$ admits a  classification $(\mathcal{C}_{0},Y)$, for all $v_{1},\dots,v_{16}\in T$ and $w_{1},\dots,w_{16}\in U$ with $v_{i}+w_{i}\in H, 1\leq i\leq 16$  and with $v_{1}+\dots+v_{8}=v_{9}+\dots+v_{16}$, $w_{1}+\dots+w_{8}=w_{9}+\dots+w_{16}$, either $\sp_{\F_{p}}\{\iota(v_{i})+\iota(w_{i})\colon 1\leq i\leq 16\}\cap Y\neq \{\bold{0}\}$ or $$\xi(v_{1}+w_{1})+\dots+\xi(v_{8}+w_{8}))-(\xi(v_{9}+w_{9})+\dots+\xi(v_{16}+w_{16}))\equiv f \mod J^{M}_{\iota(v_{1}+w_{1}),\dots,\iota(v_{15}+w_{15})}$$ for some $f\in \mathcal{C}_{0}$. 
Since $d\geq 33$,
by Proposition \ref{2:redf}, this implies that
\begin{equation}\label{2:basiccondition}
\begin{split}
&\text{for all $v_{1},\dots,v_{16}\in T$ and $u_{1},\dots,u_{16}\in \Z_{K}^{d''}$ with $v_{i}+\phi_{U}(u_{i})\in H, 1\leq i\leq 16$,}  \\&\text{and with $v_{1}+\dots+v_{8}=v_{9}+\dots+v_{16}$ and $u_{1}+\dots+u_{8}=u_{9}+\dots+u_{16}$,}
\\&\text{either $\sp_{\F_{p}}\{\iota(v_{i})+\iota(\phi_{U}(u_{i}))\colon 1\leq i\leq 16\}\cap Y\neq \{\bold{0}\}$} 
\\&\text{or $(\eta_{v_{1}}(u_{1})+\dots+\eta_{v_{8}}(u_{8}))-(\eta_{v_{9}}(u_{9})+\dots+\eta_{v_{16}}(u_{16}))\equiv g \mod J^{M'}_{\iota(u_{1}),\dots,\iota(u_{15})}$}
\\&\text{for some $g\in \sigma(\mathcal{C}_{0})$.}
\end{split}
\end{equation}

Since $\vert H\vert\geq \d K^{d}$, there exists $T_{1}\subseteq T$ with $\vert T_{1}\vert\geq\d K^{d'}/2$ such that $\vert (v+U)\cap H\vert\geq \d K^{d''}/2$ for all $v\in T_{1}$. Let $T_{2}$ be the set of $v\in T_{1}$ such that $J^{M}_{\sp_{\F_{p}}\{\iota(v)\}+U_{0}}\cap \mathcal{C}_{0}=\{0\}$ and $(\sp_{\F_{p}}\{\iota(v)\}+U_{0})\cap Y=\{\bold{0}\}$.

\textbf{Claim 1.} We have that $\vert T_{2}\vert\geq \d K^{d'}/4$.

To see this, let $R_{1}$ be the set of $v\in T$ such that $(\sp_{\F_{p}}\{\iota(v)\}+U_{0})\cap Y\neq\{\bold{0}\}$, and $R_{2}$ be the set of $v\in T$ such that $J^{M}_{\sp_{\F_{p}}\{\iota(v)\}+U_{0}}\cap \mathcal{C}_{0}\neq\{0\}$.
Since $U_{0}\cap Y=\{\bold{0}\}$, if $(\sp_{\F_{p}}\{\iota(v)\}+U_{0})\cap Y\neq\{\bold{0}\}$, then $\pi_{T_{0}}(\iota(v))$ is contained $\pi_{T_{0}}(Y)$. Since $Y$ is the union of at most $C$ subspaces of $\V$ of dimension at most $D$, the set $R_{1}$ is of cardinality at most $O_{C}(K^{D})$. 
On the other hand, we may write $R_{2}=\cup_{f\in \mathcal{C}_{0}\backslash\{\bold{0}\}} R_{2,f}$, where $R_{2,f}$ is the set of all
 $v\in T$, such that
$f\in J^{M}_{\sp_{\F_{p}}\{\iota(v)\}+U_{0}}$. Then $f\in \cap_{v\in R_{2,f}}J^{M}_{\sp_{\F_{p}}\{\iota(v)\}+U_{0}}$.
If $\vert R_{2,f}\vert>K^{s}$, then there exist $v_{1},\dots,v_{s+1}\in R_{2,f}$ which are $p$-linearly independent. Since
$\rank(M\vert_{U_{0}})=d''$, by Lemma \ref{2:iissoo} (iv), we have that $\dim(U_{0}\cap U_{0}^{\pp})=0$. 
By Lemma \ref{2:iissoo} (ii), this implies that
$\rank(M\vert_{U_{0}^{\pp}})=d'$. So
by Proposition \ref{2:gr0} (setting $r=1$), since $d\geq d''+7$ (or equivalently, $d'\geq 7$),
we have that $f\in J^{M}_{U_{0}}$, a contradiction to the fact that $J_{U_{0}}^{M}\cap\mathcal{C}_{0}=\{0\}$. 
Therefore, $\vert R_{2,f}\vert\leq K^{s}$ for all $f\in \mathcal{C}_{0}\backslash\{0\}$ and so $\vert R_{2}\vert\leq LK^{s}$. Since $\vert T_{1}\vert\geq \d K^{d'}/2$ and $d'\geq \max\{D+1,s+1\}$, we have that $\vert T_{2}\vert\geq \d K^{d'}/4$. This proves Claim 1.

\





  Since $\pi^{-1}(\bold{0})\cap(8\tilde{\xi}(H)\- 8\tilde{\xi}(H))$ admits a  classification $(\mathcal{C}_{0},Y)$, 
  by the construction of $T_{2}$ and Proposition \ref{2:g1621}, for all $v\in T_{2}$,
 there exists a subset $W_{v}\subseteq (U+v)\cap H\subseteq \iota^{-1}(\sp_{\F_{p}}\{\iota(v)\}+U_{0})$ with $\vert W_{v}\vert\geq \vert  (U+v)\cap H\vert/80^{L}\gg_{\d,L}K^{d''}$ such that $\xi$ is a Freiman $M$-homomorphism of order 16 on $W_{v}$. Write $U_{v}:=(H-v)\cap U$. Then
  for all $u_{1},\dots,u_{16}\in \phi_{U}^{-1}(U_{v})$ with $u_{1}+\dots+u_{8}=u_{9}+\dots+u_{16}$, we  have that 
  $$\sum_{i=1}^{8}\xi(v+\phi_{U}(u_{i})) \equiv\sum_{i=9}^{16}\xi(v+\phi_{U}(u_{i}))\mod J^{M}_{\sp_{\F_{p}}\{\iota(v+\phi_{U}(u_{i}))\colon 1\leq i\leq 15\}}.$$
 So by Proposition \ref{2:redf},
%
%
%
\begin{equation}\label{2:basiccondition3}
\begin{split}
&\text{for all $v\in T_{2}$ and $u_{1},\dots,u_{16}\in \phi_{U}^{-1}(U_{v})$ with $u_{1}+\dots+u_{8}=u_{9}+\dots+u_{16}$,}
\\&\text{we have $\eta_{v}(u_{1})+\dots+\eta_{v}(u_{8})\equiv\eta_{v}(u_{9})+\dots+\eta_{v}(u_{16}) \mod J^{M'}_{\iota(u_{1}),\dots,\iota(u_{15})}$.}
\end{split}
\end{equation}

\textbf{Claim 2.} There exist a subset $T_{3}$ of $T_{2}$ of cardinality $\gg_{\d,L}K^{d'}$  and some $v_{0}\in T_{3}$ such that $\vert U_{v}\cap U_{v_{0}}\vert\gg_{\d,L}K^{d''}$ for all $v\in T_{3}$.

For all $u\in U$, let $S_{u}$ denote the set of $v\in T_{2}$ such that $u\in U_{v}$. Then $\sum_{u\in U}\vert S_{u}\vert=\sum_{v\in T_{2}}\vert U_{v}\vert\gg_{\d,L} K^{d}$ by Claim 1.
On the other hand, note that 
\begin{equation}\nonumber
\begin{split}
\sum_{v,v'\in T_{2}, v\neq v'}\vert U_{v}\cap U_{v'}\vert=\sum_{u\in U}\binom{\vert S_{u}\vert}{2}\geq \frac{1}{2\vert U\vert}\Bigl(\sum_{u\in U}\vert S_{u}\vert\Bigr)^{2}-\Bigl(\sum_{u\in U}\vert S_{u}\vert\Bigr)\geq O_{\d,L}(1)^{-1}K^{d+d'}-K^{d}.
\end{split}
\end{equation}
By the Pigeonhole Principle, there exist $v_{0}\in T_{2}$ such that 
$$\sum_{v\in T_{2}, v\neq v_{0}}\vert U_{v}\cap U_{v_{0}}\vert\gg_{\d,L} K^{d}.$$
Again by the Pigeonhole Principle, there exist a subset $T'_{3}$ of $T_{2}$ of cardinality $\gg_{\d,L}K^{d'}$ such that $\vert U_{v}\cap U_{v_{0}}\vert\gg_{\d,L}K^{d''}$ for all $v\in T'_{3}$. This proves Claim 2 by setting $T_{3}:=T'_{3}\cup\{v_{0}\}$.

\

Let $T_{3}$ and $v_{0}$ be given by Claim 2. 
By (\ref{2:basiccondition3}), for all $v\in T_{2}$ and $u_{1},\dots,u_{16}\in \phi_{U}^{-1}(U_{v}\cap U_{v_{0}})$, we have that  
\begin{equation}\nonumber
	u_{1}+\dots+u_{8}=u_{9}+\dots+u_{16}\Rightarrow \eta_{v}(u_{1})+\dots+\eta_{v}(u_{8})\equiv\eta_{v}(u_{9})+\dots+\eta_{v}(u_{16}) \mod J^{M'}_{\iota(u_{1}),\dots,\iota(u_{15})}.
\end{equation}
Applying Proposition \ref{2:gef1} to $\phi_{U}^{-1}(U_{v}\cap U_{v_{0}})$, we deduce that if $d''\geq \max\{4s+19,35\}$, then  there exists $0<\rho=\rho(\d,d'')<1/4$ such that for all $v\in T_{3}$, there exist 
	a proper and homogeneous generalized arithmetic progression $P_{v}$ in $\Z_{K}^{d''}$ of rank $O_{\d,d,L}(1)$ and cardinality $\gg_{\d,d,L}K^{d''}$ (note that $\phi_{U}(P_{v})$ is a  generalized arithmetic progression in $U$ of the same rank and cardinality), a set $S_{v}\subseteq \hat{\Z_{K}^{d''}}$ of cardinality $O_{\d,L}(1)$, and a map $\omega_{v}\colon P_{v}\to \st_{\zpn,d''}(s)$ such that the followings hold:
	\begin{enumerate}[(i)]
		\item $B(S_{v},\rho)\subseteq P_{v}\subseteq B(S_{v},1/4)\subseteq 2\phi_{U}^{-1}(U_{v}\cap U_{v_{0}})-2\phi_{U}^{-1}(U_{v}\cap U_{v_{0}})$;
		\item For all $h\in P_{v}$,   the set $R(\phi_{U}^{-1}(U_{v}\cap U_{v_{0}}),h)$ is of cardinality $\gg_{\d,L}K^{3d''}$, where for all $E\subseteq \Z_{K}^{d''}$ and $h\in \Z_{K}^{d''}$, we denote
			$$R(E,h):=\{(h_{1},h_{2},h_{3},h_{4})\in E^{4}\colon h_{1}+h_{2}-h_{3}-h_{4}=h\}.$$
		\item for all $u_{1},u_{2},u_{3},u_{4}\in \phi_{U}^{-1}(U_{v}\cap U_{v_{0}})$ with $u_{1}+u_{2}-u_{3}-u_{4}\in P_{v}$,
		\begin{equation}\label{2:evev1}
		\eta_{v}(u_{1})+ \eta_{v}(u_{2})- \eta_{v}(u_{3})- \eta_{v}(u_{4})\equiv \omega_{v}(u_{1}+u_{2}-u_{3}-u_{4}) \mod J^{M'}_{\iota(u_{1}),\iota(u_{2}),\iota(u_{3}),\iota(u_{4})}.
		\end{equation}
		\item for all $u_{1},u_{2},u_{3},u_{4}\in P_{v}$ with $u_{1}+u_{2}=u_{3}+u_{4}$,
			\begin{equation}\label{2:evev2}
		\omega_{v}(u_{1})+ \omega_{v}(u_{2})\equiv\omega_{v}(u_{3})+ \omega_{v}(u_{4}) \mod J^{M'}_{\iota(u_{1}),\iota(u_{2}),\iota(u_{3})},
			\end{equation}
		i.e. the map $\omega_{v}\colon P_{v}\to\st_{\zpn,d''}(s)$ is a Freiman $M'$-homomorphism of order 4.
	\end{enumerate}

\textbf{Step 2. $\omega_{v}$ coincides with $\omega_{v_{0}}$ on a large subset $W_{v}$ of $P_{v}$ (up to a constant $b_{v}$).} 
Our next step is to compare $\omega_{v}$ with $\omega_{v_{0}}$ and conclude that they only differ by a constant.
Let $T_{4}$ be the set of $v\in T_{3}$ such that  $\sp_{\F_{p}}\{\iota(v_{0}),\iota(v)\}+U_{0}$ intersects $Y$ trivially. Since $U_{0}\cap Y$ and $U\cap T$ are trivial, the set of $v\in T_{3}$ such that  $\sp_{\F_{p}}\{\iota(v_{0}),\iota(v)\}+U_{0}$ intersects $Y$ non-trivially is of cardinality at most 
$Cp^{D+1}(K/p)^{d'}=CK^{d'}/p^{d'-D-1}$.
Since  $d'\geq D+2$, we have
$\vert T_{4}\vert\gg_{\d,L}K^{d'}$.
Pick any $v\in T_{4}$ and $u\in B(S_{v_{0}}\cup S_{v},\rho/4)$. 
Let $G_{v}(u)$ denote the set of all $(u_{1},\dots,u_{16})\in \phi_{U}^{-1}(U_{v_{0}})^{8}\times \phi_{U}^{-1}(U_{v})^{8}$ such that $u_{1}+u_{2}-u_{3}-u_{4}=x$,  $u_{5}+u_{6}-u_{7}-u_{8}=x+u$,  $u_{9}+u_{10}-u_{11}-u_{12}=y$ and $u_{13}+u_{14}-u_{15}-u_{16}=y+u$ for some $x\in B(S_{v_{0}},\rho/4)$ and $y\in B(S_{v},\rho/4)$.
We first estimate the cardinality of $G_{v}(u)$.
For any $x\in B(S_{v_{0}},\rho/4)$ and $y\in B(S_{v},\rho/4)$, since $x+u\in B(S_{v_{0}},\rho)$ and $y+u\in B(S_{v},\rho)$, we have that
$$\vert R(\phi_{U}^{-1}(U_{v_{0}}),x)\vert, \vert R(\phi_{U}^{-1}(U_{v_{0}}),x+u)\vert, \vert R(\phi_{U}^{-1}(U_{v}),y)\vert, \vert R(\phi_{U}^{-1}(U_{v}),y+u)\vert\gg_{\d,L}K^{3d''}.$$ So $$\vert G_{v}(u)\vert\gg_{\d,L}\vert B(S_{v_{0}},\rho/4)\vert\cdot \vert B(S_{v},\rho/4)\vert \cdot K^{12d''}\gg_{\d,d'',L} K^{14d''},$$ where we used the fact that $\vert B(S,\rho)\vert\geq \rho^{\vert S\vert}K^{d''}$ for all $S\subseteq \hat{\Z_{K}^{d''}}$ and $\rho>0$ (see Lemma 8.1 of \cite{GT08b}).

Pick any $(u_{1},\dots,u_{16})\in G_{v}(u)$ and let $x\in B(S_{v_{0}},\rho/4)$ and $y\in B(S_{v},\rho/4)$ be associated to this tuple. Then we have that $x,x+u\in P_{v_{0}}$ and $y,y+u\in P_{v}$. Since $\omega_{v}$ is a Freiman $M'$-homomorphism of order 4 on $P_{v}$, by (\ref{2:basiccondition}) and (\ref{2:evev2}), we have that 
\begin{equation}\nonumber
	\begin{split}
		&\quad \omega_{v_{0}}(u)-\omega_{v}(u)\equiv\omega_{v_{0}}(x+u)- \omega_{v_{0}}(x)- \omega_{v}(y+u)+ \omega_{v}(y)
		\\&\equiv\eta_{v_{0}}(u_{5})+\eta_{v_{0}}(u_{6})-\eta_{v_{0}}(u_{7})-\eta_{v_{0}}(u_{8})-\eta_{v_{0}}(u_{1})-\eta_{v_{0}}(u_{2})+\eta_{v_{0}}(u_{3})+\eta_{v_{0}}(u_{4})
		\\&\quad-\eta_{v}(u_{13})-\eta_{v}(u_{14})+\eta_{v}(u_{15})+\eta_{v}(u_{16})+\eta_{v}(u_{9})+\eta_{v}(u_{10})-\eta_{v}(u_{11})-\eta_{v}(u_{12})
		\\&\equiv f(u_{1},\dots,u_{16}) \mod J^{M'}_{\iota(u_{1}),\dots,\iota(u_{16})}
	\end{split}
\end{equation}
for some $f(u_{1},\dots,u_{16})\in\sigma(\mathcal{C}_{0})$, since $$\sp_{\F_{p}}\{\iota(v_{0})+\iota(\phi_{U}(u_{1})),\dots,\iota(v_{0})+\iota(\phi_{U}(u_{8})),\iota(v)+\iota(\phi_{U}(u_{9})),\dots,\iota(v)+\iota(\phi_{U}(u_{16}))\}$$ as a subspace of $\sp_{\F_{p}}\{\iota(v_{0}),\iota(v)\}+U_{0}$ intersects $Y$ trivially.
By the Pigeonhole Principle, there exist a subset $G'_{v}(u)$ of $G_{v}(u)$ of cardinality $\gg_{\d,L}K^{14d''}$ and some $f_{v}(u)\in \sigma(\mathcal{C}_{0})$ such that 
\begin{equation}\nonumber
	\begin{split}
		\omega_{v_{0}}(u)-\omega_{v}(u)\equiv f_{v}(u) \mod J^{M'}_{\iota(u_{1}),\dots,\iota(u_{16})}=J^{M'}_{\iota(u),\iota(u_{2}),\dots,\iota(u_{15})}
	\end{split}
\end{equation}
for all $(u_{1},\dots,u_{16})\in G'_{v}(u)$ (note that $u_{1}$ and $u_{16}$ are linear combinations of $u,u_{2},\dots,u_{15}$).
So 
\begin{equation}\nonumber
	\begin{split}
		\omega_{v_{0}}(u)-\omega_{v}(u)\equiv f_{v}(u) \mod \bigcap_{(u_{1},\dots,u_{16})\in G'_{v}(u)}J^{M'}_{\iota(u),\iota(u_{2}),\dots,\iota(u_{15})}.
	\end{split}
\end{equation}
Since $d''\geq\max\{s+15,35\}$, $\vert G'_{v}(u)\vert\gg_{\d,L}K^{14d''}$, and since $u_{1},u_{16}$ are uniquely determined by $u,u_{2},\dots,u_{15}$,
by Proposition \ref{2:grm},   we have that
\begin{equation}\label{2:wwfv}
	\begin{split}
		\omega_{v_{0}}(u)-\omega_{v}(u)\equiv f_{v}(u) \mod J^{M'}_{\iota(u)}
	\end{split}
\end{equation}
for all  $v\in T_{4}$ and $u\in B(S_{v_{0}}\cup S_{v},\rho/4)$. 
By the Pigeonhole Principle,  there exists $f_{v}\in \sigma(\mathcal{C}_{0})$ such that the set $$W_{v}:=\{u\in B(S_{v_{0}}\cup S_{v},\rho/4)\colon f_{v}(u)=f_{v}\}$$ is of cardinality at least $$\vert B(S_{v_{0}}\cup S_{v},\rho/4)\vert/\vert\mathcal{C}_{0}\vert\geq (\frac{\rho}{4})^{\vert S_{v_{0}}\cup S_{v}\vert}K^{d''}(L+1)^{-1}\gg_{\d,d'',L}K^{d''},$$ where again we used Lemma 8.1 of \cite{GT08b}.

Write $\omega'_{v}:=\omega_{v}-\omega_{v_{0}}$.
For all $v\in T_{4}$ and $u_{1},u_{2}\in W_{v}$, since $u:=u_{1}-u_{2}\in W_{v}-W_{v}\subseteq B(S_{v_{0}}\cup S_{v},\rho)$,  it follows from (\ref{2:wwfv}) that
$$\omega'_{v}(u_{1})-\omega'_{v}(u_{2})\equiv -f_{v}(u_{1})+f_{v}(u_{2})=f_{v}-f_{v}=0 \mod J^{M'}_{\iota(u_{1}),\iota(u_{2})}.$$
In other words, the set $$X_{v}:=\{(\bold{0},J^{M'}_{\iota(u)}+\omega'_{v}(u))\colon u\in W_{v}\}\subseteq\Gamma^{s}_{2}(\Z_{K}^{d''},M')$$ is an equivalence class with respect to the relation $\sim'$.
Since $\vert W_{v}\vert\gg_{\d,d'',L}K^{d''}$,  and $d''\geq s+2$, it is impossible for all the elements in $\iota(W_{v})$ to be contained in a fixed $s+1$ dimensional subspace of $\F_{p}^{d''}$.
By Proposition \ref{2:gwts} (setting $k=2$), since $d''\geq 9$, we have that $X_{v}$ is a strong equivalence class. So
there exists $b_{v}\in\st_{\zp,d''}(s)$ such that $\omega'_{v}(u)=b_{v} \mod J^{M'}_{\iota(u)}$ for all $u\in W_{v}$.

\textbf{Step 3. $\eta_{v}$ coincides with $\eta_{v_{0}}$ on a large subset $U'_{v}$ of $\phi_{U}^{-1}(U_{v}\cap U_{v_{0}})$ (up to a constant  $g_{v}$).} 
We next compare $\eta_{v}$ with $\eta_{v_{0}}$.
For convenience denote $\eta'_{v}:=\eta_{v}-\eta_{v_{0}}$. Our goal is to use the fact that $\omega'_{v}$ is a constant to conclude that $\eta'_{v}$  is also a constant. In fact, we have

\textbf{Claim 3.} For all $v\in T_{4}$, there exists $g_{v}\in \st_{\zp,d''}(s)$ and a set $U'_{v}\subseteq \phi_{U}^{-1}(U_{v}\cap U_{v_{0}})$ of cardinality $\gg_{\d,d',L} K^{d''}$ such that $\eta'_{v}(u)\equiv g_{v} \mod J^{M'}_{\iota(u)}$ for all $u\in U'_{v}$.

For $v\in T_{4}$, let $B_{v}$ denote the set of $(u_{1},u_{2},u_{3},u_{4})\in \phi_{U}^{-1}(U_{v}\cap U_{v_{0}})^{4}$ such that $u_{1}+u_{2}-u_{3}-u_{4}\in W_{v}$.
Since $u_{1}+u_{2}-u_{3}-u_{4}\in B(S_{v_{0}}\cup S_{v},\rho)\subseteq P_{v}\cap P_{v_{0}}$ for all $(u_{1},u_{2},u_{3},u_{4})\in B_{v}$, by (\ref{2:evev1}), 
\begin{equation}\nonumber
	\begin{split}
		&\quad \eta_{v}(u_{1})+\eta_{v}(u_{2})-\eta_{v}(u_{3})-\eta_{v}(u_{4})
		\\&\equiv\omega_{v}(u_{1}+u_{2}-u_{3}-u_{4})
		\equiv\omega_{v_{0}}(u_{1}+u_{2}-u_{3}-u_{4})+b_{v}
		\\&\equiv\eta_{v_{0}}(u_{1})+\eta_{v_{0}}(u_{2})-\eta_{v_{0}}(u_{3})-\eta_{v_{0}}(u_{4})+b_{v} 
		\mod J^{M'}_{\iota(u_{1}),\iota(u_{2}),\iota(u_{3}),\iota(u_{4})}
	\end{split}	
\end{equation}
In other words,
\begin{equation}\label{2:bvva}
\begin{split}
 \eta'_{v}(u_{1})+\eta'_{v}(u_{2})-\eta'_{v}(u_{3})-\eta'_{v}(u_{4})
\equiv b_{v} 
\mod J^{M'}_{\iota(u_{1}),\iota(u_{2}),\iota(u_{3}),\iota(u_{4})}
\end{split}	
\end{equation}
for all $(u_{1},u_{2},u_{3},u_{4})\in B_{v}$.

Let $B_{v}(u)$ denote the set of $(u_{2},u_{3},u_{4})\in \phi_{U}^{-1}(U_{v}\cap U_{v_{0}})^{3}$ such that $(u,u_{2},u_{3},u_{4})\in B_{v}$.
 Since $\vert R(\phi_{U}^{-1}(U_{v}\cap U_{v_{0}}),u)\vert\gg_{\d,d'',L}K^{3d''}$ for all $u\in W_{v}\subseteq P_{v}$  and since $\vert W_{v}\vert\gg_{\d,L}K^{d''}$, by the Pigeonhole Principle, there exists $U''_{v}\subseteq \phi_{U}^{-1}(U_{v}\cap U_{v_{0}})$ of cardinality $\gg_{\d,d'',L}K^{d''}$ such that for each $u\in U''_{v}$, $B_{v}(u)$ is of cardinality  $\gg_{\d,d'',L}K^{3d''}$.

 We need to use the intersection method.
 Let $Z$ be the set of $$((u_{1,2},u_{1,3},u_{1,4}),\dots,(u_{s+3,2},u_{s+3,3},u_{s+3,4}))\in \phi_{U}^{-1}(U_{v}\cap U_{v_{0}})^{3(s+3)}$$ such that $u_{i,j}, 1\leq i\leq s+3, 2\leq j\leq 4$ are $p$-linearly independent.   Consider the bipartite graph
 $G=(U''_{v},Z,E)$ such that $(u_{1},(u_{1,2},u_{1,3},u_{1,4}),\dots,(u_{4,2},u_{4,3},u_{4,4}))\in E$ if and only if $(u_{i,2},u_{i,3},u_{i,4})\in B_{v}(u_{1})$ for all $1\leq i\leq s+3$.
 Then by Lemma \ref{2:iiddpp}, for each $u_{1}\in U''_{v}$, the number of edges connecting $u_{1}$ in $G$ is at least
 $\gg_{\d,d'',L}K^{(3s+9)d''}$
 since $d''\geq 3s+9$. Since $\vert U''_{v}\vert\gg_{\d,d'',L}K^{d''}$, by the Pigeonhole Principle, there exists $((u_{1,2},u_{1,3},u_{1,4}),\dots,(u_{s+3,2},u_{s+3,3},u_{s+3,4}))\in Z$ such that the set $$U'_{v}:=\{u\in U''_{v}\colon (u,(u_{1,2},u_{1,3},u_{1,4}),\dots,(u_{s+3,2},u_{s+3,3},u_{s+3,4}))\in E\}$$ is of cardinality $\gg_{\d,d'',L}K^{d''}$.

 Now for all $u,u'\in U'_{v}$, by (\ref{2:bvva}), for $1\leq i\leq s+3$,
 \begin{equation}\nonumber
 \begin{split}
 &\quad \eta'_{v}(u)+\eta'_{v}(u_{i,2})-\eta'_{v}(u_{i,3})-\eta'_{v}(u_{i,4})\equiv b_{v}
 \\&\equiv\eta'_{v}(u')+\eta'_{v}(u_{i,2})-\eta'_{v}(u_{i,3})-\eta'_{v}(u_{i,4}) \mod J^{M'}_{\iota(u),\iota(u'),\iota(u_{i,2}),\iota(u_{i,3}),\iota(u_{i,4})},
 \end{split}
 \end{equation}
 and so 
 $$\eta'_{v}(u)\equiv\eta'_{v}(u') \mod \bigcap_{i=1}^{s+3}J^{M'}_{\iota(u),\iota(u'),\iota(u_{i,2}),\iota(u_{i,3}),\iota(u_{i,4})}.$$
 Since $d''\geq 4s+23$ and $u_{i,j}, 1\leq i\leq s+3, 2\leq j\leq s+3$ are $p$-linearly independent, by Proposition \ref{2:gri} (setting $m=2$  and $r=3$),  we have that
 $$\eta'_{v}(u)\equiv\eta'_{v}(u') \mod  J^{M'}_{\iota(u),\iota(u')}.$$
 This means that the set $$X_{v}:=\{(\bold{0},J^{M'}_{\iota(u)}+\eta'_{v}(u))\colon u\in U'_{v}\}\subseteq \Gamma^{s}_{2}(\Z_{K}^{d''},M')$$ is an equivalence class with respect to the relation $\sim'$.  
 Since $\vert U'_{v}\vert\gg_{\d,d'',L}K^{d''}$, and $d''\geq s+2$, it is impossible for all elements in $\iota(W_{v})$ to be contained in a fixed $s+1$ dimensional subspace of $\F_{p}^{d''}$. 
 By Proposition \ref{2:gwts}  (setting $k=2$), since $d''\geq 9$, the set $X_{v}$ is a strong equivalence class. 
 So there exists $g_{v}\in\st_{\zp,d''}(s)$ such that $\eta'_{v}(u)\equiv g_{v} \mod J^{M'}_{\iota(u)}$ for all $u\in U'_{v}$. This proves Claim 3.

\textbf{Step 4. The   map $v\mapsto g_{v}$ is a Freiman homomorphism modulo $J^{M'}$.} 
We have seen that $\eta'_{v}$ equals to a constant $g_{v}$. We now show that  the map $v\mapsto g_{v}$ has a nice Freiman homomorphism structure.
Let $U'_{v}$ be given by Claim 3.  We need the following combinatorial lemma.

\textbf{Claim 4.} The number of $v_{1},v_{2},v_{3},v_{4}\in T_{4}$ and $u_{i}\in U'_{v_{i}}, 1\leq i\leq 4$ with $v_{1}+v_{2}=v_{3}+v_{4}$ and $u_{1}+u_{2}=u_{3}+u_{4}$ is at least 
$\gg_{\d,d'',L}K^{3d}$.

For all $v\in T$, let $Q(v)$ denote the set of $(v_{1},v_{2})\in T_{4}^{2}$ with $v_{1}+v_{2}=v$. Since
$\sum_{v\in T}\vert Q(v)\vert=\vert T_{4}\vert^{2}\gg_{\d,d'',L}K^{2d'}$, there exists a subset $T'$ of $T$ with $\vert T'\vert\gg_{\d,d'',L}K^{d'}$ such that $\vert Q(v)\vert\gg_{\d,d'',L}K^{d'}$ for all $v\in T'$.  
Then for all $v\in T'$,
$$\sum_{u\in\Z_{K}^{d''}}\sum_{(v_{1},v_{2})\in Q(v)}\vert\{(u_{1},u_{2})\in U'_{v_{1}}\times U'_{v_{2}}\colon u_{1}+u_{2}=u\}\vert=\sum_{(v_{1},v_{2})\in Q(v)}\vert U'_{v_{1}}\vert\cdot \vert U'_{v_{2}}\vert\gg_{\d,d'',L}K^{d'+2d''}.$$
By the Pigeonhole Principle, for all $v\in T'$, there exists a subset $U(v)$ of $\Z_{K}^{d''}$ of cardinality $\gg_{\d,d'',L}K^{d''}$ such that for all $u\in U(v)$, we have
$$\sum_{(v_{1},v_{2})\in Q(v)}\vert\{(u_{1},u_{2})\in U'_{v_{1}}\times U'_{v_{2}}\colon u_{1}+u_{2}=u\}\vert\gg_{\d,d'',L}K^{d}.$$
So for any $v\in T'$ and $u\in U(v)$, the number of tuples $(v_{1},v_{2},u_{1},u_{2})$ with $(v_{1},v_{2})\in Q(v), (u_{1},u_{2})\in U'_{v_{1}}\times U'_{v_{2}}, u_{1}+u_{2}=u$ is $\gg_{\d,d'',L}K^{d}$. For two such tuples $(v_{1},v_{2},u_{1},u_{2})$ and $(v_{3},v_{4},u_{3},u_{4})$, it is clear that $v_{1}+v_{2}=v_{3}+v_{4}=v$ and $u_{1}+u_{2}=u_{3}+u_{4}=u$. Since there are $\gg_{\d,d'',L}K^{2d}$ such pairs for any  $v\in T'$ and $u\in U(v)$, and there are $\gg_{\d,d'',L}K^{d}$  pairs of $(v,u)$ with  $v\in T'$ and $u\in U(v)$. So there are in total $\gg_{\d,d'',L}K^{3d}$ many tuples of $(v_{1},\dots,v_{4},u_{1},\dots,u_{4})$ satisfying the requirement of Claim 4.

\

 Denote $U'=\cup_{v\in T_{4}}U'_{v}$. Then $\vert U'\vert\geq \inf_{v\in T_{4}}\vert U'_{v}\vert\cdot\vert T_{4}\vert\gg_{\d,d'',L}K^{d}$. 
By (\ref{2:basiccondition}) and (\ref{2:basiccondition3}), 
for all $v_{i}\in T_{4}$ and $u_{i}\in U'_{v_{i}}\subseteq \phi_{U}^{-1}(U_{v_{0}}), 1\leq i\leq 4$
with $v_{1}+v_{2}=v_{3}+v_{4}$ and $u_{1}+u_{2}=u_{3}+u_{4}$, either 
$\sp_{\F_{p}}\{v_{1}+\phi_{U}^{-1}(u_{1}),\dots,v_{4}+\phi_{U}^{-1}(u_{4})\}\cap Y\neq \{\bold{0}\}$  or
\begin{equation}\nonumber
\begin{split}
&\quad f
\equiv 4(\eta_{v_{1}}(u_{1})+\eta_{v_{2}}(u_{2})-\eta_{v_{3}}(u_{3})-\eta_{v_{4}}(u_{4}))
\\&\equiv 4(\eta_{v_{0}}(u_{1})+\eta_{v_{0}}(u_{2})-\eta_{v_{0}}(u_{3})-\eta_{v_{0}}(u_{4}))+4(g_{v_{1}}+g_{v_{2}}-g_{v_{3}}-g_{v_{4}})
\\&\equiv 4(g_{v_{1}}+g_{v_{2}}-g_{v_{3}}-g_{v_{4}}) \mod J^{M'}_{\iota(u_{1}),\iota(u_{2}),\iota(u_{3})}
\end{split}	
\end{equation}
for some $f\in \sigma(\mathcal{C}_{0})$.
By Claim 4, the number of $v_{1},v_{2},v_{3},v_{4}\in T_{4}$ and $u_{i}\in U'_{v_{i}}, 1\leq i\leq 4$ with $v_{1}+v_{2}=v_{3}+v_{4}$ and $u_{1}+u_{2}=u_{3}+u_{4}$ is at least 
$\gg_{\d,d'',L}K^{3d}$. 
Since $U_{0}\cap Y$ and $U\cap T$ are trivial, among these tuples $(v_{1},v_{2},v_{3},v_{4},u_{1},u_{2},u_{3},u_{4})$, the ones such that   $(\sp_{\F_{p}}\{\iota(v_{i})\colon 1\leq i\leq 4\}+U_{0})\cap Y\neq \{\bold{0}\}$ is of cardinality at most
$$3Cp^{2d'+3d''+(D+2)}(K/p)^{3d}=3CK^{3d}/p^{d'-D-2}.$$
So if   $d'\geq D+3$, then by the Pigeonhole Principle, there exists $E\subseteq T^{4}_{4}$ of cardinality $\gg_{\d,d'',L}K^{3d'}$ such that for all
$(v_{1},v_{2},v_{3},v_{4})\in E$, we have that $v_{1}+v_{2}=v_{3}+v_{4}$, $(\sp_{\F_{p}}\{\iota(v_{i})\colon 1\leq i\leq 4\}+U_{0})\cap Y=\{\bold{0}\}$,
and that the set
$$Z(v_{1},v_{2},v_{3},v_{4}):=\{(u_{1},u_{2},u_{3},u_{4})\in U'_{v_{1}}\times U'_{v_{2}}\times U'_{v_{3}}\times U'_{v_{4}}\colon u_{1}+u_{2}=u_{3}+u_{4}\}$$
is of cardinality $\gg_{\d,d'',L}K^{3d''}$. This means that 
\begin{equation}\nonumber
	\begin{split}
		g_{v_{1}}+g_{v_{2}}-g_{v_{3}}-g_{v_{4}}
		\equiv g(v_{1},\dots,v_{4},u_{1},\dots,u_{4}) \mod J^{M'}_{\iota(u_{1}),\iota(u_{2}),\iota(u_{3})}
	\end{split}	
\end{equation} 
for all  $(v_{1},v_{2},v_{3},v_{4})\in E$ and  $(u_{1},u_{2},u_{3},u_{4})\in Z(v_{1},v_{2},v_{3},v_{4})$ for some $g(v_{1},\dots,v_{4},u_{1}$, $\dots,u_{4})\in 4^{\ast}\sigma(\mathcal{C}_{0})$, where $4^{\ast}$ is any integer with $4^{\ast}4\equiv 1 \mod p^{N}\Z$. By the Pigeonhole Principle, there exist a subset $Z'(v_{1},v_{2},v_{3},v_{4})$ of $Z(v_{1},v_{2},v_{3},v_{4})$ of cardinality $\gg_{\d,d'',L}K^{3d''}$ and some $g(v_{1},v_{2},v_{3},v_{4})\in 4^{\ast}\sigma(\mathcal{C}_{0})$   such that
\begin{equation}\nonumber
	\begin{split}
		g_{v_{1}}+g_{v_{2}}-g_{v_{3}}-g_{v_{4}}
		\equiv g(v_{1},\dots,v_{4}) \mod J^{M'}_{\iota(u_{1}),\iota(u_{2}),\iota(u_{3})}
	\end{split}	
\end{equation} 
for all  $(v_{1},v_{2},v_{3},v_{4})\in E$ and  $(u_{1},u_{2},u_{3},u_{4})\in Z'(v_{1},v_{2},v_{3},v_{4})$. So
\begin{equation}\nonumber
	\begin{split}
		g_{v_{1}}+g_{v_{2}}-g_{v_{3}}-g_{v_{4}}
		\equiv g(v_{1},\dots,v_{4}) \mod \bigcap_{(u_{1},u_{2},u_{3},u_{4})\in Z'(v_{1},v_{2},v_{3},v_{4})}J^{M'}_{\iota(u_{1}),\iota(u_{2}),\iota(u_{3})}.
	\end{split}	
\end{equation} 
Since $d''\geq \max\{s+3,11\}$, $\vert Z'(v_{1},v_{2},v_{3},v_{4})\vert\gg_{\d,d'',L}K^{3d''}$ and $u_{4}=u_{1}+u_{2}-u_{3}$,
by Proposition \ref{2:grm},   we have that 
\begin{equation}\nonumber
	\begin{split}
		g_{v_{1}}+g_{v_{2}}-g_{v_{3}}-g_{v_{4}}
		\equiv g(v_{1},\dots,v_{4}) \mod J^{M'}
	\end{split}	
\end{equation}
for all $(v_{1},v_{2},v_{3},v_{4})\in E$.  

Let $$\Gamma=\{(v,g_{v} \mod J^{M'})\colon v\in T_{4}\}\subseteq T\times (\st_{\zp,d''}(s)\mod J^{M'}),$$ where $T\times (\st_{\zp,d''}(s)\mod J^{M'})$ is a group under the additive operation. 

\textbf{Claim 5.} The set $\Gamma$ has additive energy $\gg_{\d,d'',L}K^{3d'}$. 

Since $\vert E\vert\gg_{\d,d'',L}K^{3d'}$, by repeatedly using the Pigeonhole Principle, there exists $Z\subseteq \Z_{K}^{d'}$ of cardinality $\gg_{\d,d'',L}K^{d'}$ such that for all $u\in Z$, there exists   
$x\in T_{4}$ such that
$x+u\in T_{4}$ and there exists $T_{4}(u)\subseteq T_{4}$ of cardinality $\gg_{\d,d'',L}K^{d'}$ such that for all $y\in T_{4}(u)$, we have $y,y+u\in T_{4}$ and that 
$(x,y+u,y,x+u)\in E$.
Then  $g_{y+u}-g_{y}\equiv(g_{x+u}-g_{x})-g \mod J^{M'}$ for some $g\in 4^{\ast}\sigma(\mathcal{C}_{0})$. 
Again by the Pigeonhole Principle, there exists a subset $T'_{4}(u)$ of $T_{4}(u)$ of cardinality $\gg_{\d,d'',L}K^{d'}$ such that $g_{y+u}-g_{y}\equiv g_{y'+u}-g_{y'} \mod J^{M'}$ for all $y,y'\in T'_{4}(u)$. So the additive energy of $\Gamma$ is at least $\sum_{u\in Z}\vert T'_{4}(u)\vert^{2}\gg O_{\d,d'',L}K^{3d'}$. This proves Claim 5.

\

Since $\Gamma$ is contained in the finite abelian group  $T\times (\st_{\Z/p^{N'},d''}(s)\mod J^{M'})$ for some $N'\in\N_{+}$ and $\Gamma$ has additive energy $\gg_{\d,d'',L}K^{3d'}$ by Claim 5, we may use some standard additive combinatorial machinery   to study the structure of $\Gamma$ (see also \cite{GTZ12}). Using the Balog-Szemer\'edi-Gowers Theorem in \cite{Chang04} followed by the Pl\"unnecke inequality \cite{Plu69,Ruz89}, there exists a subset $\Gamma'$ of $\Gamma$ of cardinality  $\gg_{\d,d'',L}K^{d'}$ such that $\vert 8\Gamma'-8\Gamma'\vert\gg_{\d,d'',L}K^{d'}$. By Lemma 9.2 of \cite{GT08b}, there exists a subset  $T_{5}\subseteq T_{4}$ of cardinality $\gg_{\d,d'',L}K^{d'}$ such that
\begin{equation}\label{2:gvanish}
g_{v_{1}}+\dots+g_{v_{8}}\equiv g_{v_{9}}+\dots+g_{v_{16}} \mod J^{M'}
\end{equation}
for all $v_{1},\dots,v_{16}\in T_{5}$ with $v_{1}+\dots+v_{8}=v_{9}+\dots+v_{16}$.

\textbf{Step 5. Completion of the proof.} We now have enough information for the maps $\eta_{v}$ to complete the proof of  Proposition \ref{2:laststand}.

 Let $H':=\cup_{v\in T_{5}}\cup (\phi_{U}(U'_{v})+v)$. Then $H'\subseteq H$ and $\vert H'\vert\gg_{\d,d'',L}K^{d}$. 
For all $h=v+\phi_{U}(u)\in H'$ with $v\in T_{5}$ and $u\in U'_{v}$,
define $\eta(h):=\eta_{v_{0}}(u)+g_{v}$ (note that $\eta$ is well defined since $T\cap U=\{\bold{0}\}$).
 Then
for all $h_{i}=v_{i}+\phi_{U}(u_{i})\in H'$, $v_{i}\in T_{5}$ and $u_{i}\in U'_{v_{i}}\subseteq \phi_{U}^{-1}(U_{v_{0}}), 1\leq i\leq 16$ with $h_{1}+\dots+h_{8}=h_{9}+\dots+h_{16}$, since $T\cap U=\{\bold{0}\}$, we have that $v_{1}+\dots+v_{8}=v_{9}+\dots+v_{16}$ and $u_{1}+\dots+u_{8}=u_{9}+\dots+u_{16}$. So by (\ref{2:basiccondition3}) and (\ref{2:gvanish}), we have
\begin{equation}\label{2:aaddee}
\begin{split}
&\quad(\eta(h_{1})+\dots+\eta(h_{8}))-(\eta(h_{9})+\dots+\eta(h_{16}))
\\&\equiv(\eta_{v_{1}}(u_{1})+\dots+\eta_{v_{8}}(u_{8}))-(\eta_{v_{9}}(u_{9})+\dots+\eta_{v_{16}}(u_{16}))
\\&\equiv(\eta_{v_{0}}(u_{1})+\dots+\eta_{v_{0}}(u_{8}))-(\eta_{v_{0}}(u_{9})+\dots+\eta_{v_{0}}(u_{16}))
+(g_{v_{1}}+\dots+g_{v_{8}})-(g_{v_{9}}+\dots+g_{v_{16}})
\\&\equiv(\eta_{v_{0}}(u_{1})+\dots+\eta_{v_{0}}(u_{8}))-(\eta_{v_{0}}(u_{9})+\dots+\eta_{v_{0}}(u_{16}))
 \equiv 0 \mod J^{M'}_{\iota(u_{1}),\dots,\iota(u_{15})}.
\end{split}	
\end{equation}
For all $h=v+\phi_{U}(u)\in H'$ with $v\in T_{5}$ and $u\in U'_{v}$, since  $\iota(u)=\phi^{-1}_{U_{0}}\circ\pi_{U_{0}}\circ\iota(h)$, we have that
\begin{equation}\nonumber
\begin{split}
\sigma_{m}(\xi(h))=\eta_{v}(u)=\eta_{v_{0}}(u)+g_{v}=\eta(h) \mod J^{M'}_{\iota(u)}=J^{M'}_{\phi_{U_{0}}^{-1}\circ\pi_{U_{0}}\circ \iota(h)},
\end{split}	
\end{equation}
where we changed the notation of $\sigma$ back to $\sigma_{m}$.
Therefore for all $h_{1},\dots,h_{16}\in H'$ with $h_{1}+\dots+h_{8}=h_{9}+\dots+h_{16}$, it follows from (\ref{2:aaddee}) that 
\begin{equation}\label{2:ssiik2}
\begin{split}
 \sigma_{m}((\xi(h_{1})+\dots+\xi(h_{8}))-(\xi(h_{9})+\dots+\xi(h_{16})))\equiv 0
 \mod J^{M'}_{\phi_{U_{0}}^{-1}\circ\pi_{U_{0}}(\sp_{\F_{p}}\{\iota(h_{1}),\dots,\iota(h_{15})\})}.
\end{split}	
\end{equation}

Since $\pi^{-1}(\bold{0})\cap(8\tilde{\xi}(H')\- 8\tilde{\xi}(H'))$ admits the same   structure-obstacle decomposition as $\pi^{-1}(\bold{0})\cap(8\tilde{\xi}(H)\- 8\tilde{\xi}(H))$, for any $N=O_{d}(1)$, replacing $H'$ by a subset with cardinality $\gg_{\d,d,L}L^{d}$ if necessary, we may use the same argument repeatedly to conclude that   (\ref{2:ssiik2}) holds for all $m\in [N]^{d'}$. 
The conclusion of  Proposition \ref{2:laststand}
 then follows from Proposition \ref{2:redf}.

\section{Proof of the main theorem}\label{2:s:a2}
 
 We are now ready to complete the proof of Theorem \ref{2:aadd}. Our first task is to
  remove the additional modulo over $J^{M}_{T}$ in Proposition \ref{2:laststand}. To this end, we need to use the following lemma to find many structure-obstacle decompositions (which allows us to use the intersection method):

\begin{lem}[Abundance of structure-obstacle decompositions]\label{2:manyd}
	Let $C,d',d'',D,L,s\in\N$, $d,R\in\N_{+}$ with $d=d'+d''$,  $p\gg_{C,d,L,s} 1$ be a prime,   $M\colon \V\to\F_{p}$ be a quadratic form, and $(\mathcal{C}_{0},Y)\subseteq \st_{\zp,d}(s)\times \V$  be a $(L,C,D)$-structure-obstacle pair.
	If $d'\geq \max\{D,3\}, d''\geq (R-1)d'$  and $d\geq 4$, then there exist $(\mathcal{C}_{0},Y,M,d',d'')$-structure-obstacle decompositions $(T_{i},U_{i}), 1\leq i\leq R$ such that $T_{1},\dots,T_{R}$ are linearly independent. 
\end{lem}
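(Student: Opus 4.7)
The plan is to reduce the lemma to finding a single $d''$-dimensional subspace $U^*\subseteq\V$ that is ``good" in the sense of satisfying the three conditions of being the $U$-part of a structure-obstacle decomposition, namely $U^*\cap Y=\{\bold{0}\}$, $J^M_{U^*}\cap\mathcal{C}_{0}=\{0\}$, and $\rank(M\vert_{U^*})=d''$. Given such a $U^*$, fix any complement $T^{(0)}$ so that $\V=T^{(0)}\oplus U^*$, and parametrize the $d'$-dimensional complements of $U^*$ as graphs $T_{\phi}=\{t+\phi(t):t\in T^{(0)}\}$ of linear maps $\phi:T^{(0)}\to U^*$. Using the hypothesis $d''\geq (R-1)d'$, decompose $U^*=W_{1}\oplus\cdots\oplus W_{R-1}\oplus W'$ with $\dim W_{j}=d'$ for $j<R$ and $\dim W'=d''-(R-1)d'\geq 0$; set $\phi_{R}:=0$ and, for $i<R$, let $\phi_{i}:T^{(0)}\to W_{i}$ be an isomorphism. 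A short verification comparing the $T^{(0)}$- and $U^*$-components of $\sum_{i}(t_{i}+\phi_{i}(t_{i}))=\bold{0}$ shows that $T_{i}:=T_{\phi_{i}}$ are linearly independent, and each $(T_{i},U^*)$ is a structure-obstacle decomposition.

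It remains to produce a single good $U^*$. I would build $U^*$ greedily, one dimension at a time: set $U^{(0)}:=\{\bold{0}\}$, and at step $k<d''$ choose $v_{k+1}\in\V$ so that $U^{(k+1)}:=U^{(k)}+\F_{p}v_{k+1}$ still satisfies the (dimension-$(k+1)$) analogues of (a), (b), (c). Let $B_{a},B_{b},B_{c}\subseteq\V$ denote the bad sets; it suffices to show $|B_{a}|+|B_{b}|+|B_{c}|<p^{d}$ at every step. For (a), scale-invariance of each subspace component of $Y$ gives $B_{a}\subseteq U^{(k)}\cup (Y+U^{(k)})$, a union of at most $C$ subspaces of dimension $\leq D+k$, so $|B_{a}|\leq 2Cp^{D+d''-1}\leq 2Cp^{d-1}$ using $d'\geq D$. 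For (c), the failure of $\rank(M\vert_{U^{(k+1)}})=k+1$ is the vanishing of $Q_{k}(v):=\det(\text{Gram matrix of }B_{M}\text{ on }U^{(k+1)})$, a polynomial of degree $\leq 2$ in $v$ that is not identically zero (since $M$ is non-degenerate on $\V$ one can take $v\in (U^{(k)})^{\perp_{M}}$ with $M(v)\neq 0$), so by Lemma~\ref{2:ns} $|B_{c}|\leq 2p^{d-1}$. For (b), for each nonzero $f\in\mathcal{C}_{0}$ the invariant gives $f\notin J^{M}_{U^{(k)}}$, and the contrapositive of Proposition~\ref{2:grm} (applied with $V=U^{(k)}$, $m=1$, $k=0$) forces $|\{v:f\in J^{M}_{U^{(k)}+\F_{p}v}\}|\leq \d p^{d}$ for any fixed $\d>0$ once $p\gg_{\d,d,K,s}1$; summing over the at most $K$ nonzero $f$ and taking $\d<1/(3K)$ yields $|B_{b}|<p^{d}/3$. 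Combined, $|B_{a}|+|B_{b}|+|B_{c}|<p^{d}$ for $p\gg_{C,d,K,s}1$, so a good $v_{k+1}$ exists at each step and $U^*:=U^{(d'')}$ works.

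The main obstacle will be controlling $B_{b}$: the dimension conditions in Proposition~\ref{2:grm} are not automatically met under the bare hypotheses $d'\geq \max\{D,3\}$ and $d\geq 4$, so at each step $k\leq d''-1$ one must either verify these conditions by unfolding the numerology of $\dim V=k$, $m=1$, and applying the sharper intersection statements of Section~\ref{2:s:a1} (Propositions~\ref{2:gri}, \ref{2:gr0} and Lemma~\ref{2:killL}) directly, or else adapt the argument by applying Proposition~\ref{2:grm} to an appropriately chosen enlargement of $U^{(k)}$. By contrast, the estimates for $B_{a}$ and $B_{c}$ are routine once the scale-invariance in (a) and the degree-$2$ polynomial structure in (c) are identified, and the final assembly into $R$ decompositions is a linear-algebra calculation.
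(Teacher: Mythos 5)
Your reduction to finding a single good subspace $U^*$ and then taking $R$ linearly independent complements via graphs of linear maps $T^{(0)}\to U^*$ is a genuine and valid simplification of the paper's argument, which instead fixes the $T_i$ one at a time and re-counts compatible $U_i$'s: you are right that the conditions $U\cap Y=\{\bold{0}\}$, $J^M_U\cap\mathcal{C}_0=\{0\}$, $\rank(M\vert_U)=d''$ depend only on $U$, so one $U^*$ serves for all $i$, and your final linear-algebra verification is fine. The estimates for $B_a$ and $B_c$ are also correct, up to the small slip that one should demand $(vA)\cdot v\neq 0$ rather than $M(v)\neq 0$ (only the pure quadratic part feeds the Gram matrix).

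The gap is in $B_b$, which you half-acknowledge but do not close. Applying Proposition~\ref{2:grm} with $V=U^{(k)}$, $m=1$ requires the exponent $d^*$ in $\vert P\vert>\d p^{d^*}$ to satisfy $d^*\geq 2\dim V+2m+5=2k+7$; at the last greedy step $k=d''-1$ this forces $d^*\geq 2d''+5$, so the resulting bound $\vert B_b\vert\leq\d p^{d^*}$ is useless unless $d'\geq d''+6$, which is incompatible with $d''\geq(R-1)d'$ for every $R\geq 2$. Even the rank alternative of Proposition~\ref{2:gr0} ($\rank(M\vert_{V^{\perp_M}})\geq 7$, i.e.\ $d-k\geq 7$) needs $d'\geq 6$, which the hypothesis $d'\geq\max\{D,3\}$ does not supply. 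So the intersection propositions you cite cannot control $B_b$ under the lemma's hypotheses. The paper avoids this difficulty by not going greedily: it parametrizes all $d''$-dimensional $U$'s at once as graphs of linear maps $\F_p^{d''}\to\F_p^{d'}$ and double-counts over points of $V(f_j)\cap V(M)$, which only needs $\rank(M)\geq 3$ and $d\geq 4$. If you want to keep your greedy structure, a second-moment argument does work: by Proposition~\ref{2:noloop3} the set $S:=(V(M)\cap(U^{(k)})^{\perp_M})\setminus V(f)$ has size $\gg p^{d-k-1}$, and since $\vert S\cap v^{\perp_M}\vert$ has mean $\vert S\vert/p$ and variance $O(\vert S\vert)$ over $v\in\V$, Chebyshev bounds the number of $v$ with $S\cap v^{\perp_M}=\emptyset$ (which is what $f\in J^M_{U^{(k)}+\F_p v}$ forces, after discarding the $v$ already in $B_c$) by $O(p^{k+3})\leq O(p^{d''+2})=o(p^d)$ once $d'\geq 3$. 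That would close the gap, but it is not the tool you invoked, so as written the proof does not go through.
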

\begin{proof}	
	Throughout the proof we assume that $p\gg_{C,d,L,s} 1$.
	Let $A$ be the matrix associated to $M$.
	Suppose that for some $0\leq r\leq R-1$ we have chosen $(\mathcal{C}_{0},Y,M,d',d'')$-structure-obstacle decompositions $(T_{i},U_{i}), 1\leq i\leq r$ such that $T_{1},\dots,T_{r}$ are linearly independent. We continue to construct $(T_{r+1},U_{r+1})$.
	It then suffices to find a subspace $U_{r+1}$ of $\V$ of dimension $d'$ such that ${U_{r+1}}^{\perp_{M}}\cap (T_{1}+\dots+T_{r})=\{\bold{0}\}$, $\rank(M\vert_{U_{r+1}})=d'$, $U_{r+1}\cap Y=\{\bold{0}\}$ and $J^{M}_{U_{r+1}}\cap \mathcal{C}_{0}=\{\bold{0}\}$.

	Assume that $Y=\cup_{i=1}^{C}V_{i}$ and  $\mathcal{C}_{0}=\{{0},f_{1},\dots,f_{L}\}$ for some subspaces $V_{i}$ of dimension at most $D$ and $f_{j}\in \st_{\zp,d}(s)\backslash J^{M}$. 
	Let $\mathcal{B}$ be the set of all $d''$-tuples $\b=(b_{1},\dots,b_{d''})\in(\V)^{d''}$. Let $U(\b)$ denote the span of $b_{1},\dots,b_{d''}$.
	 Let $\mathcal{B}'$ be the set of all linearly dependent $d''$-tuples,  $\mathcal{B}''$ be the set of all $\b\in\mathcal{B}\backslash \mathcal{B}'$ such that $M\vert_{U(\b)}$ is non-degenerate, $\mathcal{B}'''$ be the set of $\b\in \mathcal{B}\backslash \mathcal{B}''$ such that $U(\b)^{\perp_{M}}\cap (T_{1}+\dots+T_{r})\neq\{\bold{0}\}$, $\mathcal{B}_{i}$ be the set of subspaces $\b\in \mathcal{B}\backslash \mathcal{B}''$ such that $U(\b)\cap V_{i}\neq\{\bold{0}\}$, and $\mathcal{B}'_{j}$ be the set of subspaces $\b\in\mathcal{B}\backslash \mathcal{B}''$ such that $f_{j}\in J^{M}_{U(\b)}$. To show the existence of $U_{k+1}$, it suffices to show that $\vert \mathcal{B}\vert>\vert \mathcal{B}'\vert+\vert \mathcal{B}''\vert+\vert \mathcal{B}'''\vert+\sum_{i=1}^{C}\vert \mathcal{B}_{i}\vert+\sum_{j=1}^{L}\vert \mathcal{B}'_{j}\vert$.

	Clearly $\vert\mathcal{B}\vert=p^{dd''}$.
	By Lemma \ref{2:iiddpp}, $\vert\mathcal{B}'\vert=O_{d''}(p^{dd''-1})$. We now compute $\vert \mathcal{B}''\vert$.
	Let $E(\b)$ be the $d''\times d$ matrix with $b_{1},\dots,b_{d''}$ as row vectors. Then for $\b\in \mathcal{B}\backslash \mathcal{B}'$, $U(\b)$ is non-degenerate if and only if $\det(E(\b)AE(\b)^{T})\neq 0$. Not that the map $\b\mapsto \det(E(\b)AE(\b)^{T})$ is a non-trivial polynomial of degree at most $O_{d}(1)$. So it follows from  Lemma \ref{2:ns} that $\vert\mathcal{B}''\vert=O_{d''}(p^{dd''-1})$.	
	
	We next compute  $\vert \mathcal{B}'''\vert$. 
	Let $m:=\dim(T_{1}+\dots+T_{r})\leq (R-1)d'$ and let $v_{1},\dots,v_{m}$ be a basis of $T_{1}+\dots+T_{r}$. Then $\b\in  \mathcal{B}'''$ if and only if there exist $x_{1},\dots,x_{m}\in\F_{p}$ not all equal to zero such that $(b_{i}A)\cdot (x_{1}v_{1}+\dots+x_{m}v_{m})=0$ for all $1\leq i\leq d''$. Since $d''\geq m$, this is true only if the determinant of the matrix $C(\b):=((b_{i}A)\cdot v_{j})_{1\leq i,j\leq m}$ is zero. Note that $\det(C(\b))$ is   a polynomial in $\b$ of degree at most $O_{d}(1)$.
%
 %
	On the other hand, it is not hard to inductively construct for any $1\leq i\leq m\leq d''$ vectors $b_{1},\dots,b_{i}\in\V$ with $b_{i}\notin\sp_{\F_{p}}\{b_{1},\dots,b_{i-1}\}$ and $(b_{i'}A)\cdot v_{j}=\delta_{i',j}$ for all $1\leq i'\leq i$ and $1\leq j\leq m$. 
	So
	$\det(C(\b))\neq 0$ and thus $\det(C(\cdot))$ is not the constant zero polynomial.  So it follows from  Lemma \ref{2:ns} that $\vert\mathcal{B}'''\vert=O_{d''}(p^{dd''-1})$.

	Now fix  $1\leq i\leq C$ and let $v_{1},\dots,v_{m}$ be a basis of $V_{i}$ for some $m\leq D$. Then $U(\b)\cap V_{i}\neq\{\bold{0}\}$ only if there exist $x_{1},\dots,x_{m},y_{1},\dots,y_{d''}\in\F_{p}$ with $y_{1},\dots,y_{d''}$ not all equal to zero such that 
	$$x_{1}v_{1}+\dots+x_{m}v_{m}=y_{1}b_{1}+\dots+y_{d''}b_{d''}.$$
	If $y_{1}\neq 0$, then $b_{1}$ is uniquely determined by the choices of $x_{1},\dots,x_{m},y_{2},\dots,y_{d''}$ and $b_{2},\dots,b_{d''}$. So
	$$\vert \mathcal{B}_{i}\vert\leq d''p^{(d''-1)d+m+d''-1}=O_{d}(p^{d''d-1})$$
	since $d'\geq D$.
	
	 Finally, fix $1\leq j\leq L$ and let
	$P$ be the set of $n\in\V\backslash\{\bold{0}\}$ such that $f_{j}(n)=(nA)\cdot n=0$. Since $f_{j}\notin J^{M}$, by  Lemmas \ref{2:iiddpp}, \ref{2:counting01} and Proposition \ref{2:bzt}, we have
 $\vert P\vert=O_{d,s}(p^{d-2})$ since $d\geq 4$.
	For $\b\in \mathcal{B}\backslash \mathcal{B}'$,
	let $P_{U(\b)}$ be the set of $n\in\V$ such that $f_{j}(n)=(nA)\cdot n=(hA)\cdot n=0$ for all $h\in U(\b)$. In other words,  $P_{U(\b)}$ is the intersection of $P$ and the subspace $$U(\b)^{\perp_{M}}:=\{ n\in\V\colon (hA)\cdot n=0 \text{ for all } h\in U(\b)\}$$ which is of dimension $d-\rank(M\vert_{U(\b)})=d'\geq 3$.
	
	If $f_{j}\in J^{M}_{U(\b)}$ for some $\b\in \mathcal{B}\backslash \mathcal{B}''$, then $P_{U(\b)}$ is the intersection of $V(M)$ and the subspace $U(\b)^{\perp_{M}}$.
	By  Lemma \ref{2:counting01}, $\vert P_{U(\b)}\vert=p^{d-d''-1}(1+O(p^{-1/2}))$.
	Therefore, in order to estimate $\vert \mathcal{B}'_{j}\vert$, it suffices to study the number of  $\b\in \mathcal{B}\backslash \mathcal{B}''$ such that $\vert P_{U(\b)}\vert=p^{d-d''-1}(1+O(p^{-1/2}))$.

Note that for $n\in\V$ and $\b\in \mathcal{B}\backslash \mathcal{B}'$, $n\in U(\b)^{\perp_{M}}$ if and only if $U(\b)\subseteq \sp_{\F_{p}}\{n\}^{\perp_{M}}$, which is a subspace of $\V$ of dimension 1 since $n\neq \bold{0}$. So the number of such $\b$ is at most $p^{(d-1)d''}$.
Then
	\begin{equation}\nonumber
	\begin{split}
	 &\quad\sum_{\b\in\mathcal{B}\backslash \mathcal{B}''}\vert P_{U(\b)}\vert=\sum_{\b\in\mathcal{B}\backslash \mathcal{B}''}\vert P\cap U(\b)^{\perp_{M}}\vert
=\sum_{n\in P}\vert\{\b\in\mathcal{B}\backslash \mathcal{B}''\colon n\in U(\b)^{\perp_{M}}\}\vert
\\&\leq\vert P\vert\cdot p^{(d-1)d''}=O_{d,s}(p^{dd''+d-d''-2}),
	\end{split}
	\end{equation}
	which implies that the number of $\b\in\mathcal{B}\backslash \mathcal{B}''$ with $\vert P_{U(\b)}\vert=p^{d-d''-1}(1+O(p^{-1/2}))$ is at most $O_{d,s}(p^{dd''-1})$. So $\vert \mathcal{B}'_{j}\vert\leq  O_{d,s}(p^{dd''-1})$ for any $1\leq j\leq L$.



	Combining the estimates of the cardinalities of $\mathcal{B}$, $\mathcal{B}'$, $\mathcal{B}_{i}$ and $\mathcal{B}'_{j}$ above,  we have $\vert \mathcal{B}\vert>\vert \mathcal{B}'\vert+\vert \mathcal{B}''\vert+\vert \mathcal{B}'''\vert+\sum_{i=1}^{C}\vert \mathcal{B}_{i}\vert+\sum_{j=1}^{L}\vert \mathcal{B}'_{j}\vert$.  This completes the proof.
\end{proof}

We are now ready to prove  Theorem \ref{2:aadd}.

	\begin{proof}[Proof of Theorem \ref{2:aadd}]
	Throughout the proof we assume that $p\gg_{\d,d,r} 1$. Since $d\geq (2s+16)(15s+453)$, we may write $d=d'+d''$ for some $d'\geq 15s+453$ and 
$d''\geq (2s+15)d'\geq (2s+15)(15s+453)$.
	By assumption, the $M$-energy  of $(\xi_{1},\xi_{2},\xi_{3},\xi_{4})$ is at least $\d K^{3d}$.
	Using the Cauchy-Schwartz inequality (Theorem \ref{2:gcs}) twice and the fact that $(\xi_{1},\xi_{2},\xi_{1},\xi_{2})$ has the same $M$-energy as  $(\xi_{1},\xi_{1},\xi_{2},\xi_{2})$, we have that the $M$-energy of $(\xi_{1},\xi_{1},\xi_{1},\xi_{1})$ is  $\gg_{\d,d}K^{3d}$. From now on we write $\xi:=\xi_{1}$ for convenience, and denote $\tilde{\xi}(h)=(h,J^{M}_{\iota(h)}+\xi(h))$ for all $h\in H$. 
 
	 By Theorem \ref{2:gbig1}, since $d\geq 2s+15$,   there exists a subset $H_{1}$ of $H$ of cardinality $\gg_{\d,d}K^{d}$ such that  $\dep(\tilde{\xi}(H_{1})\+ \tilde{\xi}(H_{1}))=O_{\d,d}(1)$. 
		By Proposition \ref{2:ga3}, since $d\geq \max\{s+32,69\}$,  there exists a subset $H_{2}$ of $H_{1}$ of cardinality $\gg_{\d,d}K^{d}$ such that    $\dep(8\tilde{\xi}(H_{2})\- 8\tilde{\xi}(H_{2}))=O_{\d,d}(1)$. 
		Since $8\tilde{\xi}(H_{2})\- 8\tilde{\xi}(H_{2})\subseteq \Gamma^{s}_{16}(\Vk,M)$ and $d\geq 28s+95$,
		by Proposition \ref{2:gweakcore2},    $\pi^{-1}(\bold{0})\cap(8\tilde{\xi}(H_{2})\- 8\tilde{\xi}(H_{2}))$ admits an $(O_{\d,d}(1),O_{\d,d}(1),15s+450)$-classification for some $(O_{\d,d,s}(1)$, $O_{\d,d,s}(1),15s+450)$-structure-obstacle pair $(\mathcal{C}_{0},Y)$.
	    By Lemma \ref{2:manyd}, since $d'\geq 15s+450, d''\geq (s+15)d'$,   there exist $(\mathcal{C}_{0},Y,M,d',d'')$-structure-obstacle decompositions $(T_{i},U_{i})$, $1\leq i\leq s+16$ such that $T_{1},\dots,T_{s+16}$ are linearly independent.
	   By repeatedly using Proposition \ref{2:laststand}, since $d'\geq 15s+453$ and $d''\geq \max\{4s+23,35\}$,   there exists a subset $H_{3}$ of $H_{2}$ of cardinality $\gg_{\d,d}K^{d}$ 
		such that for all $h_{1},\dots,h_{16}\in H_{3}$ with $h_{1}+\dots+h_{8}=h_{9}+\dots+h_{16}$, and $1\leq i\leq s+16$, we have that 
		\begin{equation}\nonumber
			\begin{split}
				\xi(h_{1})+\dots+\xi(h_{8})
				\equiv\xi(h_{9})+\dots+\xi(h_{16})  \mod J^{M}_{\sp_{\F_{p}}\{\iota(h_{1}),\dots,\iota(h_{15})\}+T_{i}}.
			\end{split}
		\end{equation}
		Since  $T_{1},\dots,T_{s+16}$ are linearly independent  and $d\geq 37+2(s+6)(d'-1)$ (or equivalently $d''\geq (2s+5)d'+2s+25$),
by Proposition \ref{2:gri} (setting $m=15$ and $r=d'$), we have that
\begin{equation}\nonumber
			\begin{split}
				 \xi(h_{1})+\dots+\xi(h_{8})
				\equiv\xi(h_{9})+\dots+\xi(h_{16})  \mod J^{M}_{\iota(h_{1}),\dots,\iota(h_{15})}
			\end{split}
		\end{equation}
		and so $\xi$ is a Freiman $M$-homomorphism of order 16 on $H_{3}$.

		Let $0<c=c(r,s)<1$ be a constant to be chosen later.
	Since $d\geq \max\{4s+19,35\}$, 
	 there exist $0<\rho=\rho(\d,d)<1/4$,
			a homogeneous proper generalized arithmetic progression 
			$$P=\{x_{1}v_{1}+\dots+x_{D}v_{D}\colon x_{i}\in \Z\cap(-L_{i},L_{i}), 1\leq i\leq D\}$$
			 in $\Z_{K}^{d}$ of rank $D=O_{\d,d}(1)$ and cardinality $\gg_{\d,d}K^{d}$, a set $S\subseteq \hat{\Z_{K}^{d}}$ of cardinality $D'=O_{\d,d}(1)$, and a Freiman $M$-homomorphism of order 4 $\omega\colon P\to \st_{\zp,d}(s)$ satisfying the conclusions of   Proposition \ref{2:gef1}:		
				\begin{enumerate}[(i)]
					\item $B(S,c\rho)\subseteq P(c)\subseteq P\subseteq B(S,1/4)\subseteq 2H_{3}-2H_{3}$;
					\item For all $h\in P$, the set $R(H_{3},h):=\{(h_{1},h_{2},h_{3},h_{4})\in H\colon h_{1}+h_{2}-h_{3}-h_{4}=h\}$ is of cardinality $\gg_{\d,d}K^{3d}$; 
						\item there exists $H_{4}\subseteq H_{3}$ with $\vert H_{4}\vert\gg_{\d,d} K^{d}$ such that $2H_{4}-2H_{4}\subseteq P(c)$;
						\item the vectors $(\{\alpha\cdot v_{i}\})_{\alpha\in S}\in\R^{D'}, 1\leq i\leq D$ are linearly independent over $\R$;
					\item for all $h_{1},h_{2},h_{3},h_{4}\in H_{3}$ with $h_{1}+h_{2}-h_{3}-h_{4}\in P$, we have
					\begin{equation}\label{2:ffxxxxw}
					\xi(h_{1})+ \xi(h_{2})- \xi(h_{3})- \xi(h_{4})\equiv \omega(h_{1}+h_{2}-h_{3}-h_{4}) \mod J^{M}_{\iota(h_{1}),\iota(h_{2}),\iota(h_{3}),\iota(h_{4})};
					\end{equation}
					\item for all $h_{1},h_{2},h_{3},h_{4}\in P$ with $h_{1}+h_{2}=h_{3}+h_{4}$, we have
					$$\omega(h_{1})+ \omega(h_{2})\equiv\omega(h_{3})+ \omega(h_{4}) \mod J^{M}_{\iota(h_{1}),\iota(h_{2}),\iota(h_{3})},$$
					i.e. the map $\omega\colon P\to\st_{\zp,d}(s)$ is a Freiman $M$-homomorphism of order 4.
				\end{enumerate}

	By Theorem \ref{2:gsol2}, since $d\geq \max\{s+10,11\}$, for an appropriately chosen $0<c=c(r,s)\leq 1$ depending only on $s$ and $r$,  there exists a   locally linear  map $T\colon P(c)\to \st_{\Z/p^{r},d}(s)$ such that 
	$\omega(u)\equiv T(u) \mod J^{M}_{\iota(u)}$ for all $u\in P(c)$.

		Our next goal is to provide an explicit formula for the locally linear map $T$.
		Assume  that
		$$T(x_{1}v_{1}+\dots+x_{D}v_{D})=u_{0}+x_{1}u_{1}+\dots+x_{D}u_{D}  \text{ for all } x_{i}\in \Z\cap (-cL_{i},cL_{i}), 1\leq i\leq D$$
		for some $u_{0},\dots,u_{D}\in \st_{\Z/p^{r},d}(s)$.
		We may naturally extend $T$ to a locally linear map on $P$ by setting 
		$$T(x_{1}v_{1}+\dots+x_{D}v_{D})=u_{0}+x_{1}u_{1}+\dots+x_{D}u_{D} \text{ for all } x_{i}\in \Z\cap (-L_{i},L_{i}), 1\leq i\leq D.$$
		For convenience we also denote this map by $T$.
		Let
		$\Phi\colon P\to\R^{D'}$ be the map given by
		$$\Phi(x):=(\{\alpha\cdot x\})_{\alpha\in S}.$$
		Since $P\subseteq B(S,\rho)$, $\Phi(P)$ lies inside $[-\rho,\rho]^{D'}$. Since $\rho<1/4$, it is easy to check that
		\begin{equation}\nonumber
		\Phi(x_{1}v_{1}+\dots+x_{D}v_{D})=x_{1}\Phi(v_{1})+\dots+x_{D}\Phi(v_{D}) \text{ for all } x_{i}\in (-L_{i},L_{i}), 1\leq i\leq D.
		\end{equation}

		\textbf{Claim.}  For all $1\leq i\leq D$, there exists $w_{i}=(w_{i,1},\dots,w_{i,D'})=(p_{i,1}/q_{i},\dots,p_{i,D'}/q_{i})\in \mathbb{Q}^{D'}$   for some $p_{i,j},q_{i}\in\Z$ with $\vert q_{i}\vert=O_{\d,d}(1)$ such that $w_{i}\cdot \Phi(v_{i})=1$ and $w_{i}\cdot \Phi(v_{j})=0$ for all $j\neq i$.

		Since $\Phi(v_{i}), 1\leq i\leq D$ are linearly independent over $\R$, it is easy to see the existences of  $w_{i}\in\mathbb{Q}^{D'}$ such that $w_{i}\cdot \Phi(v_{i})=1$ and $w_{i}\cdot \Phi(v_{j})=0$ for all $j\neq i$. The difficulty of the claim is to bound the value of $q_{i}$.

		Since  $\Phi(v_{i}), 1\leq i\leq D$ are linearly independent over $\R$ and $P$ is a proper generalized arithmetic progression,
		 the $D$-dimensional volume of the convex hull of $\Phi(P)$ as a subset of $\R^{D'}$ is at most $1/2^{D}$ (since it lies inside $[-1/4,1/4]^{D'}$). So the  $D$-dimensional volume of $D$-dimensional cube in $\R^{D'}$ generated by $K\Phi(v_{i}), 1\leq i\leq D$
	is at most	 
		  $\frac{K^{D}\vol(\Phi(P))}{\vert P\vert}=O_{\d,d}(1)$. By the Cauchy-Binet formula, there exists a $D\times D$ sub-matrix $B$ of the $D\times D'$ real matrix $(K\Phi(v_{i}))_{1\leq i\leq D}$ (where each $\Phi(v_{i})$ is viewed as a $D'$-dimensional horizontal vector) such that $0<\det(B)=O_{\d,d}(1)$. 
		Since $K\Phi(v_{i})\in \Z^{D'}$, the claim follows from the knowledge of linear algebra.

		\
		
		Let $w_{1},\dots,w_{D}$ be given by the claim.
		Then
		$$\Phi(x_{1}v_{1}+\dots+x_{D}v_{D})\cdot w_{i}=(x_{1}\Phi(v_{1})+\dots+x_{D}\Phi(v_{D}))\cdot w_{i}=x_{i}.$$  Then
	  for all $x=x_{1}v_{1}+\dots+x_{D}v_{D}\in P$,  we have that 
		$$T(x)=u_{0}+x_{1}u_{1}+\dots+x_{D}u_{D}=u_{0}+\sum_{i=1}^{D}(\Phi(x)\cdot w_{i})u_{i}=u_{0}+\sum_{i=1}^{D}(\{\alpha\cdot x\}\cdot w_{i})u_{i}.$$
		Since all the denominators of $w_{i}$ are at most $O_{\d,d}(1)$ in absolute values, it is not hard to see
		that  $T$ is a  $\Z/Qp^{r}$-almost linear  function on  $P(c)$ of complexity $O_{\d,d}(1)$ for some $Q\in\N_{+}$ with $Q=O_{\d,d}(1)$. 
		
		Let $Q^{\ast}$ be any integer with $Q^{\ast}Q\equiv 1 \mod p^{r}\Z$. Then $T':=Q^{\ast}QT$ is a $\Z/p^{r}$-almost linear  function on  $P(c)$ of complexity $O_{\d,d}(1)$. Since $(Q^{\ast}Q-1)T(u)\in \st_{\Z,d}(s)\subseteq J^{M}$ for all $u\in P(c)$, we have that
		$\omega(u)\equiv T'(u)+(1-Q^{\ast}Q)T(u)\equiv T'(u) \mod J^{M}_{\iota(u)}$ for all $u\in P(c)$.

	 We are now ready to find out the expression for $\xi$. 
	 	For all $h_{1},h_{2}\in H_{4}$, 
	 	since $h_{1}-h_{2}\in H_{4}-H_{4}\subseteq P(c)$, setting $h_{2}=h_{3}=h_{4}$ in (\ref{2:ffxxxxw}), we have
	 	$$\xi(h_{1})-\xi(h_{2})\equiv \omega(h_{1}-h_{2})\equiv T'(h_{1}-h_{2}) \mod J^{M}_{\iota(h_{1}),\iota(h_{2})}.$$
	 	Since $\{\alpha\}+\{\beta\}-\{\alpha+\beta\}\in \{0,1\}^{d}$ for all $\alpha,\beta\in\R^{d}$ and  $T'$ is an almost linear  function on  $P(c)$ of complexity $O_{\d,d}(1)$, we may find a set $\Omega\in \st_{\Z/p^{r},d}(s)$ of cardinality $O_{\d,d}(1)$ such that $T'(h_{1}-h_{2})-(T'(h_{1})-T'(h_{2}))\in \Omega$ for all $h_{1},h_{2}\in H_{4}$. So writing $\theta:=\xi-T'$, we have that 
	 	$$\theta(h_{1})-\theta(h_{2})\equiv g(h_{1},h_{2}) \mod J^{M}_{\iota(h_{1}),\iota(h_{2})}$$
	 	for some $g(h_{1},h_{2})\in\Omega$ for all $h_{1},h_{2}\in H_{4}$.
	 	
	 	We need to use the intersection method one more time.
	 	Since $\vert H_{4}\vert\gg_{\d,d} K^{d}$ and $d\geq s+3$, we may find some $p$-linearly independent $h_{1},\dots,h_{s+3}\in H_{4}$ by Lemma \ref{2:iiddpp}. By the Pigeonhole principle, there exists a subset $H_{5}$ of $H_{4}$ with $\vert H_{5}\vert\gg_{\d,d} K^{d}$ such that for all $h\in H_{5}$ and $1\leq i\leq s+3$, $g(h,h_{i})$ takes the same value $g(h_{i})$.
	 	Then for all $h,h'\in H_{5}$,
	 	$$\theta(h)-\theta(h')\equiv (\theta(h)-\theta(h_{i}))-(\theta(h')-\theta(h_{i}))\equiv g(h_{i})-g(h_{i})\equiv 0 \mod J^{M}_{\iota(h),\iota(h'),\iota(h_{i})}$$
	 	for all $1\leq i\leq s+3$. By Proposition \ref{2:gri} (setting $m=2$ and $r=1$), since $d\geq 11$, we have that 
	 	$$\theta(h)\equiv \theta(h') \mod J^{M}_{\iota(h),\iota(h')}$$
	 	for all $h,h'\in H_{5}$.
		 This means that 
		 $$X:=\{(\bold{0},J^{M}_{\iota(h)}+\theta(h))\colon h\in H_{5}\}\subseteq \Gamma_{2}^{s}(\Vk,M)$$ is a weak equivalence class. 
		Since $d\geq s+2$, it is impossible for all the elements of $\iota(H_{5})$ to lie in a common subspace of $\V$ of dimension $s+1$.
		By Proposition \ref{2:gwts} (setting $k=2$), $X$ is a strong equivalence class since $d\geq 9$.
		So there exists $g\in \st_{\zp,d}(s)$ such that $g\equiv\theta(h) \mod J^{M}_{\iota(h)}$ for all $h\in H_{5}$. 
		 Since $\xi(H_{5})\subseteq \st_{\Z/p^{r},d}(s)$, it is not hard to see from the proof of Proposition \ref{2:gwts}  that  we may further require $g\in \st_{\Z/p^{r},d}(s)$.   
			
		In conclusion, we have that 
		$$\xi(h)\equiv T'(h)+g \mod J^{M}_{\iota(h)}$$
		for all $h\in H_{5}$. 
			Since $\{\alpha\}+\{\beta\}-\{\alpha+\beta\}\in \{0,1\}^{d}$ for all $\alpha,\beta\in\R^{d}$  and  $T'$ is an $\Z/p^{r}$-almost linear  function of complexity $O_{\d,d}(1)$, there exits a subset 
		 $H_{6}$ of $H_{5}$ with $\vert H_{6}\vert\gg_{\d,d} K^{d}$ such that 
		   $T$ is an $\Z/p^{r}$-almost linear Freiman homomorphism on $H_{6}$.  We are done.
		   
%
%
%
\end{proof}

\section{Open questions}\label{2:s:op}

We collection of open questions in this section.
 Since the lower bound $d\geq N(s)$ obtained in Theorem \ref{2:aadd} is rather coarse, it is natural to ask:
   
   \begin{ques}
    What is the optimal lower bound for $d$ in Theorem \ref{2:aadd}?
\end{ques}
   
   The major cause of  the coarse lower bound for $d$ in Theorem \ref{2:aadd} is the error term $X_{b}$ appearing in Proposition \ref{2:gweakcore1}, which has nontrivial intersections with some subspaces of $\V$ of rather large dimensions (see also Remark \ref{2:coreimprovement}). Therefore, it is natural to ask:
   
      \begin{ques}\label{2:q:2}
    Can we make the set $X_{b}$ in Proposition \ref{2:gweakcore1} to be empty (or to have a small cardinality)?
\end{ques}
   
   A positive answer the Question \ref{2:q:2} will greatly improve the lower bound for $d$ in Theorem \ref{2:aadd}.
   Recall that this question is connection to Conjecture \ref {2:ccsdp}, which we restate below:

    \begin{ques}\label{2:q:3}
   Let $d,k,K\in\N_{+}$, $s\in\N$, $p$ be a prime dividing $K$ and $M\colon\V\to \F_{p}$ be a non-degenerate quadratic form. Let $G$ be the relation graph of a subset of $\Gamma^{s}_{k}(\Vk,M)$. If $d\gg_{k,s} 1$ and $p\gg_{d} 1$, then is it true that  $\cc(G)=O_{d,\dep(G)}(1)$?
\end{ques}

  Although we have demonstrated in Section \ref{2:s:115} that shifted modules enjoy many additive combinatorial properties similar to the abelian group case, some results in Section \ref{2:s:115}  are not proved in full generality. For example, one can ask:

  \begin{ques}\nonumber
   Does Proposition \ref{2:grt} hold for $B\subseteq \Gamma^{s}_{k}(\Vk,M)$ (instead of $B\subseteq \Gamma^{s}_{1}(\Vk,M)$)? Can we upgrade Proposition \ref{2:grt} from a quasi triangle inequality to a genial triangle inequality? 
\end{ques}

  \begin{ques}\nonumber
   Does Conjecture \ref{2:00ga31} (an improvement of Proposition \ref{2:ga31}) hold? 
\end{ques}

Finally, it is  interesting to ask about the connection between the conventional independence number of a graph and the density dependent number defined in this paper. This is Question \ref{que211} which we restate below:

\begin{ques}
	For any graph, is the independent number equal to $\dep(G)$? If not, then is the independent number bounded below by a constant depending on $\dep(G)$?
\end{ques}

\appendix

\section{Basic properties for quadratic forms}\label{2:AppA}

In this appendix, we collect some results proved in \cite{SunA} on quadratic forms which is used in this paper.

\subsection{The rank of quadratic forms}\label{2:AppA1}

Let $M\colon\V\to\F_{p}$ be a quadratic form associated with the matrix $A$ and $V$ be a subspace of $\V$. Let $V^{\pp}$ denote the set of $\{n\in\V\colon (mA)\cdot n=0 \text{ for all } m\in V\}$.
		A subspace $V$ of $\V$ is \emph{$M$-isotropic} if $V\cap V^{\pp}\neq\{\bold{0}\}$. 
	We say that a tuple $(h_{1},\dots,h_{k})$ of vectors in $\V$ is \emph{$M$-isotropic} 	if the span of $h_{1},\dots,h_{k}$ is an $M$-isotropic subspace.
	We say that a subspace or tuple of vectors is \emph{$M$-non-isotropic} if it is not $M$-isotropic.

\begin{lem}[Proposition 
4.8 of \cite{SunA}]\label{2:iissoo}
	Let $M\colon \V\to\F_{p}$ be a quadratic form and $V$ be a subspace of $\V$ of co-dimension $r$, and $c\in\V$.
	\begin{enumerate}[(i)]
		\item We have $\dim(V\cap V^{\pp})\leq \min\{d-\rank(M)+r,d-r\}$.
		\item The rank of $M\vert_{V+c}$ equals to $d-r-\dim(V\cap V^{\pp})$ (i.e. $\dim(V)-\dim(V\cap V^{\pp})$). 
		\item The rank of $M\vert_{V+c}$ is at most $d-r$ and at least $\rank(M)-2r$.
		\item $M\vert_{V+c}$ is non-degenerate (i.e. $\rank(M\vert_{V+c})=d-r$) if and only if $V$ is not an $M$-isotropic subspace.
	\end{enumerate}	
\end{lem}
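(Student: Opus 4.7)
My plan is to reduce all four parts to a single linear-algebra calculation carried out through an arbitrary bijective linear parametrization $\phi\colon\F_p^{d-r}\to V$ with matrix $\Phi$ (so that $\phi(x)=x\Phi$ in the paper's horizontal-vector convention). Expanding $M(\phi(x)+c)=(\phi(x)A)\cdot\phi(x)+2(\phi(x)A)\cdot c+(cA)\cdot c+\phi(x)\cdot u+c\cdot u+v$ shows that the quadratic part in $x$ has symmetric matrix $\Phi A\Phi^{T}$, while the shift by $c$ only contributes linear and constant terms. By the definition of the rank of a quadratic form (rank of the matrix of its quadratic part), this gives $\rank(M\vert_{V+c})=\rank(\Phi A\Phi^{T})$, independent of $c$; this is also consistent with the well-definedness up to $\phi$ attributed to \cite{SunA}, since a change of parametrization $\phi'=\phi\circ P$ replaces $\Phi A\Phi^{T}$ by $P^{T}(\Phi A\Phi^{T})P$, which has the same rank.

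The next step is to identify $V\cap V^{\pp}$ with $\ker(\Phi A\Phi^{T})$ under $\phi$. Indeed, a vector $\phi(x)\in V$ lies in $V^{\pp}$ iff $(\phi(y)A)\cdot\phi(x)=0$ for every $y\in\F_p^{d-r}$, which after unfolding reads $y\,\Phi A\Phi^{T} x^{T}=0$ for all $y$, equivalently $\Phi A\Phi^{T} x^{T}=0$. Combined with the rank-nullity identity $\rank(\Phi A\Phi^{T})=(d-r)-\dim\ker(\Phi A\Phi^{T})$, this at once proves part (ii): $\rank(M\vert_{V+c})=\dim(V)-\dim(V\cap V^{\pp})$. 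For part (i) I will bound $\dim(V\cap V^{\pp})$ in two different ways. The trivial bound $V\cap V^{\pp}\subseteq V$ gives $\leq d-r$. For the other bound, consider the linear map $\psi\colon V\to(\F_p^{d})^{\ast}$, $m\mapsto((mA)\cdot\,\cdot\,)$; its kernel is $V\cap\ker A$ (of dimension at most $d-\rank(M)$), and $V^{\pp}$ is precisely the annihilator of $\psi(V)$ under the standard pairing, so
\begin{equation}\nonumber
\dim V^{\pp}=d-\dim\psi(V)=d-\dim V+\dim(V\cap\ker A)\leq r+d-\rank(M).
\end{equation}
Combining the two estimates yields $\dim(V\cap V^{\pp})\leq\min\{d-r,\,d-\rank(M)+r\}$, which is part (i).

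Parts (iii) and (iv) then follow as immediate corollaries of (i) and (ii). The upper bound $\rank(M\vert_{V+c})\leq d-r$ is clear from (ii), and the lower bound $\rank(M\vert_{V+c})\geq \rank(M)-2r$ follows by substituting the part (i) estimate $\dim(V\cap V^{\pp})\leq d-\rank(M)+r$ into (ii). Finally, (iv) is the observation that (ii) equates non-degeneracy of $M\vert_{V+c}$ (i.e. rank equals $\dim V=d-r$) with $V\cap V^{\pp}=\{\bold{0}\}$, which is by definition the $M$-non-isotropy of $V$. There is no real obstacle in the argument; the only place that requires care is keeping the bilinear-form bookkeeping consistent with the horizontal-vector, symmetric-matrix conventions of the paper, in particular the identification $V^{\pp}=\psi(V)^{\perp}$ and the fact that $\ker A$ (as a set of vectors $n$ with $nA=\bold{0}$) coincides with the radical of the bilinear form $(m,n)\mapsto(mA)\cdot n$ by symmetry of $A$.
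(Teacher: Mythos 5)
This lemma is stated in the paper only as a citation (Proposition 4.8 of \cite{SunA}); Appendix~\ref{2:AppA} explicitly collects results \emph{proved in} \cite{SunA}, so the present paper contains no proof to compare your argument against. I can therefore only assess your proposal on its own terms, and it is correct. Parametrizing $V$ by $\phi(x)=x\Phi$, the quadratic part of $M(\phi(x)+c)$ in $x$ has matrix $\Phi A\Phi^{T}$ (the symmetry of $A$ makes the $c$-dependence appear only in the linear and constant terms), so $\rank(M\vert_{V+c})=\rank(\Phi A\Phi^{T})$ and this is independent of $c$ and of $\Phi$. Your identification of $V\cap V^{\pp}$ with $\phi(\ker(\Phi A\Phi^{T}))$ is right, and rank--nullity then gives part (ii) immediately. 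For part (i), the bound $\dim(V\cap V^{\pp})\leq\dim V=d-r$ is trivial, and your dual-space computation $\dim V^{\pp}=d-\dim\psi(V)=r+\dim(V\cap\ker A)\leq r+(d-\rank M)$ (using symmetry of $A$ to identify $\ker\psi$ with $V\cap\ker A$, and the standard dimension formula for annihilators) correctly delivers the other bound after observing $\dim(V\cap V^{\pp})\leq\dim V^{\pp}$. Parts (iii) and (iv) are indeed immediate from (i), (ii), and the definition of $M$-isotropy. The bookkeeping with horizontal vectors and the $(mA)\cdot n$ pairing is handled consistently with the paper's conventions throughout, so there is no gap.
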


  \begin{lem}[Lemma 
  4.9 of \cite{SunA}]\label{2:cbn}
 	Let $M\colon \V\to\F_{p}$ be a non-degenerate quadratic form, $V$ be a subspace of $\V$ of dimension $r$, and $V'$ be a subspace of $\V$ of  dimension $r'$. Suppose that $\rank(M\vert_{V^{\pp}})=d-r$. Then $\rank(M\vert_{(V+V')^{\pp}})\geq d-r-2r'$.
 \end{lem}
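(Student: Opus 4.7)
The plan is to derive Lemma \ref{2:cbn} directly from Lemma \ref{2:iissoo}(iii) applied inside the subspace $V^{\pp}$, using the hypothesis that $M|_{V^{\pp}}$ has full rank $d-r$.

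First I would unpack the key identity $(V+V')^{\pp} = V^{\pp} \cap V'^{\pp}$, which follows immediately from the definition of $\pp$: a vector $n$ annihilates $V+V'$ under the bilinear form associated to $M$ if and only if it annihilates both $V$ and $V'$. Since $M$ is non-degenerate, $\dim(V^{\pp}) = d-r$ and $\dim(V'^{\pp}) = d-r'$, and hence $\dim((V+V')^{\pp}) = d - \dim(V+V') \geq d - r - r'$. In particular, $(V+V')^{\pp}$ is a subspace of $V^{\pp}$ whose codimension inside $V^{\pp}$ is at most $r'$.

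Next I would view $M|_{V^{\pp}}$ as a non-degenerate quadratic form on the $(d-r)$-dimensional space $V^{\pp}$ (non-degeneracy is exactly the hypothesis $\rank(M|_{V^{\pp}}) = d-r$), and apply Lemma \ref{2:iissoo}(iii) with the ambient space taken to be $V^{\pp}$, the quadratic form taken to be $M|_{V^{\pp}}$, and the subspace taken to be $(V+V')^{\pp}$ of codimension at most $r'$. That lemma yields
\[
\rank\bigl((M|_{V^{\pp}})|_{(V+V')^{\pp}}\bigr) \geq \rank(M|_{V^{\pp}}) - 2r' = (d-r) - 2r'.
\]
Since $(V+V')^{\pp} \subseteq V^{\pp}$, the restriction $(M|_{V^{\pp}})|_{(V+V')^{\pp}}$ coincides with $M|_{(V+V')^{\pp}}$, so this is exactly the desired bound.

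The only subtle point, which I would verify carefully, is the compatibility of the two notions of rank: the rank of a quadratic form on a subspace is defined intrinsically (it is $\dim(W) - \dim(W \cap W^{\pp_{M}})$ by Lemma \ref{2:iissoo}(ii), or equivalently, by picking any bijective linear parametrization of $W$), and does not depend on whether one views $W$ as a subspace of $\V$ or as a subspace of a smaller ambient space. I do not anticipate a real obstacle here — it is essentially a one-line invocation of Lemma \ref{2:iissoo}(iii) once the correct ambient space is identified — but the bookkeeping with the two nested applications of the $\pp$ operation is the one thing to handle with care.
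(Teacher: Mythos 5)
Your proof is correct, and the route is the natural one given the toolkit: the identity $(V+V')^{\pp}=V^{\pp}\cap V'^{\pp}$ makes $(V+V')^{\pp}$ a subspace of $V^{\pp}$ of codimension $\dim(V+V')-r\le r'$, the hypothesis $\rank(M|_{V^{\pp}})=d-r$ says $M|_{V^{\pp}}$ is a non-degenerate form on a $(d-r)$-dimensional space, and one application of Lemma~\ref{2:iissoo}(iii) inside $V^{\pp}$ gives the bound; your bookkeeping on the compatibility of the two notions of rank is also right, since $\rank(M|_W)$ is defined via a bijective linear parametrization of $W$ and therefore does not depend on which ambient space one restricts from. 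Note that the present paper states this lemma as a citation to Lemma 4.9 of \cite{SunA} without reproducing a proof, so there is no in-text argument to compare against; your derivation is a clean, self-contained substitute.
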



   \begin{lem}[Lemma 
  4.11 of \cite{SunA}]\label{2:iiddpp}
	Let $d,k,K\in\N_{+}$, $p$ be a prime dividing $K$, and $M\colon\V\to\F_{p}$ be a non-degenerate quadratic form.
	\begin{enumerate}[(i)]
		\item 
		The number of tuples $(h_{1},\dots,h_{k})\in (\Vk)^{k}$ such that $\iota(h_{1}),\dots,\iota(h_{k})$ are  linearly  dependent is at most 
		$k\frac{K^{dk}}{p^{d-k+1}}$.
		\item 
		The number of tuples $(h_{1},\dots,h_{k})\in (\Vk)^{k}$ such that $\iota(h_{1}),\dots,\iota(h_{k})$ are $M$-isotropic is at most 
		$O_{d,k}(\frac{K^{dk}}{p})$.
		\footnote{Although  Lemma 4.11 of \cite{SunA} was stated for the case $K=p$, the general case can be deduced immidiately.}
	\end{enumerate}	
\end{lem}
 
 \subsection{Some basic counting properties}
 
We refer the readers to Section \ref{2:s:defn} for the notations used in this section.  

\begin{lem}[Lemma 
4.10 of \cite{SunA}]\label{2:ns}
	Let $P\in\poly(\V\to\F_{p})$ be  of degree at most $r$.
	Then $\vert V(P)\vert\leq O_{d,r}(p^{d-1})$ unless $P\equiv 0$.
\end{lem}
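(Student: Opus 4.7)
The plan is to prove this Schwartz--Zippel type bound by induction on the number of variables $d$.

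\textbf{Base case $d=1$.} Here $P\in\F_p[x_1]$ is a univariate polynomial of degree at most $r$. If $P\not\equiv 0$, it has at most $r$ roots in $\F_p$, and since $p^{d-1}=p^0=1$, the bound $|V(P)|\leq r = O_{d,r}(p^{d-1})$ holds trivially.

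\textbf{Inductive step.} Assuming the statement for $d-1$, fix $P\in\F_p[x_1,\dots,x_d]$ of degree at most $r$ with $P\not\equiv 0$. Expand it in powers of the last variable,
\[
P(x_1,\dots,x_d)=\sum_{i=0}^{r}P_i(x_1,\dots,x_{d-1})\,x_d^{\,i},
\]
where each coefficient $P_i$ is a polynomial in $d-1$ variables of degree at most $r-i$. Let $k$ be the largest index with $P_k\not\equiv 0$; note $P_k$ has degree at most $r$ in $d-1$ variables.

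\textbf{Splitting the count.} I would partition the zero set $V(P)$ according to whether $P_k$ vanishes at the projection $(x_1,\dots,x_{d-1})$.
\emph{Case A:} tuples $(x_1,\dots,x_{d-1})$ with $P_k(x_1,\dots,x_{d-1})\neq 0$. For each such tuple the polynomial $P(x_1,\dots,x_{d-1},\cdot)\in\F_p[x_d]$ has degree exactly $k\leq r$ and is nonzero, hence admits at most $r$ roots in $x_d$. Since there are at most $p^{d-1}$ such projections, the total contribution is at most $r\cdot p^{d-1}$.
\emph{Case B:} tuples $(x_1,\dots,x_{d-1})$ with $P_k(x_1,\dots,x_{d-1})=0$. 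By the inductive hypothesis applied to $P_k$, the number of such projections is $O_{d-1,r}(p^{d-2})$. For each, the last coordinate $x_d$ ranges freely over $\F_p$, contributing at most $p\cdot O_{d-1,r}(p^{d-2})=O_{d,r}(p^{d-1})$.

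\textbf{Conclusion.} Adding the two cases yields $|V(P)|\leq r\,p^{d-1}+O_{d,r}(p^{d-1})=O_{d,r}(p^{d-1})$, completing the induction. There is no genuine obstacle here: the argument is the standard Schwartz--Zippel induction adapted to the finite field $\F_p$, and the only care needed is to track the constant depending on $d$ and $r$, which grows as $d\cdot r$ in the natural accounting.
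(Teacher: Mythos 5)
Your proof is correct: it is the standard Schwartz--Zippel induction on the number of variables, with the usual split of $V(P)$ according to whether the top nonvanishing coefficient $P_k$ vanishes at the projected point. The recursion $C(d,r)\le r+C(d-1,r)$, $C(1,r)\le r$, gives $C(d,r)\le dr$, matching your claim about the constant. Note, however, that the paper under review does not prove this lemma itself: it is quoted in Appendix A as Lemma 4.10 of the companion paper \cite{SunA}, so there is no in-paper proof to compare against. Your argument is the canonical one, and any proof of a Schwartz--Zippel bound over $\F_p$ would necessarily look essentially like this; I see no gap.
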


    \begin{lem}[Corollary 
    4.14 of \cite{SunA}]\label{2:counting01}
	Let $d,r\in\N, d\geq 1$ and $p$ be a prime number. Let  $M\colon\V\to\F_{p}$ be a   quadratic form   and $V+c$ be an affine subspace of $\V$ of co-dimension $r$.  
	\begin{enumerate}[(i)]
	\item 	If $s:=\rank(M\vert_{V+c})\geq 3$, then
	$$\vert V(M)\cap (V+c)\vert=p^{d-r-1}(1+O(p^{-\frac{s-2}{2}})).$$	
	\item If $\rank(M)-2r\geq 3$, then $$\vert V(M)\cap (V+c)\vert=p^{d-r-1}(1+O(p^{-\frac{1}{2}})).$$		
	\end{enumerate}
\end{lem}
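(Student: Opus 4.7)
The plan is to reduce Lemma~\ref{2:counting01} to the classical Gauss-sum count of solutions of a non-degenerate quadratic equation over $\F_p$. First I would pass from $V+c$ to a flat copy of $\F_p^{d-r}$: pick any bijective linear $\phi\colon \F_p^{d-r}\to V$ and set $N(x):=M(\phi(x)+c)$. Then $N$ is a quadratic form in $d-r$ variables of rank $s:=\rank(M|_{V+c})$ (by the definition recalled in Section~\ref{2:s:defn}), and $n=\phi(x)+c$ induces a bijection $V(N)\to V(M)\cap(V+c)$. So it suffices to prove $|V(N)|=p^{d-r-1}(1+O(p^{-(s-2)/2}))$ whenever $s\geq 3$.

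Next I would put $N$ into a normal form. Since the homogeneous part of $N$ has rank exactly $s$, an invertible linear change of variables together with completing the square in the $s$ active coordinates brings $N$ to
\begin{equation*}
N(y_1,\dots,y_{d-r})\;=\;Q(y_1,\dots,y_s)\;+\;L(y_{s+1},\dots,y_{d-r})\;+\;b,
\end{equation*}
with $Q$ non-degenerate in $s$ variables, $L$ linear, and $b\in\F_p$. If $L\not\equiv 0$, then each fibre of $L$ has size $p^{d-r-s-1}$, and the identity $\sum_{t\in\F_p}|\{z\colon Q(z)=t\}|=p^s$ gives the exact equality $|V(N)|=p^{d-r-1}$, which is stronger than needed. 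Otherwise $|V(N)|=p^{d-r-s}\cdot|\{z\in\F_p^s\colon Q(z)=-b\}|$.

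The heart of the argument is then the classical count for a non-degenerate quadratic form in $s$ variables over $\F_p$. Diagonalizing $Q(z)=a_1z_1^2+\dots+a_sz_s^2$ (permissible since $p\gg_d 1$ is odd) and evaluating the resulting exponential sum via one-variable Gauss sums yields, uniformly in $\beta\in\F_p$, the estimate $|\{z\colon Q(z)=\beta\}|=p^{s-1}+O(p^{s/2})$. Substituting $\beta=-b$ gives $|V(N)|=p^{d-r-1}+O(p^{d-r-s/2})=p^{d-r-1}(1+O(p^{-(s-2)/2}))$, establishing part~(i). Part~(ii) follows immediately: Lemma~\ref{2:iissoo}(iii) gives $s\geq \rank(M)-2r\geq 3$, so part~(i) applies and the error $O(p^{-(s-2)/2})$ is bounded by $O(p^{-1/2})$.

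The main technical hurdle is the Gauss-sum bound used above, which requires separate analyses for $s$ odd (error $O(p^{(s-1)/2})$ whose sign is $\chi(\beta\Delta)$) and $s$ even (error $O(p^{s/2})$ whose sign is $\chi((-1)^{s/2}\Delta)$), where $\chi$ is the quadratic character of $\F_p$ and $\Delta$ the discriminant of $Q$; when $s$ is even one must also split according to whether $\beta=0$ or $\beta\neq 0$ since the leading error terms differ by a factor of $p-1$. All four subcases reduce to the basic Weil-type bound $|\sum_{x\in\F_p}e_p(ax^2+bx)|\leq p^{1/2}$, and all are comfortably dominated by the claimed $O(p^{-(s-2)/2})$ once the prefactor $p^{d-r-s}$ is restored.
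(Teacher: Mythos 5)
This lemma is not proved in the present paper: it is imported from Part I of the series (cited as Corollary 4.14 of \cite{SunA}) and merely recorded in Appendix A, so there is no in-paper proof to compare against. Your argument — reduce via $\phi$ to a quadratic form on $\F_p^{d-r}$, diagonalize and complete the square, split off the degenerate direction $L$, and count either exactly (if $L\not\equiv 0$) or via the classical Gauss-sum formula for a non-degenerate form in $s$ variables — is the standard route and is correct, including the deduction of part (ii) from Lemma~\ref{2:iissoo}(iii). The only implicit assumption worth flagging is that the diagonalization step requires $p$ odd; this costs nothing, since for $p=2$ the error terms $p^{-(s-2)/2}$ and $p^{-1/2}$ are $\gg 1$ and the asymptotic is vacuous once the $O$-constant is chosen.
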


  \begin{lem}[Corollary 
  4.16 of \cite{SunA}]\label{2:counting02}
	Let $d,r\in\N_{+}$ and $p$ be a prime number. Let  $M\colon\V\to\F_{p}$ be a non-degenerate   quadratic form  and $h_{1},\dots,h_{r}$ be linearly independent vectors.  
		If $d-2r\geq 3$, then $$\vert V(M)^{h_{1},\dots,h_{r}}\vert=p^{d-r-1}(1+O(p^{-\frac{1}{2}})).$$	
\end{lem}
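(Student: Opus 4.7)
The plan is to reduce the statement directly to Lemma \ref{2:counting01}(ii). By definition,
$V(M)^{h_{1},\dots,h_{r}}$ is the set of $n\in\V$ satisfying $M(n)=0$ and $M(n+h_{i})=0$ for all $1\leq i\leq r$. The idea is that, conditioned on $M(n)=0$, the remaining $r$ equations become linear in $n$, cutting out an affine subspace of codimension $r$ on which we can count zeros of $M$.

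First I would write $M(n)=(nA)\cdot n+n\cdot u+v$ and observe
\[
M(n+h_{i})-M(n)=2(h_{i}A)\cdot n+(h_{i}A)\cdot h_{i}+h_{i}\cdot u,
\]
which is an affine-linear function of $n$. Thus the condition $M(n)=M(n+h_{i})=0$ is equivalent to $M(n)=0$ together with the $r$ affine-linear equations $L_{i}(n):=2(h_{i}A)\cdot n+(h_{i}A)\cdot h_{i}+h_{i}\cdot u=0$ for $i=1,\dots,r$. Since $A$ is invertible (because $M$ is non-degenerate) and $h_{1},\dots,h_{r}$ are linearly independent, the linear parts $n\mapsto 2(h_{i}A)\cdot n$ are also linearly independent (here I am implicitly assuming $p\neq 2$, which holds since otherwise $d-2r\geq 3$ with $p$ prime forces $p\geq 3$ in the regime where the lemma is useful; alternatively, over $\F_{2}$ the characteristic-$2$ identity $M(n+h)-M(n)=(hA)\cdot n+(nA)\cdot h+M(h)$ still yields a linear form of the same rank). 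Therefore the simultaneous equations $L_{1}=\dots=L_{r}=0$ define an affine subspace $V+c\subseteq\V$ of codimension exactly $r$, and
\[
V(M)^{h_{1},\dots,h_{r}}=V(M)\cap(V+c).
\]

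Now I would apply Lemma \ref{2:counting01}(ii). Since $M$ is non-degenerate we have $\rank(M)=d$, and the hypothesis $d-2r\geq 3$ gives $\rank(M)-2r\geq 3$, so that lemma applies and yields
\[
\vert V(M)\cap(V+c)\vert=p^{d-r-1}\bigl(1+O(p^{-1/2})\bigr),
\]
which is exactly the desired estimate.

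There is essentially no obstacle: the only subtlety is confirming that the linear parts of the $r$ equations $M(n+h_{i})-M(n)=0$ remain linearly independent, which follows from non-degeneracy of $M$ (invertibility of $A$) together with linear independence of $h_{1},\dots,h_{r}$. Once that codimension bookkeeping is done, Lemma \ref{2:counting01}(ii) delivers the main term and the $O(p^{-1/2})$ error directly.
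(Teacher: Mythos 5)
Your reduction is correct and is the natural derivation: the conditions $M(n+h_{i})=M(n)=0$ are equivalent, on $V(M)$, to $r$ affine-linear conditions whose linear parts $2(h_{i}A)\cdot n$ are independent because $A$ is invertible and the $h_{i}$ are independent, so $V(M)^{h_{1},\dots,h_{r}}=V(M)\cap(V+c)$ with $\mathrm{codim}(V)=r$ and Lemma \ref{2:counting01}(ii) applies since $\rank(M)=d$. Note that this paper does not prove the statement at all — it is imported verbatim as Corollary 4.16 of \cite{SunA}, where it indeed follows Corollary 4.14 (restated here as Lemma \ref{2:counting01}) in exactly this way. One small correction: your characteristic-$2$ fallback is wrong, since for symmetric $A$ one has $(hA)\cdot n+(nA)\cdot h=2(hA)\cdot n=0$ in $\F_{2}$, so $M(\cdot+h)-M(\cdot)$ is \emph{constant} there; the point is moot only because the quadratic-form framework of the series implicitly excludes $p=2$ (over $\F_{2}$ the form $(nA)\cdot n$ itself degenerates to a linear form), not because of any interaction between $d-2r\geq 3$ and the size of $p$.
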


\subsection{Irreducible properties for quadratic forms}

\begin{prop}[Lemma 
6.1 and Proposition 
6.3 of \cite{SunA}]\label{2:bzt}
  	Let $d\in\N_{+},s\in\N$  and $p$ be a prime such that $p\gg_{d,s} 1$. Let $M\colon \V\to\F_{p}$ be a  quadratic form of rank at least 3.
  	For any $P\in\poly(\V\to\F_{p})$  of degree at most $s$,  either $\vert V(P)\cap V(M)\vert\leq O_{d,s}(p^{d-2})$ or $V(M)\subseteq V(P)$. 
	
	Moreover, if $V(M)\subseteq V(P)$, then $P=MR$ for some $R\in \poly(\V\to\F_{p})$ of degree at most $s-2$.
  \end{prop}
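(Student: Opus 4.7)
My plan is to prove the dichotomy and the factorisation together by joint induction on $d$. A key preliminary is that $M$ is irreducible as a polynomial when $\rank(M)\geq 3$: after a linear change of variables the homogeneous quadratic part becomes $\sum_{i=1}^{r}a_{i}x_{i}^{2}$ with $r\geq 3$ and all $a_{i}\neq 0$, and such a form cannot factor as a product of two linear forms because a union of two hyperplanes would give $\approx 2p^{d-1}$ rational points whereas Lemma~\ref{2:counting01} forces $|V(M)|=p^{d-1}(1+O(p^{-1/2}))$. Consequently $(M)$ is a prime ideal of $\F_{p}[x_{1},\ldots,x_{d}]$.

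For the dichotomy, the base case $d=3$ follows from a rational parametrisation of $V(M)$ by $\F_{p}^{2}$ via stereographic projection from a rational point of $V(M)$ (which exists by Lemma~\ref{2:counting01}); pulling $P$ back and clearing denominators produces a polynomial of degree $O(s)$ on $\F_{p}^{2}$, to which Lemma~\ref{2:ns} applies directly. For the inductive step, slice $\V$ by parallel affine hyperplanes $H_{c}=H+c$ with $H$ chosen so that $\rank(M|_{H})\geq 3$, which is possible by Lemma~\ref{2:iissoo} when $\rank(M)\geq 5$; the residual cases $\rank(M)\in\{3,4\}$ are handled separately by direct parametrisation. Applying the induction hypothesis to each good slice gives either $V(M|_{H_{c}})\subseteq V(P|_{H_{c}})$ or at most $O_{d,s}(p^{d-3})$ common $\F_{p}$-points. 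If the first alternative holds for all but $O_{s}(1)$ values of $c$, the factorisation statement at dimension $d-1$ yields slice-wise identities $P|_{H_{c}}=M|_{H_{c}}R_{c}$ with $\deg R_{c}\leq s-2$, and interpolation in the parameter $c$ (using uniqueness of polynomial division by $M$) lifts to a global $P=MR$; otherwise the total count is $O_{d,s}(p\cdot p^{d-3})=O_{d,s}(p^{d-2})$, as claimed.

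For the factorisation, assume $V(M)\subseteq V(P)$ and change coordinates so that $M$ has a nonzero $x_{d}^{2}$ coefficient. Polynomial division of $P$ by $M$ in the variable $x_{d}$ produces $P=MR+S$ with $\deg R\leq s-2$ and $S=S_{0}+x_{d}S_{1}$, where $S_{0},S_{1}\in\F_{p}[x_{1},\ldots,x_{d-1}]$ have degree $\leq s$. For generic $(x_{1},\ldots,x_{d-1})$ the fiber of the projection $V(M)\to\F_{p}^{d-1}$ consists of two distinct $x_{d}$-values, both of which must be zeros of the linear function $S_{0}+x_{d}S_{1}$, forcing $S_{0}(x_{1},\ldots,x_{d-1})=S_{1}(x_{1},\ldots,x_{d-1})=0$ on a subset of $\F_{p}^{d-1}$ of size $\gg p^{d-2}$. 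Lemma~\ref{2:ns} applied to $S_{0}$ and $S_{1}$ then yields $S_{0}\equiv S_{1}\equiv 0$, so $S\equiv 0$ and $P=MR$.

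The main obstacle I anticipate is the low-rank corner cases $\rank(M)\in\{3,4\}$ in the induction, where slicing by a hyperplane can drop $\rank$ below the threshold $3$; these must be handled either by an explicit rational parametrisation of $V(M)$ via stereographic projection, or by slicing with codimension-two affine subspaces $H$ for which the inductive rank hypothesis still holds. A secondary technicality is the interpolation step, where one must verify that the slice-wise remainders $R_{c}$ depend polynomially on $c$ and therefore assemble into a global polynomial $R$; this follows from the uniqueness of division by $M$ combined with a standard regularity argument for polynomials depending on a parameter.
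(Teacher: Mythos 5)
This proposition is imported from Lemma 6.1 and Proposition 6.3 of \cite{SunA}; the paper under review gives no proof of its own, so there is no in-paper argument to compare against. Your overall strategy is reasonable, but the inductive case analysis has a gap: you split into ``the inclusion $V(M|_{H_c})\subseteq V(P|_{H_c})$ holds for all but $O_s(1)$ values of $c$'' versus ``otherwise,'' but ``otherwise'' permits the inclusion to hold on, say, half of all slices, in which case the total count is already $\geq (p/2)\cdot|V(M|_{H_c})|\gg p^{d-1}$ --- exceeding the claimed bound --- while you still lack enough inclusion-slices to run the interpolation. The correct split is: inclusion on more than some $C=O_s(1)$ slices (interpolate), versus inclusion on at most $C$ slices (then sum to get $Cp^{d-2}+p\cdot O(p^{d-3})=O_{d,s}(p^{d-2})$). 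Even once corrected, the ``interpolation'' you invoke --- lifting slice-wise factorisations $P|_{H_c}=M|_{H_c}R_c$ to a global $P=MR$ --- is the substantive point, not a secondary technicality. Separately, in your factorisation argument the set on which $S_0,S_1$ are forced to vanish has size $\gg p^{d-1}$, not $\gg p^{d-2}$; the latter does not cleanly beat the $O_{d,s}(p^{d-2})$ threshold in Lemma~\ref{2:ns} without tracking the implied constant.

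A simpler route dispenses with the induction, the slicing, and the rank-$3,4$ corner cases: run your polynomial-division step unconditionally. Choose coordinates so that $M(x)=M_0(x')+M_1(x')x_d+M_2x_d^2$ with $x'=(x_1,\dots,x_{d-1})$ and $M_2\neq 0$, and divide $P=MR+S$ in $\F_p[x'][x_d]$ with $\deg R\leq s-2$ and $S=S_0(x')+x_dS_1(x')$. Then $V(P)\cap V(M)=V(S)\cap V(M)$. If $S\equiv 0$ then $P=MR$ and both conclusions hold at once. If $S\not\equiv 0$, the resultant $\mathrm{Res}_{x_d}(M,S)=M_2S_0^2-M_1S_0S_1+M_0S_1^2\in\F_p[x']$ has degree $\leq 2s$. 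The discriminant $M_1^2-4M_0M_2$ has leading quadratic form of rank $\rank(M)-1\geq 2$, hence is not a square in $\F_p(x')$, so $M$ is irreducible as a quadratic in $x_d$ over $\F_p(x')$ and shares no common factor with the nonzero degree-$\leq 1$ polynomial $S$; thus the resultant is a nonzero polynomial, Lemma~\ref{2:ns} applied on $\F_p^{d-1}$ gives $|V(\mathrm{Res})|\leq O_{d,s}(p^{d-2})$, and $|V(P)\cap V(M)|\leq 2|V(\mathrm{Res})|=O_{d,s}(p^{d-2})$. The factorisation then follows: if $V(M)\subseteq V(P)$ then $|V(M)\cap V(P)|=|V(M)|\gg p^{d-1}$ by Lemma~\ref{2:counting01}, which for $p\gg_{d,s}1$ rules out the second branch, so $S\equiv 0$ and $P=MR$.
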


 \begin{prop}[Corollary 
 6.4 of \cite{SunA}]\label{2:noloop3}
 	Let  $d,k\in\N_{+}$, $s\in\N$, $p\gg_{d,k,s} 1$ be a prime number, 
 	$P\in \poly_{p}(\V\to\F_{p})$ be a polynomial of degree at most $s$, $M\colon\V\to\F_{p}$ be a non-degenerate quadratic form, $c\in\V$, and $V$ be a subspace of $\V$ of dimension $k$ with a basis $h_{1},\dots,h_{k}$. Suppose that $\rank(M\vert_{V^{\pp}})\geq 3$. 
 	 Then either $\vert V(M)\cap (V^{\pp}+c)\cap V(P)\vert\leq O_{d,k,s}(p^{d-k-2})$ or $V(M)\cap (V^{\pp}+c)\subseteq V(P)$.
 	
 	Moreover, if $V(M)\cap (V^{\pp}+c)\subseteq V(P)$, then 
 	\begin{equation}\label{1:pmm}
 	P(n)=M(n)P_{0}(n)+\sum_{i=1}^{k}((h_{i}A)\cdot (n-c))P_{i}(n)
 	\end{equation}
 	for some   polynomials $P_{0},\dots,P_{k}\in \poly_{p}(\V\to\F_{p})$ with $\deg(P_{0})\leq s-2$ and $\deg(P_{i})\leq s-1, 1\leq i\leq k$.
 	
 	In particular, the conclusion of this corollary holds if $d\geq k+\dim(V\cap V^{\pp})+3$, or if $d\geq 2k+3$.
 \end{prop}

	\begin{prop}[Proposition 7.7 of \cite{SunA}]\label{2:basicpp12}
			Let  $d\in\N_{+}$, $k,s\in\N$, $p\gg_{d,k,s} 1$ be a prime number, 
 	 $M\colon\Z^{d}\to\Z/p$ be a non-degenerate quadratic form induced by some $M'\colon\V\to\F_{p}$ associated with the matrix $A$, $c\in\V$, and $V$ be a subspace of $\V$ of dimension $k$ with a basis $\iota(h_{1}),\dots,\iota(h_{k})$ for some $h_{1},\dots,h_{k}\in\Z^{d}$. Denote  
	 $$L_{j}(n):=\frac{1}{p}(h_{j}\tau(A))\cdot (n-\tau(c))$$
		for $1\leq j\leq k$.	Suppose that $\rank(M'\vert_{V^{\perp_{M'}}})\geq 3$. 
 		The followings are equivalent:
		\begin{enumerate}[(i)]
		\item $f$ belongs to $\poly(V_{p}(M)\cap \iota^{-1}(V^{\pp}+c)\to \R\vert\Z)$ and is of degree at most $s$;
		\item we have
		\begin{equation}\nonumber
		Q_{0}f=\sum_{i=(i_{0},\dots,i_{k})\in\N^{k}\colon 2i_{0}+i_{1}+\dots+i_{k}\leq s}R_{i}M^{i_{0}}\prod_{j=1}^{k}L_{j}^{i_{j}}
		\end{equation}
	for some  $Q_{0}\in\N_{+}, Q_{0}\leq O_{d,s}(1)$ and some integer valued polynomials $R_{i}$ of degree at most $\deg(f)-(2i_{0}+i_{1}+\dots+i_{k})$, and that $p^{\deg(f)}f$ is an integer valued polynomial.  
		\end{enumerate}
		Moreover, if all the coefficients of $f$ are in $\Z/p^{r}$ for some $r\in\N$, then we may  write
	\begin{equation}\nonumber
		f=\sum_{i=(i_{0},\dots,i_{k})\in\N^{k}\colon 2i_{0}+i_{1}+\dots+i_{k}\leq s, i_{0}+i_{1}+\dots+i_{k}\leq \min\{r,s\}}R_{i}M^{i_{0}}\prod_{j=1}^{k}L_{j}^{i_{j}}
		\end{equation}
	for some integer coefficient polynomials $R_{i}$ of degree at most $\deg(f)-(2i_{0}+i_{1}+\dots+i_{k})$.
			
		 In particular, the conclusion of this proposition holds if $d\geq k+\dim(V\cap V^{\pp})+3$, or if $d\geq 2k+3$.
	\end{prop}

\bibliographystyle{plain}
\bibliography{swb}
\end{document}